\documentclass[10pt,reqno,a4paper]{amsart}

\usepackage[utf8]{inputenc}
\usepackage[T1]{fontenc}
\usepackage{lmodern}
\usepackage{microtype}
\usepackage[margin=2.4cm]{geometry}
\usepackage{enumitem}

\usepackage{amsmath,amssymb,mathtools}
\usepackage{amsthm}
\usepackage{mathdots}
\usepackage{easybmat}

\usepackage{tikz}
\usetikzlibrary{decorations.pathreplacing}
\usepackage{dynkin-diagrams}
\usepackage{rotating}
\usepackage{wasysym}
\usepackage{subcaption}
\usepackage{multirow}
\usepackage{float}

\usepackage[usenames,dvipsnames,svgnames,table]{xcolor}
\definecolor{todocolor}{HTML}{D7E1E5}
\usepackage[textsize=tiny,
            backgroundcolor=todocolor,
            bordercolor=todocolor,
            linecolor=todocolor,
            textwidth=3.9cm]{todonotes}

\usepackage{xspace}
\usepackage{comment}
\usepackage{tabularx}
\usepackage{xltabular}
\usepackage{cite}
\usepackage{bibentry}

\usepackage{xr-hyper}
\usepackage[
pdftex,
bookmarks=true,
colorlinks=true,
debug=true,
pdfnewwindow=true]{hyperref}
\hypersetup{bookmarksdepth = 3}

\newcommand\AddLabel[1]{%
  \refstepcounter{equation}
  (\theequation)
  \label{#1}
}

\makeatletter
\let\@wraptoccontribs\wraptoccontribs
\makeatother

\theoremstyle{plain}
\newtheorem{Thm}[equation]{Theorem}
\newtheorem{Cor}[equation]{Corollary}
\newtheorem{Lem}[equation]{Lemma}
\newtheorem{Prop}[equation]{Proposition}

\newtheorem*{Thm-non}{Theorem}

\theoremstyle{definition}
\newtheorem{Def}[equation]{Definition}

\theoremstyle{remark}

\newtheorem{Rem}[equation]{Remark}

\numberwithin{equation}{section}

\makeatletter
\g@addto@macro\bfseries{\boldmath}
\makeatother

\makeatletter
\newcases{rrcases}{\quad}{%
  \hfil$\m@th\displaystyle{##}$}{$\m@th\displaystyle{##}$\hfil}{\lbrace}{.}
\makeatother

\newcommand{\ol}{\overline}
\newcommand{\bbar}[1]{\bar{\mathbf{#1}}}

\newcommand{\iso}{\,\vphantom{j^{X^2}}\smash{\overset{\sim}{\vphantom{\rule{0pt}{0.20em}}\smash{\longrightarrow}}}\,}

\newcommand{\wt}{\widetilde}
\newcommand{\sfrac}[2]{{\textstyle\frac{#1}{#2}}}

\newcommand{\BZ}{\mathbb{Z}}
\newcommand{\BC}{\mathbb{C}}

\newcommand{\BK}{\mathbb{K}}

\newcommand{\fg}{\mathfrak{g}}
\newcommand{\fh}{\mathfrak{h}}

\newcommand{\gl}{\mathfrak{gl}}
\newcommand{\ssl}{\mathfrak{sl}}

\newcommand{\fosp}{\mathfrak{osp}}
\newcommand{\fgosp}{\mathfrak{gosp}}

\newcommand{\fR}{\mathfrak{R}}

\newcommand{\CA}{\mathcal{A}}

\newcommand{\CC}{\mathcal{C}}

\newcommand{\CD}{\mathcal{D}}
\newcommand{\CF}{\mathcal{F}}

\newcommand{\CJ}{\mathcal{J}}
\newcommand{\CI}{\mathcal{I}}

\newcommand{\CT}{\mathcal{T}}

\newcommand{\sfC}{{\mathsf{C}}}

\newcommand{\sL}{\mathsf{L}}

\newcommand{\sse}{\mathsf{e}}
\newcommand{\ssf}{\mathsf{f}} 
\newcommand{\ssh}{\mathsf{h}}
\newcommand{\ssH}{\mathsf{H}}

\newcommand{\bfe}{\mathbf{e}}
\newcommand{\bff}{\mathbf{f}}

\newcommand{\bfk}{\mathbf{k}}
\newcommand{\bfE}{\mathbf{E}}
\newcommand{\bfF}{\mathbf{F}}
\newcommand{\bfH}{\mathbf{H}}
\newcommand{\bfK}{\mathbf{K}}

\newcommand{\uqgl}{U_{q}(\gl(V))}
\newcommand{\uqsl}{U_{q}(\ssl(V))}
\newcommand{\URtw}{\widetilde{U}(R)}
\newcommand{\uhgl}{U_{\hbar}(\gl(V))}

\newcommand{\DrJ}{\mathrm{DJ}}

\newcommand{\RLL}{\textrm{RLL}}

\newcommand{\uqV}{U_{q}(\fosp(V))}

\newcommand{\uqVe}{U_{q}(\fgosp(V))}
\newcommand{\DF}{\mathrm{DF}}

\newcommand{\End}{\mathrm{End}}

\newcommand{\opp}{\mathrm{op}}
\newcommand{\copp}{\mathrm{cop}}

\newcommand{\ID}{\mathrm{I}}

\newcommand{\rl}{\mathrm{l}}

\DeclareMathOperator{\diag}{diag}

\DeclareMathOperator{\Id}{Id}
\DeclareMathOperator{\sVect}{sVect}
\DeclareMathOperator{\Vect}{Vect}
\DeclareMathOperator{\Hom}{Hom}
\DeclareMathOperator{\Ad}{Ad}
\DeclareMathOperator{\Coad}{Coad}

\DeclareMathOperator{\Span}{Span}
\DeclareMathOperator{\st}{st}

\newmuskip\pFqmuskip
\newcommand*\pFq[6][8]{%
    \begingroup
    \pFqmuskip=#1mu\relax
    \mathcode`\,=\string"8000
    \begingroup\lccode`\~=`\,
    \lowercase{\endgroup\let~}\pFqcomma
    {}_{#2}F_{#3}{\left(\genfrac..{0pt}{}{#4}{#5};#6\right)}%
    \endgroup}
\newcommand{\pFqcomma}{\mskip\pFqmuskip}

\newcommand{\eqncom}{\; ,}

\def\be{\begin{eqnarray}}
\def\ee{\end{eqnarray}}

\begin{document}

\title[Orthosymplectic quantum groups revisited]
{\large{\textbf{Orthosymplectic quantum groups revisited}}}

\author{Kyungtak Hong}
\address{K.H.: Purdue University, Department of Mathematics, West Lafayette, IN 47907, USA}
\email{hong420@purdue.edu}

\author{Alexander Tsymbaliuk}
\address{A.T.: Purdue University, Department of Mathematics, West Lafayette, IN 47907, USA}
\email{sashikts@gmail.com}

\begin{abstract}
We present the $\RLL$-realization of extended orthosymplectic quantum supergroups for any parity sequence,
with $R$-matrices evaluated in the earlier work~\cite{ht1}. Our isomorphism is compatible with the internal
structure of generalized doubles. We also relate different sign conventions through $2$-cocycle twists.
Furthermore, we establish a factorization of the reduced $R$-matrix within the $\RLL$-realization.
\end{abstract}

\maketitle
\tableofcontents


\section{Introduction}
\label{sec:intro}


\subsection{Summary}
\label{ssec:summary}
\

For classical Lie algebras $\fg$, the quantum groups $U^{\RLL}_q(\fg)$ first implicitly appeared in the work of 
Faddeev's school on the \emph{quantum inverse scattering method}, see e.g.~\cite{frt}. In this \emph{$\RLL$-realization}, 
the algebra generators are encoded by two square matrices $L^\pm$ subject to the famous \emph{$\RLL$-relations}
\begin{equation*}
  RL^\pm_1L^\pm_2=L^\pm_2L^\pm_1R \,, \qquad RL^+_1L^-_2=L^-_2L^+_1R
\end{equation*}
(along with central reduction), where $R$ is a solution of the \emph{Yang-Baxter equation} 
\begin{equation*}
  R_{12}R_{13}R_{23} = R_{23}R_{13}R_{12} \,.
\end{equation*}  
This is a natural analogue of the matrix realization of classical 
Lie algebras, and it manifestly exhibits the Hopf algebra structure, with the coproduct $\Delta$, antipode $S$, 
and counit $\epsilon$ given by
\begin{equation*}
  \Delta(L^\pm)=L^\pm\otimes L^\pm \,, \qquad   S(L^\pm)=(L^\pm)^{-1} \,, \qquad \epsilon(L^\pm)=\ID \,.
\end{equation*}
The uniform definition of quantum groups $U^{\DrJ}_q(\fg)$ for simple $\fg$ was provided 
by Drinfeld~\cite{d0} and Jimbo~\cite{j0}, and is usually referred to as \emph{Drinfeld--Jimbo-realization}. 
In this presentation, the generators $e_i,f_i,k^{\pm 1}_i=q^{\pm h_i}$ are labeled by simple roots $\alpha_i$ of $\fg$, 
while the Hopf algebra structure is given formally by the assignment on the generators. In $A$-type, the corresponding 
isomorphism 
\begin{equation}\label{eq:DF-iso}
  U^{\RLL}_q(\gl_n)\simeq U^{\DrJ}_q(\gl_n) \,, 
\end{equation}  
and subsequently its $\ssl_n$-version, were constructed in~\cite[\S2]{df} by considering the Gauss decomposition of $L^\pm$.

The theory of quantum supergroups associated with Lie superalgebras is still far from a full development. 
One of the major deficiencies lies in the absence of the uniform (new) Drinfeld realization of such algebras in affine types 
(except for types $A$ and $D(2,1,x)$), which is crucial for the study of their representation theory and further applications 
to mathematical physics. A novel feature of Lie superalgebras is that they admit several non-isomorphic Dynkin diagrams, 
which can be labeled by so-called parity sequences. 
The isomorphism of the Lie superalgebras corresponding to different Dynkin diagrams of the same finite/affine type was established 
in~\cite[Appendix]{lss}. Upgrading the latter to the isomorphism of quantum finite/affine superalgebras associated 
with different Dynkin diagrams is a highly non-trivial technical task that constitutes one of the major results of~\cite{y}. 
We also note that super setup features so-called \emph{higher order} Serre relations, which often cause major 
technical complications in generalizing classical results to the super setup.

This note is devoted to the generalization of the aforementioned isomorphism~\eqref{eq:DF-iso} to the context of classical 
($ABCD$-type) finite quantum supergroups. We want to emphasize right away that this is a Hopf superalgebra
isomorphism. To this end, it is fundamental that both algebras admit \emph{generalized double} realizations in the context of a pair 
of Hopf superalgebras endowed with skew pairing (we present full details in Appendix~\ref{sec:double-construction} as we could not find 
a uniform treatment of that 
construction in the super setup, besides a somewhat related paper~\cite{bgz}). As the pairing between Borel positive and negative 
subalgebras of Drinfeld--Jimbo quantum supergroups is known to be non-degenerate (which is actually the key result of~\cite{y0} and 
is the source of all higher-order Serre relations), once a surjective map $U^{\DrJ}_q(\fg) \to U^{\RLL}_q(\fg)$ is constructed, 
compatible with coproducts and pairing, it must be an isomorphism. This approach is different from the one used in~\cite{df} to 
prove~\eqref{eq:DF-iso} as well as the one utilized in~\cite{jlm1,jlm2} to establish $BCD$-type generalizations, where one rather 
had to appeal to faithful representations (which is more subtle in the super case) or to the universal $R$-matrix $\fR$ 
(which requires one to use $\hbar$-adic completion), respectively.

As another technical aspect of the super version of quantum groups, we note that there have been multiple sign-conventions in the 
literature. In particular, in pursuit to generalize the foundational work~\cite{df} to super-$A$ type (both finite and affine) 
works~\cite{fhs,cwwz,yz}  used slightly different definitions of the corresponding $\RLL$-realizations. We show that different 
sign-conventions can be related through the more general construction of $2$-cocycle twists. This gives rise to the twisted and 
untwisted versions, see details in Section~\ref{ssec:sign-twists}. Though the untwisted algebra is the correct object to work with, 
it is often more convenient to use the twisted one (in particular, when studying the Gauss decomposition) in order to simplify 
numerous signs.

We note that evaluation of the $R$-matrices used in the $\RLL$-realization was carried out in our earlier paper~\cite{ht1} using 
the combinatorial approach of~\cite{chw} to the construction of PBW bases of $U^{\DrJ}_q(\fg)$. This manifestly featured a  
factorization over the reduced system of positive roots of its unipotent part $R_u$ into 
``local $q$-exponents'' akin to the well-known factorization of~\cite{kr}. In the present note, we also establish a similar 
factorization of the reduced $R$-matrix into ``local $q$-exponents'' working entirely 
within the $\RLL$-realization, generalizing the super-$A$ type of~\cite{hz} and classical $BCD$-types of~\cite[Appendix~B]{mt}.


\subsection{Outline}
\label{ssec:outline}
\

The structure of the present paper is the following:

\smallskip
\noindent
$\bullet$
In Section~\ref{sec:gl-DJ}, we recall the basic conventions on superalgebras and evoke the notion of Drinfeld--Jimbo  
general linear quantum supergroups, see Definitions~\ref{def:quantum-gl-lattice} and~\ref{def:quantum-gl-standard}. 
We emphasize the key property of them being Drinfeld doubles in Proposition~\ref{prop:DJ-pairing_finite} and 
introduce their twisted version $\uqgl^{\CF}$ in Subsection~\ref{ssec:drinfeld-twist}.

\smallskip
\noindent
$\bullet$
In Section~\ref{sec:RTT_finite}, we recall the $\RLL$-realization of the general linear quantum supergroups, see 
Definition~\ref{def:rll-algebra}, with the $R$-matrix evoked in~\eqref{eq:gl-evaluated-R}. The key property of them being 
generalized doubles is established in Proposition~\ref{prop:RTT-pairing_finite}. In Subsection~\ref{ssec:sign-twists}, we 
discuss a closely related algebra $\URtw$ which is often more convenient to work with in order to track various Koszul signs, 
and establish its relation to $U(R)$ via the 2-cocycle twist in Proposition~\ref{prop:gl-twisted-rtt} (this addresses 
a common discrepancy in various definitions of $\RLL$-superalgebras in the existing literature). One of the key results 
is the identification of Drinfeld--Jimbo and $\RLL$ realizations of the general linear quantum supergroups, established in 
Theorem~\ref{thm:finite DrJ to RTT}. While the inverse map (up to a diagonal automorphism) is constructed in 
Subsection~\ref{ssec:inverse morphism}, we wish to emphasize that the map $\xi$ of Theorem~\ref{thm:finite DrJ to RTT} is 
compatible with Drinfeld double structures and is thus easily argued to be an isomorphism. 
In Subsection~\ref{ssec:factorization gl}, we factorize the reduced part of the canonical tensor 
of the double $U(R)$, realized as a Drinfeld double, into ``local $q$-exponents''.

\smallskip
\noindent
$\bullet$
In Section~\ref{sec:osp-DJ}, we recall the basic conventions about orthosymplectic Lie superalgebra $\fosp(V)$ and its extended 
version $\fgosp(V)$. We then evoke the notion of Drinfeld--Jimbo-type (extended) orthosymplectic quantum supergroups, see 
Definitions~\ref{def:quantum-osp-lattice} and~\ref{def:quantum-osp-standard}. The key property of $\uqVe$ being a Drinfeld double 
and the corresponding Drinfeld twist $\uqVe^{\CF}$ are discussed in Propositions~\ref{prop:osp-DJ-pairing_finite} 
and~\ref{prop:osp-drinfeld-twist-iso}.

\smallskip
\noindent
$\bullet$
In Section~\ref{sec:RTT_osp}, we study the $\RLL$-realization of extended orthosymplectic quantum supergroups, with the corresponding 
$R$-matrix~\eqref{eq:osp-evaluated-R} evaluated in~\cite{ht1}. Similar to Section~\ref{sec:RTT_finite}, we establish their generalized  
double realization and relate them to the twisted algebra $\URtw$ in Propositions~\ref{prop:osp-RTT-pairing_finite} 
and~\ref{prop:osp-twisted-rtt}. The key (twisted) relations among the entries in the Gauss decomposition of the matrices $L^\pm$ are 
summarized 
in tables of Subsection~\ref{ssec:osp-Gauss}. One of the key results is the identification of Drinfeld--Jimbo and $\RLL$ realizations 
of the extended orthosymplectic quantum supergroups, established in Theorem~\ref{thm:osp DrJ to RTT}. Again, while the inverse map 
(up to a diagonal automorphism) is 
constructed in Subsection~\ref{ssec:inverse morphism osp}, we emphasize that the map $\xi$ of Theorem~\ref{thm:osp DrJ to RTT} 
is compatible with Drinfeld double structures and is thus easily argued to be an isomorphism. 
In Subsection~\ref{ssec:factorization gosp}, we factorize the reduced part of the canonical tensor 
of the double $U(R)$, realized as a Drinfeld double, into ``local $q$-exponents'', generalizing~\cite{hz,mt}.

\smallskip
\noindent
$\bullet$
In Appendix~\ref{sec:double-construction}, we discuss in details the super versions of the usual Drinfeld and generalized doubles.


\subsection{Acknowledgement}\label{ssec:acknowl}
\

K.~H.\ is grateful to A.~Uruno for discussions on the orthosymplectic Lie superalgebras. A.~T.\ is indebted to A.~Negu\c{t} 
for enlightening discussions over the years, specifically, for emphasizing importance of skew pairings and doubles. 
We are also grateful to H.~Zhang for the correspondence on~\cite{hz}.

Both authors were partially supported by the NSF Grant DMS-$2302661$.


\smallskip

\section{General linear Lie superalgebras and quantum supergroups}\label{sec:gl-DJ}


\subsection{Superalgebras and Lie superalgebras}\label{ssec:superspaces}
\

A vector space $V$ over a base field $\BK$ is called a \emph{superspace} if it is $\BZ_{2}$-graded, i.e.\ 
$V = V_{\bbar{0}} \oplus V_{\bbar{1}}$, where $V_{\bbar{i}}$ denotes the component of degree $\bbar{i} \in \BZ_{2}$. 
For a homogeneous element $v \in V$, its $\BZ_{2}$-degree is denoted by $|v| \in \BZ_{2}$. Elements of 
degree $\bbar{0}$ (resp.\ $\bbar{1}$) are called \emph{even} (resp.\ \emph{odd}). If the underlying vector space 
of $V$ is finite-dimensional, the pair $(\dim V_{\bbar{0}}, \dim V_{\bbar{1}})$ is called the \emph{dimension} of 
the superspace $V$.

The category $\sVect_{\BK}$ of all superspaces over $\BK$, together with all degree-preserving linear maps, 
forms a symmetric tensor category. The tensor product is the standard tensor product of vector spaces equipped 
with the $\BZ_{2}$-grading:
\begin{equation*}
  (V \otimes W)_{\bbar{k}} = \bigoplus_{\bbar{i}+\bbar{j} = \bbar{k}} V_{\bbar{i}} \otimes W_{\bbar{j}} \,,
\end{equation*}
and the tensor unit is the base field $\BK$, viewed as a superspace concentrated in degree $\bbar{0}$. 
The braiding is given by the graded permutation:
\begin{equation*}\label{eq:graded perm}
  \tau_{VW} \colon V \otimes W \to W \otimes V\eqncom \qquad v \otimes w \mapsto (-1)^{|v||w|} w \otimes v \,.
\end{equation*}
Here, we adopt the convention $(-1)^{\bbar{0}} = 1$ and $(-1)^{\bbar{1}} = -1$. Unless stated otherwise, $\BK$ as an 
object of $\sVect_{\BK}$ will always denote this tensor unit. We frequently omit the subscripts on the tensor product 
$\otimes$ and the braiding $\tau$ whenever the underlying base field or vector spaces are clear from the context.

\begin{Rem}
Since the braiding $\tau$ is symmetric (that is, $\tau_{W V} \circ \tau_{V W} = \Id_{V \otimes W}$), composite morphisms 
built from consecutive braidings depend only on the underlying permutation. Thus, for each permutation $\sigma$ in the 
symmetric group $S_{r}$, there is a uniquely defined natural isomorphism with components
\begin{equation}\label{rem:tensor braid action}
  \tau^{\sigma}_{V_{1} \ldots V_{r}} \colon 
  V_{1} \otimes \cdots \otimes V_{r} \iso V_{\sigma^{-1}(1)} \otimes \cdots \otimes V_{\sigma^{-1}(r)} \,.
\end{equation}
Again, the subscripts will frequently be omitted when the underlying superspaces are clear from the context.
\end{Rem}

\begin{Rem}\label{rem:morphism-degree-preserving}
We note that the forgetful functor from $\sVect_{\BK}$ to $\Vect_{\BK}$ is strictly monoidal (but not braided monoidal) 
and faithful. However, it is not full, as an arbitrary linear map between superspaces need not be degree-preserving. 
In practice, we often regard superspaces as objects in $\Vect_{\BK}$ to consider such arbitrary linear maps. To prevent 
ambiguity, we reserve the term \emph{morphism} for degree-preserving linear maps (i.e.\ the morphisms in 
$\sVect_{\BK}$), and use the term \emph{linear map} or \emph{map} for arbitrary linear transformations in $\Vect_{\BK}$.
\end{Rem}

Since $\sVect_{\BK}$ is monoidal, one can define \emph{superalgebras}, which are monoid objects of $\sVect_{\BK}$. 
Moreover, since $\sVect_{\BK}$ is braided, one can define the tensor product of superalgebras $A,B$ with the 
multiplication given by 
\begin{equation}\label{eq:superalgebras tensoring}
  \mu_{A \otimes B} \colon (A \otimes B) \otimes (A \otimes B) 
  \xrightarrow{\Id \otimes \tau_{BA} \otimes \Id} (A \otimes A) \otimes (B \otimes B)
  \xrightarrow{\mu_{A} \otimes \mu_{B}} A \otimes B \,,
\end{equation}
in which $\mu_{A}, \mu_{B}$ denote the multiplication of $A,B$ respectively. Concretely, for any homogeneous $a,a' \in A$ 
and $b,b' \in B$, the multiplication can be written as follows:
\begin{equation*}
  (a \otimes b)(a' \otimes b') = (-1)^{|b||a'|} (aa') \otimes (bb') \,.
\end{equation*}
We note that if both $A$ and $B$ are superalgebras, then $\tau \colon A \otimes B \to B \otimes A$ is 
a superalgebra homomorphism.

Finally, the symmetric braiding allows one to define Lie algebra objects in $\sVect_{\BK}$, called \emph{Lie superalgebras}. 
Explicitly, a Lie superalgebra is a superspace $\fg$ together with a morphism (hence degree-preserving) 
\begin{equation*}
  [-,-] \colon \fg \otimes \fg \to \fg \,,
\end{equation*}
called a \emph{Lie superbracket}, which will be frequently identified with the corresponding bilinear map 
$\fg \times \fg \to \fg$, satisfying the graded skew-symmetry:
\begin{equation*}
  [x,y] = -[\tau(x \otimes y)] = -(-1)^{|x||y|} [y,x] \qquad \text{for any homogeneous } x,y \in \fg \,,
\end{equation*}
and the graded Jacobi identity (equivalent to saying that the superbracket gives an adjoint action of $\fg$):
\begin{equation*}
  [x,[y,z]] = [[x,y],z] + (-1)^{|x||y|} [y,[x,z]] \qquad \text{for any homogeneous } x,y,z \in \fg \,.
\end{equation*}
We note that any superalgebra $A$ admits a canonical Lie superalgebra structure given by the \emph{supercommutator}:
\begin{equation}\label{eq:super commutator}
  [a,a'] \coloneqq aa' - (-1)^{|a||a'|} a'a \qquad \text{for any homogeneous } a,a' \in A \,.
\end{equation}
This construction provides a forgetful functor from the category of superalgebras to the category of Lie superalgebras, 
which admits a left adjoint known as the \emph{universal enveloping superalgebra} functor. To be specific, for any Lie 
superalgebra $\fg$, its universal enveloping superalgebra $U(\fg)$ is defined as the quotient of the tensor superalgebra 
$T(\fg) \coloneqq \bigoplus_{k \geq 0} \fg^{\otimes k}$ by the two-sided ideal generated by
$\{x \otimes y - \tau(x \otimes y) - [x,y] \mid x,y \in \fg\}$.

For a superspace $V$ together with a choice of an ordered homogeneous basis $\{v_{1}, v_{2}, \ldots, v_{N}\}$,
we define the \emph{parity} of an index $i \in \mathbb{I} \coloneqq \{1, 2, \ldots, N\}$ to be the $\BZ_{2}$-degree 
of the corresponding basis vector $v_{i}$:
\begin{equation*}
  \ol{i} \coloneqq |v_{i}| = 
  \begin{cases}
    \bbar{0} & \text{if } v_{i} \text{ is even} \,, \\
    \bbar{1} & \text{if } v_{i} \text{ is odd} \,.
  \end{cases}
\end{equation*}
The grading of this ordered basis is recorded in the \emph{parity sequence} $\gamma_{V}$, defined as the $N$-tuple:
\begin{equation*}
  \gamma_{V} \coloneqq (|v_{1}|, \ldots, |v_{N}|) = (\ol{1}, \ldots, \ol{N}) \in \{\bbar{0}, \bbar{1}\}^{N} \,.
\end{equation*}


\subsection{General linear Lie superalgebras}\label{ssec:super A}
\

For any pair of objects $V,W \in \sVect_{\BK}$, there exists an \emph{internal hom} object, whose underlying 
vector space consists of all $\BK$-linear maps 
$\Hom_{\BK}(V,W) = \bigoplus_{\bbar{i},\bbar{j} \in \BZ_{2}} \Hom_{\BK}(V_{\bbar{i}},W_{\bbar{j}})$,
equipped with the $\BZ_{2}$-grading:
\begin{equation*}
  \Hom_{\BK}(V,W)_{\bbar{k}} = \bigoplus_{\bbar{j} - \bbar{i} = \bbar{k}} \Hom_{\BK}(V_{\bbar{i}},W_{\bbar{j}}) \,.
\end{equation*}
In other words, $\sVect_{\BK}$ is a closed monoidal category.

Thus, for a fixed superspace $V$, the set of all $\BK$-linear maps $\End_{\BK}(V) \coloneqq \Hom_{\BK}(V,V)$ forms 
a superalgebra, where the multiplication is given by the composition of linear maps. Moreover, by~\eqref{eq:super commutator}, 
this superalgebra admits a canonical Lie superalgebra structure, called the \emph{general linear Lie superalgebra}. We denote
this Lie superalgebra by $\gl(V)$ to distinguish the Lie structure from the associative structure of $\End_{\BK}(V)$.

We also introduce a closely related object, the \emph{special linear Lie superalgebra}. To this end, for a finite-dimensional 
superspace $V$, recall the definition of the \emph{supertrace} morphism 
\begin{equation*}
  \mathrm{sTr} \colon \gl(V) \simeq V \otimes V^{*} \xrightarrow{\tau} V^{*} \otimes V \xrightarrow{\textrm{ev}} \BC\,,
\end{equation*}
where we use the canonical identification $\gl(V) \simeq V \otimes V^*$ and $\textrm{ev}$ is the (left) evaluation morphism. 
By construction, one can verify that the supertrace satisfies
\begin{equation*}
  \mathrm{sTr}(XY) = (-1)^{|X||Y|} \mathrm{sTr}(YX) \qquad \text{for any homogeneous } X,Y \in \End_{\BK}(V) \,,
\end{equation*}
which immediately implies $\mathrm{sTr}([X,Y]) = 0$ for all $X,Y \in \gl(V)$. Therefore, the kernel of the supertrace
morphism forms an ideal of $\gl(V)$, called the \emph{special linear Lie superalgebra} and denoted by $\ssl(V)$.


\subsection{Chevalley--Serre-type presentation}\label{ssec:gl-Chevalley-Serre}
\

Henceforth, we fix a superspace $V$ of dimension $(m,n)$ over the base field $\BK = \BC$. We also make 
a choice of an ordered homogeneous basis $\{v_{1}, v_{2}, \ldots, v_{N}\}$ of $V$, where $N \coloneqq m+n$, which allows 
the identification of $\gl(V)$ with the space of all $N \times N$ matrices. Let $E_{ij}$ be the elementary matrix with 
$1$ at the $(i,j)$-entry and $0$ elsewhere. The set $\{E_{ij}\}_{i,j=1}^{N}$ forms a homogeneous basis of $\gl(V)$ with 
$\BZ_{2}$-grading $|E_{ij}| = \ol{i} + \ol{j}$ and the Lie superbracket given by:
\begin{equation}\label{eq:gl-bracket}
  [E_{ij}, E_{kl}] = \delta_{jk} E_{il} - (-1)^{(\ol{i} + \ol{j})(\ol{k} + \ol{l})} \delta_{li} E_{kj} \,.
\end{equation}
We also note that $\mathrm{sTr}(E_{ij}) = \delta_{ij} (-1)^{\ol{i}}$, and consequently, the following forms 
a homogeneous basis of $\ssl(V)$:
\begin{equation*}
  \big\{ E_{ij} \,\big|\, 1 \leq i \neq j \leq N \big\} \cup 
  \big\{ (-1)^{\ol{i}} E_{ii} - (-1)^{\ol{i+1}} E_{i+1,i+1} \,\big|\, 1 \leq i < N \big\} \,.
\end{equation*}

We further choose the Borel subalgebra of $\gl(V)$ consisting of all upper triangular matrices, giving rise to the 
Borel subalgebra of $\ssl(V)$. The corresponding Cartan subalgebra $\fh_{\gl}$ of $\gl(V)$ consists of all diagonal 
matrices, with a basis $\{E_{ii}\}_{i=1}^N$, while the Cartan subalgebra $\fh_{\ssl}$ of $\ssl(V)$ is a codimension $1$ 
subspace in~$\fh_{\gl}$:
\begin{equation*}
  \fh_{\ssl} = \big\{ X \in \fh_{\gl} \,\big|\, \mathrm{sTr}(X) = 0 \big\} \,.
\end{equation*}
We define the linear functionals $\{\varepsilon_{i}\}_{i=1}^N$ in $\fh_{\gl}^*$ to be the dual basis to $\{E_{ii}\}_{i=1}^N$,
i.e.\ $\varepsilon_i(E_{jj}) = \delta_{ij}$ for all $i,j$.

A direct computation shows that $[E_{ii}, E_{ab}] = (\varepsilon_a - \varepsilon_b)(E_{ii}) \cdot E_{ab}$, so that 
$E_{ab}$ is a root vector corresponding to the root $\varepsilon_a - \varepsilon_b$ (for $a \neq b$). Hence, we obtain 
the \emph{root space decomposition} $\gl(V) = \fh_{\gl} \oplus \bigoplus_{\alpha \in \Phi} \gl(V)_{\alpha}$ with the root 
system given by
\begin{equation}\label{eq:gl-root-sys}
  \Phi = \big\{ \varepsilon_a - \varepsilon_b \,\big|\, 1 \leq a, b \leq N,\ a \neq b \big\} \,,
\end{equation}
and the corresponding polarization:
\begin{equation}\label{eq:gl-polarization}
  \Phi^{+} = \big\{ \varepsilon_a - \varepsilon_b \,\big|\, a < b \big\} \,, \qquad 
  \Phi^{-} = \big\{ \varepsilon_a - \varepsilon_b \,\big|\, a > b \big\} \,.
\end{equation}
The root system has a decomposition $\Phi = \Phi_{\bbar{0}} \cup \Phi_{\bbar{1}}$ into \emph{even} and \emph{odd} parts, 
where
\begin{equation*}
  \Phi_{\bbar{0}} = \big\{ \varepsilon_a - \varepsilon_b \in \Phi \,\big|\, \ol{a} = \ol{b} \big\} \,,\qquad
  \Phi_{\bbar{1}} = \big\{ \varepsilon_a - \varepsilon_b \in \Phi \,\big|\, \ol{a} \neq \ol{b} \big\} \,.
\end{equation*}
We note that $\ssl(V)$ shares the same root system $\Phi$ and admits the corresponding root space decomposition 
\begin{equation*}
  \ssl(V) = \fh_{\ssl} \oplus \bigoplus_{\alpha \in \Phi} \ssl(V)_{\alpha} \qquad \mathrm{with} \quad 
  \ssl(V)_{\alpha} = \gl(V)_{\alpha} \quad \forall\, \alpha \in \Phi \,.
\end{equation*}

Following the polarization~\eqref{eq:gl-polarization} of the root system~\eqref{eq:gl-root-sys}, the simple roots and 
the corresponding root vectors can be written as follows (with $1 \leq i < N$):
\begin{equation}\label{eq:gl-Lie-action}
  \alpha_{i} = \varepsilon_{i} - \varepsilon_{i+1} \,, \qquad 
  \sse_{i} = E_{i,i+1} \,, \qquad \ssf_{i} = (-1)^{\ol{i}} E_{i+1,i} \,.
\end{equation}
We also define the following elements of the Cartan subalgebra $\fh_{\gl}$:
\begin{equation}\label{eq:gl-Lie-action-Cartan}
  \ssH_{k} = (-1)^{\ol{k}} E_{kk} \quad \text{for } 1 \leq k \leq N \,,\qquad 
  \ssh_{i} = \ssH_{i} - \ssH_{i+1} \quad \text{for } 1 \leq i < N \,.
\end{equation}
Note that the sets $\{\ssH_{k}\}_{k=1}^{N}$ and $\{\ssh_{i}\}_{i=1}^{N-1}$ form bases for $\fh_{\gl}$ and $\fh_{\ssl}$, 
respectively. Furthermore, the lattices $P^{\vee} \coloneqq \bigoplus_{k=1}^{N} \BZ \ssH_{k}$ and 
$Q^{\vee} \coloneqq \bigoplus_{i=1}^{N-1} \BZ \ssh_{i}$ constitute the coweight and coroot lattices of $\gl(V)$, 
respectively.

Dually, we define the weight lattice $P \coloneqq \bigoplus_{k=1}^{N} \BZ\varepsilon_{k}$, 
the root lattice $Q \coloneqq \bigoplus_{i=1}^{N-1} \BZ\alpha_{i} \subseteq P$, 
and $Q^{+} \coloneqq \bigoplus_{i=1}^{N-1} \BZ_{\geq 0}\alpha_{i}$, together with the following 
partial order on $P$: $\mu \leq \nu \iff \nu - \mu \in Q^{+}$.
Then \eqref{eq:gl-bracket} implies that $\gl(V)$ is graded by $Q$ (and thus also by $P$) via:
\begin{equation*}
  \deg(E_{ij}) = \varepsilon_{i} - \varepsilon_{j} \,,
\end{equation*}
which restricts to a $Q$-grading on $\ssl(V)$, and is compatible with the $\BZ_{2}$-grading via the group homomorphism
\begin{equation}\label{eq:gl-grading-compatible}
  P \to \BZ_{2}\,, \qquad \varepsilon_{i} \mapsto \ol{i} \,.
\end{equation}
In particular, we have
\begin{equation}\label{eq:gl-gen-Q-grading}
  \deg(\sse_i) = \alpha_i \,,\qquad \deg(\ssf_i) = -\alpha_i \,,\qquad \deg(\ssh_i) = 0 \,,\qquad \deg(\ssH_k) = 0 \,.
\end{equation}

We consider the non-degenerate invariant \emph{supertrace} bilinear form $(\cdot,\cdot) \colon \gl(V) \times \gl(V) \to \BC$ 
defined by
\begin{equation*}
  (X,Y) = \mathrm{sTr}(XY) \,,\qquad X,Y\in \gl(V) \,.
\end{equation*}
Its restriction to the Cartan subalgebra $\fh_{\gl}$ is non-degenerate, thus giving rise to an identification 
$\fh_{\gl}^{*} \simeq \fh_{\gl}$ via $\varepsilon_{k} \leftrightarrow (-1)^{\ol{k}} E_{kk}$ and inducing a bilinear 
form $(\cdot, \cdot) \colon \fh_{\gl}^{*} \times \fh_{\gl}^{*} \to \BC$ such that 
\begin{equation}\label{eq:gl-epsilon-pairing}
  (\varepsilon_{k}, \varepsilon_{l}) = \delta_{kl} (-1)^{\ol{k}} \qquad \textrm{for any} \qquad 1 \leq k,l \leq N \,.
\end{equation}
We also recall that an odd root $\alpha \in \Phi_{\bbar{1}}$ is called \emph{isotropic} if $(\alpha,\alpha)=0$.

Define the \emph{symmetrized Cartan matrix} $(a_{ij})_{i,j=1}^{N-1}$ of $\gl(V)$ via $a_{ij} = (\alpha_{i},\alpha_{j})$. 
It is straightforward to verify that the elements $\{\sse_{i}, \ssf_{i}\}_{i=1}^{N-1} \cup \{\ssH_{k}\}_{k=1}^{N}$ 
satisfy the following relations:
\begin{equation}\label{eq:gl-chevalley-rel}
  [\ssH_{k}, \ssH_{l}] = 0 \,,\quad
  [\ssH_{k}, \sse_{j}] = (\varepsilon_{k},\alpha_{j}) \sse_{j} \,,\quad 
  [\ssH_{k}, \ssf_{j}] = -(\varepsilon_{k},\alpha_{j}) \ssf_{j} \,,\quad
  [\sse_{i}, \ssf_{j}] = \delta_{ij} \ssh_{i} \,,
\end{equation}
which imply the following Chevalley-type relations:
\begin{equation}\label{eq:sl-chevalley-rel}
  [\ssh_{i}, \ssh_{j}] = 0 \,,\qquad
  [\ssh_{i}, \sse_{j}] = a_{ij} \sse_{j} \,,\qquad 
  [\ssh_{i}, \ssf_{j}] = -a_{ij} \ssf_{j} \,.
\end{equation}
Furthermore, it is shown in~\cite{z} that $\ssl(V)$ is isomorphic to the Lie superalgebra generated by 
$\{\sse_i,\ssf_i,\ssh_i\}_{i=1}^{N-1}$, subject to the $\BZ_{2}$-grading induced from the $Q$-grading 
\eqref{eq:gl-gen-Q-grading}, the Chevalley-type relations~\eqref{eq:sl-chevalley-rel}, together with the 
\emph{standard} and \emph{higher order Serre relations}. As we shall not need the explicit form of the 
Serre relations, we refer the interested reader to~\cite[\S3.2.1]{z} for the exact formulas.
Similarly, $\gl(V)$ admits a generator-relation presentation: it is isomorphic to the Lie superalgebra generated 
by $\{\sse_{i}, \ssf_{i}\}_{i=1}^{N-1} \cup \{\ssH_{k}\}_{k=1}^{N}$, subject to the same grading and Serre relations 
(which depend only on $\sse_i, \ssf_i$), but satisfying the Chevalley-type relations~\eqref{eq:gl-chevalley-rel}.

\begin{Rem}\label{rem:lattice generator}
Instead of using Cartan generators $\ssh_{i}$ or $\ssH_{k}$, one can use an additive subgroup $\Gamma \subseteq \fh_{\gl}$ 
containing $\{\ssh_{i}\}_{i=1}^{N-1}$ to provide a uniform Chevalley-Serre type presentation for both $\gl(V)$ and $\ssl(V)$. 
Consider the Lie superalgebra $\fg(\Gamma)$ generated by $\{\sse_{i}, \ssf_{i}\}_{i=1}^{N-1}$ and $\Gamma$, subject to the 
Serre relations together with the following Chevalley-type relations:
\begin{equation*}
  [\ssH, \ssH'] = 0 \,,\quad
  [\ssH, \sse_{j}] = \alpha_{j}(\ssH) \sse_{j} \,,\quad 
  [\ssH, \ssf_{j}] = -\alpha_{j}(\ssH) \ssf_{j} \,,\quad
  [\sse_{i}, \ssf_{j}] = \delta_{ij} \ssh_{i}
  \quad \forall\, \ssH,\ssH' \in \Gamma\,, 1 \leq i,j < N \,.
\end{equation*}
It is clear that $\fg(P^{\vee}) = \gl(V)$ and $\fg(Q^{\vee}) = \ssl(V)$. While various choices of the lattice $\Gamma$ yield only 
these two distinct Lie superalgebras, the $\Gamma$-dependence will be more significant for quantum supergroups below.
\end{Rem}


\subsection{Drinfeld--Jimbo-type general linear quantum supergroups}\label{ssec:q-gl}
\

The \emph{Drinfeld--Jimbo-type general linear quantum supergroup} $\uqgl$ is a $\BC(q)$-superalgebra which is a natural 
quantization of the universal enveloping $U(\gl(V))$. To construct it, we first define a more general class of quantum 
supergroups $U_q(\Gamma)$, parameterized by an additive subgroup $\Gamma \subseteq P^{\vee}$ containing $Q^{\vee}$ 
(cf. Remark~\ref{rem:lattice generator}).

\begin{Rem}\label{rem:Cartan-notation-gl}
To avoid confusion, in the quantum supergroup setting, the elements $\ssH_k$ and $\ssh_i \in \fh_{\gl}$ will be denoted 
by $H_k$ and $h_i$ respectively to distinguish them from the classical Lie superalgebra setting. Moreover, following 
the physics literature, we use formal exponentials $q^{\pm H_k}$ and $q^{\pm h_i}$ instead of the more common $K_k^{\pm 1}$ 
and $k_i^{\pm 1}$.
\end{Rem}

\begin{Def}\label{def:quantum-gl-lattice}
For a fixed additive subgroup $\Gamma \subseteq P^{\vee}$ containing $Q^{\vee}$, the quantum supergroup 
$U_q(\Gamma)$ is the $\BC(q)$-superalgebra generated by $\{e_i, f_i\}_{i=1}^{N-1}$ and the formal symbols 
$\{q^{\pm H} \,\big|\, H \in \Gamma\}$. It is equipped with the $Q$-grading (cf.~\eqref{eq:gl-gen-Q-grading}): 
\begin{equation}\label{eq:uqGamma-Q-grading}
  \deg(e_i) = \alpha_i \,,\qquad \deg(f_i) = -\alpha_i \,,\qquad \deg(q^{\pm H}) = 0 \,,
\end{equation}
and the compatible $\BZ_{2}$-grading, see~\eqref{eq:gl-grading-compatible}. These generators are subject to the Serre 
relations~\eqref{eq:q-gl-serre-rel-standard-1}--\eqref{eq:q-gl-serre-rel-standard-3} introduced later in this subsection, 
together with the following Chevalley-type relations:
\begin{align}
  & q^{H}q^{H'} = q^{H + H'} \,, \qquad
    [q^{H}, q^{H'}] = 0 \,, \qquad
    q^0=1 \,,
  \label{eq:q-gamma-chevalley-rel-HH}\\ 
  & q^{H} e_j q^{-H} = q^{\alpha_j(H)} e_j \,, \quad 
    q^{H} f_j q^{-H} = q^{-\alpha_j(H)} f_j \,, 
  \label{eq:q-gamma-chevalley-rel-He} \\
  & [e_i, f_j] = \delta_{ij} \frac{q^{h_i} - q^{-h_i}}{q - q^{-1}} \,, 
  \label{eq:q-gamma-chevalley-rel-ef}
\end{align}
for all $H,H' \in \Gamma$ and $1 \leq i,j < N$, where $[-,-]$ denotes the supercommutator~\eqref{eq:super commutator}, 
and $\alpha_j(H)\in \BZ$ represents the evaluation of the root $\alpha_j$ on the Cartan element $H \in P^{\vee}$.
\end{Def}

To express the Serre relations, we introduce the \emph{$q$-supercommutator} $[\![ -,- ]\!]$ via 
\begin{equation}\label{eq:q-gl-superbracket}
  [\![ a,b ]\!] \coloneqq ab - (-1)^{|a||b|} q^{(\deg(a), \deg(b))} \cdot ba  
\end{equation}
for any $Q$-homogeneous elements $a,b \in U_q(\Gamma)$. The Serre relations, which complement the Chevalley-type 
relations above, are given as follows (cf.~\cite[Proposition 10.4.1]{y0}):
\begin{align}
  & [\![e_i,e_j]\!]=0 \,,\quad [\![f_i,f_j]\!]=0  \qquad \text{if \ } a_{ij}=0 \,, 
    \label{eq:q-gl-serre-rel-standard-1}\\ 
  & [\![e_i,[\![e_i,e_j]\!]]\!]=0 \,,\quad [\![f_i,[\![f_i,f_j]\!]]\!]=0 \
    \qquad \text{if \ } |i-j|=1 \text{ \ and \ } \alpha_i\in \Phi_{\bbar{0}} \,, 
    \label{eq:q-gl-serre-rel-standard-2}\\
  & [\![[\![[\![e_{i-1},e_{i}]\!],e_{i+1}]\!],e_i]\!] = 0 \,,\quad 
    [\![[\![[\![f_{i-1},f_{i}]\!],f_{i+1}]\!],f_i]\!] = 0
    \qquad\text{if \ } 2 \leq i \leq N-2 \text{ \ and \ } \alpha_{i}\in \Phi_{\bbar{1}} \,. 
    \label{eq:q-gl-serre-rel-standard-3}
\end{align}

\begin{Rem}
In particular, if $\alpha_i \in \Phi_{\bbar{1}}$, then \eqref{eq:q-gl-serre-rel-standard-1} immediately implies 
$e_i^2 = f_i^2 = 0$. We also note that the relation \eqref{eq:q-gl-serre-rel-standard-2} can be alternatively written 
as $[\![[\![e_{i-1},e_i]\!],e_i]\!] = 0$ and $[\![[\![f_{i-1},f_i]\!],f_i]\!] = 0$.
\end{Rem}

The quantum supergroup $U_q(\Gamma)$ is naturally endowed with a Hopf superalgebra structure, defined on the generators 
by the following comultiplication:
\begin{equation}\label{eq:gl-comult}
  \Delta(e_i) = q^{h_i} \otimes e_i + e_i \otimes 1 \,,\qquad
  \Delta(f_i) = 1 \otimes f_i + f_i \otimes q^{-h_i} \, ,\qquad
  \Delta(q^{H}) = q^{H} \otimes q^{H} \, ,
\end{equation}
and with the counit and antipode given explicitly by
\begin{equation}\label{eq:gl-counit}
\begin{split}
  & \epsilon(e_i) = 0 \,,\quad \epsilon(f_i) = 0 \,,\quad \epsilon(q^{H}) = 1 \,,\\
  & S(e_i) = -q^{-h_i} e_i \,,\quad S(f_i) = -f_i q^{h_i} \,,\quad S(q^{H}) = q^{-H} \,.
\end{split}
\end{equation}
The assignment~\eqref{eq:uqGamma-Q-grading} indeed gives rise to a $Q$-grading on $U_q(\Gamma)$, since all 
the defining relations are homogeneous. Furthermore, the explicit formulas~(\ref{eq:gl-comult},~\ref{eq:gl-counit}) 
show that $U_q(\Gamma)$ is actually a $Q$-graded Hopf superalgebra.

\begin{Def}\label{def:quantum-gl-standard}
(a) The quantum supergroup $U_q(P^{\vee})$ is called the 
\emph{Drinfeld--Jimbo-type general linear quantum supergroup} and is denoted by $\uqgl$. Equivalently, $\uqgl$ admits a
generator-relation presentation with only finitely many Cartan generators: it is the $\BC(q)$-superalgebra generated by 
$\{e_i, f_i\}_{i=1}^{N-1}$ and $\{q^{\pm H_k}\}_{k=1}^{N}$, equipped with the same $Q$-grading and $\BZ_2$-grading as 
in~\eqref{eq:uqGamma-Q-grading}. These generators are subject to the Serre 
relations~\eqref{eq:q-gl-serre-rel-standard-1}--\eqref{eq:q-gl-serre-rel-standard-3}, alongside the following explicit 
Chevalley-type relations:
\begin{align}
  & [q^{H_k}, q^{H_l}] = 0 \,, \qquad
    q^{\pm H_k} q^{\mp H_k} = 1 \,, 
  \label{eq:q-gl-chevalley-rel-HH}\\ 
  & q^{H_k} e_j q^{-H_k} = q^{(\varepsilon_k, \alpha_j)} e_j \,, \quad 
    q^{H_k} f_j q^{-H_k} = q^{-(\varepsilon_k, \alpha_j)} f_j \,, 
  \label{eq:q-gl-chevalley-rel-He} \\
  & [e_i, f_j] = \delta_{ij} \frac{q^{h_i} - q^{-h_i}}{q - q^{-1}} \,, 
  \label{eq:q-gl-chevalley-rel-ef}
\end{align}
for all $1 \leq k, l \leq N$ and $1 \leq i, j < N$, where $q^{h_i} \coloneqq q^{H_i} q^{-H_{i+1}}$, and the pairing 
in~\eqref{eq:q-gl-chevalley-rel-He} is given by~\eqref{eq:gl-epsilon-pairing}.

\smallskip
\noindent
(b) Analogously, the \emph{Drinfeld--Jimbo-type special linear quantum supergroup} is defined as 
$\uqsl \coloneqq U_q(Q^{\vee})$. Equivalently, it admits a finite generator-relation presentation as the 
$\BC(q)$-superalgebra generated by $\{e_i, f_i\}_{i=1}^{N-1}$ and the Cartan generators $\{q^{\pm h_i}\}_{i=1}^{N-1}$, 
subject to the same $Q$-grading and $\BZ_2$-grading as in~\eqref{eq:uqGamma-Q-grading}. These generators are subject to 
the Serre relations~\eqref{eq:q-gl-serre-rel-standard-1}--\eqref{eq:q-gl-serre-rel-standard-3} and the following 
explicit Chevalley-type relations:
\begin{equation*}
  [q^{h_i}, q^{h_j}] = 0 \,,\ \
  q^{\pm h_i} q^{\mp h_i} = 1 \,,\ \
  q^{h_i} e_j q^{-h_i} = q^{(\alpha_i, \alpha_j)} e_j \,,\ \
  q^{h_i} f_j q^{-h_i} = q^{-(\alpha_i, \alpha_j)} f_j \,,\ \
  [e_i, f_j] = \delta_{ij} \frac{q^{h_i} - q^{-h_i}}{q - q^{-1}} \,, 
\end{equation*}
for all $1 \leq i, j < N$. By definition, there is a canonical Hopf superalgebra morphism from $\uqsl$ to $\uqgl$. 
This morphism can be shown to be an embedding via the triangular decomposition~\eqref{eq:DJ-tri-decomp} discussed below.
\end{Def}

\begin{Rem}
We note that a slightly different definition of the $q$-supercommutator is used in~\cite{y0}:
\begin{align*}
  [\![ a,b ]\!]_{\mathrm{Y}} &\coloneqq ab - (-1)^{|a||b|} q^{-(\deg(a), \deg(b))} \cdot ba = 
  - (-1)^{|a||b|} q^{-(\deg(a), \deg(b))} \cdot [\![ b,a ]\!]\,.
\end{align*}
Nevertheless, the superalgebra $\uqgl_{\mathrm{Y}}$ defined via $[\![-,-]\!]_{\mathrm{Y}}$ in~\cite{y0} 
is isomorphic to $U_{q^{-1}}(\gl(V))$ as $\BC(q)$-superalgebras via the assignment 
$e_i \mapsto e_i$, $f_i \mapsto f_i$, $q^{\pm H_k} \mapsto q^{\mp H_k}$.
\end{Rem}

\begin{Rem}\label{rem:DrJ involution}
We note that the assignment 
\begin{equation*}
  \omega_{\DrJ}(q^{\pm H_k}) = q^{\mp H_k} \,,\quad
  \omega_{\DrJ}(e_i) = -(-1)^{\ol{i}\,\ol{i+1}} q^{(\varepsilon_i,\varepsilon_i)} f_i \,,\quad
  \omega_{\DrJ}(f_i) = -(-1)^{\ol{i}\,\ol{i+1}}(-1)^{\ol{i}+\ol{i+1}} q^{-(\varepsilon_i,\varepsilon_i)} e_i 
\end{equation*}
for $1 \leq k \leq N$, $1 \leq i < N$ gives rise to a Hopf superalgebra isomorphism 
$\omega_{\DrJ} \colon \uqgl \iso \uqgl^{\copp}$.
\end{Rem}


\subsection{Drinfeld double construction of \texorpdfstring{$\uqgl$}{\uqgl}}
\

We first introduce the \emph{triangular decomposition} of $\uqgl$. Let $U^>_{q}(\gl(V))$, $U^<_{q}(\gl(V))$, and 
$U^0_{q}(\gl(V))$ denote the subalgebras of $\uqgl$ generated by $\{e_i\}_{i=1}^{N-1}$, $\{f_i\}_{i=1}^{N-1}$, and 
$\{q^{\pm H_{k}}\}_{k=1}^{N}$, respectively. The triangular decomposition states that the multiplication morphism
\begin{equation}\label{eq:DJ-tri-decomp}
  U^<_{q}(\gl(V)) \otimes U^0_{q}(\gl(V)) \otimes U^>_{q}(\gl(V)) \iso \uqgl
\end{equation}
is an isomorphism of underlying superspaces. We further define the \emph{positive} and \emph{negative Borel subalgebras}, 
denoted $U^{\geq}_{q}(\gl(V))$ and $U^{\leq}_{q}(\gl(V))$, as the subalgebras generated by 
$\{e_i\}_{i=1}^{N-1} \cup \{q^{\pm H_{k}}\}_{k=1}^{N}$ and $\{f_i\}_{i=1}^{N-1} \cup \{q^{\pm H_{k}}\}_{k=1}^{N}$, 
respectively. Note that these are not just subalgebras but are also Hopf subalgebras of $\uqgl$.

\begin{Rem}\label{rem:triangular-decomp-morphisms}
The triangular decomposition~\eqref{eq:DJ-tri-decomp} implies that the multiplication morphisms
\begin{equation*}
  U^0_{q}(\gl(V)) \otimes U^>_{q}(\gl(V)) \iso U^{\geq}_{q}(\gl(V)) \,,\qquad
  U^<_{q}(\gl(V)) \otimes U^0_{q}(\gl(V)) \iso U^{\leq}_{q}(\gl(V))
\end{equation*}
are isomorphisms of underlying superspaces. Invoking the automorphism $\omega_{\DrJ}$ of Remark~\ref{rem:DrJ involution}, 
the following multiplication morphisms are also isomorphisms:
\begin{equation*}
  U^>_{q}(\gl(V)) \otimes U^0_{q}(\gl(V)) \iso U^{\geq}_{q}(\gl(V)) \,,\qquad
  U^0_{q}(\gl(V)) \otimes U^<_{q}(\gl(V)) \iso U^{\leq}_{q}(\gl(V)) \,.
\end{equation*}
\end{Rem}

A fundamental structural property of quantum (super)groups is their realization via the Drinfeld double construction 
(cf.\ Subsection \ref{ssec:generalized-doubles}). Recall that the \emph{opposite comultiplication} is defined by 
$\Delta^{\opp}(x) \coloneqq \tau(\Delta(x))$.

\begin{Prop}\label{prop:DJ-pairing_finite} 
(a) There exists a unique skew-pairing (cf.~\eqref{eq:skew pairing structural property})
\begin{equation}\label{eq:gl-skew-pairing}
  (\cdot,\cdot)_{\DrJ}\colon U^\leq_{q}(\gl(V)) \times U^\geq_{q}(\gl(V)) \to \BC(q)
\end{equation}
satisfying the following properties on the generators for $1 \leq i,j < N$ and $1 \leq k,l \leq N$:
\begin{equation}\label{eq:generators-pairing}
\begin{aligned}
  (f_i, q^{\pm H_k})_{\DrJ} = 0 \,,\quad (q^{\pm H_k}, e_i)_{\DrJ} = 0 \,,\quad  
  (f_i, e_j)_{\DrJ} = \frac{\delta_{ij} (-1)^{|f_i||e_j|}}{q^{-1} - q} \,,\quad 
  (q^{H_k}, q^{H_l})_{\DrJ} = q^{-(\varepsilon_k, \varepsilon_l)} \,.
\end{aligned}
\end{equation}
Moreover, this pairing is non-degenerate.

\smallskip
\noindent
(b) $\uqgl$ is isomorphic to the quotient of the Drinfeld double $\CD_{\DrJ} \coloneqq \CD(U^\leq_q(\gl(V)),U^\geq_q(\gl(V)))$ 
obtained by identifying the Cartan subalgebras $U^0_q(\gl(V))$ from both factors.
\end{Prop}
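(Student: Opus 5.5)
We construct the pairing on free algebras first and then descend it to the quotients. Let $\widetilde U^{\geq}$ be the Hopf superalgebra generated by $\{e_i\}_{i=1}^{N-1}\cup\{q^{\pm H_k}\}_{k=1}^N$ subject only to the Cartan relations \eqref{eq:q-gl-chevalley-rel-HH}--\eqref{eq:q-gl-chevalley-rel-He}, with coproduct \eqref{eq:gl-comult}. Define $\widetilde U^{\leq}$ analogously with the $f_i$. Both are $Q$-graded Hopf superalgebras, and as superspaces they are free algebras in the $e_i$ (resp.\ $f_i$) tensored with the Laurent polynomial ring $U^0$.

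On $\widetilde U^{\leq}\times\widetilde U^{\geq}$ we define the skew-pairing by the usual recursion. On generators we impose the values \eqref{eq:generators-pairing}. We then extend using the structural identities of a skew pairing (in super form, from the Appendix): $(y,xx')=(\Delta(y),x\otimes x')$ and $(yy',x)=(y\otimes y',\Delta^{\opp}(x))$, each with the appropriate Koszul sign. Since $\widetilde U^{\geq}\simeq U^0\otimes \BC(q)\langle e_i\rangle$ is free modulo the commutation rule for $q^H$ past $e_j$, the recursion defines a unique bilinear form. One then checks the following three things.
\begin{itemize}
\item[(i)] The two recursions are compatible.
\item[(ii)] The form respects the Cartan relations, i.e.\ $(y, q^He_jq^{-H}) = q^{\alpha_j(H)}(y,e_j)$ and similarly on the other side. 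This reduces to $(q^{H_k},q^{H_l})_{\DrJ}=q^{-(\varepsilon_k,\varepsilon_l)}$ being a bicharacter compatible with $\alpha_j=\varepsilon_j-\varepsilon_{j+1}$.
\item[(iii)] The pairing is graded, so that $(\widetilde U^{\leq}_{-\mu},\widetilde U^{\geq}_{\nu})=0$ unless $\mu=\nu$.
\end{itemize}
Uniqueness is immediate, since the generators generate and the skew-pairing axioms determine all values.

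Next we show that the pairing descends to $U^{\leq}_q\times U^{\geq}_q$. The left and right radicals of the pairing on $\widetilde U^{\leq}\times\widetilde U^{\geq}$ are Hopf ideals. We therefore have to show that every Serre element \eqref{eq:q-gl-serre-rel-standard-1}--\eqref{eq:q-gl-serre-rel-standard-3} lies in the radical. The plan is to prove that each Serre element $S$ is skew-primitive in the appropriate sense, modulo lower ones, of the form $\Delta(S)=q^{\deg S}\otimes S + S\otimes 1$. Its pairing with any product of two elements of positive degree then vanishes. Since $S$ has degree at least $2$, it can only pair with monomials of total degree $\deg S$, which are such products. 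Hence $S$ lies in the radical.

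For the quadratic and cubic relations this is a direct $q$-binomial computation. For the quartic odd relation \eqref{eq:q-gl-serre-rel-standard-3}, I would compute $\Delta$ of $[\![[\![[\![e_{i-1},e_i]\!],e_{i+1}]\!],e_i]\!]$ using the twisted-derivation formalism. Concretely, I would introduce the operators $r_j$ with $\Delta(x)=\sum q^{\ldots}\otimes r_j(x)e_j+\cdots$ and verify $r_j(S)=0$ for all $j$, using $e_i^2=0$ in the quotient by lower relations. I expect this to be the main computational obstacle, together with the sign bookkeeping in the super skew-pairing axioms.

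For non-degeneracy, the pairing restricted to the Cartan parts is non-degenerate because $q^{-(\varepsilon_k,\varepsilon_l)}$ defines a non-degenerate bicharacter on $P^\vee$. By Remark~\ref{rem:triangular-decomp-morphisms} and gradedness, the pairing factors as the Cartan pairing times the pairing $U^<_{-\nu}\times U^>_{\nu}\to\BC(q)$ on each weight space. Non-degeneracy of the latter is precisely the statement that the Serre relations generate the radical, which is the main theorem of~\cite{y0}. We invoke it, adjusting conventions by $q\mapsto q^{-1}$ as in the Remark following Definition~\ref{def:quantum-gl-standard}.

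For part (b), the Drinfeld double $\CD_{\DrJ}$ is $U^{\leq}\otimes U^{\geq}$ with cross relations $\sum (y_{(1)},x_{(2)})\,x_{(1)}y_{(2)}=\sum y_{(1)}x_{(1)}(y_{(2)},x_{(2)})$, with super signs. Computing these for the generator pairs gives three types of relations:
\begin{itemize}
\item[(1)] For $(q^{H_k},q^{H_l})$, commutativity.
\item[(2)] For $(f_i\text{ or }q^H,\, e_j\text{ or }q^{H'})$, the conjugation relations \eqref{eq:q-gl-chevalley-rel-He}.
\item[(3)] For $(f_i,e_j)$, the relation $[e_i,f_j]=\delta_{ij}\,(q^{h_i}\otimes 1 - 1\otimes q^{-h_i})/(q-q^{-1})$, thanks to the normalization $1/(q^{-1}-q)$.
\end{itemize}
Identifying the two copies of $U^0$ (the quotient by the central Hopf ideal generated by $q^H\otimes1-1\otimes q^H$) turns (3) into \eqref{eq:q-gl-chevalley-rel-ef}. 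This gives a surjective Hopf map $\uqgl\to\CD_{\DrJ}/\!\sim$. The map in the other direction is the multiplication map, and the two are mutually inverse by the triangular decomposition \eqref{eq:DJ-tri-decomp}, which matches $U^<\otimes U^0\otimes U^>$ with the double's PBW-type factorization.
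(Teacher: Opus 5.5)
Your strategy is sound, but the conventions you wrote down are reversed, and as stated two of your checks would fail. For $A=U^{\leq}_q(\gl(V))$ and $B=U^{\geq}_q(\gl(V))$, the Appendix's axioms \eqref{eq:skew pairing structural property} read $(yy',x)=(y\otimes y',\Delta(x))$ and $(y,xx')=(\Delta^{\opp}(y),x\otimes x')$. You put $\Delta$ and $\Delta^{\opp}$ on the opposite sides.

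With your version, check (ii) breaks. You get $(f_j,q^{H}e_j)=(f_j,e_j)$ but $(f_j,e_jq^{H})=(f_j,e_j)(q^{-h_j},q^H)=q^{(\alpha_j,H)}(f_j,e_j)$. The relation $q^He_j=q^{\alpha_j(H)}e_jq^H$ would then force $q^{2\alpha_j(H)}=1$. Up to Koszul signs, your axioms are those for the opposite order, i.e.\ for the transposed inverse $(-,-)_{\wt{\DrJ}}$ of Remark~\ref{rem:DrJ pairing order}, whose Cartan values are $q^{+(\varepsilon_k,\varepsilon_l)}$.

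Similarly, your cross relation $\sum (y_{(1)},x_{(2)})\,x_{(1)}y_{(2)}=\sum y_{(1)}x_{(1)}\,(y_{(2)},x_{(2)})$, applied to $(f_i,e_i)$, produces $q^{h^e_i}-q^{h^e_i}q^{-h^f_i}$ on the right. After identifying Cartan parts this is $q^{h_i}-1$, not $q^{h_i}-q^{-h_i}$. The relation that actually defines $\CD_{\DrJ}$ is \eqref{eq:gl-DJ-cross-rel}, $\sum\pm(y_1,x_1)\,y_2x_2=\sum\pm x_1y_1\,(y_2,x_2)$. It gives $e_if_j-(-1)^{|e_i||f_j|}f_je_i=\delta_{ij}(q^{h^e_i}-q^{-h^f_i})/(q-q^{-1})$, where $q^{h^e_i}=1\otimes q^{h_i}$ in $U^{\leq}\otimes U^{\geq}$, so your tensor legs in (3) are also swapped. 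With these corrections your argument goes through.

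\emph{Part (a), compared with the paper.} The paper does not construct the pairing. It cites \cite{y0} both for existence and for non-degeneracy on $U^<\times U^>$. It proves only the factorization $(fk',ek)_{\DrJ}=(f,e)_{\DrJ}(k',k)_{\DrJ}$, which uses the coproduct shape \eqref{eq:gl-borel-comult} and not just gradedness. Your Jantzen-style existence argument (free algebras, Serre elements skew-primitive modulo lower ones, hence in the radical) is a legitimate self-contained alternative. However, you leave the quartic odd Serre relation uncomputed and cite \cite{y0} for non-degeneracy anyway, so you end up resting on \cite{y0} just as the paper does.

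\emph{Part (b), compared with the paper.} Here the routes genuinely differ. The paper uses Proposition~\ref{prop:double gen rel} and induction on degree to show that the ideal of cross relations is generated by the $u_{y,x}$ with $y,x$ generators. Hence $\CD_{\DrJ}/\CJ$ has literally the defining presentation of $\uqgl$, and no triangular decomposition is needed.

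You instead obtain a surjection $\uqgl\to\CD_{\DrJ}/\CJ$, which only requires the generator-level cross relations to hold in the double (automatic). You then get injectivity by matching \eqref{eq:DJ-tri-decomp} against the linear structure $U^<\otimes U^0\otimes U^>$ of $\CD_{\DrJ}/\CJ$. This is valid because the elements $q^{H^e}q^{-H^f}$ are central group-likes, so the quotient simply collapses $U^0\otimes U^0$ to $U^0$. Your route thus trades the generation lemma for the triangular decomposition of $\uqgl$, which the paper takes as known.

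Do not present the multiplication map as an a priori algebra map in the reverse direction. That would require every $u_{y,x}$ to vanish in $\uqgl$, which is precisely the paper's lemma. Bijectivity of your surjection already makes it unnecessary.
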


\begin{proof}
For part (a), the existence of such a pairing is established in~\cite{y0} (cf.~\cite[Propositions 6.12, 6.18]{jan} for 
non-super case). This pairing is of $Q$-degree zero, i.e.\ for any $Q$-homogeneous $x \in U^\geq_{q}(\gl(V))$ and 
$y \in U^\leq_{q}(\gl(V))$:
\begin{equation}\label{eq:skew-pairing-degree-zero}
  \deg(x) + \deg(y) \neq 0 \implies (y,x)_{\DrJ} = 0\,.
\end{equation}

Define the following subspaces, spanned by elements of strictly positive and negative degrees, respectively:
\begin{equation}\label{eq:gl-deg-pos-neg}
\begin{split}
  U^>_{q}(\gl(V))_{\deg>0} &\coloneqq \{x \in U^>_{q}(\gl(V)) \mid \epsilon(x) = 0\} \,,\\
  U^<_{q}(\gl(V))_{\deg<0} &\coloneqq \{y \in U^<_{q}(\gl(V)) \mid \epsilon(y) = 0\} \,.
\end{split}
\end{equation}
For any $Q$-homogeneous elements $e \in U^>_{q}(\gl(V))$ and $f \in U^<_{q}(\gl(V))$, 
the structure of~\eqref{eq:gl-comult} implies that 
\begin{equation}\label{eq:gl-borel-comult}
\begin{split}
  \Delta(e) \in e \otimes 1 + U^\geq_{q}(\gl(V)) \otimes U^>_{q}(\gl(V))_{\deg>0} \,,\qquad
  \Delta(f) \in 1 \otimes f + U^<_{q}(\gl(V))_{\deg<0} \otimes U^\leq_{q}(\gl(V)) \,.
\end{split}
\end{equation}
Consequently, for any $Q$-homogeneous $e \in U^>_{q}(\gl(V)), f \in U^<_{q}(\gl(V))$ and group-like  
$k,k' \in U^0_{q}(\gl(V))$:
\begin{equation}\label{eq:DrJ-skew-pairing-identity}
\begin{split}
  (fk',ek)_{\DrJ} 
  &= (f \otimes k',\Delta(e)\Delta(k))_{\DrJ}
     \overset{(\ref{eq:skew-pairing-degree-zero},~\ref{eq:gl-borel-comult})}{=} (f,ek)_{\DrJ}(k',k)_{\DrJ}\\
  & = (\Delta^{\opp}(f), e \otimes k)_{\DrJ}(k',k)_{\DrJ}
    \overset{(\ref{eq:skew-pairing-degree-zero},~\ref{eq:gl-borel-comult})}{=} (f,e)_{\DrJ}(1,k)_{\DrJ}(k',k)_{\DrJ}
    = (f,e)_{\DrJ}(k',k)_{\DrJ}\,.
\end{split}
\end{equation}
As $U^{0}_{q}(\gl(V))$ is spanned by its group-like elements, bilinearity ensures that~\eqref{eq:DrJ-skew-pairing-identity} 
holds for all $k,k' \in U^{0}_{q}(\gl(V))$. Thus, the non-degeneracy of~\eqref{eq:gl-skew-pairing} follows from that of its 
restrictions to $U^<_{q}(\gl(V)) \times U^>_{q}(\gl(V))$ and $U^0_{q}(\gl(V)) \times U^0_{q}(\gl(V))$. The former is 
a non-trivial result established in~\cite{y0}, while the latter is straightforward.

\begin{Rem}\label{rem:yamane}
We note that the $q$-Serre relations were introduced in~\cite{y0} precisely to ensure the non-degeneracy 
of the skew-pairing~\eqref{eq:gl-skew-pairing}.
\end{Rem}

Part (b) follows from the fact that the Drinfeld double $\CD_{\DrJ}$ is the quotient of the free product (super)algebra 
$U^\leq_{q}(\gl(V)) * U^\geq_{q}(\gl(V))$, subject to the following cross-relations (cf.\ Proposition \ref{prop:gen double}):
\begin{equation}\label{eq:gl-DJ-cross-rel}
  \sum_{(x)(y)} (-1)^{|x_{2}||y|} (-1)^{|y_{1}||x_{1}|} \cdot (y_{1},x_{1})_{\DrJ} \cdot y_{2}x_{2}
  = \sum_{(x)(y)} (-1)^{|x_{2}||y|} \cdot x_{1}y_{1} \cdot (y_{2},x_{2})_{\DrJ} \,,
\end{equation}
for all $\BZ_{2}$-homogeneous $x \in U^\geq_{q}(\gl(V))$ and $y \in U^\leq_{q}(\gl(V))$, using Sweedler notation. To avoid 
confusion, we denote the Cartan generators of $U^\geq_{q}(\gl(V))$ and $U^\leq_{q}(\gl(V))$ by $\{q^{\pm H_{k}^{e}}\}_{k=1}^{N}$ 
and $\{q^{\pm H_{k}^{f}}\}_{k=1}^{N}$, respectively. Analogously, we use $\{q^{\pm h_{i}^{e}}\}_{i=1}^{N-1}$ and 
$\{q^{\pm h_{i}^{f}}\}_{i=1}^{N-1}$ to refer to the corresponding composite elements.

Following the notation in Proposition \ref{prop:gen double}, consider the elements $u_{y,x}$ for $\BZ_{2}$-homogeneous 
$y \in U^\leq_{q}(\gl(V))$ and $x \in U^\geq_{q}(\gl(V))$, and let $\CI$ be the two-sided ideal generated by them. 
Since $U^\geq_{q}(\gl(V))$ and $U^\leq_{q}(\gl(V))$ are graded by $Q^{+}$ and $-Q^{+}$ respectively, the explicit 
comultiplication formula \eqref{eq:gl-comult} together with an induction on the degree via Proposition~\ref{prop:double gen rel} 
implies that $\CI$ is generated by the elements $u_{y,x}$ where $y$ and $x$ range over the generators of $U^\leq_{q}(\gl(V))$ 
and $U^\geq_{q}(\gl(V))$, respectively. Applying~\eqref{eq:gl-DJ-cross-rel} to pairs of generators yields:
\begin{align*}
  (-1)^{|e_{i}||f_{j}|} \cdot (1,q^{h_{i}^{e}})_{\DrJ} \cdot f_{j}e_{i}
  + (-1)^{|e_{i}||f_{j}|} \cdot (f_{j},e_{i})_{\DrJ} \cdot q^{-h_{j}^{f}}
  &= e_{i}f_{j} \cdot (q^{-h_{j}^{f}},1)_{\DrJ}
   + (-1)^{|e_{i}||f_{j}|} \cdot q^{h_{i}^{e}} \cdot (f_{j},e_{i})_{\DrJ} \,,\\
   (1,q^{H_{k}^{e}})_{\DrJ} \cdot f_{j}q^{H_{k}^{e}}
  &= q^{H_{k}^{e}}f_{j} \cdot (q^{-h_{j}^{f}},q^{H_{k}^{e}})_{\DrJ} \,,\\
   (q^{H_{l}^{f}},q^{h_{i}^{e}})_{\DrJ} \cdot q^{H_{l}^{f}}e_{i}
  &= e_{i}q^{H_{l}^{f}} \cdot (q^{H_{l}^{f}},1)_{\DrJ} \,,\\
   (q^{H_{l}^{f}},q^{H_{k}^{e}})_{\DrJ} \cdot q^{H_{l}^{f}}q^{H_{k}^{e}}
  &= q^{H_{k}^{e}}q^{H_{l}^{f}} \cdot (q^{H_{l}^{f}},q^{H_{k}^{e}})_{\DrJ} \,,
\end{align*}
which simplifies as follows:
\begin{equation*}
  e_{i}f_{j} - (-1)^{|e_{i}||f_{j}|} f_{j}e_{i}
  = \delta_{ij} \frac{q^{h_{i}^{e}} - q^{-h_{i}^{f}}}{q - q^{-1}} \,,
\end{equation*}
\begin{align*}
  q^{H_{k}^{e}}f_{j}q^{-H_{k}^{e}} 
  = q^{-(\alpha_{j},\varepsilon_{k})}f_{j} \,,\qquad
  &q^{H_{l}^{f}}e_{i}q^{-H_{k}^{f}}
   = q^{(\varepsilon_{l},\alpha_{i})}e_{i} \,, \qquad
   q^{H_{l}^{f}}q^{H_{k}^{e}} = q^{H_{k}^{e}}q^{H_{l}^{f}} \,.
\end{align*}
To identify the Cartan elements from both factors, we shall quotient the double $\CD_{\DrJ}$ by the two-sided ideal 
\begin{equation*}
  \CJ \coloneqq \big\langle q^{\pm H_{k}^{e}} - q^{\pm H_{k}^{f}} \mid 1 \leq k \leq N \big\rangle \,.
\end{equation*}
Passing to the quotient $\CD_{\DrJ}/\CJ$ precisely recovers the commutation relations 
\eqref{eq:q-gl-chevalley-rel-HH}--\eqref{eq:q-gl-chevalley-rel-ef} between the Borel subalgebras $U^\leq_{q}(\gl(V))$ 
and $U^\geq_{q}(\gl(V))$, which concludes the proof that $\CD_{\DrJ}/\CJ \simeq \uqgl$.
\end{proof}

\begin{Rem}\label{rem:DrJ pairing order}
By Proposition~\ref{prop:opposite double}, one may alternatively use the transposed inverse
\begin{equation*}
  (-,-)_{\wt{\DrJ}} \colon U^\geq_{q}(\gl(V)) \times U^\leq_{q}(\gl(V)) \to \BC(q)
\end{equation*}
of $(-,-)_{\DrJ}$ in the double construction, which amounts to taking the Borel subalgebras in the opposite order. 
The latter order is necessary to align $(-,-)_{\wt{\DrJ}}$ with the corresponding 
skew-pairing~\eqref{eq:gl-RTT-skew-pairing} of the $\RLL$-realization $U(R)$ defined below, 
cf.\ Remark~\ref{rem:RTT double full Cartan}. Specifically, the isomorphism $\xi$ from Theorem~\ref{thm:finite DrJ to RTT} 
maps the subbialgebras $U_{q}^{\geq}(\gl(V))$ and $U_{q}^{\leq}(\gl(V))$ onto $U^{\geq}(R)$ and $U^{\leq}(R)$, respectively. 
Because both Borel subalgebras are Hopf superalgebras with invertible antipodes, \eqref{eq:inverse-pairing} ensures that 
the transposed inverse pairing is also non-degenerate, and its evaluation on the generators is given as follows: 
\begin{equation*}
  (e_i, q^{\pm H_k})_{\wt{\DrJ}} = 0 \,,\quad (q^{\pm H_k}, f_i)_{\wt{\DrJ}} = 0 \,,\quad
  (e_i, f_j)_{\wt{\DrJ}} = \frac{\delta_{ij}}{q - q^{-1}} \,,\quad 
  (q^{H_k}, q^{H_l})_{\wt{\DrJ}} = q^{(\varepsilon_k, \varepsilon_l)} \,.
\end{equation*}
\end{Rem}

\begin{Rem}\label{rem:DrJ double half Cartan}
Instead of the Hopf subalgebras $U^{\leq}_{q}(\gl(V))$ and $U^{\geq}_{q}(\gl(V))$, the double construction can 
also be carried out using the subbialgebras $U^{-}_{q}(\gl(V))$ and $U^{+}_{q}(\gl(V))$ generated by 
$\{f_i\}_{i=1}^{N-1} \cup \{q^{-H_{k}}\}_{k=1}^{N}$ and $\{e_i\}_{i=1}^{N-1} \cup \{q^{H_{k}}\}_{k=1}^{N}$ respectively, 
equipped with the restriction of the exact same skew-pairing.
\end{Rem}

\begin{Rem}\label{rem:DrJ involution subalgebras}
Evoking the Hopf superalgebra isomorphism $\omega_{\DrJ}$ of Remark~\ref{rem:DrJ involution}, we note that it 
restricts to superalgebra isomorphisms between $U^{\leq}_{q}(\gl(V))$ and $U^{\geq}_{q}(\gl(V))$ and also between 
$U^{-}_{q}(\gl(V))$ and $U^{+}_{q}(\gl(V))$.
\end{Rem}


\subsection{Drinfeld twist of \texorpdfstring{$\uqgl$}{\uqgl}}\label{ssec:drinfeld-twist}
\

For the later use in Subsection~\ref{ssec:homomorphism thm}, we shall now introduce a specific Drinfeld twist of $\uqgl$.
Recall that a \emph{Drinfeld twist} (or gauge transformation) of a Hopf superalgebra $H$ is an invertible element 
$\CF \in H \otimes H$ satisfying the 2-cocycle condition (cf.~\cite[(4.3)]{xz})
\begin{equation*}
  (\CF \otimes 1)(\Delta \otimes \Id)(\CF) = (1 \otimes \CF)(\Id \otimes \Delta)(\CF) \,,
\end{equation*}
and the counitality condition $(\epsilon \otimes \Id)(\CF) = 1 = (\Id \otimes \epsilon)(\CF)$. Given such an element,
one can construct a new Hopf superalgebra $H^{\CF}$ whose underlying superalgebra structure and the counit morphism are 
identical to those of $H$, but with the comultiplication conjugated by $\CF$:
\begin{equation*}
  \Delta^{\CF}(x) = \CF \Delta(x) \CF^{-1} \qquad \text{for all \ } x \in H \,,
\end{equation*}
and its antipode $S^{\CF}$ modified correspondingly.

We now apply the above construction to $H = \uqgl$ using the element $\CF = \fR_{s}$, which will be defined 
in~\eqref{eq:gl-universal-Rs Ru}. Since $\fR_{s}$ is the canonical element of the Drinfeld double 
$\CD(U_q^{0}(\gl(V)),U_q^{0}(\gl(V)))$ (cf.\ Remark~\ref{rem:U0-canonical}), it naturally satisfies the conditions 
of a Drinfeld twist by Proposition~\ref{prop:R properties}. A direct $\hbar$-adic computation 
(cf.\ Remark~\ref{rem:h-adic-completion-avoidance}) yields the following 
twisted comultiplication $\Delta^{\CF}$ on the generators:
\begin{align*}
  \Delta^{\CF}(e_i) &= 1 \otimes e_i + e_i \otimes q^{-h_i} \,,\qquad
  \Delta^{\CF}(f_i) = q^{h_i} \otimes f_i + f_i \otimes 1 \,,\qquad
  \Delta^{\CF}(q^{\pm H_k}) = q^{\pm H_k} \otimes q^{\pm H_k} \,,
\end{align*}
and the corresponding antipode is determined by 
$S^{\CF}(e_i) = -e_i q^{h_i}$, $S^{\CF}(f_i) = -q^{-h_i} f_i$, $S^{\CF}(q^{\pm H_k}) = q^{\mp H_k}$.

In general, a Hopf superalgebra $H$ and its Drinfeld twist $H^{\CF}$ need not be isomorphic as Hopf superalgebras. 
However, there is a simple Hopf superalgebra isomorphism between $\uqgl$ and its twist $\uqgl^{\CF}$. 
Let $U^{\geq}_q(\gl(V))^{\CF}$ and $U^{\leq}_q(\gl(V))^{\CF}$ denote the positive and negative Borel subalgebras 
of $\uqgl^{\CF}$ generated by $\{e_i\}_{i=1}^{N-1} \cup \{q^{\pm H_k}\}_{k=1}^{N}$ and 
$\{f_i\}_{i=1}^{N-1} \cup \{q^{\pm H_k}\}_{k=1}^{N}$, respectively. Being closed under $\Delta^\CF$ and $S^{\CF}$, 
they are actually Hopf subalgebras. We emphasize that they share the exact same underlying superalgebra structure as 
their untwisted counterparts $U^{\geq}_q(\gl(V))$ and $U^{\leq}_q(\gl(V))$, differing only in their supercoalgebra structure.

\begin{Prop}\label{prop:drinfeld-twist-iso}
(a) The assignment
\begin{equation}\label{eq:twist-identification}
  \phi_{\CF}(e_i) = e_i q^{-h_i} \,,\qquad \phi_{\CF}(f_i) = q^{h_i} f_i \,,\qquad \phi_{\CF}(q^{\pm H_k}) = q^{\pm H_k}
\end{equation}
uniquely extends to a $Q$-graded Hopf superalgebra isomorphism $\phi_{\CF} \colon \uqgl^{\CF} \iso \uqgl$.

\smallskip
\noindent
(b) The isomorphism $\phi_{\CF}$ preserves the Borel subalgebras $U^{\geq}_q(\gl(V))$ and $U^{\leq}_q(\gl(V))$.

\smallskip
\noindent
(c) Define the skew-pairing $(-,-)^{\CF}_{\DrJ}$ on the Drinfeld twist via 
$(x,y)^{\CF}_{\DrJ} \coloneqq (\phi_{\CF}(x), \phi_{\CF}(y))_{\DrJ}$. The skew-pairing $(-,-)^{\CF}_{\DrJ}$ is also 
non-degenerate and evaluates identically to the skew-pairing $(-,-)_{\DrJ}$ on the generators~\eqref{eq:generators-pairing}.
\end{Prop}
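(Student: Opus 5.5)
Since $\uqgl^{\CF}$ and $\uqgl$ have the same underlying superalgebra, the proof of (a) splits into an algebra part and a coalgebra part. For the algebra part, I would first check that the elements $E_i\coloneqq e_iq^{-h_i}$, $F_i\coloneqq q^{h_i}f_i$ and $q^{\pm H_k}$ satisfy the defining relations of $\uqgl$.

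The Chevalley relations~\eqref{eq:q-gl-chevalley-rel-HH}--\eqref{eq:q-gl-chevalley-rel-He} are immediate, since conjugation by $q^{H}$ only sees the $Q$-degree. For~\eqref{eq:q-gl-chevalley-rel-ef}, moving the Cartan factors together gives
\[
E_iF_j=e_iq^{-h_i}q^{h_j}f_j=q^{(\alpha_i,\alpha_j)}\,q^{h_j-h_i}e_if_j ,\qquad F_jE_i=q^{(\alpha_j,\alpha_i)}\,q^{h_j-h_i}f_je_i .
\]
Hence $[E_i,F_j]=q^{a_{ij}}q^{h_j-h_i}[e_i,f_j]$, which vanishes for $i\neq j$. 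For $i=j$ it equals $q^{a_{ii}}\frac{q^{h_i}-q^{-h_i}}{q-q^{-1}}$. Here $a_{ii}=(-1)^{\ol i}+(-1)^{\ol{i+1}}\in\{0,\pm2\}$, so for $a_{ii}\neq 0$ a harmless rescaling is needed. I expect the paper's normalization to absorb this, and if it does not, I would adjust the statement by the scalar $q^{-a_{ii}}$.

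For the Serre relations I would use a general principle. If $x,y$ are $Q$-homogeneous and $x'=xq^{-\mu}$, $y'=yq^{-\nu}$ with $\mu,\nu\in\fh$ linear in the degree, then $[\![x',y']\!]$ equals $[\![x,y]\!]\,q^{-\mu-\nu}$ up to a nonzero scalar. The reason is that the two monomials $xy$ and $yx$ pick up the factors $q^{-(\deg y,\mu)}$ and $q^{-(\deg x,\nu)}$ respectively, and for $\mu=h(\deg x)$ with symmetric pairing these coincide. Iterating this rescaling maps each Serre relation~\eqref{eq:q-gl-serre-rel-standard-1}--\eqref{eq:q-gl-serre-rel-standard-3} to a scalar multiple of itself times an invertible Cartan element, so those relations are preserved.

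This gives an algebra endomorphism of $\uqgl$. It is invertible, with inverse $e_i\mapsto e_iq^{h_i}$, $f_i\mapsto q^{-h_i}f_i$, which is checked in the same way. It also visibly preserves the $Q$-grading and the parity.

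For the coalgebra part, it suffices to check $(\phi_\CF\otimes\phi_\CF)\Delta^{\CF}=\Delta\phi_\CF$ on generators. Compatibility with the antipode and counit then follows automatically for bialgebra maps between Hopf superalgebras. On $e_i$, the right-hand side is
\[
\Delta(e_iq^{-h_i})=(q^{h_i}\otimes e_i+e_i\otimes 1)(q^{-h_i}\otimes q^{-h_i})=1\otimes e_iq^{-h_i}+e_iq^{-h_i}\otimes q^{-h_i},
\]
which is exactly the image of $\Delta^{\CF}(e_i)$. The computations for $f_i$ and $q^{H}$ are analogous.

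Parts (b) and (c) then follow quickly. For (b), the generators of the Borel subalgebras map into those subalgebras, and the same holds for the inverse. For (c), $(-,-)^{\CF}_{\DrJ}$ is the pullback of a non-degenerate pairing under bijections, so it is non-degenerate. It is a skew-pairing for the twisted structures because $\phi_\CF$ intertwines the coproducts. For the values on generators, I would use~\eqref{eq:DrJ-skew-pairing-identity}: for instance $(q^{h_i}f_i,e_jq^{-h_j})_{\DrJ}=(f_i,e_j)_{\DrJ}(q^{h_i},q^{-h_j})_{\DrJ}$. The Cartan factors here appear in the order opposite to~\eqref{eq:DrJ-skew-pairing-identity}, so I would first commute them using the skew-pairing axioms. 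The resulting Cartan value is $\delta_{ij}q^{(\alpha_i,\alpha_i)}$, and this is the same factor as in (a).

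I expect the main obstacle to be this consistent tracking of $q^{\pm a_{ii}}$ factors in the $[e_i,f_i]$ relation and in the pairing $(f_i,e_i)$. It must be reconciled with the exact conventions of~\eqref{eq:twist-identification}, or absorbed by a scalar rescaling of the generators.
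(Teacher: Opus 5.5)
Your strategy is the same as the paper's. You verify the relations on generators, handling the Serre relations by noting that $[\![xq^{-\mu},yq^{-\nu}]\!]$ is a scalar multiple of $[\![x,y]\!]q^{-\mu-\nu}$. You then exhibit the inverse, check $\Delta$ on generators, and get (b) and (c) by pullback together with \eqref{eq:DrJ-skew-pairing-identity}. However, the two computations that carry real content are done incorrectly. As written, your proposal ends by claiming that both the relation $[\phi_{\mathcal F}(e_i),\phi_{\mathcal F}(f_i)]=\frac{q^{h_i}-q^{-h_i}}{q-q^{-1}}$ and the value of $(f_i,e_i)$ fail by a factor $q^{a_{ii}}$. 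You then suggest modifying the statement, so the proposal does not prove the proposition.

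\textbf{First error.} In $E_iF_j=e_iq^{-h_i}q^{h_j}f_j$ you must move $e_i$ past both Cartan factors: $e_iq^{-h_i}=q^{a_{ii}}q^{-h_i}e_i$ and $e_iq^{h_j}=q^{-a_{ij}}q^{h_j}e_i$. This gives
\[
E_iF_j=q^{(\alpha_i-\alpha_j,\alpha_i)}\,q^{h_j-h_i}e_if_j\,,\qquad F_jE_i=q^{(\alpha_i-\alpha_j,\alpha_i)}\,q^{h_j-h_i}f_je_i\,,
\]
so $[E_i,F_j]=q^{(\alpha_i-\alpha_j,\alpha_i)}q^{h_j-h_i}[e_i,f_j]$. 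This is equivalent to the paper's $q^{(\alpha_i-\alpha_j,\alpha_j)}[e_i,f_j]q^{h_j-h_i}$, not your $q^{(\alpha_i,\alpha_j)}$. For $i=j$ no commutation is needed at all. Indeed $E_iF_i=e_iq^{-h_i}q^{h_i}f_i=e_if_i$, and $F_iE_i=q^{h_i}(f_ie_i)q^{-h_i}=f_ie_i$ because $f_ie_i$ has $Q$-degree $0$. Hence $[E_i,F_i]=[e_i,f_i]$ exactly. No rescaling is required, and any nontrivial rescaling would actually break the relation.

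\textbf{Second error, in (c).} The commutation you mention is exactly what cancels the factor you are worried about. Since $q^{h_i}f_i=q^{-(\alpha_i,\alpha_i)}f_iq^{h_i}$, we have
$(q^{h_i}f_i,e_iq^{-h_i})_{\DrJ}=q^{-(\alpha_i,\alpha_i)}(f_iq^{h_i},e_iq^{-h_i})_{\DrJ}=q^{-(\alpha_i,\alpha_i)}(f_i,e_i)_{\DrJ}(q^{h_i},q^{-h_i})_{\DrJ}=(f_i,e_i)_{\DrJ}$,
using $(q^{h_i},q^{-h_i})_{\DrJ}=q^{(\alpha_i,\alpha_i)}$. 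So no $q^{(\alpha_i,\alpha_i)}$ survives, and there is no obstacle to reconcile. Once these two computations are corrected, your argument coincides with the paper's proof.
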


\begin{Rem}\label{rem:twisted-drinfeld-double}
As an immediate consequence, $\uqgl^{\CF}$ can also be realized as the Drinfeld double $\CD^{\CF}_{\DrJ}$ of 
$U^{\leq}_q(\gl(V))^{\CF}$ and $U^{\geq}_q(\gl(V))^{\CF}$ with respect to the skew-pairing $(-,-)^{\CF}_{\DrJ}$, 
modulo the identification of the Cartan subalgebras.
\end{Rem}

\begin{Rem}\label{rem:wt-pairing-Dr-twist}
The same argument shows that Proposition~\ref{prop:drinfeld-twist-iso}(c) remains valid when using the alternate skew-pairing 
$(-,-)_{\wt{\DrJ}}$ from Remark~\ref{rem:DrJ pairing order}. Hence, the same Drinfeld double $\CD^{\CF}_{\DrJ}$ can be
realized with respect to the pullback $(-,-)^{\CF}_{\wt{\DrJ}}$ via $\phi_{\CF}$, using the opposite order of the Borel 
subalgebras $U^{\geq}_q(\gl(V))^{\CF}$ and~$U^{\leq}_q(\gl(V))^{\CF}$.
\end{Rem}

\begin{proof}[Proof of Proposition~\ref{prop:drinfeld-twist-iso}]
(a) We first check that the assignment~\eqref{eq:twist-identification} gives rise to a superalgebra morphism. 
As the underlying superalgebra structure of $\uqgl^{\CF}$ is identical to that of $\uqgl$, we need to show 
that~\eqref{eq:twist-identification} defines a superalgebra endomorphism on $\uqgl$. The Chevalley-type relations 
are straightforward, e.g.\ \eqref{eq:q-gamma-chevalley-rel-ef} is verified as follows:
\begin{multline*}
  [\phi_{\CF}(e_i), \phi_{\CF}(f_j)] = e_i q^{-h_i+h_j} f_j - (-1)^{|e_i||f_j|} q^{h_j} f_j e_i q^{-h_i} 
  = q^{(\alpha_{i}-\alpha_{j},\alpha_{j})} [e_i, f_j] q^{-h_i+h_j} \\ 
  = \delta_{ij} \frac{\phi_{\CF}(q^{h_i})-\phi_{\CF}(q^{-h_i})}{q-q^{-1}} \,.
\end{multline*}
For the Serre relations, we note that $\phi_{\CF}(e_i) \phi_{\CF}(e_j) = q^{-(\alpha_i,\alpha_j)} \cdot e_i e_j q^{-h_i-h_j}$, 
which implies that their $q$-supercommutator evaluates to 
  $[\![\phi_{\CF}(e_i), \phi_{\CF}(e_j)]\!] = q^{-(\alpha_i,\alpha_j)} \cdot [\![e_i, e_j]\!] q^{-h_i-h_j}$. 
By induction, any nested $q$-supercommutator evaluated on the images $\phi_{\CF}(e_i)$ is simply proportional to the 
same commutator evaluated on the generators $e_i$, right-multiplied by a Cartan element uniquely determined by its total 
$Q$-degree. As the Serre relations vanish on the generators $e_i$ in $\uqgl$, they identically vanish on the images 
$\phi_{\CF}(e_i)$.

The inverse map is given by the assignment 
$\phi_{\CF}^{-1}(e_i) = e_i q^{h_i}$, $\phi_{\CF}^{-1}(f_i) = q^{-h_i} f_i$, $\phi_{\CF}^{-1}(q^{\pm H_k}) = q^{\pm H_k}$, 
which can be shown to define a valid superalgebra homomorphism by a completely analogous argument. Furthermore, 
the compatibility of the coalgebra structures $(\phi_{\CF} \otimes \phi_{\CF}) \circ \Delta^{\CF} = \Delta \circ \phi_{\CF}$ 
is directly checked on the generators. Since $\phi_{\CF}$ clearly preserves the $Q$-grading, this proves part (a).

\smallskip
\noindent
(b) This is immediate from the definitions, as $\phi_{\CF}(e_i) = e_i q^{-h_i} \in U^{\geq}_q(\gl(V))$ and 
$\phi_{\CF}(f_i) = q^{h_i} f_i \in U^{\leq}_q(\gl(V))$.

\smallskip
\noindent
(c) We first emphasize that $(-,-)^{\CF}_{\DrJ}$ is indeed a skew-pairing by part (a). 
The evaluation of $(f_{i},e_{i})^{\CF}_{\DrJ}$ is the only nontrivial case among the generators 
in~\eqref{eq:generators-pairing}, the computation of which is done as follows:
\begin{multline*}
  (f_{i},e_{i})^{\CF}_{\DrJ} 
  = (q^{h_{i}}f_{i}, e_{i}q^{-h_{i}})_{\DrJ} 
  = q^{-(\alpha_{i},\alpha_{i})} (f_{i}q^{h_{i}}, e_{i}q^{-h_{i}})_{\DrJ} \\
  \overset{\eqref{eq:DrJ-skew-pairing-identity}}{=} q^{-(\alpha_{i},\alpha_{i})} (f_{i},e_{i})_{\DrJ} (q^{h_{i}},q^{-h_{i}})_{\DrJ} 
  = (f_{i},e_{i})_{\DrJ} \,.
\end{multline*}
\end{proof}


\section{\texorpdfstring{$\RLL$}{\RLL}-realization of general linear quantum supergroups}\label{sec:RTT_finite}


\subsection{The universal \texorpdfstring{$R$}{R}-matrix}\label{ssec:universal R}
\

A fundamental property of the Drinfeld double of Hopf superalgebras is that it naturally carries the
structure of a (topological) quasi-triangular Hopf superalgebra via its canonical element~\eqref{eq:canonical element}, 
see Remark~\ref{rem:h-adic-completion-avoidance} for more details. By Remark~\ref{rem:triangular-decomp-morphisms} and 
the identity~\eqref{eq:DrJ-skew-pairing-identity}, the skew-pairing~\eqref{eq:gl-skew-pairing} factors multiplicatively into its 
restrictions to $U^{<}_{q}(\gl(V)) \times U^{>}_{q}(\gl(V))$ and $U^{0}_{q}(\gl(V)) \times U^{0}_{q}(\gl(V))$. Let $\{u^{f}_{i}\}_{i}$ 
and $\{u^{e}_{i}\}_{i}$ be dual bases for $U^{<}_{q}(\gl(V))$ and $U^{>}_{q}(\gl(V))$ with respect to this restricted pairing. Analogously, 
let $\{s^{f}_{j}\}_{j}$ and $\{s^{e}_{j}\}_{j}$ be dual bases for the respective copies of $U^{0}_{q}(\gl(V))$. It then follows that 
the products $\{u^{f}_{i} s^{f}_{j}\}_{i,j}$ and $\{u^{e}_{i} s^{e}_{j}\}_{i,j}$ form dual bases for $U^{\leq}_{q}(\gl(V))$ and 
$U^{\geq}_{q}(\gl(V))$. Therefore, the canonical element $\fR$ factors as $\fR = \fR_{u}\fR_{s}$, where 
(note that every element in $U^{0}_{q}(\gl(V))$ has $\BZ_2$-degree $\bbar{0}$) 
\begin{equation*}
  \fR_{u} = \sum_{i} u^{e}_{i} \otimes u^{f}_{i} \,,\qquad
  \fR_{s} = \sum_{j} s^{e}_{j} \otimes s^{f}_{j} \,.
\end{equation*}

\begin{Rem}
Motivated by their actions on finite-dimensional weight modules, the above factors $\fR_{s}$ and $\fR_{u}$ are often referred to 
as the \emph{semisimple} and \emph{unipotent} parts of the canonical element $\fR$. Furthermore, the unipotent part $\fR_{u}$ is 
also often called the \emph{reduced} $R$-matrix.
\end{Rem}

\begin{Rem}\label{rem:U0-canonical}
Since $U^{0}_{q}(\gl(V))$ is in fact a Hopf subalgebra, the restriction of the skew-pairing~\eqref{eq:gl-skew-pairing} yields 
a Drinfeld double $\CD(U^{0}_{q}(\gl(V)), U^{0}_{q}(\gl(V)))$ with $\fR_{s}$ being the corresponding canonical element.
\end{Rem}

\begin{Rem}\label{rem:h-adic-completion-avoidance}
(a) To treat the canonical element $\fR = \fR_{u}\fR_{s}$ rigorously, one must address the fact that its expansion involves infinite 
sums of tensor products, which are not well-defined elements in the algebraic tensor product $\uqgl \otimes \uqgl$. While the infinite 
sum in $\fR_{u}$ can be resolved quite simply, $\fR_{s}$ presents a more fundamental algebraic obstruction: the dual bases for the 
Cartan subalgebras cannot be constructed in $\uqgl$, as expressing them requires taking formal logarithms of the generators $q^{H}$. 
To resolve this and make sense of these elements, one must pass to the \emph{$\hbar$-adic topological quantum supergroups} $\uhgl$, 
defined over $\BC[[\hbar]]$, and consider the completed tensor product $\uhgl \hat{\otimes} \uhgl$. In this framework, the 
indeterminate $q \in \BC(q)$ and the Cartan elements $q^{H}$ (for $H \in \Gamma$) are treated as formal exponentials 
$q = e^{\hbar} \coloneqq \sum_{k \ge 0} \hbar^{k}/k! \in \BC[[\hbar]]$ and 
$q^{H} = e^{\hbar H} \coloneqq \sum_{k \ge 0} \hbar^{k} H^{k}/k!$. We refer the reader to~\cite[\S8.3.2]{kas} 
or \cite[\S17]{ks} for a systematic development of this topological setting in the non-super case.

\smallskip
\noindent
(b) However, $\fR_{s}$ is rarely necessary in practice: it generally suffices to work with its evaluations on finite-dimensional
weight modules. By adapting the approach of~\cite[\S7]{jan} to the super setting (as in~\cite[\S4.1]{ht1}), one can evaluate these 
matrices directly within the $q$-adic framework, entirely bypassing topological completions. 

\smallskip
\noindent
(c) In the present paper, the only instance explicitly requiring the universal element $\fR$ is the construction of the 
isomorphism in the opposite direction in Subsections~\ref{ssec:inverse morphism} and~\ref{ssec:inverse morphism osp}.  
We emphasize that the key isomorphism between the Drinfeld--Jimbo and $\RLL$ realizations constructed in 
Subsections~\ref{ssec:homomorphism thm} and~\ref{ssec:osp-homomorphism thm}  
is unequivocally established via the generalized double construction. Therefore, if one only requires the existence of the 
isomorphism without an explicit formula for the inverse map, the issue of $\hbar$-adic completions can be completely circumvented.
\end{Rem}

To obtain an explicit formula for $\fR$, 
we first recall the PBW-type bases of $U^>_q(\gl(V))$ and $U^<_q(\gl(V))$ (cf.~\cite[Theorem 5.16]{ht1}). For convenience, 
let $\gamma_{ij} \coloneqq \varepsilon_{i} - \varepsilon_{j}$ for all $1 \leq i<j \leq N$, so that
\begin{equation*}
  \Phi^{+} = \{\gamma_{ij} \mid 1 \leq i < j \leq N\} \,.
\end{equation*}
For each $\gamma_{ij} \in \Phi^{+}$, we define the corresponding \emph{quantum root vectors} $e_{\gamma_{ij}}$ and 
$f_{\gamma_{ij}}$ inductively on $j-i$. The base cases are $e_{\gamma_{i,i+1}} \coloneqq e_i$ and 
$f_{\gamma_{i,i+1}} \coloneqq f_i$. For $j > i$, we set:
\begin{equation}\label{eq:quantum root vectors}
\begin{split}
  & e_{\gamma_{i,j+1}} \coloneqq 
    e_{\gamma_{ij}}e_{j} - (-1)^{|e_{\gamma_{ij}}||e_{j}|} q^{(\gamma_{ij},\alpha_{j})} e_{j}e_{\gamma_{ij}}
    = [\![ e_{\gamma_{ij}}, e_{j} ]\!] \,,\\
  & f_{\gamma_{i,j+1}} \coloneqq 
    f_{j}f_{\gamma_{ij}} - (-1)^{|f_{\gamma_{ij}}||f_{j}|} q^{-(\gamma_{ij},\alpha_{j})} f_{\gamma_{ij}}f_{j}
    = - (-1)^{|f_{\gamma_{ij}}||f_{j}|} q^{-(\gamma_{ij},\alpha_{j})} [\![ f_{\gamma_{ij}}, f_{j} ]\!] \,.
\end{split}
\end{equation}
We also consider the \emph{convex order} on $\Phi^{+}$, corresponding to the lexicographic order on pairs 
$\{(i,j)\}_{1 \leq i<j \leq N}$:
\begin{equation}\label{eq:gl-convex-order}
  \gamma_{12} \prec \gamma_{13} \prec \cdots \prec \gamma_{1N} \prec \gamma_{23} \prec \cdots \prec \gamma_{2N} \prec \cdots
  \prec \gamma_{N-2,N-1} \prec \gamma_{N-2,N} \prec \gamma_{N-1,N}\,.
\end{equation}

\begin{Thm}\label{thm:gl-PBW-general}
(a) The sets of ordered monomials
\begin{equation*}
  \left\{ \prod_{\gamma \in \Phi^{+}}^{\leftarrow} e_{\gamma}^{m_{\gamma}} \;\Bigg|\; 
  \substack{m_{\gamma} \in \BZ_{\ge 0} \\ m_\gamma \leq 1 \text{ if } \gamma \in \Phi_{\bbar{1}} } \right\}
  \qquad \text{and} \qquad
  \left\{ \prod_{\gamma \in \Phi^{+}}^{\leftarrow} f_{\gamma}^{m_{\gamma}} \;\Bigg|\; 
  \substack{m_{\gamma} \in \BZ_{\ge 0} \\ m_\gamma \leq 1 \text{ if } \gamma \in \Phi_{\bbar{1}} } \right\}
\end{equation*}
form bases for $U^>_q(\gl(V))$ and $U^<_q(\gl(V))$, respectively. Here, the arrow $\leftarrow$ indicates that the factors in 
the product are ordered such that the roots $\gamma$ increase from right to left with respect to the convex 
order~\eqref{eq:gl-convex-order}.

\smallskip
\noindent
(b) The skew-pairing~\eqref{eq:gl-skew-pairing} is orthogonal with respect to these bases.
\end{Thm}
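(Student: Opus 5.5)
The plan follows the standard Kirillov--Reshetikhin/Levendorskii--Soibelman pattern for PBW bases, adapted to the super setting via the non-degeneracy of the skew-pairing from Proposition~\ref{prop:DJ-pairing_finite}(a). By the automorphism $\omega_{\DrJ}$ of Remark~\ref{rem:DrJ involution}, which sends $e_i$ to a multiple of $f_i$ and (up to scalars and reversal of products) $e_\gamma$ to $f_\gamma$, it suffices to treat $U^>_q(\gl(V))$ for spanning, and then handle both sides together for linear independence through the pairing.

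\textbf{Spanning.} I would first prove a Levendorskii--Soibelman-type convexity property: for $\gamma\prec\delta$ in the order~\eqref{eq:gl-convex-order}, the $q$-supercommutator $[\![e_\delta,e_\gamma]\!]$ (suitably ordered) lies in the span of ordered monomials in $e_\beta$ with $\gamma\prec\beta\prec\delta$. In type $A$ this reduces to explicit cases for $\gamma=\gamma_{ij}$, $\delta=\gamma_{kl}$: disjoint index ranges give $q$-supercommutation via~\eqref{eq:q-gl-serre-rel-standard-1}; the case $j=k$ gives $e_{\gamma_{il}}$ by~\eqref{eq:quantum root vectors}; and the nested or overlapping cases are handled by induction on $j-i$ using the standard and quartic Serre relations~\eqref{eq:q-gl-serre-rel-standard-2},~\eqref{eq:q-gl-serre-rel-standard-3}. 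Additionally, $e_\gamma^2=0$ for odd $\gamma$, which follows from the odd case of~\eqref{eq:q-gl-serre-rel-standard-1} at the base and propagates inductively by the same nested relations. With these, a straightening argument (induction on degree and on the number of inversions) shows that the ordered monomials span.

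\textbf{Orthogonality and independence.} Next I would compute the coproduct of root vectors modulo terms that pair trivially. The aim is to show $\Delta(e_\gamma)\in e_\gamma\otimes 1+q^{\gamma}\otimes e_\gamma+\sum(\text{products of } e_\beta \text{ with } \beta\prec\gamma)\otimes(\ldots)$, with a convex-order-compatible triangularity, and the analogous statement for $f_\gamma$. Using the skew-pairing axioms (the Hopf pairing identities) and the degree-zero property~\eqref{eq:skew-pairing-degree-zero}, I would then show by induction on the number of factors that
\[
\Big(\prod^{\leftarrow} f_\gamma^{n_\gamma},\ \prod^{\leftarrow} e_\gamma^{m_\gamma}\Big)_{\DrJ}=\prod_\gamma \delta_{n_\gamma m_\gamma}\,(f_\gamma^{m_\gamma},e_\gamma^{m_\gamma})_{\DrJ},
\]
where each factor is nonzero: a $q$-factorial times $(f_\gamma,e_\gamma)_{\DrJ}^{m}$ for even $\gamma$, and just $(f_\gamma,e_\gamma)_{\DrJ}$ with $m\le1$ for odd $\gamma$. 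The value $(f_\gamma,e_\gamma)_{\DrJ}\neq0$ is computed recursively from~\eqref{eq:quantum root vectors}. This gives part (b), and a nondegenerate diagonal Gram matrix forces linear independence, completing part (a).

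\textbf{Main obstacle.} I expect the convexity and coproduct-triangularity lemmas to be the main difficulty, because of sign bookkeeping for odd roots: for instance, isotropic $\gamma$ with $(\gamma,\gamma)=0$ makes $e_\gamma^2=0$ rather than producing a $q$-factorial. My first approach would be to avoid general arguments and compute directly with the nested formulas~\eqref{eq:quantum root vectors}, proving $\Delta(e_{\gamma_{ij}})=\sum_{i\le k\le j}c_k\,e_{\gamma_{kj}}q^{\gamma_{ik}}\otimes e_{\gamma_{ik}}$ (with the convention $e_{\gamma_{kk}}=1$ and explicit $c_k$ including $(q^{-1}-q)$ factors) by induction on $j$. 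This exact formula gives triangularity immediately. Alternatively, one can cite~\cite[Theorem 5.16]{ht1} and the combinatorics of~\cite{chw}.
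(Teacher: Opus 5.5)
The paper does not prove this statement; it imports it from \cite[Theorem~5.16]{ht1}, where the root vectors and PBW bases come from the dominant Lyndon word (quantum shuffle) framework of \cite{chw}, uniformly for all basic types. Your route is genuinely different and more elementary: Levendorskii--Soibelman straightening gives spanning, and coproduct triangularity plus a peeling induction gives orthogonality.

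Its second half is the Drinfeld--Jimbo counterpart of what the paper does on the $\RLL$ side in Lemma~\ref{lem:gl-pairing-basis}. There the paper obtains spanning for free from this very theorem through $\xi$, and that is exactly the part you must supply by hand. Your independence argument uses only the existence of $(\cdot,\cdot)_{\DrJ}$ and the nonvanishing of the diagonal entries, not the non-degeneracy from \cite{y0}. Together with spanning, it actually reproves that non-degeneracy on $U^<_q(\gl(V))\times U^>_q(\gl(V))$.

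The cost lies in the first half: the super LS relations among all $e_{\gamma_{ij}}$, plus $e_\gamma^2=0$ for odd non-simple $\gamma$. For instance, the nested case $[\![e_{\gamma_{14}},e_2]\!]=0$ with $\alpha_2$ odd is literally the quartic relation~\eqref{eq:q-gl-serre-rel-standard-3}. This is the case-by-case work that the Lyndon-word machinery absorbs, and it would become much heavier for the $\fgosp$ analogue, Theorem~\ref{thm:osp-PBW-general}.

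One concrete correction: your explicit coproduct formula has its tensor legs swapped. With $\Delta(e_i)=q^{h_i}\otimes e_i+e_i\otimes 1$ and the nesting~\eqref{eq:quantum root vectors}, induction on $j$ gives
\[
\Delta(e_{\gamma_{ij}})=e_{\gamma_{ij}}\otimes 1+q^{h_i+\cdots+h_{j-1}}\otimes e_{\gamma_{ij}}+\sum_{i<k<j}\bigl(1-q^{-2(\varepsilon_k,\varepsilon_k)}\bigr)\,e_{\gamma_{ik}}\,q^{h_k+\cdots+h_{j-1}}\otimes e_{\gamma_{kj}} .
\]
The inductive step $e_{\gamma_{i,j+1}}=[\![e_{\gamma_{ij}},e_j]\!]$ uses only that $e_{\gamma_{ik}}$ supercommutes with $e_j$ for $k<j$. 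For $\gamma_{13}$, the cross term is $\bigl(1-q^{-2(\varepsilon_2,\varepsilon_2)}\bigr)e_1q^{h_2}\otimes e_2$. The term $e_2q^{h_1}\otimes e_1$ that your formula predicts has coefficient $0$.

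So the first leg carries the root $\gamma_{ik}\prec\gamma$ and the second leg carries $\gamma_{kj}\succ\gamma$. This matches your first triangularity statement and \eqref{eq:RTT-Delta-ef}, but contradicts the formula you proposed to prove.

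The orientation matters. In the peeling step you write $F=F'f_\alpha$, with $\alpha$ the $\prec$-minimal root present, and use $(F,E)_{\DrJ}=(F'\otimes f_\alpha,\Delta(E))_{\DrJ}$. This closes only because every second leg of $\Delta(e_\gamma)$ has degree $0$ or a root $\succeq\gamma$. Convexity then makes second-leg degree exactly $\alpha$ force the term with second leg $e_\alpha$ from exactly one factor $e_\alpha$, and $e_\gamma\otimes 1$ from all other factors. With the legs as you wrote them, the induction would only work for monomials ordered by $\prod^{\rightarrow}$, which is not the statement.

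Finally, your product formula for the pairing omits the Koszul sign $(-1)^{\sum_{\gamma\prec\gamma'}m_\gamma m_{\gamma'}|e_\gamma||e_{\gamma'}|}$. It comes from $(a\otimes a',b\otimes b')=(-1)^{|a'||b|}(a,b)(a',b')$; compare $\chi(\mathbf{m})$ in Lemma~\ref{lem:gl-pairing-basis}. This sign is harmless for (b) and for linear independence.
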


Using the dual PBW bases above, one can express the universal $R$-matrix $\fR$ in the completion 
of the tensor square of $\uqgl$:
\begin{equation*}
  \fR = \fR_u \fR_s \in \uqgl \hat{\otimes} \uqgl \,,
\end{equation*}
where (cf.~\cite[Theorem 5.18]{ht1})
\begin{equation}\label{eq:gl-universal-Rs Ru}
  \fR_s = q^{-\sum_{i=1}^{N} (-1)^{\ol{i}} H_i \otimes H_i} \,, \qquad
  \fR_u = \prod_{\gamma \in \Phi^{+}}^{\leftarrow} 
  \left( \sum^{k\geq 0}_{k \leq 1 \text{ if } \gamma \in \Phi_{\bbar{1}} } 
         \frac{e_\gamma^k \otimes f_\gamma^k}{(f_\gamma^k, e_\gamma^k)_{\DrJ}} \right) \,.
\end{equation}
We now evaluate $\fR$ on the \emph{first fundamental representation} 
$\varrho \colon \uqgl \to \End(V_{q})$ (see~\cite[Proposition~A.1]{ht1}). Here, $V_{q} \coloneqq \BC(q) \otimes_{\BC} V$ 
is the $\BC(q)$-superspace obtained by extension of scalars, and $\End(V_{q})$ denotes the $\BC(q)$-superalgebra of all 
$\BC(q)$-linear maps. The representation $\varrho$ is defined via:
\begin{equation*}
  \varrho(e_i) = \sse_i\,, \qquad 
  \varrho(f_i) = \ssf_i\,, \qquad 
  \varrho(q^{\pm H_k}) = q^{\pm \ssH_k} \,,
\end{equation*}
for $1 \leq i < N$ and $1 \leq k \leq N$, where $\sse_i, \ssf_i, \ssH_k$ are the Chevalley-type generators of $\gl(V)$ 
from~(\ref{eq:gl-Lie-action},~\ref{eq:gl-Lie-action-Cartan}). Henceforth, for a diagonal matrix 
$D = \diag(d_{1}, \ldots, d_{N})$, the notation $q^D$ signifies:
\begin{equation}\label{eq:q-D}
  q^D \coloneqq \sum_{1\leq i\leq N} q^{d_i} E_{ii} \quad\in\quad \End(V_{q}) \,.
\end{equation}

The evaluation of the semisimple part $\varrho^{\otimes 2}(\fR_s)$ follows from a direct computation 
(interpreting $q = e^{\hbar}$ in the $\hbar$-adic setting), while the evaluation of the unipotent part 
$\varrho^{\otimes 2}(\fR_u)$ is established in~\cite[Proposition~A.15]{ht1}:
\begin{equation*}
  R_{s} \coloneqq \varrho^{\otimes 2}(\fR_s) = 
    \sum_{1\leq i,j\leq N} q^{-(\varepsilon_{i},\varepsilon_{j})} E_{ii} \otimes E_{jj} \,,\qquad
  R_{u} \coloneqq \varrho^{\otimes 2}(\fR_u) = \ID - (q-q^{-1}) \sum_{i<j} (-1)^{\ol{j}} E_{ij} \otimes E_{ji} \,.
\end{equation*}
Combining these, we obtain the explicit formula for $R \coloneqq \varrho^{\otimes 2}(\fR)$ and its inverse $R^{-1}$ 
in $\End(V_{q})^{\otimes 2}$ as given in~\cite[Remark A.5]{ht1}:
\begin{equation}\label{eq:gl-evaluated-R}
\begin{split}
  R &= \sum_{1\leq i,j\leq N} q^{-(\varepsilon_{i},\varepsilon_{j})} E_{ii} \otimes E_{jj} 
    - (q-q^{-1}) \sum_{i<j} (-1)^{\ol{j}} E_{ij} \otimes E_{ji} \,,\\
  R^{-1} &= \sum_{1\leq i,j\leq N} q^{(\varepsilon_{i},\varepsilon_{j})} E_{ii} \otimes E_{jj} 
    + (q-q^{-1}) \sum_{i<j} (-1)^{\ol{j}} E_{ij} \otimes E_{ji} \,.
\end{split}
\end{equation}

\begin{Rem}\label{rem:YBE}
We note that $\fR$ (as well as its evaluation $R$) satisfies the \emph{Yang--Baxter equation} (cf. Proposition~\ref{prop:R properties}):
\begin{equation}\label{eq:YBE}
  \fR_{(12)}\fR_{(13)}\fR_{(23)} = \fR_{(23)}\fR_{(13)}\fR_{(12)} \quad\in\quad \uqgl^{\hat{\otimes} 3} \,,
\end{equation}
and the following comultiplication properties:
\begin{equation}\label{eq:R comult}
  (\Delta \otimes \Id)\fR = \fR_{(13)}\fR_{(23)} \,,\qquad
  (\Id \otimes \Delta)\fR = \fR_{(13)}\fR_{(12)} \,,
\end{equation}
where the indices indicate the tensor factors on which $\fR$ acts (e.g., 
$\fR_{(12)} = \fR \otimes 1 \in \uqgl^{\hat{\otimes} 3}$, with analogous definitions for $\fR_{(13)}$ and $\fR_{(23)}$). 
This leg-numbering notation will be formalized in the next subsection. The multiplication in $\uqgl^{\hat{\otimes} 3}$ 
follows the standard sign convention for the tensor product of superalgebras, cf.~\eqref{eq:superalgebras tensoring}. 
Moreover, the operator
\begin{equation*}
  \hat{R} \coloneqq \tau_{VV} \circ R \in \End(V_{q})^{\otimes 2}
\end{equation*}
is a $\uqgl$-module homomorphism from $V_{q}^{\otimes 2}$ to itself, and satisfies the \emph{braid relation}:
\begin{equation*}
  \hat{R}_{12}\hat{R}_{23}\hat{R}_{12} = \hat{R}_{23}\hat{R}_{12}\hat{R}_{23} \quad\in\quad 
  \End(V_{q})^{\otimes 3}\,,
\end{equation*}
where the indices follow the same leg-numbering conventions.
\end{Rem}

\subsection{Leg-numbering notation}\label{ssec:leg-numbering}
\

Let $\CA$ be a $\BC(q)$-superalgebra (in many cases, $\CA$ will be the free superalgebra $\CT$ defined 
in~\eqref{eq:gl-rtt-free superalgebra} below, or one of its subquotients). We will frequently encounter superalgebras 
of the form $\CA^{\otimes r} \otimes \End(V_{q})^{\otimes s}$ ($r,s \geq 0$) and morphisms between them. To keep equations 
involving these tensor products concise, we devote this separate subsection to formalizing the leg-numbering notation.

For each $1 \leq a \leq r'$ and $1 \leq b \leq s'$, there are canonical embeddings of $\CA$ and $\End(V_{q})$ as the 
$a$-th and $b$-th tensor arguments of $\CA^{\otimes r'} \otimes \End(V_{q})^{\otimes s'}$ respectively:
\begin{equation*}
\begin{split}
  \iota_{(a)} \colon \CA \hookrightarrow \CA^{\otimes r'} \otimes \End(V_{q})^{\otimes s'}\,,\qquad
  &x \mapsto \big(1^{\otimes (a-1)} \otimes x \otimes 1^{\otimes (r'-a)}\big) \otimes \ID^{\otimes s'} \,, \\
  \iota_{b} \colon \End(V_{q}) \hookrightarrow \CA^{\otimes r'} \otimes \End(V_{q})^{\otimes s'}\,,\qquad
  &Y \mapsto 1^{\otimes r'} \otimes \big(\ID^{\otimes (b-1)} \otimes Y \otimes \ID^{\otimes (s'-b)}\big) \,.
\end{split}
\end{equation*}
By the universal property of tensor products, for any $r \leq r'$, $s \leq s'$, and strictly increasing sequences of indices 
$1 \leq a_1 < \dots < a_r \leq r'$ and $1 \leq b_1 < \dots < b_s \leq s'$, one can construct a canonical ordered embedding:
\begin{equation*}
  \iota_{(a_1,\ldots,a_r),b_1,\ldots,b_s} \colon 
  \CA^{\otimes r} \otimes \End(V_{q})^{\otimes s} \hookrightarrow \CA^{\otimes r'} \otimes \End(V_{q})^{\otimes s'} \,.
\end{equation*}

We uniquely extend this construction to arbitrary sequences of distinct indices $(a_1, \ldots, a_r)$ and $(b_1, \ldots, b_s)$. 
Let $(a'_{1}, \ldots, a'_{r})$ and $(b'_{1}, \ldots, b'_{s})$ be the strictly increasing rearrangements of these sequences. 
There exist unique permutations $\sigma_{r} \in S_{r}$ and $\sigma_{s} \in S_{s}$ such that $a_i = a'_{\sigma_{r}(i)}$ and 
$b_j = b'_{\sigma_{s}(j)}$. Letting $\sigma \in S_{r+s}$ be the image of $(\sigma_r,\sigma_s) \in S_{r} \times S_{s}$ under 
the canonical embedding $S_{r} \times S_{s} \to S_{r+s}$, we define the permuted embedding by precomposing the canonical 
ordered embedding with the natural isomorphism $\tau^{\sigma}$ from~\eqref{rem:tensor braid action}:
\begin{equation*}
  \iota_{(a_{1},\ldots,a_{r}),b_{1},\ldots,b_{s}} \coloneqq 
  \iota_{(a'_{1},\ldots,a'_{r}),b'_{1},\ldots,b'_{s}} \circ \tau^{\sigma} \,.
\end{equation*}
For any sequences of distinct indices $(a_1, \ldots, a_r)$, $(b_1, \ldots, b_s)$, the image of an element 
$M \in \CA^{\otimes r} \otimes \End(V_{q})^{\otimes s}$ under the above permuted embedding is denoted by:
\begin{equation*}
  M_{(a_1,\ldots,a_r),b_1,\ldots,b_s} \coloneqq 
  \iota_{(a_1,\ldots,a_r),b_1,\ldots,b_s}(M) \quad \in \quad \CA^{\otimes r'} \otimes \End(V_{q})^{\otimes s'} \,.
\end{equation*}
In practice, the integers $r'$ and $s'$ will be clear from the context. When $0 \leq r \leq 1$ or $0 \leq s \leq 1$ and 
no confusion can arise, the corresponding subscript is omitted. We note that this convention is compatible with our earlier 
notation in~\eqref{eq:YBE}; for instance, the elements $R_{12}, R_{13}, R_{23}$ are precisely the images of 
$R \in \End(V_{q})^{\otimes 2}$ under the respective embeddings $\iota_{12}, \iota_{13}, \iota_{23}$ into 
$\End(V_{q})^{\otimes 3}$.

Analogously, one can generalize this convention to morphisms $\phi \colon \CA^{\otimes k} \to \CA^{\otimes k'}$ and 
$\psi \colon \End(V_{q})^{\otimes l} \to \End(V_{q})^{\otimes l'}$. For any integers $r$ and $s$ large enough to contain 
the consecutive index blocks $(a, \ldots, a+k-1)$ and $(b, \ldots, b+l-1)$ respectively, we define:
\begin{equation*}
\begin{split}
  \phi_{(a,\ldots,a+k-1)} \coloneqq
  \big(\Id^{\otimes (a-1)} \otimes\, \phi \otimes \Id^{\otimes (r-a-k+1)}\big) \otimes \Id^{\otimes s}
  \colon &\CA^{\otimes r} \otimes \End(V_{q})^{\otimes s} \to \CA^{\otimes (r-k+k')} \otimes \End(V_{q})^{\otimes s} \,,\\
  \psi_{b,\ldots,b+l-1} \coloneqq
  \Id^{\otimes r} \otimes \big(\Id^{\otimes (b-1)} \otimes\, \psi \otimes \Id^{\otimes (s-b-l+1)}\big)
  \colon &\CA^{\otimes r} \otimes \End(V_{q})^{\otimes s} \to \CA^{\otimes r} \otimes \End(V_{q})^{\otimes (s-l+l')}\,.
\end{split}
\end{equation*}
Again, these subscripts will frequently be omitted when the context is clear and no confusion can arise.


\subsection{The superalgebra \texorpdfstring{$U(R)$}{U(R)}}
\

Following the construction in~\cite{frt}, we now define the $\RLL$-realization $U(R)$ of the general linear quantum supergroup, 
corresponding to the matrix $R$:

\begin{Def}\label{def:rll-algebra}
The $\BC(q)$-superalgebra $U(R)$ is generated by the elements $\{l^{+}_{ij}, l^{-}_{ji}\}_{1 \leq i \leq j \leq N}$ 
subject to the relations~\eqref{eq:gl-rtt-diag-rel}--\eqref{eq:gl-rtt-rel-2} below. The $\BZ_{2}$-grading is 
compatible via~\eqref{eq:gl-grading-compatible} with the $Q$-grading determined~by
\begin{equation}\label{eq:uR-Q-grading}
  \deg(l^{\pm}_{ij}) = \varepsilon_{i} - \varepsilon_{j} \,.
\end{equation}
\end{Def}

To rigorously formulate the defining relations, we first consider the free $\BC(q)$-superalgebra
\begin{equation}\label{eq:gl-rtt-free superalgebra}
  \CT \coloneqq \BC(q) \big\langle l^{+}_{ij}, l^{-}_{ji} \,\big|\, 1 \leq i \leq j \leq N \big\rangle \,,
\end{equation}
equipped with the same $Q$-grading and $\BZ_{2}$-grading as in~\eqref{eq:uR-Q-grading}. By adopting the convention that 
$l^{+}_{ij} = 0 = l^{-}_{ji}$ for $i>j$, we now organize these generators into formal matrices
\begin{equation*}
  L^{\pm} \coloneqq \sum_{1\leq i,j\leq N} l^{\pm}_{ij} \otimes E_{ij} \quad\in\quad \CT \otimes \End(V_{q}) \,.
\end{equation*}

\noindent
Following notations of Subsection~\ref{ssec:leg-numbering}, the algebra $U(R)$ is the quotient of $\CT$ by 
the following relations:
\begin{equation}\label{eq:gl-rtt-diag-rel}
  l^{\pm}_{ii}l^{\mp}_{ii} = 1 \qquad \text{for } 1 \leq i \leq N \,,
\end{equation}
together with all coefficients in $\CT$ of the following matrix equations evaluated in the superalgebra 
$\CT \otimes \End(V_{q})^{\otimes 2}$:
\begin{align}
  R_{12}L^{\pm}_{1}L^{\pm}_{2} &= L^{\pm}_{2}L^{\pm}_{1}R_{12} \,, \label{eq:gl-rtt-rel-1}\\
  R_{12}L^{+}_{1}L^{-}_{2} &= L^{-}_{2}L^{+}_{1}R_{12} \,. \label{eq:gl-rtt-rel-2}
\end{align}

\begin{Rem}
(a) As the relations \eqref{eq:gl-rtt-diag-rel}--\eqref{eq:gl-rtt-rel-2} are homogeneous, \eqref{eq:uR-Q-grading} 
indeed defines a $Q$-grading on $U(R)$.

\smallskip
\noindent
(b) As $L^{+}$ and $L^{-}$ are upper and lower triangular matrices, respectively, and their diagonal entries are 
invertible by~\eqref{eq:gl-rtt-diag-rel}, they are invertible matrices in the superalgebra $U(R) \otimes \End(V_{q})$.
\end{Rem}

Furthermore, $U(R)$ admits the structure of a Hopf superalgebra. To construct the latter, we shall first introduce 
the bialgebra structure on the above free superalgebra $\CT$. Let $W$ be the $\BC(q)$-superspace with basis 
$\{l^{+}_{ij}, l^{-}_{ji}\}_{1 \leq i \leq j \leq N}$, equipped with the $Q$-grading and the compatible $\BZ_{2}$-grading 
given by~\eqref{eq:uR-Q-grading}. Then the free superalgebra $\CT$ is isomorphic to the tensor superalgebra $T(W)$ of $W$ 
over the base field $\BC(q)$.

We endow $W$ with a supercoalgebra structure via the following matrix equations on the generators:
\begin{equation}\label{eq:gl-rtt-coalg}
  \Delta(L^{\pm}) = L^{\pm}_{(1)} L^{\pm}_{(2)} \in W^{\otimes 2} \otimes \End(V_{q}) \,,\qquad
  \epsilon(L^{\pm}) = \ID \in \End(V_{q}) \,,
\end{equation}
where we use the above leg-numbering notation for the superspace $W$. Component-wise these yield
\begin{equation}\label{eq:gl-rtt-coalg-component}
  \Delta(l^{\pm}_{ij}) = \sum_{1\leq k\leq N} (-1)^{(\ol{i}+\ol{k})(\ol{k}+\ol{j})} \, l^{\pm}_{ik} \otimes l^{\pm}_{kj} \,, 
  \qquad \epsilon(l^{\pm}_{ij}) = \delta_{ij}  \qquad \text{for all} \quad 1 \leq i,j \leq N \,.
\end{equation}
Since the supercoalgebra structure morphisms~\eqref{eq:gl-rtt-coalg-component} preserve the $Q$-grading, 
$W$ is a $Q$-graded supercoalgebra. By the universal property of $\CT = T(W)$, the morphisms
\begin{equation*}
  W \xrightarrow{\Delta} W \otimes W \hookrightarrow \CT \otimes \CT \,,\qquad  W \xrightarrow{\epsilon} \BC(q)
\end{equation*}
can be lifted uniquely to superalgebra morphisms
\begin{equation*}
  \Delta \colon \CT \to \CT \otimes \CT \,,\qquad \epsilon \colon \CT \to \BC(q) \,,
\end{equation*}
endowing $\CT$ with a $Q$-graded superbialgebra structure (the verification of coassociativity for $\Delta$ and 
the compatibility of the two maps is straightforward).

We claim that this superbialgebra structure on $\CT$ descends to a superbialgebra structure (in fact, a Hopf superalgebra 
structure) on its quotient $U(R)$. To facilitate this verification, we record a few identities using the leg-numbering notation 
from Subsection~\ref{ssec:leg-numbering}. Let $\mu$ denote the multiplication morphism of $\CA$, which is either $\CT$ or 
one of its subquotients. We then have:
\begin{equation}\label{eq:matrix notation mult}
  \mu (L^{\nu}_{(1),1} L^{\eta}_{(2),2}) = L^{\nu}_{1}L^{\eta}_{2} \in \CA \otimes \End(V_{q})^{\otimes 2} \,,
\end{equation}
where $\nu,\eta \in \{\pm\}$. Since the braiding $\tau \colon A \otimes B \to B \otimes A$ itself is a superalgebra morphism 
for any superalgebras $A$ and $B$, we also obtain: 
\begin{equation}\label{eq:matrix notation leg swap}
\begin{split}
  \tau_{(12)} (L^{\nu}_{(1),i}L^{\eta}_{(2),j}) &=
  \tau_{(12)} (L^{\nu}_{(1),i}) \tau_{(12)} (L^{\eta}_{(2),j}) = 
  L^{\nu}_{(2),i}L^{\eta}_{(1),j}\,,\\
  \tau_{12} (L^{\nu}_{(i),1}L^{\eta}_{(j),2}) &=
  \tau_{12} (L^{\nu}_{(i),1}) \tau_{12} (L^{\eta}_{(j),2}) =
  L^{\nu}_{(i),2}L^{\eta}_{(j),1}\,,
\end{split}
\end{equation}
where $i$ and $j$ are not necessarily distinct. Furthermore, since $L^{\pm} \in \CA \otimes \End(V_{q})$ have 
$\BZ_{2}$-degree $\bbar{0}$, we obtain the following commutativity relation between matrices acting on disjoint 
tensor arguments:
\begin{equation}\label{eq:matrix notation mult order swap}
  L^{\nu}_{(i),a}L^{\eta}_{(j),b} = L^{\eta}_{(j),b}L^{\nu}_{(i),a} \qquad \text{for} \quad a \neq b \text{ and } i \neq j \,.
\end{equation}

The verification of the compatibility of~\eqref{eq:gl-rtt-coalg} with the relations~\eqref{eq:gl-rtt-rel-1} 
and~\eqref{eq:gl-rtt-rel-2} is now straightforward. Let $\CI$ be the two-sided ideal of $\CT$ generated by the 
defining relations~\eqref{eq:gl-rtt-diag-rel}--\eqref{eq:gl-rtt-rel-2}. For any $(\nu,\eta) \in \{ (+,+), (-,-), (+,-) \}$, 
we have:
\begin{align*}
  \Delta(R_{12}L^{\nu}_{1}L^{\eta}_{2})
  &= R_{12} \big( L^{\nu}_{(1),1} L^{\nu}_{(2),1} \big) \big( L^{\eta}_{(1),2} L^{\eta}_{(2),2} \big)
  \overset{\eqref{eq:matrix notation mult order swap}}{=} 
   \big( R_{12} L^{\nu}_{(1),1} L^{\eta}_{(1),2} \big) \big( L^{\nu}_{(2),1} L^{\eta}_{(2),2} \big) \,, \\
  \Delta(L^{\eta}_{2}L^{\nu}_{1}R_{12})
  &= \big( L^{\eta}_{(1),2} L^{\eta}_{(2),2} \big) \big( L^{\nu}_{(1),1} L^{\nu}_{(2),1} \big) R_{12}
  \overset{\eqref{eq:matrix notation mult order swap}}{=} 
   \big( L^{\eta}_{(1),2} L^{\nu}_{(1),1} \big) \big( L^{\eta}_{(2),2} L^{\nu}_{(2),1} R_{12} \big) \,.
\end{align*}
Subtracting these expansions and evoking the relations~\eqref{eq:gl-rtt-rel-1} and~\eqref{eq:gl-rtt-rel-2} yields:
\begin{equation}\label{eq:rtt rel bialgebra ideal}
  \Delta(R_{12}L^{\nu}_{1}L^{\eta}_{2} - L^{\eta}_{2}L^{\nu}_{1}R_{12}) \in
  (\CI \otimes \CT + \CT \otimes \CI) \otimes \End(V_{q})^{\otimes 2}\,.
\end{equation}
Along with the straightforward evaluation of~\eqref{eq:gl-rtt-coalg-component} on~\eqref{eq:gl-rtt-diag-rel}, this confirms 
that $\CI$ is a superbialgebra ideal, and that $\Delta$ indeed preserves the defining relations of $U(R)$. The verification 
for the counit $\epsilon$ proceeds analogously.

We note that in contrast to $\CT$, the superalgebra $U(R)$ actually admits the structure of a Hopf superalgebra, where 
the antipode is uniquely determined by the following formula on the generators:
\begin{equation}\label{eq:gl-rtt-antipode}
  S(L^{\pm}) = (L^{\pm})^{-1} \in U(R) \otimes \End(V_{q}) \,.
\end{equation}

\begin{Rem}\label{rem:U(R) involution}
Recall the notion of the \emph{supertranspose}. For any superalgebra $\CA$ and matrix 
$X = \sum_{i,j=1}^{N} x_{ij} \otimes E_{ij} \in \CA \otimes \End(V_{q})$, its supertranspose is defined as
\begin{equation}\label{eq:supertranspose}
  X^{\st} \coloneqq \sum_{1\leq i,j\leq N} (-1)^{\ol{j}(\ol{i}+\ol{j})} x_{ij} \otimes E_{ji} \,.
\end{equation}
Suppose $X = \sum_{i,j=1}^{N} x_{ij} \otimes E_{ij}$ and $Y = \sum_{i,j=1}^{N} y_{ij} \otimes E_{ij}$ are elements in
$\CA \otimes \End(V_{q})$ that are homogeneous with respect to the total $\BZ_{2}$-grading, and whose entries satisfy 
the following supercommutativity condition:
\begin{equation}\label{eq:matrix coeff supercommute}
  [x_{ik},y_{kj}] = 0 \qquad \text{for all} \qquad 1 \leq i,j,k \leq N \,,
\end{equation}
where $[-,-]$ denotes the supercommutator~\eqref{eq:super commutator}. Under this condition, the following identity holds:
\begin{equation}\label{eq:supertranspose product}
  (XY)^{\st} = (-1)^{|X||Y|} Y^{\st} X^{\st} \,.
\end{equation}
Following the notation of Subsection~\ref{ssec:leg-numbering}, for any element $X \in \CA \otimes \End(V_{q})^{\otimes s}$ 
and $1 \leq i \leq s$, we let $X^{\st_{i}}$ denote the \emph{partial supertranspose} applied to the $i$-th factor of 
$\End(V_{q})$. We note that these partial supertranspose operations acting on different factors mutually commute 
($(X^{\st_{i}})^{\st_{j}} = (X^{\st_{j}})^{\st_{i}}$ for $i \neq j$) and satisfy (cf.~\eqref{rem:tensor braid action}):
\begin{equation}\label{eq:partial supertranspose braiding}
  \tau^{\sigma}(X^{\st_{i}}) = (\tau^{\sigma}(X))^{\st_{\sigma(i)}} \qquad \mathrm{for\ any} \quad \sigma \in S_{s} \,.
\end{equation}
The \emph{full supertranspose}, obtained by applying the partial supertranspose to all $s$ matrix factors, is denoted by 
$X^{\st} \coloneqq X^{\st_{1} \cdots \st_{s}}$. By~\eqref{eq:partial supertranspose braiding} and the mutual commutativity 
of partial supertransposes, we have:
\begin{equation}\label{eq:full supertranspose braiding}
  \tau^{\sigma}(X^{\st}) = (\tau^{\sigma}(X))^{\st} \,.
\end{equation}
Moreover, for homogeneous elements $X, Y \in \CA \otimes \End(V_{q})^{\otimes s}$ with respect to the total $\BZ_{2}$-grading, 
whose entries satisfy the supercommutativity condition~\eqref{eq:matrix coeff supercommute}, the full supertranspose obeys 
the exact same identity~\eqref{eq:supertranspose product}. 
Furthermore, for any elements $X, Y \in \CA \otimes \End(V_{q})$ (not necessarily satisfying~\eqref{eq:matrix coeff supercommute}), 
we have in $\CA \otimes \End(V_{q})^{\otimes s}$: 
\begin{equation}\label{eq:supertranspose different leg}
  (X_{i}Y_{j})^{\st} = X_{i}^{\st}Y_{j}^{\st} \qquad \forall\, 1 \leq i \neq j \leq s \,.
\end{equation}

Now consider the defining relations~\eqref{eq:gl-rtt-rel-1} and~\eqref{eq:gl-rtt-rel-2}:
\begin{equation*}
  R_{12}L^{\nu}_{1}L^{\eta}_{2} = L^{\eta}_{2}L^{\nu}_{1}R_{12} \,,
\end{equation*}
where $(\nu,\eta) \in \{ (+,+), (-,-), (+,-) \}$. Since $R_{12}$ has coefficients in the base field, its entries supercommute 
with those of $L^{\nu}_{1}L^{\eta}_{2}, L^{\eta}_{2}L^{\nu}_{1}$. Thus, applying the identity~\eqref{eq:supertranspose product} 
to the full supertranspose of this relation yields:
\begin{equation*}
  (L^{\nu}_{1}L^{\eta}_{2})^{\st} (R_{12})^{\st} = (R_{12})^{\st} (L^{\eta}_{2}L^{\nu}_{1})^{\st} \,.
\end{equation*}
Furthermore, direct computation using the explicit formula~\eqref{eq:gl-evaluated-R} yields
\begin{equation}\label{eq:gl-st-vs-op}
  (R_{12})^{\st} 
  = \sum_{1\leq i,j \leq N} q^{-(\varepsilon_{i},\varepsilon_{j})} E_{ii} \otimes E_{jj} 
    - (q-q^{-1}) \sum_{i<j} (-1)^{\ol{i}} E_{ji} \otimes E_{ij}
  = R_{21} \,,
\end{equation}
and by~\eqref{eq:supertranspose different leg} we obtain:
\begin{equation*}
  (L^{\nu}_{1})^{\st}(L^{\eta}_{2})^{\st}R_{21} = R_{21}(L^{\eta}_{2})^{\st}(L^{\nu}_{1})^{\st} \,.
\end{equation*}
Applying the permutation operator $\tau_{12}$ to both sides gives 
(cf.~\eqref{eq:matrix notation leg swap}, \eqref{eq:full supertranspose braiding}):
\begin{equation*}
  (L^{\nu}_{2})^{\st}(L^{\eta}_{1})^{\st}R_{12} = R_{12}(L^{\eta}_{1})^{\st}(L^{\nu}_{2})^{\st} \,.
\end{equation*}
As $(L^{+})^{\st}$ and $(L^{-})^{\st}$ are lower and upper triangular, respectively, we thus get a superalgebra isomorphism 
\begin{equation}\label{eq:transpose-iso}
  \omega_{R}\colon U(R)\iso U(R) \qquad \mathrm{with} \qquad L^{\pm} \mapsto (L^{\mp})^{\st} \,.
\end{equation}
Similarly to Remark~\ref{rem:DrJ involution}, it gives rise to the Hopf superalgebra isomorphism 
$\omega_{R}\colon U(R)\iso U(R)^{\copp}$ since 
\begin{equation*}
  \Delta((L^{\pm})^{\st}) = 
  (\Delta(L^{\pm}))^{\st} \overset{\eqref{eq:supertranspose product}}{=} (L_{(2)}^{\pm})^{\st}(L_{(1)}^{\pm})^{\st} \,.
\end{equation*}
It is immediate that $\omega_{R}$ restricts to superalgebra isomorphisms between the subbialgebras 
$U^{+}(R)$ and $U^{-}(R)$ introduced in the next subsection, cf.\ Remark~\ref{rem:DrJ involution subalgebras}.
\end{Rem}


\subsection{Generalized double construction of \texorpdfstring{$U(R)$}{U(R)}}
\

Let $U^{+}(R)$ and $U^{-}(R)$ be the superalgebras generated respectively by $\{l^{+}_{ij}\}_{1 \leq i \leq j \leq N}$ and 
$\{l^{-}_{ji}\}_{1 \leq i \leq j \leq N}$, equipped with the $Q$-grading~\eqref{eq:uR-Q-grading} and the compatible 
$\BZ_{2}$-grading, subject to the respective relations~\eqref{eq:gl-rtt-rel-1}. The same formulas~\eqref{eq:gl-rtt-coalg} 
endow both $U^{\pm}(R)$ with superbialgebra structures. Similarly to the case of $\uqgl$, the superalgebra $U(R)$ can be 
realized as a quotient of the double of the superbialgebra pair $(U^{+}(R), U^{-}(R))$.

\begin{Prop}\label{prop:RTT-pairing_finite} 
(a) There exists a unique skew-pairing (cf.~\eqref{eq:skew pairing structural property})
\begin{equation}\label{eq:gl-RTT-skew-pairing}
  (\cdot,\cdot)_{R} \colon U^{+}(R) \times U^{-}(R) \to \BC(q)
\end{equation}
satisfying the following property on the generators:
\begin{equation}\label{eq:gl-RTT-skew-pairing-gen}
  \sum_{1\leq i,j,k,l\leq N} (-1)^{(\ol{i}+\ol{j})(\ol{k}+\ol{l})} (l^{+}_{ij}, l^{-}_{kl})_{R} \, E_{ij} \otimes E_{kl} 
  = R\,.
\end{equation}

\noindent
(b) The superalgebra $U(R)$ is isomorphic to the quotient of the generalized double $\CD_{R} \coloneqq \CD(U^{+}(R),U^{-}(R))$ 
modulo the relations $l^{\pm}_{ii}l^{\mp}_{ii} = 1$ for all $1 \leq i \leq N$. 
\end{Prop}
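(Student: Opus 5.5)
The plan is to construct the pairing first on the free bialgebras and then show that it descends, after which part (b) is a comparison of presentations.

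\textbf{Existence in (a).} Let $\CT^{+}$ and $\CT^{-}$ be the free superbialgebras on $\{l^{+}_{ij}\}$ and $\{l^{-}_{ji}\}$, with the coproduct~\eqref{eq:gl-rtt-coalg}. Prescribe $(l^{+}_{ij},l^{-}_{kl})_R$ on generators by~\eqref{eq:gl-RTT-skew-pairing-gen}. Extend it to $\CT^{+}\times\CT^{-}$ by the skew-pairing axioms~\eqref{eq:skew pairing structural property}; this amounts to the matrix identities
\begin{equation*}
  (L^{+}_{1}L^{+}_{2}, L^{-}_{3}) = R_{13}R_{23} \,,\qquad (L^{+}_{1}, L^{-}_{2}L^{-}_{3}) = R_{13}R_{12} \,,
\end{equation*}
where the order of the factors follows the paper's convention. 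Since both algebras are free, with multiplicative counit $\epsilon(L^{\pm})=\ID$, this extension exists and is unique. Uniqueness in (a) is then clear, because $U^{\pm}(R)$ are generated by the $l^{\pm}$.

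\textbf{Descent to $U^{\pm}(R)$.} It suffices to check that the defining ideal of $U^{+}(R)$ pairs to zero with $\CT^{-}$, and symmetrically. By the skew-pairing axioms, the set of elements of $\CT^{+}$ annihilating $\CT^{-}$ is a two-sided ideal. Because the coproduct of $L^{-}$ is matrix-multiplicative, it is enough to test the relation $R_{12}L^{+}_{1}L^{+}_{2}-L^{+}_{2}L^{+}_{1}R_{12}$ against the single generator matrix $L^{-}_{3}$; an induction on monomial length handles the rest. This pairing gives
\begin{equation*}
  R_{12}R_{13}R_{23}-R_{23}R_{13}R_{12}=0 \,,
\end{equation*}
which is the Yang--Baxter equation of Remark~\ref{rem:YBE}. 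The same argument, using the second identity above, disposes of the $L^{-}L^{-}$ relation.

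\textbf{Main obstacle.} I expect the hardest step to be the Koszul-sign bookkeeping: the signs $(-1)^{(\ol{i}+\ol{j})(\ol{k}+\ol{l})}$ in~\eqref{eq:gl-RTT-skew-pairing-gen} must be matched with the leg-numbering and braiding conventions so that the pairing of matrix products really produces $R_{13}R_{23}$ in that order. To control this, I would check the sign conventions with the supermatrix identities~\eqref{eq:matrix notation mult}--\eqref{eq:matrix notation mult order swap} rather than on entries.

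\textbf{Part (b).} By Proposition~\ref{prop:gen double}, $\CD_R$ is the free product $U^{+}(R)*U^{-}(R)$ modulo the cross relations~\eqref{eq:gl-DJ-cross-rel}. Arguing as in Proposition~\ref{prop:DJ-pairing_finite}(b) via Proposition~\ref{prop:double gen rel}, it is enough to impose these relations on the generators, that is, for $x=L^{+}_{1}$ and $y=L^{-}_{2}$. Using $\Delta(L^{\pm})=L^{\pm}_{(1)}L^{\pm}_{(2)}$ and the pairing values $R$, the cross relation becomes $R_{12}L^{+}_{1}L^{-}_{2}=L^{-}_{2}L^{+}_{1}R_{12}$, which is exactly~\eqref{eq:gl-rtt-rel-2}. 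Adding the relations $l^{\pm}_{ii}l^{\mp}_{ii}=1$ then recovers the full presentation of $U(R)$ from Definition~\ref{def:rll-algebra}, which proves (b).
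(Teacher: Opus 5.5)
Your plan has the same architecture as the paper's proof. You extend the generator values to a skew-pairing of the free bialgebras, kill the $RLL$ relations with the Yang--Baxter equation, and get (b) by imposing the cross relations only on generators. Your identities $(L^{+}_{1}L^{+}_{2},L^{-}_{3})=R_{13}R_{23}$ and $(L^{+}_{1},L^{-}_{2}L^{-}_{3})=R_{13}R_{12}$ are correct.

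The one step that does not go through as justified is the passage from the single test against $L^{-}_{3}$ to all of $\CT^{-}$. Pairing an element of $\CT^{+}$ against a product $bb'$ in $\CT^{-}$ is governed by $(a,bb')=(\Delta^{\opp}(a),b\otimes b')$, that is, by the coproduct on the $\CT^{+}$ side. So the matrix-multiplicativity of $\Delta(L^{-})$ is not what makes an induction on the length of $b$ work. What is needed, and what the paper invokes, is that the relation spans a coideal. For $X\coloneqq R_{12}L^{+}_{1}L^{+}_{2}-L^{+}_{2}L^{+}_{1}R_{12}$, the computation behind~\eqref{eq:rtt rel bialgebra ideal} gives
\[
  \Delta(X)=X_{(1)}\,L^{+}_{(2),1}L^{+}_{(2),2}+L^{+}_{(1),2}L^{+}_{(1),1}\,X_{(2)} \,,
\]
and $\epsilon(X)=R_{12}-R_{12}=0$. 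Hence every term of $(\Delta^{\opp}(X),y\otimes M')$ contains an entry of $X$ paired either with the generator $y$ or with the shorter monomial $M'$. By induction on length, the entries of $X$ annihilate $\CT^{-}$ once they annihilate $1$ and the generators, and the generator case is your Yang--Baxter check.

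If you instead want to use $\Delta(L^{-})$, you do get $(L^{+}_{1}L^{+}_{2},M)=T_{1}T_{2}$ with $T_{a}=(L^{+}_{a},M)$. But you then still need the factorization $T_{a}=R_{a,n+2}\cdots R_{a3}$ for $M=L^{-}_{3}\cdots L^{-}_{n+2}$, which again comes from $\Delta(L^{+})$ and the second skew axiom. After that you push $R_{12}$ through with the Yang--Baxter equation $R_{12}R_{1k}R_{2k}=R_{2k}R_{1k}R_{12}$ once for each $k$. That ``train'' argument is valid, but it needs this factorization, not just the multiplicativity you cite. With either repair, and symmetrically for the $L^{-}L^{-}$ relation, the proposal is complete.
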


\begin{Rem}\label{rem:tensor index convention 2}
Using the notation $\sigma \coloneqq (\cdot,\cdot)_{R}$, the formula~\eqref{eq:gl-RTT-skew-pairing-gen} can be written 
concisely as a matrix relation:
\begin{equation}\label{eq:gl-RTT-skew-pairing-matrix}
  \sigma (L^{+}_{(1),1} L^{-}_{(2),2}) = R\,,
\end{equation}
where we employ the leg-numbering notation defined in Subsection~\ref{ssec:leg-numbering}. 
We will frequently use this below.
\end{Rem}

\begin{Rem}\label{rem:gl-rtt-convention}
We also note that~\eqref{eq:gl-RTT-skew-pairing-gen} is compatible with the convention $l^{+}_{ij} = 0 = l^{-}_{ji}$ 
for $i>j$, which is evident from the explicit formula for $R$ presented in~\eqref{eq:gl-evaluated-R}.
\end{Rem}

\begin{proof}[Proof of Proposition~\ref{prop:RTT-pairing_finite}] 
(a) Recall the supercoalgebra $W$ defined via~\eqref{eq:gl-rtt-coalg}. We note that the subspaces
\begin{equation*}
  W^{+} \coloneqq \Span \{l^{+}_{ij} \mid 1 \leq i \leq j \leq N \} \,,\qquad
  W^{-} \coloneqq \Span \{l^{-}_{ji} \mid 1 \leq i \leq j \leq N \} \,,
\end{equation*}
are subcoalgebras of $W$. Define a bilinear pairing $\sigma \colon W^{+} \times W^{-} \to \BC(q)$ via the matrix 
equations~\eqref{eq:gl-RTT-skew-pairing-matrix}. We note that this pairing is of $Q$-degree zero 
(cf.~\eqref{eq:skew-pairing-degree-zero}), and equivalent to a morphism $W^{+} \to (W^{-})^{*}$ (or $W^{-} \to (W^{+})^{*}$) 
in $\sVect$. Since the dual space $(W^{-})^{*}$ of a supercoalgebra $W^{-}$ has a canonical superalgebra structure, 
the universal property of the tensor superalgebra $T(W^{+})$ yields a superalgebra morphism
\begin{equation}\label{eq:RTT pairing extension 1}
  T(W^{+}) \to (W^{-})^{*} \,.
\end{equation}
We shall now reverse the roles of the two spaces above. Since $T(W^{+})$ is locally finite with respect to the $Q$-grading, 
its graded dual $T(W^{+})^{\vee}$ is also a superbialgebra. Taking the graded dual of the superalgebra 
morphism~\eqref{eq:RTT pairing extension 1}, we obtain a supercoalgebra morphism $W^{-} \to T(W^{+})^{\vee}$. Because
$T(W^{+})^{\vee}$ and its opposite superalgebra $(T(W^{+})^{\vee})^{\opp}$ share the same supercoalgebra structure, we may 
view the above map as a supercoalgebra morphism taking values in $(T(W^{+})^{\vee})^{\opp}$. Invoking the universal property 
of the tensor superalgebra on $W^{-}$ then yields a unique extension to a superbialgebra morphism
\begin{equation*}
  T(W^{-}) \to (T(W^{+})^{\vee})^{\opp} \,,
\end{equation*}
which is exactly equivalent to a skew-pairing 
(cf.~Remarks~\ref{rem:dual pairing equivalent morphisms}, \ref{rem:skew pairing dual pairing of opp})
\begin{equation}\label{eq:RTT pairing extension 2}
  \sigma \colon T(W^{+}) \times T(W^{-}) \to \BC(q) \,.
\end{equation}

Now we prove that~\eqref{eq:RTT pairing extension 2} factors through the defining relations~\eqref{eq:gl-rtt-rel-1} on 
both tensor factors, hence inducing a skew-pairing on $U^{+}(R) \times U^{-}(R)$. We note that the last two identities 
of the skew-pairing~\eqref{eq:skew pairing structural property} evaluated on the generators can be written concisely in  
leg-numbering notation as follows: 
\begin{equation}\label{eq:skew pairing structural property matrix}
\begin{aligned}
  \sigma (L^{+}_{(1),1}L^{+}_{(1),2}L^{-}_{(2),3})
  &= (\sigma \otimes \sigma)(L^{+}_{(1),1}L^{-}_{(2),3}L^{+}_{(3),2}L^{-}_{(4),3}) \,,\\
  \sigma (L^{+}_{(1),1}L^{-}_{(2),2}L^{-}_{(2),3})
  &= (\sigma \otimes \sigma)(L^{+}_{(1),1}L^{-}_{(2),2}L^{+}_{(3),1}L^{-}_{(4),3}) \,,
\end{aligned}
\end{equation}
where $\sigma \otimes \sigma \colon \CT^{\otimes 4} \to \BC(q)$ is considered to have codomain $\BC(q)$ after the 
identification $\BC(q)^{\otimes 2} \simeq \BC(q)$. For example, the first formula 
of~\eqref{eq:skew pairing structural property matrix} follows from:
\begin{align*}
  \sigma \Big( \mu_{(12)} (L^{+}_{(1),1}L^{+}_{(2),2}L^{-}_{(3),3}) \Big)
  &=
   ((\sigma \otimes \sigma) \circ \tau_{(23)}) \Big( \Delta_{(3)} (L^{+}_{(1),1}L^{+}_{(2),2}L^{-}_{(3),3}) \Big)\\
   \overset{\eqref{eq:matrix notation mult}}{\Rightarrow}\qquad
   \sigma (L^{+}_{(1),1}L^{+}_{(1),2}L^{-}_{(2),3})
  &=
   ((\sigma \otimes \sigma) \circ \tau_{(23)}) (L^{+}_{(1),1}L^{+}_{(2),2}L^{-}_{(3),3}L^{-}_{(4),3})\\
   \overset{\eqref{eq:matrix notation leg swap}}{\Rightarrow}\qquad
   \sigma (L^{+}_{(1),1}L^{+}_{(1),2}L^{-}_{(2),3})
  &=
   (\sigma \otimes \sigma)(L^{+}_{(1),1}L^{+}_{(3),2}L^{-}_{(2),3}L^{-}_{(4),3})\\
   \overset{\eqref{eq:matrix notation mult order swap}}{\Rightarrow}\qquad
   \sigma (L^{+}_{(1),1}L^{+}_{(1),2}L^{-}_{(2),3})
  &=
   (\sigma \otimes \sigma)(L^{+}_{(1),1}L^{-}_{(2),3}L^{+}_{(3),2}L^{-}_{(4),3}) \,.
\end{align*}

Using these properties, we obtain:
\begin{align*}
  \sigma \big( R_{12} L^{+}_{(1),1}L^{+}_{(1),2}L^{-}_{(2),3} \big)
  &= R_{12} \, \sigma \big( L^{+}_{(1),1}L^{+}_{(1),2}L^{-}_{(2),3} \big)\\
  &= R_{12} \, (\sigma \otimes \sigma)(L^{+}_{(1),1}L^{-}_{(2),3}L^{+}_{(3),2}L^{-}_{(4),3})
   = R_{12} R_{13} R_{23}\,,
\end{align*}
and analogously:
\begin{align*}
   \sigma \big( L^{+}_{(1),2}&L^{+}_{(1),1}R_{12}L^{-}_{(2),3} \big)
  = \sigma \big( L^{+}_{(1),2}L^{+}_{(1),1}L^{-}_{(2),3} \big) R_{12}
   \overset{\eqref{eq:matrix notation leg swap}}{=} \tau_{12} \Big( \sigma \big( L^{+}_{(1),1}L^{+}_{(1),2}L^{-}_{(2),3} \big) \Big) R_{12}\\
  &= \tau_{12} \Big( (\sigma \otimes \sigma) (L^{+}_{(1),1}L^{-}_{(2),3}L^{+}_{(3),2}L^{-}_{(4),3}) \Big) R_{12}
  = \tau_{12}(R_{13} R_{23}) R_{12}
  \overset{\eqref{eq:matrix notation leg swap}}{=} R_{23} R_{13} R_{12}\,.
\end{align*}
Comparing these results, we note that the equality follows directly from the Yang--Baxter 
equation~\eqref{eq:YBE}. This implies that the expression $R_{12} L^{+}_{1}L^{+}_{2} - L^{+}_{2}L^{+}_{1}R_{12}$ pairs trivially 
with every generator of $T(W^{-})$. A computation completely analogous to that of~\eqref{eq:rtt rel bialgebra ideal} shows 
that the two-sided ideal of $T(W^{+})$ generated by the positive sign case of relation~\eqref{eq:gl-rtt-rel-1} is also a 
superbialgebra ideal. Consequently, $R_{12} L^{+}_{1}L^{+}_{2} - L^{+}_{2}L^{+}_{1}R_{12}$ pairs trivially with every element 
of $T(W^{-})$, and therefore lies in the left kernel of $\sigma$. The verification for the right kernel proceeds analogously,
completing the proof of part~(a).

(b) The proof is completely parallel to that of Proposition~\ref{prop:DJ-pairing_finite}(b). We note again that the 
double $\CD_{R}$ is the quotient of the free product superalgebra $U^{+}(R) * U^{-}(R)$ modulo the 
cross-relations~\eqref{eq:gl-DJ-cross-rel}. Following the notation in Proposition~\ref{prop:gen double}, let $\CI$ be 
the two-sided ideal generated by all elements $u_{x,y}$ for $\BZ_{2}$-homogeneous $x \in U^{+}(R)$ and $y \in U^{-}(R)$. 
Since $U^{+}(R)$ and $U^{-}(R)$ are respectively graded by $Q^{+}$ and $-Q^{+}$, the explicit comultiplication 
formula~\eqref{eq:gl-rtt-coalg-component}, together with Proposition~\ref{prop:double gen rel} 
(see Remark~\ref{rem:reduced_generators}), implies that $\CI$ is generated by the elements $u_{x,y}$ as $x$ and $y$ range over 
the generators of $U^{+}(R)$ and $U^{-}(R)$, respectively. 
Applying~\eqref{eq:gl-DJ-cross-rel} to pairs of generators $(l^{+}_{ij},l^{-}_{kl})$ and organizing them into a matrix yields:
\begin{align*}
  \sum_{i,j,k,l}& u_{l^{+}_{ij},l^{-}_{kl}} \otimes E_{ij} \otimes E_{kl} \\
  &= \sum_{i,j,k,l} \sum_{a,b} (-1)^{(\ol{b}+\ol{l})(\ol{i}+\ol{j})}
  \cdot (-1)^{(\ol{i}+\ol{a})(\ol{a}+\ol{j})}(-1)^{(\ol{k}+\ol{b})(\ol{b}+\ol{l})}\\
  &\hspace{50pt} \cdot \left\{
   (-1)^{(\ol{i}+\ol{a})(\ol{k}+\ol{b})} (l^{+}_{ia},l^{-}_{kb})_{R} \cdot l^{+}_{aj}l^{-}_{bl}
   - (l^{+}_{aj},l^{-}_{bl})_{R} \cdot l^{-}_{kb}l^{+}_{ia} \right\} \otimes E_{ij} \otimes E_{kl}\\
  & \overset{(\star)}{=} \sum_{i,j,k,l} \sum_{a,b} (-1)^{(\ol{a}+\ol{j})(\ol{k}+\ol{l})} \cdot 
    \left\{ (-1)^{(\ol{i}+\ol{a})(\ol{k}+\ol{b})} (l^{+}_{ia},l^{-}_{kb})_{R} \cdot l^{+}_{aj}l^{-}_{bl}
   - (l^{+}_{aj},l^{-}_{bl})_{R} \cdot l^{-}_{kb}l^{+}_{ia} \right\} \otimes E_{ij} \otimes E_{kl} \\ 
  &= R_{12} L^{+}_{1}L^{-}_{2} - L^{-}_{2}L^{+}_{1} R_{12} \,,
\end{align*}
where the simplification of signs $(\star)$ is due to Remark~\ref{rem:pairing-degree-zero}:
\begin{align*}
  & (-1)^{(\ol{b}+\ol{l})(\ol{i}+\ol{j})} (-1)^{(\ol{i}+\ol{a})(\ol{a}+\ol{j})} (-1)^{(\ol{k}+\ol{b})(\ol{b}+\ol{l})} 
    (-1)^{(\ol{i}+\ol{a})(\ol{k}+\ol{b})} \cdot (l^{+}_{ia},l^{-}_{kb})_{R}\\
  &= (-1)^{(\ol{b}+\ol{l})(\ol{i}+\ol{j})} (-1)^{(\ol{k}+\ol{b})(\ol{a}+\ol{j})} (-1)^{(\ol{i}+\ol{a})(\ol{b}+\ol{l})} 
    (-1)^{(\ol{i}+\ol{a})(\ol{k}+\ol{b})} \cdot (l^{+}_{ia},l^{-}_{kb})_{R}\\
  &= (-1)^{(\ol{k}+\ol{l})(\ol{a}+\ol{j})} (-1)^{(\ol{i}+\ol{a})(\ol{k}+\ol{b})} \cdot (l^{+}_{ia},l^{-}_{kb})_{R}  
\end{align*}
and
\begin{align*}
  & (-1)^{(\ol{b}+\ol{l})(\ol{i}+\ol{j})} (-1)^{(\ol{i}+\ol{a})(\ol{a}+\ol{j})} (-1)^{(\ol{k}+\ol{b})(\ol{b}+\ol{l})} \cdot 
    (l^{+}_{aj},l^{-}_{bl})_{R}\\
  &= (-1)^{(\ol{b}+\ol{l})(\ol{i}+\ol{j})} (-1)^{(\ol{i}+\ol{a})(\ol{b}+\ol{l})} (-1)^{(\ol{k}+\ol{b})(\ol{a}+\ol{j})} \cdot 
    (l^{+}_{aj},l^{-}_{bl})_{R} 
  = (-1)^{(\ol{k}+\ol{l})(\ol{a}+\ol{j})} \cdot (l^{+}_{aj},l^{-}_{bl})_{R} \,.
\end{align*}
Thus, the vanishing of all $u_{l^{+}_{ij},l^{-}_{kl}}$ is precisely equivalent to relation~\eqref{eq:gl-rtt-rel-2}.

Finally, by taking the quotient of the double $\CD_{R}$ by the two-sided ideal
\begin{equation*}
  \CJ \coloneqq \big\langle l^{\pm}_{ii}l^{\mp}_{ii} - 1 \,\big|\, 1 \leq i \leq N \big\rangle \,,
\end{equation*}
we obtain the isomorphism $\CD_{R}/\CJ \simeq U(R)$.
\end{proof}

\begin{Rem}\label{rem:RTT double full Cartan}
Let $U^{\geq}(R)$ and $U^{\leq}(R)$ be the \emph{Borel subalgebras} of $U(R)$ generated by 
$\{l^{+}_{ij}\}_{1 \leq i \leq j \leq N} \cup \{l^{-}_{kk}\}_{1 \leq k \leq N}$ and 
$\{l^{-}_{ji}\}_{1 \leq i \leq j \leq N} \cup \{l^{+}_{kk}\}_{1 \leq k \leq N}$, respectively. From the double construction 
above, one can check that the defining relations of $U^{\geq}(R)$ consist of the internal relations of $U^{+}(R)$ 
(the ``$+$''-sign of~\eqref{eq:gl-rtt-rel-1}) and the cross-relations between $\{l^{+}_{ij}\}_{1 \leq i \leq j \leq N}$ and 
$\{l^{-}_{kk}\}_{1 \leq k \leq N}$ (encompassing the diagonal relations~\eqref{eq:gl-rtt-diag-rel}). An analogous statement 
holds for $U^{\leq}(R)$. Consequently, it can be shown that the skew-pairing~\eqref{eq:gl-RTT-skew-pairing} extends uniquely 
to a skew-pairing $U^{\geq}(R) \times U^{\leq}(R) \to \BC(q)$, denoted by the same symbol. Furthermore, $U^{\geq}(R)$ and 
$U^{\leq}(R)$ are Hopf subalgebras of $U(R)$, as they are closed under the antipode~\eqref{eq:gl-rtt-antipode}. As in 
Remark~\ref{rem:DrJ double half Cartan}, the generalized double construction for $U(R)$ can alternatively be carried out 
using these Hopf subalgebras instead of the subbialgebras $U^{+}(R)$ and $U^{-}(R)$. Also the isomorphism $\omega_{R}$ of 
\eqref{eq:transpose-iso} restricts to a superalgebra isomorphism between $U^{\geq}(R)$ and $U^{\leq}(R)$.
\end{Rem}

\begin{Rem}\label{rem:RTT pairing order}
As in Remark~\ref{rem:DrJ pairing order}, one may alternatively construct the generalized double of $U(R)$ by taking 
the subbialgebras in the opposite order. This utilizes the transposed inverse of the original pairing $\sigma = (\cdot,\cdot)_{R}$:
\begin{equation*}
  \wt{\sigma}=(-,-)_{\wt{R}} \colon U^{-}(R) \times U^{+}(R) \to \BC(q) \,.
\end{equation*}
By direct computation using the convolution product, one can verify that its evaluation on the generators is explicitly given 
by the matrix formula (cf.~\eqref{eq:gl-RTT-skew-pairing-matrix}):
\begin{equation}\label{eq:gl-RTT-skew-pairing-opposite}
  \wt{\sigma}(L^{-}_{(1),1} L^{+}_{(2),2}) = R^{-1}_{21} \,,
\end{equation}
using the notation of Subsection~\ref{ssec:leg-numbering}. Furthermore, as in Remark~\ref{rem:RTT double full Cartan}, this pairing 
uniquely extends to a skew-pairing $(-,-)_{\wt{R}} \colon U^{\leq}(R) \times U^{\geq}(R) \to \BC(q)$ between the Borel subalgebras.
\end{Rem}

\subsection{The twisted superalgebra \texorpdfstring{$\URtw$}{\URtw}}\label{ssec:sign-twists}
\

Although the defining relations~\eqref{eq:gl-rtt-rel-1} and~\eqref{eq:gl-rtt-rel-2} are concise in matrix form, extracting 
their component-wise formulas can be computationally complicated. This difficulty arises from the Koszul signs generated 
when swapping elements of the coefficient superalgebra $\CA$ with the auxiliary matrix algebra $\End(V_{q})$ during the 
multiplication of formal matrices like $L^{\pm}_{(a),b}$. To circumvent this issue, we introduce a \emph{twisted superalgebra}.

Let $G$ be an abelian group, written additively, and let $\CC \coloneqq \Vect_{\BK}^{G}$ be the category of $G$-graded 
$\BK$-vector spaces with degree-preserving linear maps, endowed with the standard tensor product of underlying $\BK$-vector 
spaces. Consider a $2$-cochain (that is, an arbitrary function) $\zeta \colon G \times G \to \BK^{\times}$ on $G$. The identity 
functor $F$ on $\CC$, together with the identity morphism $\BK \overset{\Id}{\to} F(\BK)$ and the natural isomorphism
\begin{equation*}
  J^{\zeta}_{X,Y} \colon F(X) \otimes F(Y) \iso F(X \otimes Y) \,,\qquad
  x \otimes y \mapsto \zeta(|x|_{G},|y|_{G})\, x \otimes y \,,
\end{equation*}
defined for any $G$-homogeneous elements $x \in X, y \in Y$ of degrees $|x|_{G},|y|_{G} \in G$, form a monoidal endofunctor 
on $\CC$ if and only if $\zeta$ is a \emph{normalized $2$-cocycle}:
\begin{align}\label{eq:2-cocycle}
  \zeta(g',g'') \zeta(g,g'+g'') \zeta(g,g')^{-1} \zeta(g+g',g'')^{-1} = 1 \,,\qquad
  \zeta(g,0) = 1 = \zeta(0,g) \qquad \forall \, g,g',g'' \in G \,.
\end{align}
Let $A$ be an algebra object in $\CC$ (i.e., a $G$-graded algebra), and let $\mu_{A}$ denote its multiplication morphism. 
Using the above isomorphism $J^{\zeta}$, one can define a \emph{twisted algebra} $A^{\zeta}$ whose multiplication 
is given by:
\begin{equation}\label{eq:twisted-mult}
  \mu_{A}^{\zeta} \coloneqq \mu_{A} \circ J^{\zeta}_{A,A} \colon a \otimes a' \mapsto \zeta(|a|_{G},|a'|_{G})\,aa' \,.
\end{equation}
Furthermore, if $\phi \colon A \to B$ is an algebra morphism, functoriality ensures that the map $\phi = F(\phi)$ itself 
provides an algebra morphism $\phi \colon A^{\zeta} \to B^{\zeta}$ between the twisted algebras.

Now let us consider braided structures on $\CC$. For any $2$-cochain $\beta \colon G \times G \to \BK^{\times}$, 
one can define a natural isomorphism 
\begin{equation*}
  c^{\beta}_{X,Y} \colon X \otimes Y \iso Y \otimes X \,,\qquad 
  x \otimes y \mapsto \beta(|x|_{G},|y|_{G})\, y \otimes x \,,
\end{equation*}
for any $G$-homogeneous elements $x \in X, y \in Y$ of degrees $|x|_{G},|y|_{G} \in G$. This natural isomorphism equips $\CC$ 
with a braided monoidal structure if and only if $\beta$ is a \emph{bicharacter}, meaning it is additive in both arguments: 
\begin{equation*}
  \beta(g+g',h) = \beta(g,h)\beta(g',h) \,, \qquad \beta(g,h+h') = \beta(g,h)\beta(g,h') \qquad \forall\, g,g',h,h' \in G \,.
\end{equation*}
Equivalently, viewing $G$ as a $\BZ$-module, a bicharacter is simply a $\BZ$-module homomorphism 
$G \otimes_{\BZ} G \to \BK^{\times}$.

For any algebra objects $A$ and $B$ in $\CC$, the braiding $c^{\beta}$ endows the tensor product $A \otimes B$ with 
an algebra structure, denoted $A \otimes^{\beta} B$, with multiplication given on $G$-homogeneous elements by 
(cf.~\eqref{eq:superalgebras tensoring}):
\begin{equation*}
  (a \otimes b) * (a' \otimes b') \coloneqq \beta(|b|_{G},|a'|_{G}) \, aa' \otimes bb' \,.
\end{equation*}

\begin{Rem}\label{rem:sVect bicharacter}
The category $\sVect$ is recovered in this framework by setting $G = \BZ_{2}$ and $\beta(g,h) = (-1)^{gh}$.
\end{Rem}

We shall now apply these constructions for $G = \BZ_{2} \times \BZ_{2}$ and $\BK = \BC(q)$. When working 
with tensor products of the form $\CA^{\otimes r} \otimes \End(V_{q})^{\otimes s}$, we view the coefficient superalgebra $\CA$ 
and the matrix algebra $\End(V_{q})$ as objects in $\CC$ by assigning their standard $\BZ_{2}$-degrees to the first and second
components of $G$, respectively. Specifically, homogeneous elements in $\CA$ have degrees in $\BZ_{2} \times \{0\}$, while 
those in $\End(V_{q})$ have degrees in $\{0\} \times \BZ_{2}$.
We endow $\CC$ with two different braidings given by bicharacters.

\smallskip
\noindent
\textbf{$\bullet$ The standard braiding:} 
Consider the bicharacter
\begin{equation*}
  \beta(g,h) \coloneqq (-1)^{(g_1 + g_2)(h_1 + h_2)} \in \{\pm 1\} \,,
\end{equation*}
defined for any elements $g = (g_1,g_2)$ and $h = (h_1,h_2)$ in $G = \BZ_{2} \times \BZ_{2}$. For any objects $X,Y$ in $\CC$, 
the standard braiding $c \coloneqq c^{\beta}$ evaluates on $G$-homogeneous elements $x \in X$ and $y \in Y$ as:
\begin{equation*}
  c_{X,Y}(x \otimes y) \coloneqq \beta(|x|_{G},|y|_{G}) \, y \otimes x = (-1)^{(|x|_{1}+|x|_{2})(|y|_{1}+|y|_{2})} y \otimes x\,,
\end{equation*}
where their $G$-degrees are denoted by $|x|_{G} = (|x|_{1},|x|_{2})$ and $|y|_{G} = (|y|_{1},|y|_{2})$, respectively. 
Under this braiding, multiplying two formal matrices yields: 
\begin{equation*}
  L^{\nu}_{1}L^{\eta}_{2} = 
  \sum_{1\leq i,j,k,l \leq N} (-1)^{(\ol{i}+\ol{j})(\ol{k}+\ol{l})} l^{\nu}_{ij}l^{\eta}_{kl} \otimes E_{ij} \otimes E_{kl}
  \quad\in\quad \CA \otimes^{\beta} \End(V_{q})^{\otimes 2} \,.
\end{equation*}
Here, the Koszul sign appears because the matrix unit $E_{ij}$ ``passes through'' the generator $l^{\eta}_{kl}$. We note that this 
braiding is compatible with the standard symmetric monoidal structure of $\sVect$: the group homomorphism
\begin{equation*}
  G \to \BZ_{2}\,, \qquad (a, b) \mapsto a+b
\end{equation*}
induces a braided monoidal functor from $(\CC,c)$ to $\sVect$. Therefore, computing the matrix relations~\eqref{eq:gl-rtt-rel-1} 
and~\eqref{eq:gl-rtt-rel-2} in $(\CC,c)$ yields the exact same component-wise formulas as computing them in $\sVect$.

\smallskip
\noindent
\textbf{$\bullet$ The twisted braiding:} 
Consider the alternative bicharacter
\begin{equation*}
  \beta'(g,h) \coloneqq (-1)^{g_{1}h_{1} + g_{2}h_{2}}\,,
\end{equation*}
defined for any $g=(g_1,g_2), h=(h_1,h_2) \in G$. For any objects $X,Y$ in $\CC$, this yields the twisted braiding 
$c' \coloneqq c^{\beta'}$ given by:
\begin{equation*}
  c'_{X,Y}(x \otimes y) \coloneqq \beta'(|x|_{G},|y|_{G}) \, y \otimes x = (-1)^{|x|_{1}|y|_{1} + |x|_{2}|y|_{2}} y \otimes x \,.
\end{equation*}
Under this twisted braiding, no signs appear when swapping an object concentrated in degree $\BZ_{2} \times \{0\}$ with 
an object concentrated in degree $\{0\} \times \BZ_{2}$. Consequently, formal matrix multiplication simplifies as follows:
\begin{equation*}
  L^{\nu}_{1}L^{\eta}_{2} = \sum_{1\leq i,j,k,l \leq N} l^{\nu}_{ij}l^{\eta}_{kl} \otimes E_{ij} \otimes E_{kl}
  \quad\in\quad \CA \otimes^{\beta'} \End(V_{q})^{\otimes 2} \,.
\end{equation*}

Now consider the superalgebra $\URtw$, which has the exact same generators and diagonal relations~\eqref{eq:gl-rtt-diag-rel} 
as $U(R)$, and whose remaining defining relations take the identical matrix form as~\eqref{eq:gl-rtt-rel-1} 
and~\eqref{eq:gl-rtt-rel-2} but are evaluated in the superalgebra $\CT \otimes^{\beta'} \End(V_{q})^{\otimes 2}$ instead of 
$\CT \otimes^{\beta} \End(V_{q})^{\otimes 2}$.

\begin{Prop}\label{prop:gl-twisted-rtt}
Let $\zeta \colon \BZ_{2} \times \BZ_{2} \to \BC(q)^{\times}$ be the 2-cochain on $\BZ_{2}$ given by 
$\zeta(\bbar{i},\bbar{j}) \coloneqq (-1)^{\bbar{i}\,\bbar{j}}$. Then, the assignment $l^{\pm}_{ij} \mapsto l^{\pm}_{ij}$ 
for all $i,j$ gives rise to a superalgebra isomorphism $\URtw \iso U(R)^{\zeta}$.
\end{Prop}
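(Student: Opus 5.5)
Both $\URtw$ and $U(R)^{\zeta}$ are quotients of free superalgebras on the same generators, so the plan is to compare defining relations component-wise.

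\textbf{Step 1: present the twisted free algebra.} Since $\zeta(\bbar i,\bbar j)=(-1)^{\bbar i\,\bbar j}$ is a bicharacter, it is a normalized $2$-cocycle. Hence $\CT^{\zeta}$ is again free on the same generators $l^{\pm}_{ij}$. Indeed, by functoriality, $U(R)^{\zeta}=\CT^{\zeta}/\CI$, where $\CI$ is the ideal of $\CT$ generated by \eqref{eq:gl-rtt-diag-rel}--\eqref{eq:gl-rtt-rel-2}; this set is also a two-sided ideal of $\CT^{\zeta}$, since twisted products differ from untwisted ones only by nonzero scalars on homogeneous elements. An element of $\CI$ written in the untwisted product can be rewritten in the twisted product $*$ using $a*b=(-1)^{|a||b|}ab$.

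\textbf{Step 2: rewrite the relations.} The diagonal relation: $l^{\pm}_{ii}$ is even, so $l^{\pm}_{ii}*l^{\mp}_{ii}=l^{\pm}_{ii}l^{\mp}_{ii}$, and this relation is unchanged. For the quadratic relations, expand both sides in components. In $\CT\otimes^{\beta}\End(V_q)^{\otimes2}$ the coefficient of $E_{ij}\otimes E_{kl}$ in $L^\nu_1L^\eta_2$ is $(-1)^{(\ol i+\ol j)(\ol k+\ol l)}l^\nu_{ij}l^\eta_{kl}$. Since $|l^\nu_{ij}|=\ol i+\ol j$, this equals $l^\nu_{ij}*l^\eta_{kl}$. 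The same holds for $L^\eta_2L^\nu_1$: its coefficient is $(-1)^{(\ol k+\ol l)(\ol i+\ol j)}l^\eta_{kl}l^\nu_{ij}=l^\eta_{kl}*l^\nu_{ij}$. Multiplication by $R_{12}$ involves only scalar coefficients and matrix units, so the $\beta$ and $\beta'$ braidings act identically there; no sign arises from scalars. The coefficients of $R_{12}L^\nu_1L^\eta_2-L^\eta_2L^\nu_1R_{12}$ in the $\beta$-setting, expressed via $*$, are therefore exactly the coefficients of the same matrix expression computed in the $\beta'$-setting with $*$ as the product.

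\textbf{Step 3: conclude.} The generators of $\CI$, written in $\CT^{\zeta}$, coincide with the defining relations of $\URtw$ written in the free algebra. So the identity on generators induces mutually inverse superalgebra morphisms $\URtw\iso U(R)^{\zeta}$, and these preserve the $\BZ_2$-grading.

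The main obstacle is the careful bookkeeping in Step 2. One must check that the matrix product with $R_{12}$ really introduces no residual sign difference between the two braidings. A cleaner way to do this than tracking indices: the twisting functor $(F,J^{\zeta})$, combined with the identity $\beta(g,h)=\beta'(g,h)\,\tilde\zeta(g,h)\tilde\zeta(h,g)^{-1}$ for a suitable cocycle $\tilde\zeta$ on $G$ coupling the two components, gives a monoidal equivalence $(\CC,c)\to(\CC,c')$ that sends $\CT\otimes^{\beta}\End^{\otimes2}$ to $\CT^{\zeta}\otimes^{\beta'}\End^{\otimes2}$ on these elements. I would first try the direct component computation and fall back on this categorical argument if the signs become unwieldy.
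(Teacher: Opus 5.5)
Your strategy is the paper's. You rewrite the relations of $U(R)$, computed with the braiding $\beta$, in the twisted product $a*b=(-1)^{|a||b|}ab$, and observe that they become the relations of $\URtw$ computed with $\beta'$. Your Step~1 ($\CT^{\zeta}$ is again free on the $l^{\pm}_{ij}$, and $U(R)^{\zeta}=\CT^{\zeta}/\CI$) usefully spells out what the paper leaves implicit. However, the sign bookkeeping for $L^{\eta}_{2}L^{\nu}_{1}$ in Step~2, which you rightly call the crux, is wrong on both sides of the comparison.

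\textbf{The $\beta$ side.} In $\CT\otimes^{\beta}\End(V_{q})^{\otimes 2}$, the product $(l^{\eta}_{kl}\otimes\ID\otimes E_{kl})(l^{\nu}_{ij}\otimes E_{ij}\otimes\ID)$ moves $E_{kl}$ past $l^{\nu}_{ij}$ \emph{and} past $E_{ij}$. The two signs $(-1)^{(\ol{k}+\ol{l})(\ol{i}+\ol{j})}$ cancel. So the coefficient of $E_{ij}\otimes E_{kl}$ is $l^{\eta}_{kl}l^{\nu}_{ij}=(-1)^{(\ol{i}+\ol{j})(\ol{k}+\ol{l})}\,l^{\eta}_{kl}*l^{\nu}_{ij}$, not $l^{\eta}_{kl}*l^{\nu}_{ij}$.

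\textbf{The $\beta'$ side.} Your ``therefore'' tacitly assumes that in the $\beta'$-setting $L^{\eta}_{2}L^{\nu}_{1}$ has coefficient $l^{\eta}_{kl}*l^{\nu}_{ij}$ with no sign. This is also false. The bicharacter $\beta'(g,h)=(-1)^{g_1h_1+g_2h_2}$ keeps the Koszul sign between the two $\End(V_{q})$ factors, so the $\beta'$-coefficient is again $(-1)^{(\ol{i}+\ol{j})(\ol{k}+\ol{l})}\,l^{\eta}_{kl}*l^{\nu}_{ij}$. The two errors cancel, so your conclusion is true, but Step~2 as written does not prove it.

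\textbf{The fix.} The paper effectively does this by comparing the two products in $\CT^{\otimes 2}\otimes\End(V_{q})^{\otimes 2}$ before applying $\mu^{\zeta}_{\CT}$. Take $X=\sum x\otimes A$ and $Y=\sum y\otimes B$ with $|x|=|A|$, placed on distinct matrix legs in either order. Then the $\beta$- and $\beta'$-products of $X$ and $Y$ differ \emph{only} by the factor $(-1)^{|A||y|}$, which comes from moving $A$ past the coefficient $y$. Any sign from moving $A$ past $B$ is common to both settings. Since $(-1)^{|A||y|}=(-1)^{|x||y|}=\zeta(|x|,|y|)$, this factor is exactly what converts $xy$ into $x*y$.

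Likewise, multiplication by $R_{12}$ causes no discrepancy because every summand of $R$ has even total degree in $\End(V_{q})^{\otimes 2}$. It is this evenness that matters, not merely that the coefficients of $R$ are scalars. With these corrections your argument is complete, and the categorical fallback via $\tilde\zeta$ is unnecessary.
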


\begin{proof}
We first note that it is straightforward to verify that $\zeta$ is a normalized $2$-cocycle, i.e.\ 
satisfies~\eqref{eq:2-cocycle}. To avoid confusion, let $*$ and $*'$ denote the multiplications in 
$\CT^{\otimes r} \otimes^{\beta} \End(V_{q})^{\otimes 2}$ and $\CT^{\otimes r} \otimes^{\beta'} \End(V_{q})^{\otimes 2}$ 
for any $r \geq 1$, respectively. We also note that since $R_{12} \in \End(V_{q})^{\otimes 2}$ has $G$-degree $0$, 
multiplication by $R_{12}$ does not depend on the choice of $*$ or $*'$.

Consider the RLL defining relations~(\ref{eq:gl-rtt-rel-1},~\ref{eq:gl-rtt-rel-2}) of $U(R)$ written as 
$R_{12}(L^{\nu}_{1} * L^{\eta}_{2}) = (L^{\eta}_{2} * L^{\nu}_{1})R_{12}$ with $(\nu,\eta) \in \{ (+,+), (-,-), (+,-) \}$. 
Using the multiplication morphism $\mu_{\CT}$ of $\CT$, this can be written as follows: 
\begin{equation*}
  R_{12}\,\mu_{\CT}(L^{\nu}_{(1),1} * L^{\eta}_{(2),2}) = \mu_{\CT}(L^{\eta}_{(2),2} * L^{\nu}_{(1),1})R_{12} \,.
\end{equation*}
Since $\mu^{\zeta}_{\CT} = \mu_{\CT} \circ J^{\zeta}_{\CT,\CT}$, this is equivalent to:
\begin{equation}\label{eq:twisted rel 1}
  R_{12}\,\mu^{\zeta}_{\CT} \big((J^{\zeta}_{\CT,\CT})^{-1}(L^{\nu}_{(1),1} * L^{\eta}_{(2),2})\big)
  = \mu^{\zeta}_{\CT} \big((J^{\zeta}_{\CT,\CT})^{-1}(L^{\eta}_{(2),2} * L^{\nu}_{(1),1})\big)R_{12} \,.
\end{equation}
Moreover, direct computation yields (cf.~\eqref{eq:matrix notation mult order swap}): 
\begin{align*}
  (J^{\zeta}_{\CT,\CT})^{-1}(L^{\nu}_{(1),1} * L^{\eta}_{(2),2})
  &= \sum_{1\leq i,j,k,l \leq N} (-1)^{(\ol{i}+\ol{j})(\ol{k}+\ol{l})} 
     (J^{\zeta}_{\CT,\CT})^{-1}(l^{\nu}_{ij} \otimes l^{\eta}_{kl}) \otimes E_{ij} \otimes E_{kl}
   = L^{\nu}_{(1),1} *' L^{\eta}_{(2),2} \,, \\
  (J^{\zeta}_{\CT,\CT})^{-1}(L^{\eta}_{(2),2} * L^{\nu}_{(1),1})
  &= \sum_{1\leq i,j,k,l \leq N} (-1)^{(\ol{i}+\ol{j})(\ol{k}+\ol{l})} 
     (J^{\zeta}_{\CT,\CT})^{-1}(l^{\nu}_{ij} \otimes l^{\eta}_{kl}) \otimes E_{ij} \otimes E_{kl}
   = L^{\eta}_{(2),2} *' L^{\nu}_{(1),1} \,.
\end{align*}
Substituting these back into~\eqref{eq:twisted rel 1}, we obtain:
\begin{equation*}
  R_{12}\,\mu^{\zeta}_{\CT} (L^{\nu}_{(1),1} *' L^{\eta}_{(2),2})
  = \mu^{\zeta}_{\CT} (L^{\eta}_{(2),2} *' L^{\nu}_{(1),1})R_{12} \,.
\end{equation*}
This implies that the defining relations of $U(R)^{\zeta}$ coincide exactly with those of $\URtw$
(where the compatibility of the diagonal relations~\eqref{eq:gl-rtt-diag-rel} is trivial due to 
$\zeta(|l^{\pm}_{ii}|,|l^{\mp}_{ii}|) = 1$), which completes the proof.
\end{proof}

\begin{Rem}\label{rem:untwisting}
The above isomorphism $\URtw \iso U(R)^{\zeta}$ implies that the original superalgebra $U(R)$ can be recovered from 
$\URtw$ via an \emph{untwisting} procedure (i.e.\ twisting by $\zeta^{-1}$), that is, $U(R)\iso \URtw^{\zeta^{-1}}$.
\end{Rem}


\subsection{Gauss decomposition and twisted identities: general linear case}\label{ssec:gl-Gauss}
\

In this Subsection, we shall assume that all algebraic manipulations occur within the twisted superalgebra 
$\URtw$ or the tensor product $\URtw^{\otimes r} \otimes^{\beta'} \End(V_{q})^{\otimes s}$ (for $r,s \geq 1$), 
rather than in their untwisted counterparts.

Any element in $\URtw \otimes^{\beta'} \End(V_{q})$ can be expressed in matrix form as follows:
\begin{equation}\label{eq:gl-formal-matrix}
  \sum_{1 \leq i,j \leq N} a_{ij} \otimes E_{ij} = 
  \begin{bsmallmatrix}
    a_{11} & a_{12} & \cdots & a_{1N} \\
    a_{21} & a_{22} & \cdots & a_{2N} \\
    \vdots & \ddots & \ddots & \vdots \\
    a_{N1} & a_{N2} & \cdots & a_{NN}
  \end{bsmallmatrix}
  \quad\in\quad \URtw \otimes^{\beta'} \End(V_{q}) \,.
\end{equation}
Since the Koszul signs no longer arise when swapping elements of $\URtw$ and $\End(V_{q})$, the multiplication in 
$\URtw \otimes^{\beta'} \End(V_{q})$ coincides exactly with the standard matrix multiplication:
\begin{equation}\label{eq:gl-matrix-mult}
  \Bigg(\sum_{1\leq i,j \leq N} a_{ij} \otimes E_{ij}\Bigg) 
  \Bigg(\sum_{1\leq i,j \leq N} b_{ij} \otimes E_{ij}\Bigg)
  = \sum_{1\leq i,j \leq N} \Bigg(\sum_{1\leq k \leq N} a_{ik}b_{kj}\Bigg) \otimes E_{ij} \,.
\end{equation}
We will frequently utilize this matrix form. In particular, $L^{+}$ and $L^{-}$ are genuinely upper and lower 
triangular matrices, respectively. Using the diagonal identity~\eqref{eq:gl-rtt-diag-rel}, we obtain the following 
\emph{Gauss decompositions}:
\begin{equation}\label{eq:gl-Gauss-L}
\begin{split}
  L^{+} &=
  \begin{bsmallmatrix}
    l^{+}_{11} & 0 & \cdots & 0 \\
    0 & l^{+}_{22} & \cdots & 0 \\
    \vdots & \ddots & \ddots & \vdots \\
    0 & \cdots & 0 & l^{+}_{NN}
  \end{bsmallmatrix}
  \begin{bsmallmatrix}
    1 & (l^{+}_{11})^{-1}l^{+}_{12} & \cdots & (l^{+}_{11})^{-1}l^{+}_{1N}\\
    0 & 1 & \ddots & \vdots\\
    \vdots & \ddots & \ddots & (l^{+}_{N-1,N-1})^{-1}l^{+}_{N-1,N}\\
    0 & \cdots & 0 & 1
  \end{bsmallmatrix} \,,\\
  L^{-} &=
  \begin{bsmallmatrix}
    1 & 0 & \cdots & 0\\
    l^{-}_{21}(l^{-}_{11})^{-1} & 1 & \ddots & \vdots\\
    \vdots & \ddots & \ddots & 0\\
   l^{-}_{N1}(l^{-}_{11})^{-1} & \cdots & l^{-}_{N,N-1}(l^{-}_{N-1,N-1})^{-1} & 1
  \end{bsmallmatrix}
  \begin{bsmallmatrix}
    l^{-}_{11} & 0 & \cdots & 0 \\
    0 & l^{-}_{22} & \cdots & 0 \\
    \vdots & \ddots & \ddots & \vdots \\
    0 & \cdots & 0 & l^{-}_{NN}
  \end{bsmallmatrix} \,.
\end{split}
\end{equation}
For convenience, we define the elements $\{\bfe_{ij}, \bff_{ji}\}_{1 \leq i < j \leq N}$ in $\URtw$ via the following formulas:
\begin{equation}\label{eq:RTT ef gen}
  (l_{ii}^{+})^{-1} l_{ij}^{+} = 
    (-1)^{\ol{i}\,\ol{j}} q^{-(\varepsilon_{i},\varepsilon_{i})} (q - q^{-1}) \cdot \bfe_{ij} \,,\qquad
  l_{ji}^{-} (l_{ii}^{-})^{-1} = 
    -(-1)^{\ol{i} + \cdots + \ol{j-1}} (-1)^{\ol{i}\,\ol{j}} q^{(\varepsilon_{i},\varepsilon_{i})} (q - q^{-1}) \cdot \bff_{ji} \,.
\end{equation}
We also introduce $\{q^{\pm \bfH_{k}}\}_{k=1}^{N}$ in $\URtw$ via $q^{\pm \bfH_k} = l_{kk}^{\pm}$ and 
define $q^{\pm \bfH_{ij}} \coloneqq q^{\pm \bfH_i} (q^{\pm \bfH_j})^{-1}$ for $1 \leq i < j \leq N$.

We now exhibit a list of identities satisfied by the above elements of $\URtw$. These identities are extracted by comparing 
the appropriate matrix components of the defining relations~\eqref{eq:gl-rtt-rel-1} and~\eqref{eq:gl-rtt-rel-2} of $\URtw$. 
We illustrate this methodology with a few selected computations. To prevent ambiguity, we use $[-,-]^{\zeta}$ and 
$[\![-,-]\!]^{\zeta}$ to denote the supercommutator and the $q$-supercommutator of elements in $\URtw$, 
cf.~(\ref{eq:super commutator},~\ref{eq:q-gl-superbracket}).

\smallskip
\noindent\textbf{1. Commutativity of Cartan generators:} $[q^{\bfH_{i}}, q^{\bfH_{j}}]^{\zeta} = 0$.

We compute the $(E_{ii} \otimes E_{jj})$-component of~\eqref{eq:gl-rtt-rel-1} for the ``$+$''-sign case:
\begin{itemize}[leftmargin=0.7cm]

\item \textbf{LHS:}
  $( q^{-(\varepsilon_{i},\varepsilon_{j})} E_{ii} \otimes E_{jj} )
   ( l_{ii}^{+}l_{jj}^{+} E_{ii} \otimes E_{jj} )$;

\item \textbf{RHS:}
  $( l_{jj}^{+}l_{ii}^{+} E_{ii} \otimes E_{jj} )
   ( q^{-(\varepsilon_{i},\varepsilon_{j})} E_{ii} \otimes E_{jj} )$.

\end{itemize}
Equating both sides yields $l_{ii}^{+}l_{jj}^{+} = l_{jj}^{+}l_{ii}^{+}$, which gives 
$q^{\bfH_{i}}q^{\bfH_{j}} = q^{\bfH_{j}}q^{\bfH_{i}}$.

\smallskip
\noindent\textbf{2. Inverses:} $q^{\pm \bfH_{i}}q^{\mp \bfH_{i}} = 1$.

This follows directly from the diagonal identity~\eqref{eq:gl-rtt-diag-rel}.

\smallskip
\noindent\textbf{3. Cartan action on root vectors:} 
$q^{\bfH_{a}} \bfe_{ij} q^{-\bfH_{a}} = q^{(\varepsilon_{a}, \varepsilon_{i}-\varepsilon_{j})} \bfe_{ij}$.

We compute the $(E_{aa} \otimes E_{ij})$-component of~\eqref{eq:gl-rtt-rel-1} for the ``$+$''-sign case 
(evoking that $L^+$ is upper triangular):
\begin{itemize}[leftmargin=0.7cm]

\item \textbf{LHS:}
 $( q^{-(\varepsilon_{a},\varepsilon_{i})} E_{aa} \otimes E_{ii} )
  ( l_{aa}^{+}l_{ij}^{+} E_{aa} \otimes E_{ij} )$;

\item \textbf{RHS:}
 $( l_{ij}^{+} l_{aa}^{+} E_{aa} \otimes E_{ij} )
  ( q^{-(\varepsilon_{a},\varepsilon_{j})} E_{aa} \otimes E_{jj} )$.

\end{itemize}
Equating both sides yields 
  $q^{-(\varepsilon_{a},\varepsilon_{i})}l_{aa}^{+}l_{ij}^{+} = q^{-(\varepsilon_{a},\varepsilon_{j})}l_{ij}^{+}l_{aa}^{+}$, 
which simplifies to $l_{aa}^{+}l_{ij}^{+}(l_{aa}^{+})^{-1} = q^{(\varepsilon_{a},\varepsilon_{i}-\varepsilon_{j})}l_{ij}^{+}$. 
This establishes the desired identity
  $q^{\bfH_{a}}\bfe_{ij}q^{-\bfH_{a}} = q^{(\varepsilon_{a},\varepsilon_{i}-\varepsilon_{j})}\bfe_{ij}$.

To exhibit the rest of aforementioned identities, we summarize the relevant matrix components and the 
resulting twisted identities in Table~\ref{table:A1} (the identities verified above are also 
included for ease of~reference):
\begin{table}[H]
\centering
\begin{tabularx}{\textwidth}{|l@{\hskip 4pt}c@{\hskip 10pt}Xr|}
    \hline
    Matrix Component    & Conditions   & Identity  &   \\
    \hline
    $E_{ii} \otimes E_{jj}$
        & $1 \leq i,j \leq N$
        & $[q^{\bfH_{i}},q^{\bfH_{j}}]^{\zeta} = 0$
        & \AddLabel{eq:A-twisted-1}  \\
    \quad
        & $1 \leq i \leq N$
        & $q^{\pm \bfH_{i}}q^{\mp \bfH_{i}} = 1$
        & \AddLabel{eq:A-twisted-2}  \\
    $E_{aa} \otimes E_{ij}$
        & $1 \leq i < j \leq N$, $1 \leq a \leq N$
        & $q^{\bfH_{a}} \bfe_{ij} q^{-\bfH_{a}} = q^{(\varepsilon_{a}, \varepsilon_{i}-\varepsilon_{j})} \bfe_{ij}$
        & \AddLabel{eq:A-twisted-3}  \\
    $E_{ij} \otimes E_{j,j+1}$
        & $1 \leq i < j < N$
        & $\bfe_{i,j+1} = (-1)^{(\ol{i}+\ol{j})(\ol{j}+\ol{j+1})} [\![ \bfe_{ij}, \bfe_{j,j+1} ]\!]^{\zeta}$
        & \AddLabel{eq:A-twisted-4}  \\
    $E_{i,i+1} \otimes E_{j,j+1}$
        & $1 \leq i < j < N$, $j-i > 1$
        & $[\![ \bfe_{i,i+1}, \bfe_{j,j+1} ]\!]^{\zeta} = 0$
        & \AddLabel{eq:A-twisted-5}  \\
    $E_{i,i+1} \otimes E_{i,i+1}$
        & $1 \leq i < N$, $|\bfe_{i,i+1}| = \bbar{1}$
        & $[\![ \bfe_{i,i+1}, \bfe_{i,i+1} ]\!]^{\zeta} = 0$
        & \AddLabel{eq:A-twisted-6}  \\
    $E_{i,i+1} \otimes E_{i,i+2}$
        & $1 \leq i \leq N-2$
        & $[\![ \bfe_{i,i+1}, [\![ \bfe_{i,i+1}, \bfe_{i+1,i+2} ]\!]^{\zeta}]\!]^{\zeta} = 0$
        & \AddLabel{eq:A-twisted-7}  \\
    $E_{i,i+1} \otimes E_{i-1,i+1}$
        & $2 \leq i \leq N-1$
        & $[\![[\![ \bfe_{i-1,i}, \bfe_{i,i+1} ]\!]^{\zeta}, \bfe_{i,i+1} ]\!]^{\zeta} = 0$
        & \AddLabel{eq:A-twisted-8}  \\
    $E_{i,i+1} \otimes E_{i-1,i+2}$
        & $2 \leq i \leq N-2$
        & $[\![[\![[\![ \bfe_{i-1,i}, \bfe_{i,i+1} ]\!]^{\zeta}, \bfe_{i+1,i+2} ]\!]^{\zeta}, \bfe_{i,i+1} ]\!]^{\zeta} = 0$
        & \AddLabel{eq:A-twisted-9}  \\
    $E_{ij} \otimes E_{ij}$
        & $1 \leq i < j \leq N$, $|\bfe_{ij}| = \bbar{1}$
        & $[\![ \bfe_{ij}, \bfe_{ij} ]\!]^{\zeta} = 0$
        & \AddLabel{eq:A-twisted-10}  \\
    \hline
\end{tabularx}

\caption{$A$-type, identities in $\URtw$ via $RL_{1}^{+}L_{2}^{+} = L_{2}^{+}L_{1}^{+}R$ and diagonal relations}
\label{table:A1}
\end{table}

\begin{Rem}\label{rem:A-rel-nilpotent}
While~\eqref{eq:A-twisted-5} is a special case of~\eqref{eq:A-twisted-10}, it is recorded separately as it corresponds to the 
Serre relations. We also note that~\eqref{eq:A-twisted-10} implies $\bfe_{ij}^{2} = 0$ for $|\bfe_{ij}| = \bbar{1}$.
\end{Rem}

\begin{Rem}\label{rem:table-higher-order-rel}
The relations~\eqref{eq:A-twisted-1}--\eqref{eq:A-twisted-4} are applied repeatedly to derive the higher-order relations 
in Table~\ref{table:A1}. For instance, the $E_{i,i+1} \otimes E_{i,i+2}$ component of~\eqref{eq:gl-rtt-rel-1} yields 
$[\![ \bfe_{i,i+1}, \bfe_{i,i+2} ]\!]^{\zeta} = 0$, which directly recovers~\eqref{eq:A-twisted-7} upon substituting 
the iterative formula~\eqref{eq:A-twisted-4} and removing the overall sign.
\end{Rem}

\begin{Rem}
Similar relations for $\bff_{ij}$'s can be obtained by applying Remark~\ref{rem:U(R) involution} to Table~\ref{table:A1}. 
We note that while 
  $(-1)^{(\ol{i}+\ol{i+1})(\ol{j}+\ol{j+1})} [\bfe_{i,i+1}, \bff_{j+1,j}]^{\zeta} 
   = \delta_{ij} \frac{q^{\bfH_{i,i+1}} - q^{-\bfH_{i,i+1}}}{q-q^{-1}}$ 
can be derived alike, it will not be needed. 
\end{Rem}


\subsection{Homomorphism theorem: general linear case}\label{ssec:homomorphism thm}
\

The main result of this section is the identification of the Hopf superalgebras $\uqgl^{\CF}$ and $U(R)$:

\begin{Thm}\label{thm:finite DrJ to RTT}
(a) The assignment 
\begin{equation}\label{eq:xi_efh}
  e_{i} \mapsto \bfe_{i,i+1} \,,\qquad f_{i} \mapsto \bff_{i+1,i} \,,\qquad q^{\pm H_{i}} \mapsto q^{\pm\bfH_{i}}
\end{equation}
gives rise to a $Q$-graded Hopf superalgebra isomorphism $\xi \colon \uqgl^{\CF} \iso U(R)$.

\smallskip
\noindent
(b) The images of the quantum root vectors from Subsection~\ref{ssec:universal R} under $\xi$ are given by
\begin{equation*}
  \xi(e_{\gamma_{ij}}) = \bfe_{ij} \,, \qquad \xi(f_{\gamma_{ij}}) = \bff_{ji}\,.
\end{equation*}

\noindent
(c) The isomorphism $\xi$ is compatible with the pairings of 
Remarks~\ref{rem:wt-pairing-Dr-twist},~\ref{rem:RTT double full Cartan}:
\begin{equation*}
  (\xi(x),\xi(y))_{R} = (x,y)^{\CF}_{\wt{\DrJ}} \qquad \text{for all} 
  \quad x \in U_{q}^{\geq}(\gl(V))^{\CF} ,\, y \in U_{q}^{\leq}(\gl(V))^{\CF} \,.
\end{equation*}
\end{Thm}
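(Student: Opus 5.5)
The plan is to build $\xi$ first as a superalgebra map on the twisted side, then upgrade it to a Hopf map, and finally get bijectivity from the double structures rather than by constructing an inverse.

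\textbf{Step 1: well-definedness.} I would work in $\URtw$, where Table~\ref{table:A1} and its image under Remark~\ref{rem:U(R) involution} give the needed relations:
\begin{itemize}
\item the Chevalley-type relations \eqref{eq:A-twisted-1}--\eqref{eq:A-twisted-3};
\item the Serre relations \eqref{eq:A-twisted-5}--\eqref{eq:A-twisted-9};
\item the $[\bfe,\bff]$ relation noted after the table, obtained from the $E_{i,i+1}\otimes E_{j+1,j}$ component of \eqref{eq:gl-rtt-rel-2}.
\end{itemize}
These relations in $\URtw$ carry the extra signs $(-1)^{(\ol{i}+\ol{i+1})(\ol{j}+\ol{j+1})}$ and use $[\![-,-]\!]^{\zeta}$. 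Untwisting by $\zeta^{-1}$ via Remark~\ref{rem:untwisting} turns each $\zeta$-twisted $q$-supercommutator of homogeneous elements into an ordinary one in $U(R)$, up to a sign depending only on the parities. So each table identity becomes, in $U(R)$, exactly the corresponding relation of Definition~\ref{def:quantum-gl-standard}: the Serre relations are homogeneous, so the sign drops out, and the normalizations in \eqref{eq:RTT ef gen} are chosen to absorb the remaining signs in $[e_i,f_j]$. Because $\uqgl^{\CF}$ has the same algebra structure as $\uqgl$, this gives a superalgebra map $\xi$.

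\textbf{Step 2: Hopf compatibility and part (b).} I would check coproducts on generators. From \eqref{eq:gl-rtt-coalg-component}, $\Delta(l^+_{i,i+1})=l^+_{ii}\otimes l^+_{i,i+1}+l^+_{i,i+1}\otimes l^+_{i+1,i+1}$, with no signs since $\ol{i}+\ol{i}=0$. Multiplying by $\Delta((l^+_{ii})^{-1})$ gives $\Delta(\bfe_{i,i+1})=1\otimes\bfe_{i,i+1}+\bfe_{i,i+1}\otimes q^{\bfH_{i+1}-\bfH_i}$, which matches $\Delta^{\CF}(e_i)$. The argument for $f_i$ is analogous, and the antipode follows automatically. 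For part (b), I would induct on $j-i$: \eqref{eq:A-twisted-4}, after untwisting, is exactly the recursion \eqref{eq:quantum root vectors}, and its sign is designed to match. The $\bff$ case follows by applying $\omega_R$ together with $\omega_{\DrJ}$.

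\textbf{Step 3: surjectivity.} Every $l^+_{ij}$ equals $l^+_{ii}$ times a multiple of $\bfe_{ij}$, and by part (b) $\bfe_{ij}=\xi(e_{\gamma_{ij}})$. The $l^-$ entries are handled the same way. Hence $\xi$ is onto.

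\textbf{Step 4: part (c).} Both $(\xi(\cdot),\xi(\cdot))_R$ and $(\cdot,\cdot)^{\CF}_{\wt{\DrJ}}$ are skew-pairings on $U^{\geq}_q(\gl(V))^{\CF}\times U^{\leq}_q(\gl(V))^{\CF}$, because $\xi$ is a bialgebra map sending Borels to Borels. A skew-pairing is determined by its values on generators, so it suffices to compare these. From \eqref{eq:gl-RTT-skew-pairing-gen} and the explicit $R$ one reads off:
\begin{itemize}
\item $(l^+_{kk},l^-_{ll})_R=q^{-(\varepsilon_k,\varepsilon_l)}$;
\item $(l^+_{i,i+1},l^-_{i+1,i})_R=-(-1)^{\ol{i+1}}(q-q^{-1})$, up to the Koszul sign in \eqref{eq:gl-RTT-skew-pairing-gen}.
\end{itemize}
Using the factorization \eqref{eq:DrJ-skew-pairing-identity} to strip off the Cartan factors, I would then verify $(\bfe_{i,i+1},\bff_{i+1,i})_R=1/(q-q^{-1})$ and $(q^{\bfH_k},q^{\bfH_l})_R=q^{(\varepsilon_k,\varepsilon_l)}$. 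Here the apparent mismatch between $q^{-(\varepsilon_k,\varepsilon_l)}$ and $q^{(\varepsilon_k,\varepsilon_l)}$ must be resolved by tracking the convention $q^{\bfH_k}=l^+_{kk}$ against $l^-_{kk}=q^{-\bfH_k}$ in the Borel pairing of Remark~\ref{rem:RTT double full Cartan}.

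\textbf{Step 5: injectivity.} Let $x$ lie in the kernel of $\xi$ restricted to $U^{\geq}$. Then $(x,y)^{\CF}_{\wt{\DrJ}}=(\xi(x),\xi(y))_R=0$ for all $y$, and non-degeneracy (Remark~\ref{rem:wt-pairing-Dr-twist}) gives $x=0$; the same holds on $U^{\leq}$. Both sides are quotients of doubles by Cartan identifications (Propositions~\ref{prop:DJ-pairing_finite}(b) and \ref{prop:RTT-pairing_finite}(b)), and multiplication $U^{\geq}\otimes U^{\leq}\to U$ is surjective on each side. By Proposition~\ref{prop:gen double}, the double is the tensor product of its two factors, and $\xi$ is compatible with these decompositions and with the Cartan quotients. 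I would conclude that $\xi$ is injective; the cleanest route is to note that the induced map $\CD_{\DrJ}\to\CD_R$ restricts to isomorphisms on both tensor factors.

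\textbf{Main obstacle.} I expect the hardest part to be the sign bookkeeping in Steps 1 and 4: matching the $\zeta$-twist, the Koszul signs in \eqref{eq:gl-RTT-skew-pairing-gen}, and the normalizations \eqref{eq:RTT ef gen}. My first approach would be to do all computations in $\URtw$ and untwist only at the end.
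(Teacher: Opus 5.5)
Your proposal is correct and follows the paper's proof essentially step by step. Like the paper, you build the Borel-wise maps from Table~\ref{table:A1} and its $\omega_R$/$\omega_{\DrJ}$ mirror, check coproducts on generators, get part (b) from \eqref{eq:A-twisted-4}, compare the pairings on generators, and obtain injectivity from the non-degeneracy of $(-,-)^{\CF}_{\wt{\DrJ}}$ through the induced map of doubles.

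The one difference is that you also verify the relation $[\bfe_{i,i+1},\bff_{j+1,j}]=\delta_{ij}\frac{q^{\bfH_{i,i+1}}-q^{-\bfH_{i,i+1}}}{q-q^{-1}}$, so as to define $\xi$ on all of $\uqgl^{\CF}$ at the outset. This relation does hold: it follows from the $E_{i,i+1}\otimes E_{j+1,j}$ component of \eqref{eq:gl-rtt-rel-2}, once you keep the Koszul sign from passing $E_{j+1,j}$ past $E_{i,i+1}$ in $L^-_2L^+_1$. It is, however, redundant. Once (c) holds, the two Borel maps induce an algebra map of doubles, and the cross relations come for free. That is why the paper remarks that this relation ``will not be needed.''
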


\begin{Rem}
Since the elements $l^{\pm}_{ii}$ are even, the defining formulas~\eqref{eq:RTT ef gen} for $\bfe_{ij}$ and $\bff_{ji}$ 
still hold when the multiplication in the left-hand sides is replaced by that of $U(R)$, instead the one of $\URtw$. 
\end{Rem}

\begin{Rem}\label{rem:omega_R_action}
For the later use in the proof of Theorem~\ref{thm:finite DrJ to RTT}, we record here how the superalgebra isomorphism 
$\omega_{R}$ of~\eqref{eq:transpose-iso} acts on the elements $\bfe_{ij}$, $\bff_{ji}$ of~\eqref{eq:RTT ef gen}, and $q^{\pm \bfH_{k}}$. 
We have $\omega_{R}(q^{\pm \bfH_{k}}) = q^{\mp \bfH_{k}}$ and 
\begin{equation*}
  \omega_{R}(\bfe_{ij}) = 
  -(-1)^{\ol{i}+\cdots+\ol{j-1}}(-1)^{\ol{i}(\ol{i}+\ol{j})} q^{(\varepsilon_{i}, \varepsilon_{i}+\varepsilon_{j})} \bff_{ji} \,,\qquad
  \omega_{R}(\bff_{ji}) 
  = -(-1)^{\ol{i}+\cdots+\ol{j-1}}(-1)^{\ol{j}(\ol{i}+\ol{j})} q^{-(\varepsilon_{i}, \varepsilon_{i}+\varepsilon_{j})} \bfe_{ij} \,.
\end{equation*}
In particular, we have 
$\omega_{R}(\bfe_{i,i+1}) = -(-1)^{\ol{i}\,\ol{i+1}} q^{(\varepsilon_{i},\varepsilon_{i})} \bff_{i+1,i}$,  
$\omega_{R}(\bff_{i+1,i}) = -(-1)^{\ol{i}\,\ol{i+1}}(-1)^{\ol{i}+\ol{i+1}} q^{-(\varepsilon_{i}, \varepsilon_{i})} \bfe_{i,i+1}$.
\end{Rem}

\begin{proof}[Proof of Theorem~\ref{thm:finite DrJ to RTT}]
We first prove that the assignment~\eqref{eq:xi_efh} gives rise to a morphism of Hopf superalgebras 
$U_{q}^{\geq}(\gl(V))^{\CF} \to U^{\geq}(R)$. By untwisting the relations in Table~\ref{table:A1} via (cf.~\eqref{eq:twisted-mult})
\begin{equation}\label{eq:untwisting}
  [\![ x,y ]\!]^{\zeta} = (-1)^{|x||y|} [\![ x,y ]\!] \,,
\end{equation}
we obtain the following: 
\begin{itemize}[leftmargin=0.7cm]

\item 
equations~\eqref{eq:A-twisted-1}--\eqref{eq:A-twisted-3} imply the Chevalley-type 
relations~\eqref{eq:q-gl-chevalley-rel-HH}--\eqref{eq:q-gl-chevalley-rel-He} for $U_{q}^{\geq}(\gl(V))^{\CF}$;

\item 
equations~\eqref{eq:A-twisted-5}--\eqref{eq:A-twisted-9} imply the Serre 
relations~\eqref{eq:q-gl-serre-rel-standard-1}--\eqref{eq:q-gl-serre-rel-standard-3} for $U_{q}^{\geq}(\gl(V))^{\CF}$.

\end{itemize}
This proves that $\xi$ gives a superalgebra morphism $U_{q}^{\geq}(\gl(V))^{\CF} \to U^{\geq}(R)$. 
Moreover, the following computations show that $\xi$ also intertwines 
the comultiplications: 
\begin{align*}
  &\Delta(q^{\bfH_{k}}) 
   = \Delta(l^{+}_{kk}) = l^{+}_{kk} \otimes l^{+}_{kk} = q^{\bfH_{k}} \otimes q^{\bfH_{k}}\,,\\
  &\Delta(\bfe_{i,i+1}) 
   = (-1)^{\ol{i}\,\ol{i+1}} q^{(\varepsilon_{i},\varepsilon_{i})} (q-q^{-1})^{-1} \cdot \Delta(l^{-}_{ii}) \Delta(l^{+}_{i,i+1})\\
  &= (-1)^{\ol{i}\,\ol{i+1}} q^{(\varepsilon_{i},\varepsilon_{i})} (q-q^{-1})^{-1} \cdot (1 \otimes l^{-}_{ii}l^{+}_{i,i+1} 
   +   l^{-}_{ii}l^{+}_{i,i+1} \otimes l^{-}_{ii}l^{+}_{i+1,i+1})
   = 1 \otimes \bfe_{i,i+1} + \bfe_{i,i+1} \otimes q^{-\bfH_{i,i+1}} \,.
\end{align*}
Furthermore, untwisting~\eqref{eq:A-twisted-4} via~\eqref{eq:untwisting} gives 
$\bfe_{i,j+1} = [\![ \bfe_{ij}, \bfe_{j,j+1} ]\!]$, which is compatible with~\eqref{eq:quantum root vectors}, 
thus establishing part (b) for the positive Borel subalgebras. We note that this iterative formula 
also proves the surjectivity of $\xi \colon U_{q}^{\geq}(\gl(V))^{\CF} \to U^{\geq}(R)$.

To derive the analogous results for the negative Borel subalgebras, we utilize the isomorphisms from 
Remarks~\ref{rem:DrJ involution} and~\ref{rem:U(R) involution}. Specifically, using Remark~\ref{rem:omega_R_action}, 
one can check that the composed morphism
\begin{equation*}
  U_{q}^{\leq}(\gl(V))^{\CF} \xrightarrow{\omega_{\DrJ}} (U_{q}^{\geq}(\gl(V))^{\CF})^{\copp} \xrightarrow{\xi} (U^{\geq}(R))^{\copp} 
  \xrightarrow{\omega_{R}^{-1}} U^{\leq}(R)
\end{equation*}
coincides with $\xi$ on the generators. This proves that the assignment~\eqref{eq:xi_efh} gives rise to a well-defined Hopf 
superalgebra morphism $U_{q}^{\leq}(\gl(V))^{\CF} \to U^{\leq}(R)$. Moreover, this morphism is automatically surjective since 
both $\omega_{\DrJ}$ and $\omega_{R}$ restrict to isomorphisms between the respective positive and negative Borel subalgebras 
(cf.~Remarks~\ref{rem:DrJ involution subalgebras} and~\ref{rem:RTT double full Cartan}). Furthermore, we obtain an analogous 
iterative formula 
   $\bff_{j+1,i} = 
    - (-1)^{(\ol{i}+\ol{j})(\ol{j}+\ol{j+1})} q^{(\varepsilon_{j},\varepsilon_{j})} [\![ \bff_{ji}, \bff_{j+1,j} ]\!]$, 
which is also compatible with~\eqref{eq:quantum root vectors}. This completes the proof of part (b) for the negative Borel 
subalgebras.

Let us now compare the pairings. Since both pairings are of $Q$-degree zero, cf.~\eqref{eq:skew-pairing-degree-zero}, we get
\begin{equation*}
  (\bfe_{i,i+1},q^{\pm \bfH_{k}})_{R} = 0 = (e_{i},q^{\pm H_{k}})^{\CF}_{\wt{\DrJ}} \,,\qquad
  (q^{\pm \bfH_{k}},\bff_{i+1,i})_{R} = 0 = (q^{\pm H_{k}},f_{i})^{\CF}_{\wt{\DrJ}} \,.
\end{equation*}
Moreover, we have 
\begin{equation*}
  (q^{\bfH_{i}},q^{-\bfH_{j}})_{R} = (l^{+}_{ii},l^{-}_{jj})_{R} = q^{-(\varepsilon_{i},\varepsilon_{j})} = 
  (q^{H_{i}},q^{-H_{j}})^{\CF}_{\wt{\DrJ}} \,.
\end{equation*}
Finally
\begin{align*}
  ( l_{ii}^{-} l_{i,i+1}^{+} \,,\, l_{j+1,j}^{-} l_{jj}^{+} )_{R}
  &= ( l_{ii}^{-} \otimes l_{i,i+1}^{+} \,,\, \Delta(l_{j+1,j}^{-}) \Delta(l_{jj}^{+}) )_{R} 
   \overset{\eqref{eq:skew-pairing-degree-zero}}{=} 
   ( l_{ii}^{-} \otimes l_{i,i+1}^{+} \,,\, l_{j+1,j+1}^{-} l_{jj}^{+} \otimes l_{j+1,j}^{-} l_{jj}^{+} )_{R}\\
  &= ( l_{ii}^{-} \,,\, l_{j+1,j+1}^{-} l_{jj}^{+} )_{R} \, ( l_{i,i+1}^{+} \,,\, l_{j+1,j}^{-} l_{jj}^{+} )_{R}
   = q^{-(\varepsilon_{i},\alpha_{j})} \, ( l_{i,i+1}^{+} \,,\, l_{j+1,j}^{-} l_{jj}^{+} )_{R}\\
  &\overset{\eqref{eq:skew-pairing-degree-zero}}{=} 
   q^{-(\varepsilon_{i},\alpha_{j})} \, ( l_{i,i+1}^{+} \otimes l_{ii}^{+} \,,\, l_{j+1,j}^{-} \otimes l_{jj}^{+} )_{R}
   = q^{(\varepsilon_{i},\varepsilon_{j+1})} \, ( l_{i,i+1}^{+} \,,\, l_{j+1,j}^{-} )_{R}\\
  &= -\delta_{ij} q^{(\varepsilon_{i},\varepsilon_{j+1})} (-1)^{\ol{i}+\ol{i+1}} \cdot (-1)^{\ol{i+1}} (q-q^{-1})
   = -\delta_{ij} (-1)^{\ol{i}} (q-q^{-1})
\end{align*}
which implies
\begin{equation*}
  (\bfe_{i,i+1},\bff_{j+1,j})_{R} = \delta_{ij} (q-q^{-1})^{-1} = (e_{i},f_{j})^{\CF}_{\wt{\DrJ}} \,.
\end{equation*}
This establishes the compatibility of the pairings on the generators, which completes the proof of part~(c).

Thus, $\xi$ defines a surjective Hopf superalgebra morphism between the corresponding doubles:
\begin{equation*}
  \xi \colon \CD^{\CF}_{\DrJ} = U^\geq_{q}(\gl(V))^{\CF} \otimes U^\leq_{q}(\gl(V))^{\CF}
  \longrightarrow U^{\geq}(R) \otimes U^{\leq}(R) = \CD_{R} \,,
\end{equation*}
see Remark~\ref{rem:wt-pairing-Dr-twist}. Since the pairing $(-,-)^{\CF}_{\wt{\DrJ}}$ is non-degenerate, 
$\xi \colon \CD^{\CF}_{\DrJ} \to \CD_{R}$ must be injective on each Borel subalgebra, and so is an isomorphism. 
Finally, the identification of the Cartan subalgebras yields an isomorphism $\xi \colon \uqgl^{\CF} \iso U(R)$, 
thus completing the proof of part (a).
\end{proof}


\subsection{Inverse morphism: general linear case}\label{ssec:inverse morphism}
\

Following~\cite{df}, we shall now construct a morphism in the opposite direction 
(which does require to work with $\hbar$-adic completion, as emphasized in Remark~\ref{rem:h-adic-completion-avoidance}).

\begin{Prop}\label{prop:gl-inverse-morphism}
The assignment
\begin{equation}\label{eq:DF morphism}
  L^{+} \mapsto (\Id \otimes \varrho)(\fR_{(21)}) \,,\qquad
  L^{-} \mapsto (\Id \otimes \varrho)(\fR^{-1}) \,,
\end{equation}
gives rise to a $Q$-graded Hopf superalgebra morphism $\phi^{\DF} \colon U(R)^{\copp} \to \uqgl$\,.
\end{Prop}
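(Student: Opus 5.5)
The plan is to push the universal identities for $\fR$ through $\Id\otimes\varrho$ and $\Id\otimes\varrho\otimes\varrho$, and to read off each defining relation of $U(R)$ from one of them. Put $\CL^{+}\coloneqq(\Id\otimes\varrho)(\fR_{(21)})$ and $\CL^{-}\coloneqq(\Id\otimes\varrho)(\fR^{-1})$, both in $\uqgl\hat{\otimes}\End(V_q)$.

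\textbf{Step 1: well-definedness, triangularity and grading.} Write $\fR=\fR_u\fR_s$ using~\eqref{eq:gl-universal-Rs Ru}.
\begin{itemize}[leftmargin=0.7cm]
\item Because $\varrho$ kills all sufficiently high powers of the root vectors, only finitely many terms of $\fR_u$ survive. The semisimple part becomes the diagonal matrix $\sum_k q^{-H_k}\otimes E_{kk}$, or its inverse for $\CL^{-}$. Hence all entries of $\CL^{\pm}$ lie in $\uqgl$ itself and no completion is needed.
\item In $\fR_{(21)}$ the factor $e_\gamma$ is hit by $\varrho$, so $\CL^{+}$ is upper triangular. Similarly $\fR^{-1}$ yields a lower triangular $\CL^{-}$.
\item The diagonal entries are $q^{\mp H_k}$, up to sign conventions. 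This gives the relations~\eqref{eq:gl-rtt-diag-rel}.
\item The $(i,j)$-entry has $Q$-degree $\varepsilon_i-\varepsilon_j$, since $\fR$ has total degree zero. This gives $Q$-gradedness.
\end{itemize}

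\textbf{Step 2: the RLL relations.} Label the algebra leg $0$ and the matrix legs $1,2$. The relation for $\CL^{+}$ becomes
\begin{equation*}
  R_{12}\fR_{10}\fR_{20}=\fR_{20}\fR_{10}R_{12} ,
\end{equation*}
after applying $\varrho\otimes\varrho$ on legs $1,2$. This is the relabeling $(1,2,3)\mapsto(1,2,0)$ of~\eqref{eq:YBE}.

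For $\CL^{-}$, take~\eqref{eq:YBE} with labels $(0,1,2)$, namely $\fR_{01}\fR_{02}\fR_{12}=\fR_{12}\fR_{02}\fR_{01}$. Inverting it and conjugating by $\fR_{12}$ gives
\begin{equation*}
  \fR_{02}^{-1}\fR_{01}^{-1}R_{12}=R_{12}\fR_{01}^{-1}\fR_{02}^{-1} .
\end{equation*}

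The mixed relation $R_{12}\CL^{+}_1\CL^{-}_2=\CL^{-}_2\CL^{+}_1R_{12}$ is equivalent to
\begin{equation*}
  \fR_{02}R_{12}\fR_{10}=\fR_{10}R_{12}\fR_{02} ,
\end{equation*}
which is~\eqref{eq:YBE} with labels $(1,0,2)$.

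The one genuine subtlety is super signs. I would rely on the fact that the leg-numbering $\iota$ of Subsection~\ref{ssec:leg-numbering} is defined through the symmetric braiding $\tau^\sigma$, which is a superalgebra map. Relabeled versions of~\eqref{eq:YBE} therefore hold verbatim in $\uqgl\hat\otimes\End(V_q)^{\otimes2}$. One also needs that $\varrho\otimes\varrho$ commutes with $\tau$, since $\varrho$ is even.

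\textbf{Step 3: Hopf compatibility.} By~\eqref{eq:R comult}, $(\Delta\otimes\Id)\fR_{(21)}$ in the relevant legs equals $\fR_{(21)}$-type factors in the order $\CL^{+}_{(2)}\CL^{+}_{(1)}$. That is, $\Delta(\CL^{+})=\CL^{+}_{(2)}\CL^{+}_{(1)}$, which is exactly $\Delta^{\opp}$ of the $\RLL$ formula~\eqref{eq:gl-rtt-coalg}. The same holds for $\fR^{-1}$, using $(\Delta\otimes\Id)\fR^{-1}=\fR_{(23)}^{-1}\fR_{(13)}^{-1}$. This explains why the source must be $U(R)^{\copp}$.

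The counit condition follows from $(\epsilon\otimes\Id)\fR=1$. The antipode is then automatically compatible, since a bialgebra map between Hopf superalgebras intertwines antipodes.

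\textbf{Main obstacle.} I expect the main difficulty to be the careful Koszul-sign bookkeeping in Steps 2 and 3, namely the placement of $\CL_{(1)}$ versus $\CL_{(2)}$ with the super braiding. If this becomes messy, I would first verify the relations in the twisted setting $\URtw$ and then untwist via Proposition~\ref{prop:gl-twisted-rtt}.
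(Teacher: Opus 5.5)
Your proposal is correct and follows essentially the same route as the paper. The diagonal relations come from $\fR_s$, the three $\RLL$ relations come from relabeled (and, for $L^-$, inverted and conjugated) forms of the Yang--Baxter equation pushed through $\Id\otimes\varrho^{\otimes 2}$, and the coproduct compatibility comes from $(\Delta\otimes\Id)\fR_{(21)}=\fR_{(32)}\fR_{(31)}$ and $(\Delta\otimes\Id)\fR^{-1}=\fR^{-1}_{(23)}\fR^{-1}_{(13)}$, which is exactly why the source is $U(R)^{\copp}$.

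One correction to Step~1 concerns the degree count. Since $\fR$ has total degree zero and $\varrho(x)$ has an $E_{ij}$-component only if $\deg x=\varepsilon_i-\varepsilon_j$, the $(i,j)$-entry of $(\Id\otimes\varrho)(\fR_{(21)})$ (and likewise of $(\Id\otimes\varrho)(\fR^{-1})$) has degree $\varepsilon_j-\varepsilon_i$, not $\varepsilon_i-\varepsilon_j$. For example, $l^{+}_{i,i+1}\mapsto -(q-q^{-1})f_iq^{-H_{i+1}}$. So $\phi^{\DF}$ actually negates the $Q$-grading, and ``$Q$-graded'' must be read up to $\lambda\mapsto-\lambda$. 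The composite with $\omega_R^{-1}$ in Proposition~\ref{prop:almost-inverse-gl} is then genuinely degree-preserving.
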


\begin{proof}
We first verify that the assignment~\eqref{eq:DF morphism} defines a superalgebra morphism. The explicit 
formula~\eqref{eq:gl-universal-Rs Ru} (together with an $\hbar$-adic computation, see Remark~\ref{rem:h-adic-completion-avoidance}) 
ensures that $\phi^{\DF}(L^{+})$ and $\phi^{\DF}(L^{-})$ are indeed upper and lower triangular matrices, respectively. 
Furthermore, since the diagonal components of $(\Id \otimes \varrho)((\fR_{u})_{(21)})$ and 
$(\Id \otimes \varrho)(\fR_{u}^{-1})$ reduce to the identity matrix, the diagonal components of 
$(\Id \otimes \varrho)(\fR_{(21)})$ and $(\Id \otimes \varrho)(\fR^{-1})$ simplify respectively to 
$(\Id \otimes \varrho)((\fR_{s})_{(21)}) = (\Id \otimes \varrho)(\fR_{s})$ and $(\Id \otimes \varrho)(\fR_{s}^{-1})$, 
verifying the diagonal relations~\eqref{eq:gl-rtt-diag-rel}.

From the Yang--Baxter equation~\eqref{eq:YBE}, we deduce the following equalities:
\begin{equation}\label{eq:YBE variants}
  \fR_{(23)}\fR_{(21)}\fR_{(31)} = \fR_{(31)}\fR_{(21)}\fR_{(23)} \,,\qquad
  \fR_{(23)}\fR_{(12)}^{-1}\fR_{(13)}^{-1} = \fR_{(13)}^{-1}\fR_{(12)}^{-1}\fR_{(23)} \,,
\end{equation}
with the former obtained by applying $\tau_{(12)}\tau_{(23)}$ to~\eqref{eq:YBE}. 
Applying $\Id \otimes \varrho^{\otimes 2}$ to~\eqref{eq:YBE variants} verifies~\eqref{eq:gl-rtt-rel-1}:
\begin{equation*}
  R_{12} \phi^{\DF}(L^{\pm})_{1} \phi^{\DF}(L^{\pm})_{2} = \phi^{\DF}(L^{\pm})_{2} \phi^{\DF}(L^{\pm})_{1} R_{12} \,.
\end{equation*}
We note that because $\Id \otimes \varrho^{\otimes 2}$ maps the second and third tensor factors of $\uqgl^{\hat{\otimes} 3}$ to 
the last two factors in $U(R)\otimes \End(V_{q})^{\otimes 2}$, the element $\fR_{(23)}$ is sent to $R_{12}$. 

Likewise, applying $\tau_{(12)}$ to~\eqref{eq:YBE} and multiplying by $\fR^{-1}_{(13)}$, we get
$\fR_{(23)}\fR_{(21)}\fR_{(13)}^{-1} = \fR_{(13)}^{-1}\fR_{(21)}\fR_{(23)}$. 
Applying $\Id \otimes \varrho^{\otimes 2}$ to the latter equality then verifies~\eqref{eq:gl-rtt-rel-2}:
\begin{equation*}
  R_{12} \phi^{\DF}(L^{+})_{1} \phi^{\DF}(L^{-})_{2} = \phi^{\DF}(L^{-})_{2} \phi^{\DF}(L^{+})_{1} R_{12} \,.
\end{equation*}

To verify that $\phi^{\DF}$ also intertwines the respective comultiplications, we note that~\eqref{eq:R comult} implies 
\begin{equation*}
  (\Delta \otimes \Id)\fR_{(21)} = \fR_{(32)}\fR_{(31)} \,,\qquad
  (\Delta \otimes \Id)\fR^{-1} = \fR_{(23)}^{-1}\fR_{(13)}^{-1}\,.
\end{equation*}
Applying $\Id^{\otimes 2} \otimes \varrho$ to this yields
  $(\Delta \otimes \Id)\phi^{\DF}(L^{\pm}) = \phi^{\DF}(L^{\pm})_{(2)}\phi^{\DF}(L^{\pm})_{(1)}$.
This completes the proof.
\end{proof}

To understand the relation between $\phi^{\DF}$ and $\xi$, we start with the following explicit computation.

\begin{Lem}\label{lem:DF gl image}
The images of the generators $\{l^{\pm}_{ii}\}_{i=1}^{N}$ and $\{l^{+}_{i,i+1}, l^{-}_{i+1,i} \}_{i=1}^{N}$ under 
$\phi^{\DF}$ are as follows:
\begin{equation*}
  \phi^{\DF}\colon \quad 
  l^{\pm}_{ii} \mapsto q^{\mp H_{i}} \,,\qquad
  l^{+}_{i,i+1} \mapsto -(q-q^{-1}) f_{i}q^{-H_{i+1}} \,,\qquad
  l^{-}_{i+1,i} \mapsto (-1)^{\ol{i+1}} (q-q^{-1}) q^{H_{i+1}} e_{i} \,.
\end{equation*}
\end{Lem}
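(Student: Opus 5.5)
The images can be read off directly from the explicit factorization $\fR=\fR_u\fR_s$ of~\eqref{eq:gl-universal-Rs Ru}, by applying $\Id\otimes\varrho$ and keeping only the terms that can contribute to the entries $E_{ii}$, $E_{i,i+1}$, $E_{i+1,i}$. For $L^{+}\mapsto(\Id\otimes\varrho)(\fR_{(21)})$ we have $\fR_{(21)}=(\fR_u)_{(21)}(\fR_s)_{(21)}$. The second leg now carries the $e$'s, and $\varrho(e_\gamma)$ is strictly upper triangular. In fact $\varrho(e_{\gamma_{ab}})$ is a multiple of $E_{ab}$ (all products $\varrho(e_\gamma)^k$ with $k\ge2$ vanish), and products of several distinct root vectors in the ordered product give strictly larger "distance" $b-a$.

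\textbf{Diagonal entries.} The diagonal part is just $(\Id\otimes\varrho)(\fR_s)=\sum_k q^{-\sum_a (-1)^{\ol a}H_a\cdot(-1)^{\ol k}\delta_{ak}}\otimes E_{kk}$ read $\hbar$-adically. Since $(-1)^{\ol k}(-1)^{\ol k}=1$, this gives $q^{-H_k}\otimes E_{kk}$, so $l^+_{kk}\mapsto q^{-H_k}$. Similarly, $\fR^{-1}$ gives $l^-_{kk}\mapsto q^{H_k}$. This part is already essentially stated in the proof of Proposition~\ref{prop:gl-inverse-morphism}.

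\textbf{The $(i,i+1)$ entry of $L^+$.} Only the linear term $f_i\otimes e_i/(f_i,e_i)_{\DrJ}$ of $\fR_u$ (flipped to $e_i$ on the second leg, with the sign $(-1)^{|e_i||f_i|}$ from $\tau$) contributes to $E_{i,i+1}$, because any product of two or more root vectors raises the $Q$-degree beyond $\alpha_i$. We then multiply on the right by the diagonal $(\fR_s)_{(21)}$ evaluated at $E_{i+1,i+1}$, which gives the factor $q^{-H_{i+1}}$. Inserting $(f_i,e_i)_{\DrJ}=(-1)^{|f_i||e_i|}/(q^{-1}-q)$ from~\eqref{eq:generators-pairing} and $\varrho(e_i)=E_{i,i+1}$, the two Koszul signs cancel and we get $-(q-q^{-1})f_iq^{-H_{i+1}}$.

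\textbf{The $(i+1,i)$ entry of $L^-$.} We write $\fR^{-1}=\fR_s^{-1}\fR_u^{-1}$. To first order $\fR_u^{-1}=1-\sum_i e_i\otimes f_i/(f_i,e_i)_{\DrJ}+\dots$, and higher terms again do not reach degree $-\alpha_i$ in the second leg. Using $\varrho(f_i)=(-1)^{\ol i}E_{i+1,i}$, the left factor $\fR_s^{-1}$ at $E_{i+1,i+1}$ gives $q^{H_{i+1}}$ on the left. This yields $(-1)^{\ol i}(q-q^{-1})(-1)^{|e_i|}q^{H_{i+1}}e_i$, and since $|e_i|=\ol i+\ol{i+1}$ the sign becomes $(-1)^{\ol{i+1}}$.

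\textbf{Main obstacle.} The delicate step is the sign bookkeeping. This includes the Koszul sign incurred when flipping legs in $\fR_{(21)}$ and when writing $(\Id\otimes\varrho)$ of an element $x\otimes y$ as an element of $U(R)\otimes\End(V_q)$ in the \emph{untwisted} braiding. We must also make sure the ordering of $\fR_s$ relative to $\fR_u$ puts the Cartan factor on the correct side ($f_iq^{-H_{i+1}}$ versus $q^{H_{i+1}}e_i$). I would handle this by fixing the convention $\varrho(x)$ placed as $x\otimes\varrho(y)$ with no extra sign, and checking consistency on the diagonal relations first.
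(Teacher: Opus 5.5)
Your proposal is correct and follows the paper's proof essentially step for step. You evaluate $(\Id\otimes\varrho)(\fR_s)=\sum_k q^{-H_k}\otimes E_{kk}$ for the diagonal entries. You then use the $Q$-degree to isolate the single simple-root term in $(\Id\otimes\varrho)((\fR_u)_{(21)})$, respectively $(\Id\otimes\varrho)(\fR_u^{-1})$, and multiply by the Cartan factor on the correct side. The sign bookkeeping matches the paper: $(-1)^{|e_i|}\cdot(-1)^{|e_i|}=1$ in the first case and $(-1)^{|e_i|+\ol{i}}=(-1)^{\ol{i+1}}$ in the second.

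The only slips are cosmetic. The linear term of $\fR_u$ itself is $e_i\otimes f_i/(f_i,e_i)_{\DrJ}$; it becomes $(-1)^{|e_i|}f_i\otimes e_i/(f_i,e_i)_{\DrJ}$ only after the flip. Also, the ambient algebra is $\uqgl\otimes\End(V_{q})$, not $U(R)\otimes\End(V_{q})$.
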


\begin{proof}
We begin by evaluating the semisimple part. A direct $\hbar$-adic computation yields:
\begin{equation*}
  (\Id \otimes \varrho)(\fR_{s}) = \sum_{1\leq i\leq N} q^{-H_{i}} \otimes E_{ii} \,,
\end{equation*}
so that $\phi^{\DF}(l^{\pm}_{ii}) = q^{\mp H_{i}}$. 
Next, we evaluate the unipotent part. We first determine the $E_{i,i+1}$-component of
\begin{equation*}
  (\Id \otimes \varrho)((\fR_{u})_{(21)}) 
  = \prod_{\gamma \in \Phi^{+}}^{\leftarrow} \left( \sum^{k\geq 0}_{k \leq 1 \text{ if } \gamma \in \Phi_{\bbar{1}} } 
    (-1)^{|e_{\gamma}^{k}|} \frac{f_\gamma^k \otimes \varrho(e_\gamma^k)}{(f_\gamma^k, e_\gamma^k)_{\DrJ}} \right) \,.
\end{equation*}
Since $\varrho$ preserves the $Q$-degree and $\deg(E_{i,i+1}) = \alpha_{i}$ is a simple root, the only term contributing 
to this component is
\begin{equation*}
  (-1)^{|e_{\alpha_{i}}|} \frac{f_{\alpha_{i}} \otimes \varrho(e_{\alpha_{i}})}{(f_{\alpha_{i}}, e_{\alpha_{i}})_{\DrJ}}
  = -(q-q^{-1}) f_{i} \otimes E_{i,i+1}\,,
\end{equation*}
which implies that $\phi^{\DF}(l^{+}_{i,i+1}) = -(q-q^{-1}) f_{i}q^{-H_{i+1}}$.
Likewise, let us compute the $E_{i+1,i}$-component of
\begin{equation*}
  (\Id \otimes \varrho)(\fR_{u}^{-1})
  = \prod_{\gamma \in \Phi^{+}}^{\rightarrow} \left( \sum^{k\geq 0}_{k \leq 1 \text{ if } \gamma \in \Phi_{\bbar{1}} } 
    \frac{e_\gamma^k \otimes \varrho(f_\gamma^k)}{(f_\gamma^k, e_\gamma^k)_{\DrJ}} \right)^{-1}
  =\, \prod_{\gamma \in \Phi^{+}}^{\rightarrow} 
      \sum_{r \geq 0} \left( -\sum^{k\geq 1}_{k \leq 1 \text{ if } \gamma \in \Phi_{\bbar{1}} } 
      \frac{e_\gamma^k \otimes \varrho(f_\gamma^k)}{(f_\gamma^k, e_\gamma^k)_{\DrJ}} \right)^{r} \,,
\end{equation*}
where we use the fact that 
  $\sum^{k\geq 1}_{k \leq 1 \text{ if } \gamma \in \Phi_{\bbar{1}} } 
   \frac{e_\gamma^k \otimes \varrho(f_\gamma^k)}{(f_\gamma^k, e_\gamma^k)_{\DrJ}}$ is nilpotent. 
As above, the only contributing term is thus 
\begin{equation*}
  - \frac{e_{\alpha_{i}} \otimes \varrho(f_{\alpha_{i}})}{(f_{\alpha_{i}}, e_{\alpha_{i}})_{\DrJ}}
  = (-1)^{\ol{i}+\ol{i+1}} (q-q^{-1}) e_{i} \otimes (-1)^{\ol{i}} E_{i+1,i}
  = (-1)^{\ol{i+1}} (q-q^{-1}) e_{i} \otimes E_{i+1,i}\,,
\end{equation*}
which implies that $\phi^{\DF}(l^{-}_{i+1,i}) = (-1)^{\ol{i+1}} (q-q^{-1}) q^{H_{i+1}} e_{i}$. 
This completes the proof.
\end{proof}

We shall now exhibit $\phi^{\DF}$ as the inverse of $\xi$, up to canonical superalgebra automorphisms.

\begin{Prop}\label{prop:almost-inverse-gl}
The composition
\begin{equation*}
  \uqgl \xrightarrow[\eqref{eq:twist-identification}]{\phi_{\CF}^{-1}} \uqgl^{\CF} 
  \xrightarrow[\eqref{eq:xi_efh}]{\xi} U(R) 
  \xrightarrow[\eqref{eq:transpose-iso}]{\omega_{R}^{-1}} U(R)^{\copp} 
  \xrightarrow[\eqref{eq:DF morphism}]{\phi^{\DF}} \uqgl 
\end{equation*}
defines an automorphism of $\uqgl$, mapping
\begin{equation*}
  q^{\pm H_{k}} \mapsto q^{\pm H_{k}} \,,\qquad
  e_{i} \mapsto q^{(\varepsilon_{i+1},\alpha_{i})} e_{i} \,,\qquad
  f_{i} \mapsto q^{-(\varepsilon_{i+1},\alpha_{i})} f_{i} \,.
\end{equation*}
\end{Prop}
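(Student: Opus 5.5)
The plan is to evaluate the composite on the generators $q^{\pm H_k}$, $e_i$, $f_i$ of $\uqgl$, and then to check that the resulting assignment is an automorphism. Each of the four maps is a superalgebra morphism: $\phi_{\CF}^{-1}$ and $\xi$ are isomorphisms by Proposition~\ref{prop:drinfeld-twist-iso}(a) and Theorem~\ref{thm:finite DrJ to RTT}(a). The map $\omega_{R}^{-1}$ is a superalgebra isomorphism by~\eqref{eq:transpose-iso}, since $U(R)^{\copp}$ has the same underlying superalgebra as $U(R)$. Finally, $\phi^{\DF}$ is a superalgebra morphism by Proposition~\ref{prop:gl-inverse-morphism}. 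Hence the composite is a superalgebra endomorphism of $\uqgl$, and it is determined by its values on generators.

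For the Cartan part, $q^{H_k}\mapsto q^{H_k}\mapsto q^{\bfH_k}=l^{+}_{kk}$. Since $\omega_R(L^{-})=(L^{+})^{\st}$ gives $\omega_R(l^{-}_{kk})=l^{+}_{kk}$, we get $\omega_R^{-1}(l^{+}_{kk})=l^{-}_{kk}$, and Lemma~\ref{lem:DF gl image} sends this to $q^{H_k}$.

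For $e_i$, the chain is as follows.
\begin{enumerate}[leftmargin=0.7cm]
\item First, $\phi_{\CF}^{-1}(e_i)=e_iq^{h_i}$, and $\xi$ sends this to $\bfe_{i,i+1}q^{\bfH_{i,i+1}}$.
\item Inverting the formula for $\omega_R(\bff_{i+1,i})$ in Remark~\ref{rem:omega_R_action} gives $\omega_R^{-1}(\bfe_{i,i+1})=-(-1)^{\ol{i}\,\ol{i+1}}(-1)^{\ol{i}+\ol{i+1}}q^{(\varepsilon_i,\varepsilon_i)}\bff_{i+1,i}$. Together with $\omega_R^{-1}(q^{\bfH_{i,i+1}})=q^{-\bfH_{i,i+1}}$, this yields an explicit scalar multiple of $\bff_{i+1,i}q^{-\bfH_{i,i+1}}$.
\item Next, use~\eqref{eq:RTT ef gen} to write $\bff_{i+1,i}$ as a scalar times $l^{-}_{i+1,i}(l^{-}_{ii})^{-1}=l^{-}_{i+1,i}l^{+}_{ii}$, and write $q^{-\bfH_{i,i+1}}$ as $l^{-}_{ii}\,l^{+}_{i+1,i+1}$.
\item Lemma~\ref{lem:DF gl image} then gives the image as a scalar times $q^{H_{i+1}}e_i\,q^{-H_i}\,q^{H_i}q^{-H_{i+1}}=q^{H_{i+1}}e_iq^{-H_{i+1}}=q^{(\varepsilon_{i+1},\alpha_i)}e_i$ by~\eqref{eq:q-gl-chevalley-rel-He}.
\end{enumerate}
It then remains to check that the accumulated scalar from Remark~\ref{rem:omega_R_action}, \eqref{eq:RTT ef gen} and the lemma is exactly $1$. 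The sign $(-1)^{\ol{i}\,\ol{i+1}}$ appears twice, and the factors $(-1)^{\ol{i}+\ol{i+1}}$, $(-1)^{\ol{i+1}}$ and $(-1)^{\ol{i}}$ combine to $+1$. The factor $(q-q^{-1})^{\pm1}$ cancels. The powers $q^{\pm(\varepsilon_i,\varepsilon_i)}$ also cancel. The case of $f_i$ is entirely parallel: $\phi_{\CF}^{-1}(f_i)=q^{-h_i}f_i$, $\omega_R^{-1}(\bff_{i+1,i})$ is read off from $\omega_R(\bfe_{i,i+1})$, and $\phi^{\DF}(l^{+}_{i,i+1})=-(q-q^{-1})f_iq^{-H_{i+1}}$. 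This produces $q^{-H_{i+1}}$-conjugation and hence the factor $q^{-(\varepsilon_{i+1},\alpha_i)}$.

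Finally, the resulting assignment is an automorphism. Rescaling $e_i\mapsto c_ie_i$ and $f_i\mapsto c_i^{-1}f_i$ with $c_i\in\BC(q)^{\times}$, while fixing the $q^{\pm H_k}$, preserves the Chevalley relations~\eqref{eq:q-gl-chevalley-rel-HH}--\eqref{eq:q-gl-chevalley-rel-ef}, because $c_ic_i^{-1}=1$. It also preserves the Serre relations~\eqref{eq:q-gl-serre-rel-standard-1}--\eqref{eq:q-gl-serre-rel-standard-3}, since they are $Q$-homogeneous in the $e$'s alone (respectively the $f$'s alone). Its inverse is the rescaling by $c_i^{-1}$. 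The main obstacle is the sign and $q$-power bookkeeping in the middle steps. In particular, one must use that the algebra structures of $U(R)$ and $U(R)^{\copp}$ coincide, so that products are not reversed, and one must keep the ordering of $l^{-}_{ii}$ and $l^{+}_{ii}$ correct when applying the lemma. I would organize this computation by first recording $\omega_R^{-1}$ on $\bfe_{i,i+1}$, $\bff_{i+1,i}$ and $q^{\pm\bfH_k}$ explicitly, and then pushing everything through $\phi^{\DF}$ in one line per generator.
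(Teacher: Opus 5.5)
Your proposal is correct and is essentially the paper's proof: the paper also traces each generator through $\phi_{\CF}^{-1}$, $\xi$, $\omega_R^{-1}$ (via Remark~\ref{rem:omega_R_action}) and $\phi^{\DF}$ (via Lemma~\ref{lem:DF gl image}), with the same intermediate values, e.g.\ $-(-1)^{\ol{i}\,\ol{i+1}}(-1)^{\ol{i}+\ol{i+1}}q^{(\varepsilon_i,\varepsilon_i)}\bff_{i+1,i}q^{-\bfH_{i,i+1}}$. Your scalar bookkeeping checks out; you did not mention the two overall minus signs in each chain (from Remark~\ref{rem:omega_R_action} and \eqref{eq:RTT ef gen}, resp.\ Lemma~\ref{lem:DF gl image}), but they also cancel, and your explicit rescaling argument for invertibility fills in a point the paper leaves implicit.
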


\begin{proof}
This is verified by direct computation:
\begin{align*}
  & q^{\pm H_{k}} \xmapsto{\phi_{\CF}^{-1}} q^{\pm H_{k}} \xmapsto{\xi} q^{\pm \bfH_{k}} 
  \xmapsto{\omega_{R}^{-1}} q^{\mp \bfH_{k}} \xmapsto{\phi^{\DF}} q^{\pm H_{k}} \,,\\
  & e_{i} \xmapsto{\phi_{\CF}^{-1}} e_{i}q^{h_{i}} \xmapsto{\xi} \bfe_{i,i+1} q^{\bfH_{i,i+1}} \xmapsto{\omega_{R}^{-1}} 
  -(-1)^{\ol{i}\,\ol{i+1}} (-1)^{\ol{i}+\ol{i+1}} q^{(\varepsilon_{i},\varepsilon_{i})} \bff_{i+1,i} q^{-\bfH_{i,i+1}}
  \xmapsto{\phi^{\DF}} q^{(\varepsilon_{i+1},\alpha_{i})} e_{i} \,,\\
  & f_{i} \xmapsto{\phi_{\CF}^{-1}} q^{-h_{i}}f_{i} \xmapsto{\xi} q^{-\bfH_{i,i+1}} \bff_{i+1,i} \xmapsto{\omega_{R}^{-1}} 
  -(-1)^{\ol{i}\,\ol{i+1}} q^{-(\varepsilon_{i},\varepsilon_{i})} q^{\bfH_{i,i+1}} \bfe_{i,i+1}
  \xmapsto{\phi^{\DF}} q^{-(\varepsilon_{i+1},\alpha_{i})} f_{i} \,. \qedhere
\end{align*}
\end{proof}


\subsection{Factorization of the reduced canonical element: general linear case}\label{ssec:factorization gl}
\

In this subsection, we derive the canonical element $\fR^{R}$ of $U(R)$ corresponding to the double construction and establish 
the factorization of the associated reduced $R$-matrix. Although this recovers the main result of~\cite{hz}, we present 
the complete argument here, as it will require minimal modifications to extend present results to the orthosymplectic setup 
in Subsection~\ref{ssec:factorization gosp}.
To align with the order of the Borel subalgebras utilized in~\eqref{eq:gl-skew-pairing}, we employ the transposed inverse pairing 
$(-,-)_{\wt{R}}$ introduced in Remark~\ref{rem:RTT pairing order} (see also Proposition~\ref{prop:opposite double} and 
Corollary~\ref{cor:opposite-canonical}). The non-degeneracy of this pairing follows directly from that of $(-,-)_{R}$ 
via Theorem~\ref{thm:finite DrJ to RTT}(c), together with~\eqref{eq:inverse-pairing} and \eqref{eq:sigma-tilde}.

To construct the canonical element with respect to this pairing, we first establish bases for the relevant subalgebras. 
Let $U^{>}(R)$, $U^{<}(R)$, and $U^{0}(R)$ be the subalgebras of $U(R)$ generated by $\{\bfe_{ij}\}_{1 \leq i<j \leq N}$, 
$\{\bff_{ji}\}_{1 \leq i<j \leq N}$, and $\{q^{\pm \bfH_{k}}\}_{k=1}^{N}$, respectively. By Theorem~\ref{thm:finite DrJ to RTT} 
and Remark~\ref{rem:triangular-decomp-morphisms}, we obtain the following isomorphisms of underlying superspaces given by 
the multiplication morphisms:
\begin{equation}\label{eq:RTT-tri-decomp}
  U^0(R) \otimes U^>(R) \iso U^{\geq}(R) \,,\qquad U^0(R) \otimes U^<(R) \iso U^{\leq}(R) \,.
\end{equation}
It is clear that the set $\big\{ q^{\sum_{k=1}^{N} m_{k} \bfH_{k} } \,\big|\, m_{1}, \ldots, m_{N} \in \BZ \big\}$ forms a basis for 
$U^{0}(R)$. For notational convenience, given any positive root $\gamma = \gamma_{ij} = \varepsilon_{i} - \varepsilon_{j} \in \Phi^{+}$, 
we shall use the notation $\bfe_{\gamma} \coloneqq \bfe_{ij}$, $\bff_{\gamma} \coloneqq \bff_{ji}$, and 
$q^{\pm\bfH_{\gamma}} \coloneqq q^{\pm\bfH_{ij}}$. To establish the bases for $U^{>}(R)$ and $U^{<}(R)$, we need the following 
technical result:

\begin{Lem}\label{lem:gl-pairing-basis}
For any non-negative integers $m_{\gamma}, n_{\gamma}$ with $m_{\gamma}, n_{\gamma} \leq 1$ if $\gamma \in \Phi_{\bbar{1}}$, we have
\begin{equation}\label{eq:gl-PBW-pairing}
  \Bigg( \prod_{\gamma \in \Phi^{+}}^{\leftarrow} \bff_{\gamma}^{m_{\gamma}}, 
         \prod_{\gamma \in \Phi^{+}}^{\leftarrow} \bfe_{\gamma}^{n_{\gamma}} \Bigg)_{\wt{R}} 
  = (-1)^{\chi(\mathbf{m})} \prod_{\gamma \in \Phi^{+}} \delta_{m_{\gamma}, n_{\gamma}} 
  \sfC_{\gamma, m_{\gamma}} (\bff_{\gamma}, \bfe_{\gamma})_{\wt{R}}^{m_{\gamma}}\,,
\end{equation}
where the scalar $\sfC_{\gamma,p}$ is defined by
\begin{equation*}
  \sfC_{\gamma,p} \coloneqq \prod_{k=1}^{p} 
    \frac{1 - \left((-1)^{|\bfe_{\gamma}|} q^{(\gamma,\gamma)}\right)^{k}}{1 - (-1)^{|\bfe_{\gamma}|} q^{(\gamma,\gamma)}} \,,
\end{equation*}
and for the sequence $\mathbf{m} = (m_\gamma)_{\gamma \in \Phi^+}$ we define $\chi(\mathbf{m})$ via 
\begin{equation*}
  \chi(\mathbf{m}) \coloneqq \sum_{\gamma \prec \gamma'} m_{\gamma} m_{\gamma'} |\bfe_{\gamma}| |\bfe_{\gamma'}| 
  + \sum_{\gamma \in \Phi^+} \binom{m_{\gamma}}{2} |\bfe_{\gamma}| \,.
\end{equation*}
\end{Lem}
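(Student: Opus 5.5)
The plan is to compute the pairing directly inside $U(R)$. The only inputs are the skew-pairing axioms for $(-,-)_{\wt{R}}$ (Appendix~\ref{sec:double-construction}), the $Q$-degree-zero property~\eqref{eq:skew-pairing-degree-zero}, and explicit coproduct formulas for $\bfe_{ij}$ and $\bff_{ji}$. Alternatively, one can transport Theorem~\ref{thm:gl-PBW-general}(b) through $\xi$ using Theorem~\ref{thm:finite DrJ to RTT}(b,c); this gives orthogonality immediately, but it still leaves the constants $\sfC_{\gamma,p}$ and the sign $(-1)^{\chi(\mathbf m)}$ to be determined. So I would do the computation intrinsically.

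\emph{Step 1: coproducts of root vectors.} Reading off the entries of $\Delta(L^{+})=L^{+}_{(1)}L^{+}_{(2)}$ through the Gauss decomposition~\eqref{eq:gl-Gauss-L} gives
\begin{equation*}
  \Delta(\bfe_{ij}) = 1\otimes \bfe_{ij} + \bfe_{ij}\otimes q^{-\bfH_{ij}} + \sum_{i<k<j} c_{ik j}\, \bfe_{ik} \otimes \bfe_{kj} q^{-\bfH_{ik}}
\end{equation*}
for explicit scalars $c_{ikj}$. This is the same computation as for $\Delta(\bfe_{i,i+1})$ in the proof of Theorem~\ref{thm:finite DrJ to RTT}. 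The analogous formula for $\Delta(\bff_{ji})$ follows by applying $\omega_R$ (Remark~\ref{rem:omega_R_action}). Together with $Q$-degree considerations, this shows that in $\Delta$ of an ordered monomial $\prod^{\leftarrow}\bfe_\gamma^{n_\gamma}$, the component of degree equal to that of the leftmost factor $\bfe_{\gamma_{\max}}^{n_{\gamma_{\max}}}$ in the appropriate tensor leg comes only from that factor.

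\emph{Step 2: induction on the number of distinct roots.} Write the $\bff$-monomial as $\bff_{\gamma_0}^{m_{\gamma_0}}\cdot Y$, where $\gamma_0$ is the $\prec$-largest root with $m_{\gamma_0}>0$, and similarly split the $\bfe$-monomial. Apply the skew-pairing axiom $(y_1y_2,x)=\pm(y_1\otimes y_2,\Delta(x))$ to expand $\Delta$ of the $\bfe$-monomial. The key claim is that, by convexity of the lexicographic order~\eqref{eq:gl-convex-order}, only the term $\bfe_{\gamma_0}^{m_{\gamma_0}}\otimes(\text{rest})$ (up to Cartan factors) can have the right degrees. Mixed terms $\bfe_{ik}\otimes\bfe_{kj}$ produce a factor of degree $\varepsilon_i-\varepsilon_k$, and no such factor is realizable by the roots $\preceq\gamma_0$ remaining in $Y$. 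The Cartan tails $q^{-\bfH}$ pair with $1$ to give $1$, by the analogue of~\eqref{eq:DrJ-skew-pairing-identity}. This yields the factorization and the Kronecker deltas. The sign accumulated by commuting odd factors through one another in the Koszul rule is exactly the cross term $\sum_{\gamma\prec\gamma'}m_\gamma m_{\gamma'}|\bfe_\gamma||\bfe_{\gamma'}|$ of $\chi$.

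\emph{Step 3: single root powers.} Compute $(\bff_\gamma^{p},\bfe_\gamma^{p})_{\wt{R}}$ by induction on $p$. Using $\Delta(\bfe_\gamma)\ni 1\otimes\bfe_\gamma+\bfe_\gamma\otimes q^{-\bfH_\gamma}$, we get
\begin{equation*}
  \Delta(\bfe_\gamma^p)=\sum_k \binom{p}{k}_{\!(-1)^{|\bfe_\gamma|}q^{(\gamma,\gamma)}}\bfe_\gamma^{k}\otimes \bfe_\gamma^{p-k}q^{-k\bfH_\gamma}+(\text{terms of wrong degree}).
\end{equation*}
Here the commutation $q^{-\bfH_\gamma}\bfe_\gamma=q^{-(\gamma,\gamma)}\bfe_\gamma q^{-\bfH_\gamma}$ and the Koszul sign produce the twisted $q$-binomial. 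Pairing against $\bff_\gamma\cdot\bff_\gamma^{p-1}$ isolates the $k=1$ term, so each step contributes the factor $(1-x^p)/(1-x)$ with $x=(-1)^{|\bfe_\gamma|}q^{(\gamma,\gamma)}$. This gives $\sfC_{\gamma,p}$, and the residual sign $\binom{p}{2}|\bfe_\gamma|$. For odd $\gamma$ only $p\le1$ occurs, by~\eqref{eq:A-twisted-10}.

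The main obstacle is the degree argument in Step 2. One must verify that the non-primitive terms in $\Delta(\bfe_{ij})$ never contribute after pairing, and that the sign bookkeeping matches $\chi$ exactly. To settle the first point, I would prove a lemma that $\Delta$ of an ordered monomial lies in $\bfe_{\gamma_0}^{m}\otimes(\cdots)$ plus terms whose first leg has degree not in $\BZ_{\ge0}\gamma_0$ plus a sum of roots $\prec\gamma_0$.
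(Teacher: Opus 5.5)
Your overall strategy is the paper's in mirror image. The paper takes the $\prec$-minimal root $\alpha$ occurring in \emph{either} monomial. It strips a single factor $\bff_\alpha$ off the right (or argues symmetrically on the $\bfe$-side if $m_\alpha=0$) and pairs it with the second leg of $\Delta(\bfE)$, whose degrees are $0$ or roots $\succeq\alpha$ by \eqref{eq:RTT-Delta-ef}. Peeling one factor at a time produces $\sfC_{\gamma,m_\gamma}$ directly as a product of $q$-integers. You strip the maximal root off the left and pair it with the first leg. That is equally viable, but Step~2 as written has a genuine gap.

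\textbf{The gap.} You choose $\gamma_0$ as the largest root with $m_{\gamma_0}>0$. The first legs of $\Delta(\bfe_\gamma)$ have degree $0$, $\gamma$, or some $\gamma'\prec\gamma$, so they are controlled by $\gamma_0$ only if every root of the $\bfe$-monomial is $\preceq\gamma_0$. Otherwise your key claim, and the auxiliary lemma you propose at the end, are false. Already for $N=3$, take $\bfF=\bff_{31}=\bff_{\gamma_{13}}$ and $\bfE=\bfe_{23}\bfe_{12}$. The degrees match and the lemma predicts $0$. But your splitting has $Y=1$, so $(\bff_{31}\otimes 1,\Delta(\bfE))_{\wt{R}}=(\bff_{31},\bfE)_{\wt{R}}$ is a tautology. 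Moreover, the term $\bfe_{23}\bfe_{12}\otimes q^{-\bfH_{23}}q^{-\bfH_{12}}$ of $\Delta(\bfE)$ has first leg of the required degree $\gamma_{13}$ without being of the form $\bfe_{13}\otimes(\cdots)$. Your phrase ``similarly split the $\bfe$-monomial'' does not say which side to split when, and that is exactly what is needed.

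\textbf{The fix.} Let $\gamma_0$ be the $\prec$-largest root with $m_{\gamma_0}+n_{\gamma_0}>0$.

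If $m_{\gamma_0}=0$, write $\bfE=\bfe_{\gamma_0}^{n_{\gamma_0}}X$ and use $(y,x_1x_2)_{\wt{R}}=(\Delta^{\opp}(y),x_1\otimes x_2)_{\wt{R}}$. The first leg of $\Delta^{\opp}(\bfF)$ has degree equal to minus a sum of roots $\prec\gamma_0$, so it never equals $-n_{\gamma_0}\gamma_0$, and the pairing vanishes.

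If $m_{\gamma_0}>0$, every first-leg degree in $\Delta(\bfE)$ is a sum of contributions, each equal to $0$ or to a root $\preceq\gamma_0$. You then need the following fact: a sum of positive roots $\preceq\gamma_0$ equal to $k\gamma_0$ consists of $k$ copies of $\gamma_0$. In type $A$, such a sum splits into $k$ chains $\varepsilon_a-\varepsilon_{c_1},\ldots,\varepsilon_{c_{r-1}}-\varepsilon_b$ whose partial sums are roots. A chain with $r\ge 2$ has all steps $\prec\gamma_0$, so by convexity its sum is $\prec\gamma_0$, a contradiction.

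Hence the only surviving terms take $\bfe_{\gamma_0}\otimes q^{-\bfH_{\gamma_0}}$ from exactly $m_{\gamma_0}$ of the $\bfe_{\gamma_0}$-factors and first leg $1$ from every other factor.
\begin{itemize}
\item If $n_{\gamma_0}<m_{\gamma_0}$, this is impossible.
\item If $n_{\gamma_0}>m_{\gamma_0}$, the leftover pairing of $Y$ with $\bfe_{\gamma_0}^{n_{\gamma_0}-m_{\gamma_0}}(\cdots)$ vanishes by the first case.
\item If $n_{\gamma_0}=m_{\gamma_0}$, you get $(-1)^{m_{\gamma_0}|\bfe_{\gamma_0}||Y|}(\bff_{\gamma_0}^{m_{\gamma_0}},\bfe_{\gamma_0}^{m_{\gamma_0}})_{\wt{R}}(Y,\bfE')_{\wt{R}}$ with $\bfE=\bfe_{\gamma_0}^{m_{\gamma_0}}\bfE'$. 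This yields the cross terms of $\chi$, as you intended.
\end{itemize}

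\textbf{A smaller bookkeeping slip in Step~3.} Set $A=1\otimes\bfe_\gamma$ and $B=\bfe_\gamma\otimes q^{-\bfH_\gamma}$. Then $AB=xBA$, so the $q$-binomial expansion reads $\sum_k\binom{p}{k}_{x}\,\bfe_\gamma^{k}\otimes q^{-k\bfH_\gamma}\bfe_\gamma^{p-k}$, with the Cartan factor on the \emph{left}. Moving it to the right, as you wrote, changes the coefficient by $q^{-k(p-k)(\gamma,\gamma)}$.

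In $U(R)$ the pairing ignores Cartan factors on the left, by \eqref{eq:rtt-skew-pairing-identity}. It does not ignore them on the right: there they contribute $q^{(p-1)(\gamma,\gamma)}$ at $k=1$. The identity \eqref{eq:DrJ-skew-pairing-identity} you cite belongs to the untwisted coproduct and has the Cartan factors on the other side. Your two slips cancel at $k=1$, so $\sfC_{\gamma,p}$ comes out right, but the computation should be written with left Cartan factors.
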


\begin{proof}
The proof proceeds by induction on $\sum_{\gamma \in \Phi^{+}} (m_{\gamma} + n_{\gamma})$. The base cases, when this sum is $0$ or $1$, 
are obvious due to degree reasons. For any $\lambda \in Q$, let $U(R)_{\lambda}, U^{\geq}(R)_{\lambda}, U^{\leq}(R)_{\lambda}$ denote 
the corresponding degree $\lambda$ components. By~\eqref{eq:gl-rtt-coalg-component}, the coproducts of the quantum root vectors satisfy 
the following inclusions:
\begin{equation}\label{eq:RTT-Delta-ef}
\begin{split}
  \Delta(\bfe_{\gamma}) &\in \bfe_{\gamma} \otimes q^{-\bfH_{\gamma}} + 1 \otimes \bfe_{\gamma} 
    + \sum_{\gamma' \prec \gamma \prec \gamma''} U^{\geq}(R)_{\gamma'} \otimes U^{\geq}(R)_{\gamma''} \,,\\
  \Delta(\bff_{\gamma}) &\in q^{\bfH_{\gamma}} \otimes \bff_{\gamma} + \bff_{\gamma} \otimes 1 
    + \sum_{\gamma' \succ \gamma \succ \gamma''} U^{\leq}(R)_{-\gamma'} \otimes U^{\leq}(R)_{-\gamma''} \,,
\end{split}
\end{equation}
where $\prec$ denotes the convex order~\eqref{eq:gl-convex-order}, and thus $\gamma', \gamma'' \in \Phi^{+}$ satisfy 
$\gamma'+\gamma''=\gamma$. Let $\bfE = \prod_{\gamma \in \Phi^{+}}^{\leftarrow} \bfe_{\gamma}^{n_{\gamma}}$ and 
$\bfF = \prod_{\gamma \in \Phi^{+}}^{\leftarrow} \bff_{\gamma}^{m_{\gamma}}$. Assume the sequences $(m_\gamma)$ and $(n_\gamma)$ 
are not both identically zero, and let $\alpha \in \Phi^{+}$ be the minimal root with respect to the convex order $\prec$ such that 
$m_\alpha \neq 0$ or $n_\alpha \neq 0$.

First assume that $m_\alpha > 0$. We factor $\bfF = \bfF' \bff_\alpha$ with 
$\bfF' \coloneqq \big( \prod_{\gamma \succ \alpha}^{\leftarrow} \bff_{\gamma}^{m_{\gamma}} \big) \bff_{\alpha}^{m_{\alpha}-1}$. 
Then we have 
\begin{equation*}
  (\bfF, \bfE)_{\wt{R}} = (\bfF' \otimes \bff_{\alpha}, \Delta(\bfE))_{\wt{R}}
  = \sum_{(\bfE)} (-1)^{|\bff_{\alpha}||\bfE_{1}|} (\bfF', \bfE_{1})_{\wt{R}} (\bff_{\alpha}, \bfE_{2})_{\wt{R}} \,.
\end{equation*}
Because $n_\gamma = 0$ for all $\gamma \prec \alpha$, the product $\bfE$ consists of root vectors $\bfe_\gamma$ with 
$\gamma \succeq \alpha$, and furthermore the second tensor factor of each $\Delta(\bfe_\gamma)$ is a sum of homogeneous 
elements of degree $0$ or $\succeq \alpha$ by~\eqref{eq:RTT-Delta-ef}. 
Since the skew-pairing is of $Q$-degree zero and $\deg(\bff_\alpha) = -\alpha$, every nonzero contribution 
requires $\bfE_2$ to have degree exactly $\alpha$. Because all contributing terms have degree $0$ or $\succeq \alpha$, 
convexity of the order~\eqref{eq:gl-convex-order} implies that the only way to form a total degree of $\alpha$ is to select 
the $(1 \otimes \bfe_\alpha)$ term from the comultiplication of exactly one $\bfe_\alpha$ factor, and the 
$(\bfe_\gamma \otimes q^{-\bfH_\gamma})$ term from all other factors in $\bfE$. 
Thus, the only contributing part of $\Delta(\bfE)$ is:
\begin{equation*}
  \prod_{\gamma \succ \alpha}^{\leftarrow} (\bfe_{\gamma} \otimes q^{-\bfH_{\gamma}}) \cdot 
  (\bfe_{\alpha} \otimes q^{-\bfH_{\alpha}} + 1 \otimes \bfe_{\alpha})^{n_{\alpha}} \,.
\end{equation*}
Together with the following computation:
\begin{align*}
  \sum_{i=1}^{n_{\alpha}} (\bfe_{\alpha} \otimes q^{-\bfH_{\alpha}})^{i-1} 
  (1 \otimes \bfe_{\alpha}) (\bfe_{\alpha} \otimes q^{-\bfH_{\alpha}})^{n_{\alpha}-i}
  &= \sum_{i=1}^{n_{\alpha}} \left((-1)^{|\bfe_{\alpha}|} q^{(\alpha,\alpha)}\right)^{n_{\alpha}-i} 
     (\bfe_{\alpha} \otimes q^{-\bfH_{\alpha}})^{n_{\alpha}-1} (1 \otimes \bfe_{\alpha})\\
  &= \frac{1 - \left((-1)^{|\bfe_{\alpha}|} q^{(\alpha,\alpha)}\right)^{n_{\alpha}}}{1 - (-1)^{|\bfe_{\alpha}|} q^{(\alpha,\alpha)}} 
     (\bfe_{\alpha} \otimes q^{-\bfH_{\alpha}})^{n_{\alpha}-1} (1 \otimes \bfe_{\alpha})\,,
\end{align*}
this yields:
\begin{align*}
  (\bfF,\bfE)_{\wt{R}}
  &= (-1)^{|\bff_{\alpha}||\bfF'|} 
    \frac{1 - \left((-1)^{|\bfe_{\alpha}|} q^{(\alpha,\alpha)}\right)^{n_{\alpha}}}{1 - (-1)^{|\bfe_{\alpha}|} q^{(\alpha,\alpha)}} 
    (\bfF',\bfE')_{\wt{R}}  \left(\bff_{\alpha}, 
    q^{-\sum_{\gamma \succ \alpha} n_{\gamma} \bfH_{\gamma}} q^{- (n_{\alpha}-1) \bfH_{\alpha}} \bfe_{\alpha}\right)_{\wt{R}} \\
  &= (-1)^{|\bff_{\alpha}||\bfF'|} 
     \frac{1 - \left((-1)^{|\bfe_{\alpha}|} q^{(\alpha,\alpha)}\right)^{n_{\alpha}}}{1 - (-1)^{|\bfe_{\alpha}|} q^{(\alpha,\alpha)}} 
     (\bfF',\bfE')_{\wt{R}} \left(\bff_{\alpha}, \bfe_{\alpha}\right)_{\wt{R}} \,,
\end{align*}
where $\bfE' \coloneqq \big( \prod_{\gamma \succ \alpha}^{\leftarrow} \bfe_{\gamma}^{n_{\gamma}} \big) \bfe_{\alpha}^{n_{\alpha}-1}$, 
and we use that 
  $(\bff_\alpha, \bfK\bfe_\alpha)_{\wt{R}} = (\bff_\alpha, \bfe_\alpha)_{\wt{R}}$ for any $\bfK \in U^{0}(R)$
by~\eqref{eq:RTT-Delta-ef}. Applying the induction hypothesis to evaluate $(\bfF', \bfE')_{\wt{R}}$ then yields 
the desired factorization for $(\bfF, \bfE)_{\wt{R}}$.

The case when $m_\alpha=0$ but $n_\alpha > 0$ proceeds analogously by reversing the roles of $\bfe$ and $\bff$.
\end{proof}

From the lemma above, we obtain the following PBW-type basis of $U^{>}(R)$ and $U^{<}(R)$:

\begin{Cor}
The sets of ordered monomials
\begin{equation}\label{eq:gl-PBW-RTT}
  \left\{ \prod_{\gamma \in \Phi^{+}}^{\leftarrow} \bfe_{\gamma}^{m_{\gamma}} \;\Bigg|\; 
  \substack{m_{\gamma} \in \BZ_{\ge 0} \\ m_\gamma \leq 1 \text{ if } \gamma \in \Phi_{\bbar{1}} } \right\}
  \qquad \text{and} \qquad
  \left\{ \prod_{\gamma \in \Phi^{+}}^{\leftarrow} \bff_{\gamma}^{m_{\gamma}} \;\Bigg|\; 
  \substack{m_{\gamma} \in \BZ_{\ge 0} \\ m_\gamma \leq 1 \text{ if } \gamma \in \Phi_{\bbar{1}} } \right\}
\end{equation}
form bases for $U^>(R)$ and $U^<(R)$, respectively. 
\end{Cor}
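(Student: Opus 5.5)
The plan is to prove spanning and linear independence separately. Linear independence comes from Lemma~\ref{lem:gl-pairing-basis}; spanning comes from transporting the PBW basis of Theorem~\ref{thm:gl-PBW-general}(a) along the isomorphism $\xi$.

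For spanning, I use Theorem~\ref{thm:finite DrJ to RTT}(a),(b). The map $\xi$ is a superalgebra isomorphism $\uqgl^{\CF}\iso U(R)$ with $\xi(e_{\gamma_{ij}})=\bfe_{ij}$ and $\xi(f_{\gamma_{ij}})=\bff_{ji}$. The underlying superalgebra of $\uqgl^{\CF}$ coincides with that of $\uqgl$. Hence $\xi$ carries $U^>_q(\gl(V))$, which is generated by the $e_i$, onto the subalgebra generated by the $\bfe_{i,i+1}$. By the iterative formula~\eqref{eq:A-twisted-4}, untwisted as in the proof of Theorem~\ref{thm:finite DrJ to RTT}, this subalgebra contains every $\bfe_{ij}$, so it equals $U^>(R)$. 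Theorem~\ref{thm:gl-PBW-general}(a) then shows that the ordered monomials in~\eqref{eq:gl-PBW-RTT} span $U^>(R)$. The same argument applies to $U^<(R)$ using the $f_\gamma$.

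For linear independence, I use Lemma~\ref{lem:gl-pairing-basis} directly. Suppose $\sum_{\mathbf{n}} c_{\mathbf{n}}\prod^{\leftarrow}\bfe_\gamma^{n_\gamma}=0$. Pair this relation with a fixed monomial $\prod^{\leftarrow}\bff_\gamma^{m_\gamma}$. The Kronecker deltas in~\eqref{eq:gl-PBW-pairing} kill every term except $\mathbf{n}=\mathbf{m}$, which leaves
$$c_{\mathbf{m}}\,(-1)^{\chi(\mathbf{m})}\prod_\gamma \sfC_{\gamma,m_\gamma}(\bff_\gamma,\bfe_\gamma)_{\wt{R}}^{m_\gamma}=0 .$$
It then suffices to show that the scalar multiplying $c_{\mathbf{m}}$ is nonzero.

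That scalar has two kinds of factors, and these are the only points needing care. First, $\sfC_{\gamma,p}$ is nonzero in $\BC(q)$, except possibly when $(-1)^{|\bfe_\gamma|}q^{(\gamma,\gamma)}=1$ is a root of unity with $p\geq 2$. For odd $\gamma$ this is excluded because $p\le 1$, and $\sfC_{\gamma,0}=\sfC_{\gamma,1}=1$. For even $\gamma$ we have $(\gamma,\gamma)=\pm2$, so $q^{\pm2}$ is not a root of unity in $\BC(q)$. Second, $(\bff_\gamma,\bfe_\gamma)_{\wt{R}}\neq0$. Through Theorem~\ref{thm:finite DrJ to RTT}(c) together with the transposed-inverse relations~\eqref{eq:inverse-pairing}, \eqref{eq:sigma-tilde}, this equals a pairing of $f_\gamma$ with $e_\gamma$ in the Drinfeld--Jimbo double. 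That pairing is nonzero by Theorem~\ref{thm:gl-PBW-general}(b) and the nondegeneracy in Proposition~\ref{prop:DJ-pairing_finite}, since an orthogonal basis of a nondegenerate pairing must pair each basis element nontrivially with its partner.

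Alternatively, the value can be computed directly. Expanding via~\eqref{eq:RTT ef gen} reduces it to the matrix entry $\wt\sigma(l^-_{ji}\otimes l^+_{ij})$ of $R^{-1}_{21}$, which is $\pm(q-q^{-1})\neq0$; the Cartan factors are handled as in the proof of Theorem~\ref{thm:finite DrJ to RTT}(c). With both factors nonzero, $c_{\mathbf{m}}=0$ for every $\mathbf{m}$, which proves independence. The argument for $U^<(R)$ is symmetric.

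I expect the nonvanishing of $(\bff_\gamma,\bfe_\gamma)_{\wt{R}}$ to be the only step that is not immediate. I would settle it via the pairing-compatibility route, since that avoids sign bookkeeping.
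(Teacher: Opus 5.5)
Your proposal is correct and rests on the same ingredients as the paper. You use Lemma~\ref{lem:gl-pairing-basis} for linear independence, and your explicit check that $\sfC_{\gamma,m_\gamma}$ and $(\bff_\gamma,\bfe_\gamma)_{\wt{R}}$ are nonzero spells out what the paper compresses into ``follows from non-degeneracy''. For the other half you use $\xi$ with Theorem~\ref{thm:gl-PBW-general}; the paper does a graded dimension count there, where you instead transport spanning via Theorem~\ref{thm:finite DrJ to RTT}(b).

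Your transport step alone already proves the corollary. Since $\xi$ is injective and maps the PBW basis of $U^>_q(\gl(V))$ monomial-by-monomial onto~\eqref{eq:gl-PBW-RTT}, and $U^>_q(\gl(V))$ onto $U^>(R)$, the pairing half of your argument is redundant.
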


\begin{proof}
Since Lemma~\ref{lem:gl-pairing-basis} establishes that these monomials are orthogonal with respect to the skew-pairing 
$(-,-)_{\wt{R}}$, their linear independence immediately follows from the non-degeneracy of this pairing.

Furthermore, Theorem~\ref{thm:finite DrJ to RTT} ensures that each degree $\lambda$ component of $U^{>}(R)$ and $U^{<}(R)$ 
is finite-dimensional and shares the exact same dimension as its counterpart in $U^{>}_{q}(\gl(V))$ and $U^{<}_{q}(\gl(V))$. 
Since the degrees of the respective generators match ($\deg(\bfe_{\gamma}) = \gamma = \deg(e_{\gamma})$ and 
$\deg(\bff_{\gamma}) = -\gamma = \deg(f_{\gamma})$), the number of monomials of degree $\lambda$ in~\eqref{eq:gl-PBW-RTT} 
precisely equals this dimension, due to Theorem~\ref{thm:gl-PBW-general}. Thus,~\eqref{eq:gl-PBW-RTT} indeed form bases.
\end{proof}

Consider the following subspaces (cf.~\eqref{eq:gl-deg-pos-neg}):
\begin{equation*}
  U^{\geq}(R)_{\deg>0} \coloneqq \bigoplus_{\lambda > 0} U^{\geq}(R)_{\lambda} \,,\qquad
  U^{\leq}(R)_{\deg<0} \coloneqq \bigoplus_{\lambda < 0} U^{\leq}(R)_{\lambda} \,.
\end{equation*}
For any $Q$-homogeneous elements $\bfe \in U^{>}(R)$ and $\bff \in U^{<}(R)$, formula~\eqref{eq:RTT-Delta-ef} implies 
(cf.~\eqref{eq:gl-borel-comult}):
\begin{equation}\label{eq:rtt-borel-comult}
  \Delta(\bfe) \in 1 \otimes \bfe + U^{\geq}(R)_{\deg>0} \otimes U^{\geq}(R) \,,\qquad 
  \Delta(\bff) \in \bff \otimes 1 + U^{\leq}(R) \otimes U^{\leq}(R)_{\deg<0} \,.
\end{equation}
Thus, for any $Q$-homogeneous $\bfe \in U^{>}(R), \bff \in U^{<}(R)$ and group-like elements $\bfk, \bfk' \in U^{0}(R)$, we obtain:
\begin{equation}\label{eq:rtt-skew-pairing-identity}
  (\bfk'\bff, \bfk\bfe)_{\wt{R}}
  = (\bfk' \otimes \bff, \Delta(\bfk)\Delta(\bfe))_{\wt{R}}
  = (\bfk', \bfk)_{\wt{R}} (\bff, \bfk\bfe)_{\wt{R}}\\
  = (\bfk', \bfk)_{\wt{R}} (\Delta^{\opp}(\bff), \bfk \otimes \bfe)_{\wt{R}}
  = (\bfk', \bfk)_{\wt{R}} (\bff, \bfe)_{\wt{R}}\,.
\end{equation}
Bilinearity ensures that~\eqref{eq:rtt-skew-pairing-identity} holds for all $\bfk,\bfk' \in U^{0}(R)$. Together with 
the triangular decomposition~\eqref{eq:RTT-tri-decomp}, this allows us to factor the canonical element $\fR^{R}$ of 
the double $\CD(U^{\leq}(R), U^{\geq}(R))$ (cf.\ Subsection~\ref{ssec:universal R})
\begin{equation*}
  \fR^{R} = \fR^{R}_{s} \fR^{R}_{u} 
\end{equation*}
into the canonical elements associated with the restrictions of 
$(-,-)_{\wt{R}}$ to $U^{0}(R) \times U^{0}(R)$ and $U^{<}(R) \times U^{>}(R)$. 
Using the PBW-type bases~\eqref{eq:gl-PBW-RTT} and their orthogonality from Lemma~\ref{lem:gl-pairing-basis}, 
we can explicitly evaluate $\fR^{R}_{u}$.

\begin{Prop}\label{prop:R-factor-A-rtt}
$\fR^{R}_{u}$ factorizes as 
\begin{equation*}
  \fR^{R}_{u} = \prod_{\gamma \in \Phi^{+}}^{\leftarrow} 
  \left(\sum^{k\geq 0}_{k \leq 1 \text{ if } \gamma \in \Phi_{\bbar{1}} }
  \frac{\bfe_{\gamma}^{k} \otimes \bff_{\gamma}^{k}}{(\bff_{\gamma}^{k}, \bfe_{\gamma}^{k})_{\wt{R}}} \right) \,.
\end{equation*}
\end{Prop}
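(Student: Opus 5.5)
Since $\fR^{R}_{u}$ is the canonical element of the restricted pairing $(-,-)_{\wt{R}}$ on $U^{<}(R)\times U^{>}(R)$, it equals $\sum_b b^{e}\otimes b^{f}$ for any pair of dual bases $\{b^{f}\}$, $\{b^{e}\}$. The plan is to take the PBW bases~\eqref{eq:gl-PBW-RTT} and use Lemma~\ref{lem:gl-pairing-basis}. That lemma shows these bases are orthogonal, so the dual basis of $\bff^{\mathbf m}\coloneqq\prod^{\leftarrow}_{\gamma}\bff_\gamma^{m_\gamma}$ is obtained by rescaling $\bfe^{\mathbf m}\coloneqq\prod^{\leftarrow}_\gamma \bfe_\gamma^{m_\gamma}$. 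Hence
\begin{equation*}
  \fR^{R}_{u}=\sum_{\mathbf m}\frac{\bfe^{\mathbf m}\otimes\bff^{\mathbf m}}{(\bff^{\mathbf m},\bfe^{\mathbf m})_{\wt{R}}}
  =\sum_{\mathbf m}(-1)^{\chi(\mathbf m)}\frac{\bfe^{\mathbf m}\otimes\bff^{\mathbf m}}{\prod_\gamma \sfC_{\gamma,m_\gamma}(\bff_\gamma,\bfe_\gamma)_{\wt{R}}^{m_\gamma}} \,.
\end{equation*}
One caveat needs checking against the conventions of Appendix~\ref{sec:double-construction}: whether a Koszul sign $(-1)^{|b|}$ enters the canonical element. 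Since the pairing has $Q$-degree zero, any such sign is absorbed consistently on both sides, so I expect it not to matter.

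Next I would expand the right-hand side of the claimed factorization. Multiplying out the ordered product $\prod^{\leftarrow}_\gamma\big(\sum_k \bfe_\gamma^k\otimes\bff_\gamma^k/(\bff_\gamma^k,\bfe_\gamma^k)_{\wt{R}}\big)$ in $U(R)^{\otimes 2}$, with the super tensor product sign rule~\eqref{eq:superalgebras tensoring}, gives
\begin{equation*}
  \prod^{\leftarrow}_\gamma(\bfe_\gamma^{m_\gamma}\otimes\bff_\gamma^{m_\gamma})=(-1)^{\chi'(\mathbf m)}\,\bfe^{\mathbf m}\otimes\bff^{\mathbf m}, \qquad \chi'(\mathbf m)=\sum_{\gamma\prec\gamma'}m_\gamma m_{\gamma'}|\bfe_\gamma||\bfe_{\gamma'}| .
\end{equation*}
This holds because moving $\bff_{\gamma'}^{m_{\gamma'}}$ past $\bfe_{\gamma}^{m_\gamma}$ contributes $(-1)^{m_\gamma m_{\gamma'}|\bfe_\gamma||\bfe_{\gamma'}|}$.

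It then remains to match scalars factor by factor. I would apply Lemma~\ref{lem:gl-pairing-basis} with a single nonzero $m_\gamma=k$ to get
\begin{equation*}
  (\bff_\gamma^{k},\bfe_\gamma^{k})_{\wt{R}}=(-1)^{\binom{k}{2}|\bfe_\gamma|}\,\sfC_{\gamma,k}\,(\bff_\gamma,\bfe_\gamma)_{\wt{R}}^{k}.
\end{equation*}
Substituting, the $\binom{k}{2}$ part of $\chi$ is accounted for by the denominators, and the cross part of $\chi$ coincides with $\chi'$. Since $(-1)^{\chi}$ sits in the denominator and signs are self-inverse, the two expansions agree term by term. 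For odd $\gamma$ one has $k\le1$, so the binomial term vanishes anyway.

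The main obstacle is the sign bookkeeping. I need to confirm that the sign convention in the definition of the canonical element (order $\fR^{R}=\fR^{R}_s\fR^{R}_u$ with $e$-part first, following Corollary~\ref{cor:opposite-canonical}) produces exactly $\bfe^{\mathbf m}\otimes\bff^{\mathbf m}$ with no extra Koszul factor. I also need to confirm that the cross-sign from the tensor product multiplication is $\chi'$ and not its complement. I would verify this first in the two-root case $\gamma\prec\gamma'$ with both roots odd, and then argue inductively along the convex order.
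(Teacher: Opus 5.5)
Your proposal is correct and follows essentially the same route as the paper. You expand $\fR^{R}_{u}$ over the orthogonal PBW bases. You then split the sign $(-1)^{\chi(\mathbf m)}$ of Lemma~\ref{lem:gl-pairing-basis} into two parts: the single-root pairings $(\bff_\gamma^{k},\bfe_\gamma^{k})_{\wt{R}}$, and a cross term that cancels against the Koszul sign from rearranging $\prod^{\leftarrow}_\gamma(\bfe_\gamma^{m_\gamma}\otimes\bff_\gamma^{m_\gamma})$. Your worry about an extra sign in the canonical element is moot: \eqref{eq:canonical element} defines it as $\sum_i b_i\otimes a_i$ with $(a_i,b_j)=\delta_{ij}$, with no Koszul factor.
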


\begin{proof}
Let us first record the following special case of~\eqref{eq:gl-PBW-pairing}:
\begin{equation*}
  (\bff_{\gamma}^{m_{\gamma}}, \bfe_{\gamma}^{n_{\gamma}})_{\wt{R}}
  = \delta_{m_{\gamma},n_{\gamma}} (-1)^{\binom{m_{\gamma}}{2} |\bfe_{\gamma}|} 
    \sfC_{\gamma, m_{\gamma}} (\bff_{\gamma}, \bfe_{\gamma})_{\wt{R}}^{m_{\gamma}} \,.
\end{equation*}
This allows us to rewrite the general pairing~\eqref{eq:gl-PBW-pairing} as:
\begin{equation*}
  \Bigg( \prod_{\gamma \in \Phi^{+}}^{\leftarrow} \bff_{\gamma}^{m_{\gamma}}, 
         \prod_{\gamma \in \Phi^{+}}^{\leftarrow} \bfe_{\gamma}^{n_{\gamma}} \Bigg)_{\wt{R}} 
  = (-1)^{\sum_{\gamma \prec \gamma'} m_{\gamma} m_{\gamma'} |\bfe_{\gamma}| |\bfe_{\gamma'}|} 
    \prod_{\gamma \in \Phi^{+}} \delta_{m_{\gamma}, n_{\gamma}} 
    (\bff_{\gamma}^{m_{\gamma}}, \bfe_{\gamma}^{m_{\gamma}})_{\wt{R}} \,.
\end{equation*}
Combining this with the straightforward tensor product rearrangement
\begin{equation*}
  \prod_{\gamma \in \Phi^{+}}^{\leftarrow} 
    \bfe_{\gamma}^{m_{\gamma}} \otimes \prod_{\gamma \in \Phi^{+}}^{\leftarrow} \bff_{\gamma}^{m_{\gamma}}
  = (-1)^{\sum_{\gamma \prec \gamma'} m_{\gamma} m_{\gamma'} |\bfe_{\gamma}| |\bfe_{\gamma'}|} 
    \prod_{\gamma \in \Phi^{+}}^{\leftarrow} \bfe_{\gamma}^{m_{\gamma}} \otimes \bff_{\gamma}^{m_{\gamma}} 
\end{equation*}
and the equalities $\bfe_{\gamma}^{2} = 0 = \bff_{\gamma}^{2}$ for odd $\gamma$ 
(see Remark~\ref{rem:A-rel-nilpotent}), we obtain the claimed factorization of~$\fR^{R}_{u}$:
\begin{align*}
  \fR^{R}_{u} \, 
  &= \sum^{m_\gamma\geq 0}_{m_\gamma \leq 1 \text{ if } \gamma \in \Phi_{\bbar{1}} }
      \Bigg( \prod_{\gamma \in \Phi^{+}}^{\leftarrow} \bfe_{\gamma}^{m_{\gamma}} \otimes 
             \prod_{\gamma \in \Phi^{+}}^{\leftarrow} \bff_{\gamma}^{m_{\gamma}} \Bigg) \Bigg/
      \Bigg( \prod_{\gamma \in \Phi^{+}}^{\leftarrow} \bff_{\gamma}^{m_{\gamma}}, 
             \prod_{\gamma \in \Phi^{+}}^{\leftarrow} \bfe_{\gamma}^{m_{\gamma}} \Bigg)_{\wt{R}} \\
  &= \sum^{m_{\gamma}\geq 0}_{m_\gamma \leq 1 \text{ if } \gamma \in \Phi_{\bbar{1}} } \,
     \prod_{\gamma \in \Phi^{+}}^{\leftarrow} 
     \frac{\bfe_{\gamma}^{m_{\gamma}} \otimes \bff_{\gamma}^{m_{\gamma}}}
          {(\bff_{\gamma}^{m_{\gamma}}, \bfe_{\gamma}^{m_{\gamma}})_{\wt{R}}}
   = \prod_{\gamma \in \Phi^{+}}^{\leftarrow} 
     \left( \sum^{k\geq 0}_{k \leq 1 \text{ if } \gamma \in \Phi_{\bbar{1}} }
            \frac{\bfe_{\gamma}^{k} \otimes \bff_{\gamma}^{k}}{(\bff_{\gamma}^{k}, \bfe_{\gamma}^{k})_{\wt{R}}} \right) \,. 
\end{align*}
\end{proof}


\section{Orthosymplectic Lie superalgebras and quantum supergroups}\label{sec:osp-DJ}


\subsection{Orthosymplectic Lie superalgebras}
\

As in Subsection~\ref{ssec:gl-Chevalley-Serre}, we fix a superspace $V$ of dimension $(m,n)$, and its homogeneous basis 
$\{v_{1}, \ldots, v_{N}\}$, where $N = m + n$. We assume that $n$ is even, and define $s = \lfloor \frac{N}{2} \rfloor$. 
Consider the involution $'$ on the index set $\mathbb{I} = \{1,2,\ldots,N\}$ defined by:
\begin{equation*}
  i \mapsto i' \coloneqq N+1-i \,.
\end{equation*}
We shall assume that $\ol{i} = \ol{i'}$ for all $i \in \mathbb{I}$. In particular, $v_{s+1}$ is necessarily even when $N$ 
is odd. We also choose a sequence $\vartheta_{V} \coloneqq (\vartheta_{1},\vartheta_{2},\ldots,\vartheta_{N}) \in \{\pm 1\}^{N}$ 
such that $\vartheta_{i} = 1$ if $\ol{i} = \bbar{0}$ and $\vartheta_{i} = -\vartheta_{i'}$ if $\ol{i} = \bbar{1}$
(we do not assume $\vartheta_{i} = 1$ for $i \leq s$). From the conditions above, we immediately have:
\begin{equation*}
  \vartheta_{i}^{2} = 1 \qquad \text{and} \qquad
  \vartheta_{i}\vartheta_{i'} = (-1)^{\ol{i}} \qquad \text{for all} \quad i \in \mathbb{I} \,.
\end{equation*}
Consider a bilinear form $B_{G} \colon V \times V \to \BC$ defined by the anti-diagonal matrix 
$G = \sum_{i=1}^{N} \vartheta_{i} E_{i'i}$.  
The \emph{orthosymplectic Lie superalgebra} $\fosp(V)$ is defined as the Lie subalgebra of $\gl(V)$ consisting of 
all matrices $X$ that preserve the bilinear form $B_{G}$, meaning (see notation~\eqref{eq:supertranspose})
\begin{equation*}
  X^{\st}G + GX = 0 \,.
\end{equation*}
Explicitly, $\fosp(V)$ is spanned by the elements
\begin{equation*}
  X_{ij} = E_{ij} - (-1)^{\ol{i}(\ol{i}+\ol{j})} \vartheta_{i}\vartheta_{j} E_{j'i'} \,.
\end{equation*}
Since $X_{j'i'} = -(-1)^{\ol{i}(\ol{i}+\ol{j})} \vartheta_{i}\vartheta_{j} \cdot X_{ij}$, 
the following elements form a basis for $\fosp(V)$:
\begin{equation*}
  \big\{X_{ij} \,\big|\, i+j \leq N \big\} \cup \big\{X_{ii'} \,\big|\, \ol{i} = \bbar{1}\,,\ 1 \leq i \leq s \big\} \,.
\end{equation*}
We also introduce a closely related object, the \emph{extended orthosymplectic Lie superalgebra}\footnote{Our choice of 
notation $\fgosp$ follows that of~\cite{sss}.}
\begin{equation*}
  \fgosp(V) \coloneqq \fosp(V) \oplus \BC \cdot \ID \subset \gl(V) \,,
\end{equation*}
where $\ID$ denotes the identity matrix.
We note that both $\fosp(V)$ and $\fgosp(V)$ inherit the $\BZ_{2}$-grading from $\gl(V)$. Since $[\ID, X] = 0$ for all 
$X \in \fosp(V)$, the orthosymplectic Lie superalgebra $\fosp(V)$ forms a codimension $1$ ideal of $\fgosp(V)$. 
To avoid confusion, the central element $\ID$ will frequently be denoted by $C$.

Similarly to $\gl(V)$, we choose the Borel subalgebras of $\fosp(V)$ and $\fgosp(V)$ consisting of all upper triangular matrices. 
The Cartan subalgebra $\fh_{\fosp}$ of $\fosp(V)$ consists of all diagonal matrices and has basis $\{X_{ii}\}_{i=1}^{s}$, 
while the Cartan subalgebra $\fh_{\fgosp}$ of $\fgosp(V)$ is the direct sum $\fh_{\fosp} \oplus \BC \cdot C$. Recall the 
linear functionals $\{\varepsilon_{i}\}_{i=1}^{N}$ on $\gl(V)$, and let 
$\Tilde{\varepsilon}_{i} \coloneqq \varepsilon_{i}|_{\fh_{\fgosp}}$ be their restrictions to the Cartan subalgebra 
$\fh_{\fgosp}$ of $\fgosp(V)$. Then 
$\Tilde{\varepsilon}_{i} + \Tilde{\varepsilon}_{i'} = \Tilde{\varepsilon}_{j} + \Tilde{\varepsilon}_{j'}$ 
for all $1 \leq i,j \leq N$. 
Thus $\Tilde{\varepsilon}_{C} \coloneqq (\Tilde{\varepsilon}_{i} + \Tilde{\varepsilon}_{i'})/2$ is well-defined 
independently from the choice of $i$, and $\Tilde{\varepsilon}_{C} = \Tilde{\varepsilon}_{s+1}$ for odd $N$. 
The set $\{\Tilde{\varepsilon}_{i} - \Tilde{\varepsilon}_{C}\}_{i=1}^{s} \cup \{\Tilde{\varepsilon}_{C}\}$ forms a basis 
of $\fh_{\fgosp}^{*}$ dual to the basis $\{X_{ii}\}_{i=1}^{s} \cup \{C\}$ of $\fh_{\fgosp}$. Moreover, as 
$\Tilde{\varepsilon}_{C}|_{\fh_{\fosp}} = 0$, we note that 
$\varepsilon_{i}|_{\fh_{\fosp}} = (\Tilde{\varepsilon}_{i} - \Tilde{\varepsilon}_{C})|_{\fh_{\fosp}}$ for all 
$1 \leq i \leq N$, and therefore:
\begin{equation*}
  \varepsilon_{i}|_{\fh_{\fosp}} = -\varepsilon_{i'}|_{\fh_{\fosp}} \quad \text{for all } i \,,\qquad
  \varepsilon_{s+1}|_{\fh_{\fosp}} = 0 \quad \text{for odd } N \,.
\end{equation*}
This implies that $\{\varepsilon_{i}|_{\fh_{\fosp}}\}_{i=1}^{s}$ forms a basis of $\fh_{\fosp}^{*}$ dual to the basis 
$\{X_{ii}\}_{i=1}^{s}$ of $\fh_{\fosp}$. 
The functionals $\varepsilon_{i}|_{\fh_{\fosp}} \in \fh_{\fosp}^{*}$ and their extension to $\fh_{\fgosp}^{*}$ by zero on 
$C$ will be simply denoted by $\varepsilon_{i}$, so that 
\begin{equation}\label{eq:osp-epsilon-def}
  \varepsilon_{i} = \Tilde{\varepsilon}_{i} - \Tilde{\varepsilon}_{C} \in \fh_{\fgosp}^{*} \qquad \forall\, 1 \leq i \leq N.
\end{equation}

In complete parallel with the $\gl(V)$ case, a direct computation shows that 
$[X_{ii},X_{ab}] = (\varepsilon_{a}-\varepsilon_{b})(X_{ii})X_{ab}$, and thus $X_{ab}$ is a root vector corresponding to the 
root $\varepsilon_{a} - \varepsilon_{b} = \Tilde{\varepsilon}_{a} - \Tilde{\varepsilon}_{b}$. This yields the root space 
decomposition $\fosp(V) = \fh_{\fosp} \oplus \bigoplus_{\alpha \in \Phi} \fosp(V)_{\alpha}$ with the root system given by
\begin{equation*}
  \Phi = \big\{\varepsilon_{a} - \varepsilon_{b} \,\big|\, a+b \leq N\,,\ a \neq b \big\} \cup 
    \big\{2\varepsilon_{a} \,\big|\, \ol{a} = \bbar{1} \big\} \,,
\end{equation*}
and the corresponding polarization:
\begin{align*}
  \Phi^{+} &= 
    \big\{\varepsilon_{a} - \varepsilon_{b} \,\big|\, a<b<a' \big\} \cup 
    \big\{2\varepsilon_{a} \,\big|\, \ol{a} = \bbar{1}\,,\ a \leq s \big\} \,,\\
  \Phi^{-} &= 
    \big\{\varepsilon_{a} - \varepsilon_{b} \,\big|\, b<a<b' \big\} \cup 
    \big\{2\varepsilon_{a} \,\big|\, \ol{a} = \bbar{1}\,,\ a' \leq s \big\} \,.
\end{align*}
We use $\Phi_{\bbar{0}}, \Phi_{\bbar{1}}$ to denote the sets of even, odd roots.   
Lastly, we consider the following \emph{reduced} root system:
\begin{equation}\label{eq:osp-reduced-root-sys}
  \bar{\Phi} = \big\{ \gamma \in \Phi \,\big|\, \tfrac{1}{2}\gamma \notin \Phi \big\} \,,\qquad
  \bar{\Phi}_{\bbar{0}} = \bar{\Phi} \cap \Phi_{\bbar{0}} \,,\qquad
  \bar{\Phi}_{\bbar{1}} = \bar{\Phi} \cap \Phi_{\bbar{1}} \,.
\end{equation}
We also note that $\fgosp(V)$ shares the same root system $\Phi$ and admits the corresponding root space decomposition
\begin{equation*}
  \fgosp(V) = \fh_{\fgosp} \oplus \bigoplus_{\alpha \in \Phi} \fgosp(V)_{\alpha} \qquad \mathrm{with} \quad 
  \fgosp(V)_{\alpha} = \fosp(V)_{\alpha} \quad \forall\, \alpha \in \Phi \,.
\end{equation*}


\subsection{Chevalley--Serre-type presentation}
\

With respect to above polarization, the simple roots and corresponding root vectors in $\fosp(V)$ are as follows:
\begin{itemize}[leftmargin=0.7cm]

\item[$\bullet$] 
\textbf{Case 1 ($B$-type):} $m$ is odd.
\begin{equation}\label{eq:Lie-action-case-B}
\begin{split}
  & \alpha_{i} = \varepsilon_{i} - \varepsilon_{i+1} \,,\quad \sse_{i} = X_{i,i+1} \,,\quad
    \ssf_{i} = (-1)^{\ol{i}} X_{i+1,i} \qquad \text{for \ } 1 \leq i \leq s \,, \\
  & \ssh_{i} = 
    (-1)^{\ol{i}} X_{ii} - (-1)^{\ol{i+1}} X_{i+1,i+1} \qquad \text{for \ } 1 \leq i \leq s \,.
\end{split}
\end{equation}

\item[$\bullet$] 
\textbf{Case 2 ($C$-type):} $m$ is even and $\ol{s} = \bbar{1}$.
\begin{equation}\label{eq:Lie-action-case-C}
\begin{split}
  & \alpha_{i} = 
  \begin{cases}
    \varepsilon_{i} - \varepsilon_{i+1} & \text{if \ } 1 \leq i < s \\
    2\varepsilon_{s} & \text{if \ } i = s 
  \end{cases} \,, \qquad
  \sse_{i} = 
  \begin{cases}
    X_{i,i+1} & \text{if \ } 1 \leq i < s \\
    E_{ss'} & \text{if \ } i = s 
  \end{cases} \,, \\
  & \ssf_{i} = 
  \begin{cases}
    (-1)^{\ol{i}} X_{i+1,i} & \text{if \ } 1 \leq i < s \\
    -2E_{s's} & \text{if \ } i = s 
  \end{cases} \,, \qquad
  \ssh_{i} = 
  \begin{cases}
    (-1)^{\ol{i}} X_{ii} - (-1)^{\ol{i+1}} X_{i+1,i+1} & \text{if \ } 1 \leq i < s \\
    -2X_{ss} & \text{if \ } i = s 
  \end{cases} \,. 
\end{split}
\end{equation}

\item[$\bullet$] 
\textbf{Case 3 ($D$-type):} $m$ is even and $\ol{s} = \bbar{0}$.
\begin{equation}\label{eq:Lie-action-case-D}
\begin{split}
  & \alpha_{i} = 
  \begin{cases}
    \varepsilon_{i} - \varepsilon_{i+1} & \text{if \ } 1 \leq i < s \\
    \varepsilon_{s-1} + \varepsilon_{s} & \text{if \ } i = s 
  \end{cases} \,, \qquad
  \sse_{i} = 
  \begin{cases}
    X_{i,i+1} & \text{if \ } 1 \leq i < s \\
    X_{s-1,s'} & \text{if \ } i = s 
  \end{cases} \,, \\
  & \ssf_{i} = 
  \begin{cases}
    (-1)^{\ol{i}} X_{i+1,i} & \text{if \ } 1 \leq i < s \\
    (-1)^{\ol{s-1}} X_{s',s-1} & \text{if \ } i = s 
  \end{cases} \,, \qquad
  \ssh_{i} = 
  \begin{cases}
    (-1)^{\ol{i}} X_{ii} - (-1)^{\ol{i+1}} X_{i+1,i+1} & \text{if \ } 1 \leq i < s \\
    (-1)^{\ol{s-1}} X_{s-1,s-1} + (-1)^{\ol{s}} X_{ss} & \text{if \ } i = s 
  \end{cases} \,.
\end{split}
\end{equation}

\end{itemize}

While $\{\ssh_{i}\}_{i=1}^{s}$ is a basis for $\fh_{\fosp}$, it is more often convenient to employ an alternative basis. 
To this end, we consider the following elements:
\begin{equation*}
  \ssH_i \coloneqq (-1)^{\ol{i}} X_{ii} = (-1)^{\ol{i}} (E_{ii} - E_{i'i'}) \qquad \text{for \ } 1 \leq i \leq N \,.
\end{equation*}
Specifically, $\ssH_{i} = -\ssH_{i'}$ for all $1 \leq i \leq N$, and $\ssH_{s+1} = 0$ when $N$ is odd. 
The set $\{\ssH_{i}\}_{i=1}^{s}$ also forms a basis for $\fh_{\fosp}$, and in terms of this alternative basis, 
the elements $\ssh_i$ from~\eqref{eq:Lie-action-case-B}--\eqref{eq:Lie-action-case-D} take the following form:
\begin{equation}\label{eq:h-elements}
  \ssh_i = 
  \begin{cases}
    \ssH_i - \ssH_{i+1} & \text{if \ } 1 \leq i < s \text{ \ (in all $BCD$-types)}\,, \\
    \ssH_s & \text{if \ } i = s \text{ \ ($B$-type)}\,, \\
    2\ssH_s & \text{if \ } i = s \text{ \ ($C$-type)}\,, \\
    \ssH_{s-1} + \ssH_s & \text{if \ } i = s \text{ \ ($D$-type)}\,.
  \end{cases}
\end{equation}
Furthermore, the lattices $P^{\vee} \coloneqq \BZ C \oplus \bigoplus_{i=1}^{s} \BZ \ssH_{i}$ and 
$Q^{\vee} \coloneqq \bigoplus_{i=1}^{s} \BZ \ssh_{i}$ constitute the coweight and coroot lattices of $\fgosp(V)$, 
respectively. We also define the weight lattice
\begin{equation*}
  P \coloneqq \sum_{i=1}^{N} \BZ \Tilde{\varepsilon}_{i} = 
  \begin{cases}
    \BZ \Tilde{\varepsilon}_{C} \oplus \bigoplus_{i=1}^{s} \BZ \Tilde{\varepsilon}_{i} & \text{ \ ($B$-type)}\,, \\
    \BZ (2\Tilde{\varepsilon}_{C}) \oplus \bigoplus_{i=1}^{s} \BZ \Tilde{\varepsilon}_{i} & \text{ \ ($CD$-types)}\,,
  \end{cases}
\end{equation*}
the root lattice $Q \coloneqq \bigoplus_{i=1}^{s} \BZ\alpha_{i} \subseteq P$, and 
$Q^{+} \coloneqq \bigoplus_{i=1}^{s} \BZ_{\geq 0}\alpha_{i}$, the latter giving rise to the partial order on~$P$. 
One can verify that $\fgosp(V)$ is graded by $Q$ (and hence also by $P$) via 
\begin{equation}\label{eq:osp-gen-Q-grading}
  \deg(\sse_i) = \alpha_i \,,\qquad \deg(\ssf_i) = -\alpha_i \,,\qquad 
  \deg(\ssh_i) = 0 \,,\qquad \deg(\ssH_i) = 0\,,\qquad \deg(C) = 0 \,,
\end{equation}
which restricts to a $Q$-grading on $\fosp(V)$, and is compatible with the $\BZ_{2}$-grading via the group homomorphism 
\begin{equation}\label{eq:gosp-grading-compatible}
  P \to \BZ_{2}\,, \qquad \Tilde{\varepsilon}_{i} \mapsto \ol{i} \quad \textrm{for all} \quad 1 \leq i \leq N\,,
\end{equation}
cf.~\eqref{eq:gl-grading-compatible}. Specifically, this implies $\Tilde{\varepsilon}_{C} \mapsto \bbar{0}$ 
for $B$-type, and $2\Tilde{\varepsilon}_{C} \mapsto \bbar{0}$ for $CD$-types.

We define a non-degenerate bilinear form $(\cdot,\cdot) \colon \fgosp(V) \times \fgosp(V) \to \BC$ by 
\begin{equation}\label{eq:osp-str-form}
  (X,Y) = \sfrac{1}{2} \mathrm{sTr}(XY) \,, \qquad
  (C,X) = 0 = (X,C) \,, \qquad
  (C,C) = 1 \,,
\end{equation}
for all $X,Y\in \fosp(V)$. As in the $\gl(V)$ case, its restriction to $\fh_{\fgosp}$ is non-degenerate. 
Through the identification $\fh_{\fgosp}^* \simeq \fh_{\fgosp}$ via $\varepsilon_{i} \leftrightarrow \ssH_{i}$ 
and $\Tilde{\varepsilon}_{C} \leftrightarrow C$, this induces a bilinear form 
$(-,-) \colon \fh_{\fgosp}^{*} \times \fh_{\fgosp}^{*} \to \BC$ satisfying 
(for all $1 \leq i,j \leq s$)
\begin{equation}\label{eq:osp-epsilon-pairing}
  (\varepsilon_{i}, \varepsilon_{j}) = \delta_{ij} (-1)^{\ol{i}} \,,\qquad
  (\Tilde{\varepsilon}_{C}, \varepsilon_{i}) = 0 = (\varepsilon_{i}, \Tilde{\varepsilon}_{C}) \,,\qquad
  (\Tilde{\varepsilon}_{C}, \Tilde{\varepsilon}_{C}) = 1 \,.
\end{equation}
We note that the factor $1/2$ in~\eqref{eq:osp-str-form} is used to ensure 
the first formula above akin to~\eqref{eq:gl-epsilon-pairing}.

We define the \emph{symmetrized Cartan matrix} by $a_{ij} = (\alpha_{i},\alpha_{j})$ for $1 \leq i,j \leq s$. 
The elements $\{\sse_{i},\ssf_i,\ssh_i\}_{i=1}^s$ generate $\fosp(V)$ and satisfy the exact same Chevalley-type 
relations~\eqref{eq:gl-chevalley-rel} as in the $\gl(V)$ case. By \cite[Main Theorem]{z}, this set of generators and relations 
provides a complete presentation of $\fosp(V)$ when augmented with the appropriate Serre relations (which we omit here).

\begin{Rem}
We remark that our choice of generators corresponds to a rescaled version of those used in~\cite{y,z}, 
as we work with the symmetrized Cartan matrix rather than the non-symmetrized one.
\end{Rem}

\begin{Rem}\label{rem:lattice generator-osp}
As in Remark~\ref{rem:lattice generator}, instead of using Cartan generators $\ssh_{i}$ or $\ssH_{i}$, one can use 
an additive subgroup $\Gamma \subseteq \fh_{\fgosp}$ containing $\{\ssh_{i}\}_{i=1}^{s}$ to provide a uniform Chevalley-Serre 
type presentation for both $\fosp(V)$ and $\fgosp(V)$. Consider the Lie superalgebra $\fg(\Gamma)$ over $\BC$ generated by
$\{\sse_{i}, \ssf_{i}\}_{i=1}^{s}$ and $\Gamma$, subject to the Serre relations together with the Chevalley-type relations 
as in Remark~\ref{rem:lattice generator}, but for all $\ssH,\ssH' \in \Gamma$ and $1 \leq i,j \leq s$. It is clear that 
$\fg(Q^{\vee}) = \fosp(V)$ and $\fg(\Gamma_{\fgosp}) = \fgosp(V)$, where 
\begin{equation*}
  \Gamma_{\fgosp} = 
  \begin{cases}
    \BZ C \oplus \bigoplus_{i=1}^{s} \BZ\ssH_{i} & \text{ \ ($B$-type)}\,, \\
    \BZ (2C) \oplus \bigoplus_{i=1}^{s} \BZ(\ssH_{i}+C) & \text{ \ ($CD$-types)}\,,
  \end{cases}
\end{equation*}
is the image of $P$ inside $P^{\vee}$ under the identification $\fh_{\fgosp} \simeq \fh_{\fgosp}^*$ induced by the above 
non-degenerate form. One can uniformly choose a different basis of the above lattice  
$\Gamma_{\fgosp} = \bigoplus_{i=1}^{s+1} \BZ \Tilde{\ssH}_{i}$ via 
\begin{equation}\label{eq:Tilde H}
  \Tilde{\ssH}_i \coloneqq \ssH_{i} + C \qquad \text{for \ } 1 \leq i \leq N \,.
\end{equation}
\end{Rem}


\subsection{Drinfeld--Jimbo-type orthosymplectic quantum supergroups}\label{ssec:q-orthosymplectic}
\

Similarly to the general linear case in Subsection~\ref{ssec:q-gl}, we define a class of quantum supergroups $U_q(\Gamma)$ 
parameterized by an additive subgroup $\Gamma \subseteq P^{\vee}$ containing the coroot lattice $Q^{\vee}$, see 
Remark~\ref{rem:lattice generator-osp}.

\begin{Rem}\label{rem:Cartan-notation-osp}
As in Remark~\ref{rem:Cartan-notation-gl}, we shall denote the elements $\ssh_i, \ssH_i, \Tilde{\ssH}_i \in \fh_{\fgosp}$ by 
$h_i, H_i, \Tilde{H}_i$, respectively, in the quantum supergroup setting in order to distinguish them from their classical counterparts.
\end{Rem}

\begin{Def}\label{def:quantum-osp-lattice}
For a fixed additive subgroup $\Gamma \subseteq P^{\vee}$ containing $Q^{\vee}$, the quantum supergroup $U_q(\Gamma)$ is 
the $\BC(q)$-superalgebra generated by $\{e_i, f_i\}_{i=1}^{s}$ and the formal symbols $\{q^{\pm H} \mid H \in \Gamma\}$. 
It is equipped with the $Q$-grading (cf.~\eqref{eq:osp-gen-Q-grading}):
\begin{equation}\label{eq:uqGamma-osp-Q-grading}
  \deg(e_i) = \alpha_i \,,\qquad \deg(f_i) = -\alpha_i \,,\qquad \deg(q^{\pm H}) = 0 \,,
\end{equation}
and the compatible $\BZ_2$-grading, see~\eqref{eq:gosp-grading-compatible}. These generators are subject to the Serre 
relations specified later in this subsection, alongside with the Chevalley-type 
relations~\eqref{eq:q-gamma-chevalley-rel-HH}--\eqref{eq:q-gamma-chevalley-rel-ef} as in the $\gl(V)$ case, but for all 
$H,H' \in \Gamma$ and $1 \leq i,j \leq s$, where $\alpha_j(H)$ denote the values of roots $\alpha_j$ on the Cartan elements 
$H \in P^{\vee}$.
\end{Def}

To express the Serre relations, we recall the $q$-supercommutator $[\![ -,- ]\!]$ from~\eqref{eq:q-gl-superbracket}. 
First of all, we impose~\eqref{eq:q-gl-serre-rel-standard-1} for all $1 \leq i,j \leq s$ as well 
as~(\ref{eq:q-gl-serre-rel-standard-2},~\ref{eq:q-gl-serre-rel-standard-3}) provided all featured indices are strictly 
less than $s$. The remaining Serre relations involving the index $s$ are specified by the following type-dependent list:
\begin{itemize}[leftmargin=0.7cm]

\item \textbf{$B$-type:}
\begin{align}\label{eq:q-osp-serre-B}
  \begin{split}
  & [\![e_{s-1},[\![e_{s-1},e_{s}]\!]]\!]=0 \,,\quad [\![f_{s-1},[\![f_{s-1},f_{s}]\!]]\!]=0 \
    \qquad \text{if \ } \alpha_{s-1}\in \Phi_{\bbar{0}} \,, \\
  & [\![[\![[\![e_{s-1},e_s]\!],e_s]\!],e_s]\!]=0 \,,\quad [\![[\![[\![f_{s-1},f_s]\!],f_s]\!],f_s]\!]=0 \
    \qquad \text{if \ } \alpha_{s}\in \Phi_{\bbar{0}} \,, \\
  & [\![[\![[\![e_{s-2},e_{s-1}]\!],e_{s}]\!],e_{s-1}]\!] = 0 \,,\quad 
    [\![[\![[\![f_{s-2},f_{s-1}]\!],f_{s}]\!],f_{s-1}]\!] = 0
    \qquad \text{if \ } \alpha_{s-1}\in \Phi_{\bbar{1}} \,.
  \end{split}
\end{align}

\item \textbf{$C$-type:}
\begin{align}\label{eq:q-osp-serre-C}
  \begin{split}
  & [\![e_{s-1},[\![e_{s-1},[\![e_{s-1},e_{s}]\!]]\!]]\!]=0 \,,\quad [\![f_{s-1},[\![f_{s-1},[\![f_{s-1},f_{s}]\!]]\!]]\!]=0 \
    \qquad \text{if \ } \alpha_{s-1}\in \Phi_{\bbar{0}} \,, \\
  & [\![[\![e_{s-1},e_{s}]\!],e_{s}]\!]=0 \,,\quad [\![[\![f_{s-1},f_{s}]\!],f_{s}]\!]=0 \,, \\
  & \begin{cases}
        & [\![[\![[\![[\![e_{s-2},e_{s-1}]\!],e_{s}]\!], [\![e_{s-2},e_{s-1}]\!]]\!], e_{s-1}]\!]=0 \\
        & [\![[\![[\![[\![f_{s-2},f_{s-1}]\!],f_{s}]\!], [\![f_{s-2},f_{s-1}]\!]]\!], f_{s-1}]\!]=0
  \end{cases}
  \qquad \text{if \ } \alpha_{s-2},\alpha_{s-1}\in \Phi_{\bbar{1}} \,, \\
  & \begin{cases}
        & [\![[\![[\![[\![[\![[\![e_{s-3},e_{s-2}]\!],e_{s-1}]\!],e_{s}]\!],e_{s-1}]\!],e_{s-2}]\!],e_{s-1}]\!]=0 \\
        & [\![[\![[\![[\![[\![[\![f_{s-3},f_{s-2}]\!],f_{s-1}]\!],f_{s}]\!],f_{s-1}]\!],f_{s-2}]\!],f_{s-1}]\!]=0
  \end{cases}
  \qquad \text{if \ } \alpha_{s-2}\in \Phi_{\bbar{0}},\ \alpha_{s-1}\in \Phi_{\bbar{1}} \,.
  \end{split}
\end{align}

\item \textbf{$D$-type:}
\begin{align}\label{eq:q-osp-serre-D}
  \begin{split}
  & [\![e_{s-2},[\![e_{s-2},e_{s}]\!]]\!]=0 \,,\quad [\![f_{s-2},[\![f_{s-2},f_{s}]\!]]\!]=0 \
    \qquad \text{if \ } \alpha_{s-2}\in \Phi_{\bbar{0}} \,, \\
  & [\![[\![e_{s-2},e_{s}]\!],e_{s}]\!]=0 \,,\quad [\![[\![f_{s-2},f_{s}]\!],f_{s}]\!]=0 \
    \qquad \text{if \ } \alpha_{s}\in \Phi_{\bbar{0}} \,, \\
  & [\![[\![[\![e_{s-3},e_{s-2}]\!],e_{s}]\!],e_{s-2}]\!] = 0 \,,\quad 
    [\![[\![[\![f_{s-3},f_{s-2}]\!],f_{s}]\!],f_{s-2}]\!] = 0
    \qquad \text{if \ } \alpha_{s-2}\in \Phi_{\bbar{1}} \,, \\
  & [\![[\![e_{s-2},e_{s-1}]\!],e_{s}]\!] = [\![[\![e_{s-2},e_{s}]\!],e_{s-1}]\!] \,,\quad
    [\![[\![f_{s-2},f_{s-1}]\!],f_{s}]\!] = [\![[\![f_{s-2},f_{s}]\!],f_{s-1}]\!]
    \qquad \text{if \ } \alpha_{s} \in \Phi_{\bbar{1}} \,.
  \end{split}
\end{align}

\end{itemize}
The Hopf superalgebra structure on $U_q(\Gamma)$ is naturally defined by the exactly same 
formulas~\eqref{eq:gl-comult}--\eqref{eq:gl-counit}.

\begin{Def}\label{def:quantum-osp-standard}
(a) The quantum supergroup $U_q(\Gamma_{\fgosp})$ is called the \emph{extended Drinfeld--Jimbo-type orthosymplectic 
quantum supergroup} and is denoted by $\uqVe$. Using the basis $\{\Tilde{\ssH}_{i}\}_{i=1}^{s+1}$ of $\Gamma_{\fgosp}$, 
see~\eqref{eq:Tilde H}, it admits an equivalent presentation with finitely many generators. Specifically, it is the 
$\BC(q)$-superalgebra generated by $\{e_i, f_i\}_{i=1}^{s} \cup \{q^{\pm \Tilde{H}_i}\}_{i=1}^{s+1}$, equipped with the same 
$Q$-grading as in~\eqref{eq:uqGamma-osp-Q-grading} and compatible $\BZ_2$-grading. These generators are subject to the Serre 
relations specified above, alongside with the Chevalley-type 
relations~\eqref{eq:q-gamma-chevalley-rel-HH}--\eqref{eq:q-gamma-chevalley-rel-ef} evaluated on the chosen 
basis of $\Gamma_{\fgosp}$. Here, the elements $q^{\pm C} = q^{\pm \Tilde{H}_{s+1}}$ for $B$-type 
(resp.\ $q^{\pm 2C} = q^{\pm \Tilde{H}_{s}}q^{\pm \Tilde{H}_{s+1}}$ for $CD$-types) are central, and 
the elements $q^{\pm h_i}$ in~\eqref{eq:q-gamma-chevalley-rel-ef} are described through the chosen Cartan generators 
$\{q^{\pm \Tilde{H}_k}\}_{k=1}^{s+1}$ akin to~\eqref{eq:h-elements} via:
\begin{equation*}
  q^{\pm h_i} = 
  \begin{cases}
    q^{\pm \Tilde{H}_i} q^{\mp \Tilde{H}_{i+1}} & \text{if \ } 1 \leq i < s \text{ \ (in all $BCD$-types)}\,, \\
    q^{\pm \Tilde{H}_s} q^{\mp \Tilde{H}_{s+1}} & \text{if \ } i = s \text{ \ ($BC$-types)}\,, \\
    q^{\pm \Tilde{H}_{s-1}} q^{\mp \Tilde{H}_{s+1}} & \text{if \ } i = s \text{ \ ($D$-type)}\,.
  \end{cases}
\end{equation*}

\noindent
(b) Analogously, the \emph{Drinfeld--Jimbo-type orthosymplectic quantum supergroup} is defined as $\uqV \coloneqq U_q(Q^{\vee})$. 
Equivalently, it is the $\BC(q)$-superalgebra generated by $\{e_i, f_i\}_{i=1}^{s}$ and the Cartan generators 
$\{q^{\pm h_i}\}_{i=1}^{s}$, subject to the same $Q$-grading and $\BZ_2$-grading. These generators satisfy the Serre relations 
specified above, alongside with the following explicit Chevalley-type relations (for all $1 \leq i,j \leq s$):
\begin{equation*}
  [q^{h_i}, q^{h_j}] = 0 \,,\ \ 
  q^{\pm h_i} q^{\mp h_i} = 1 \,,\ \ 
  q^{h_i} e_j q^{-h_i} = q^{(\alpha_i, \alpha_j)} e_j \,,\ \ 
  q^{h_i} f_j q^{-h_i} = q^{-(\alpha_i, \alpha_j)} f_j \,,\ \
  [e_i, f_j] = \delta_{ij} \frac{q^{h_i} - q^{-h_i}}{q - q^{-1}} \,.
\end{equation*}
By definition, there is a canonical Hopf superalgebra morphism from $\uqV$ to $\uqVe$. This morphism can be shown 
to be an embedding via the triangular decomposition~\eqref{eq:osp-DJ-tri-decomp} discussed below.
\end{Def}

\begin{Rem}
Our choice of the denominator $q-q^{-1}$ in the Chevalley-type relations follows the conventions of~\cite{y}, 
and may therefore differ from some other literature by a scalar rescaling of the $f_j$'s.
\end{Rem}

\begin{Rem}\label{rem:osp-DrJ involution}
Analogously to Remark~\ref{rem:DrJ involution}, the assignment $\omega_{\DrJ}(q^{\pm \Tilde{H}_k}) = q^{\mp \Tilde{H}_k}$ 
for $1 \leq k \leq s+1$, together with the following action on the root vectors:
\begin{equation*}
\begin{split}
  \omega_{\DrJ}(e_i) &= 
  \begin{cases}
    -(-1)^{\ol{i}\,\ol{i+1}} q^{(\varepsilon_i,\varepsilon_i)} f_i 
      & \text{if \ } 1 \leq i < s \text{ \ (in all $BCD$-types)}\,, \\
    -q^{(\varepsilon_s,\varepsilon_s)} f_s & \text{if \ } i = s \text{ \ ($B$-type)}\,, \\
    \frac{1}{q(1+q^2)} f_s & \text{if \ } i = s \text{ \ ($C$-type)}\,, \\
    - q^{(\varepsilon_{s-1},\varepsilon_{s-1})} f_s & \text{if \ } i = s \text{ \ ($D$-type)}\,,
  \end{cases}\\
  \omega_{\DrJ}(f_i) &= 
  \begin{cases}
    -(-1)^{\ol{i}+\ol{i+1}}(-1)^{\ol{i}\,\ol{i+1}} q^{-(\varepsilon_i,\varepsilon_i)} e_i 
      & \text{if \ } 1 \leq i < s \text{ \ (in all $BCD$-types)}\,, \\
    -(-1)^{\ol{s}} q^{-(\varepsilon_s,\varepsilon_s)} e_s & \text{if \ } i = s \text{ \ ($B$-type)}\,, \\
    q(1+q^2) e_s & \text{if \ } i = s \text{ \ ($C$-type)}\,, \\
    -(-1)^{\ol{s-1}} q^{-(\varepsilon_{s-1},\varepsilon_{s-1})} e_s & \text{if \ } i = s \text{ \ ($D$-type)}\,,
  \end{cases}
\end{split}
\end{equation*}
gives rise to a Hopf superalgebra isomorphism $\omega_{\DrJ} \colon \uqVe \iso \uqVe^{\copp}$.
\end{Rem}


\subsection{Drinfeld double and Drinfeld twist of \texorpdfstring{$\uqVe$}{\uqVe}}\label{ssec:osp-double-twist}
\

In complete analogy with the $\gl(V)$ case (cf.~\eqref{eq:DJ-tri-decomp}), $\uqVe$ admits a \emph{triangular decomposition} 
\begin{equation}\label{eq:osp-DJ-tri-decomp}
  U^<_{q}(\fgosp(V)) \otimes U^0_{q}(\fgosp(V)) \otimes U^>_{q}(\fgosp(V)) \iso \uqVe \,,
\end{equation}
where the factors are the subalgebras generated by $\{f_i\}_{i=1}^{s}$, $\{q^{\pm H} \mid H \in \Gamma_{\fgosp}\}$, 
and $\{e_i\}_{i=1}^{s}$, respectively. This gives rise to the corresponding positive and negative Borel Hopf subalgebras 
$U^{\geq}_{q}(\fgosp(V))$ and $U^{\leq}_{q}(\fgosp(V))$,
along with the following isomorphisms of underlying superspaces:
\begin{equation}\label{eq:osp-triangular-morphisms}
  U^>_{q}(\fgosp(V)) \otimes U^0_{q}(\fgosp(V)) \iso U^{\geq}_{q}(\fgosp(V)) \,,\qquad
  U^<_{q}(\fgosp(V)) \otimes U^0_{q}(\fgosp(V)) \iso U^{\leq}_{q}(\fgosp(V)) \,.
\end{equation}

The following result establishes the Drinfeld double realization of $\uqVe$. This is a direct analogue of 
Proposition~\ref{prop:DJ-pairing_finite} (we omit the proof which is identical).

\begin{Prop}\label{prop:osp-DJ-pairing_finite} 
(a) There exists a unique skew-pairing (cf.~\eqref{eq:skew pairing structural property})
\begin{equation}\label{eq:osp-skew-pairing}
  (\cdot,\cdot)_{\DrJ}\colon U^\leq_{q}(\fgosp(V)) \times U^\geq_{q}(\fgosp(V)) \to \BC(q)
\end{equation}
satisfying the following properties on the generators for $1 \leq i,j \leq s$ and $H,H' \in \Gamma_{\fgosp}$:
\begin{equation*}
\begin{aligned}
  (f_i, q^{H})_{\DrJ} = 0 \,,\quad (q^{H}, e_i)_{\DrJ} = 0 \,,\quad  
  (f_i, e_j)_{\DrJ} = \frac{\delta_{ij} (-1)^{|f_i||e_j|}}{q^{-1} - q} \,,\quad 
  (q^{H}, q^{H'})_{\DrJ} = q^{-(H,H')} \,,
\end{aligned}
\end{equation*}
where the pairing in the exponent is evaluated via~\eqref{eq:osp-str-form}. Moreover, this pairing is non-degenerate, 
as is its restriction to the Cartan subalgebras $U^0_{q}(\fgosp(V)) \times U^0_{q}(\fgosp(V))$.

\smallskip
\noindent
(b) The superalgebra $\uqVe$ is isomorphic to the quotient of the Drinfeld double $\CD_{\DrJ}$ formed by the pair of 
Hopf subalgebras $U^\leq_q(\fgosp(V))$ and $U^\geq_q(\fgosp(V))$ with respect to~\eqref{eq:osp-skew-pairing}, obtained 
by identifying the Cartan subalgebras $U^0_q(\fgosp(V))$ from both factors.
\end{Prop}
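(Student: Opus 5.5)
The plan follows the proof of Proposition~\ref{prop:DJ-pairing_finite} almost verbatim, with $\Gamma_{\fgosp}$ in place of $P^{\vee}$ and the bilinear form~\eqref{eq:osp-str-form} in place of~\eqref{eq:gl-epsilon-pairing}.

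\textbf{Existence of the pairing.} For the part $U^<_q(\fgosp(V))\times U^>_q(\fgosp(V))$ I invoke the super version of Lusztig's/Jantzen's construction in~\cite{y0}. First define the pairing on the free algebras generated by the $f_i$ and the $e_j$ together with the Cartan parts. Extend it to the free Borels via the skew-pairing axioms~\eqref{eq:skew pairing structural property}, using the coproduct~\eqref{eq:gl-comult}. Then show that the Serre elements listed in~\eqref{eq:q-osp-serre-B}--\eqref{eq:q-osp-serre-D} (and the type-$A$ ones) lie in the radical. The key input from~\cite{y0} is that the radical of the pairing on the free positive part is generated exactly by these Serre elements; this is precisely why Yamane's higher-order relations appear (cf.\ Remark~\ref{rem:yamane}). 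On the Cartan part, $(q^H,q^{H'})_{\DrJ}=q^{-(H,H')}$ is bimultiplicative in $H,H'\in\Gamma_{\fgosp}$, hence compatible with the group-like coproduct. Compatibility with $q^{H}e_jq^{-H}=q^{\alpha_j(H)}e_j$ holds because $(H,\alpha_j)=\alpha_j(H)$ under the identification $\fh_{\fgosp}\simeq\fh_{\fgosp}^*$. Uniqueness is immediate, since a skew-pairing is determined by its values on generators.

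\textbf{Non-degeneracy.} Using the triangular decompositions~\eqref{eq:osp-triangular-morphisms} together with degree-zero-ness~\eqref{eq:skew-pairing-degree-zero} and the coproduct shape~\eqref{eq:gl-borel-comult}, I repeat the computation~\eqref{eq:DrJ-skew-pairing-identity} to get $(fk',ek)_{\DrJ}=(f,e)_{\DrJ}(k',k)_{\DrJ}$. Non-degeneracy then reduces to two pieces.

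The first piece is the $U^<\times U^>$ part, which is Yamane's theorem~\cite{y0}.

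The second piece is the Cartan part, which is the one point genuinely differing from the $\gl$ case. $U^0_q(\fgosp(V))$ is the group algebra of the lattice $\Gamma_{\fgosp}$, so the pairing is the bicharacter $(H,H')\mapsto q^{-(H,H')}$. The form $(\cdot,\cdot)$ restricted to $\Gamma_{\fgosp}$ is non-degenerate over $\BQ$ (by~\eqref{eq:osp-epsilon-pairing} it is diagonal $\pm1$ in the $\varepsilon_i,\Tilde\varepsilon_C$ basis). Therefore the map $\Gamma_{\fgosp}\to\Hom(\Gamma_{\fgosp},\BC(q)^\times)$ given by $H\mapsto q^{-(H,\cdot)}$ is injective. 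Distinct characters of a group are linearly independent, so the pairing on group algebras is non-degenerate. A finite linear combination $\sum c_H q^H$ pairing to zero with all $q^{H'}$ gives a vanishing linear combination of the distinct characters $q^{-(H,\cdot)}$, forcing $c_H=0$.

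\textbf{Part (b).} I use Proposition~\ref{prop:gen double} and Proposition~\ref{prop:double gen rel}: by the $\pm Q^+$-grading, the ideal of cross-relations is generated by the $u_{y,x}$ with $x,y$ generators. Computing~\eqref{eq:gl-DJ-cross-rel} on pairs of generators gives the four relations displayed in the $\gl$ proof, now with $q^{H^e},q^{H^f}$ for $H\in\Gamma_{\fgosp}$. Quotienting by $q^{H^e}-q^{H^f}$ recovers exactly the Chevalley-type relations of Definition~\ref{def:quantum-osp-lattice}, while the Serre relations are internal to the Borels.

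\textbf{Main obstacle.} The hard step is the non-degeneracy on $U^<\times U^>$, i.e.\ that the listed Serre relations generate the full radical, including the exotic $C$/$D$-type ones. I would not reprove this but cite~\cite{y0}. One would need to check that our $q$-supercommutator convention matches Yamane's up to $q\mapsto q^{-1}$ (as in the remark following Definition~\ref{def:quantum-gl-standard}), and that the rescaling of generators relative to~\cite{y} preserves the relations up to nonzero scalars.
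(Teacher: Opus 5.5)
Your proposal is correct and follows the paper's route: the paper simply declares the proof identical to that of Proposition~\ref{prop:DJ-pairing_finite}. That means existence and non-degeneracy on $U^<\times U^>$ are cited from~\cite{y0}, the reduction uses $(fk',ek)_{\DrJ}=(f,e)_{\DrJ}(k',k)_{\DrJ}$, and part~(b) is obtained from the cross-relations on generators followed by identifying the Cartan parts. Your linear-independence-of-characters argument for the Cartan part correctly fleshes out what the paper calls ``straightforward.''
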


\begin{Rem}\label{rem:osp-DrJ pairing order}
Similarly to Remark~\ref{rem:DrJ pairing order}, one may alternatively construct the Drinfeld double by taking the Borel 
subalgebras in the opposite order. This utilizes the transposed inverse 
$(-,-)_{\wt{\DrJ}} \colon U^\geq_{q}(\fgosp(V)) \times U^\leq_{q}(\fgosp(V)) \to \BC(q)$, which is also non-degenerate 
and evaluates on the generators as follows:
\begin{equation*}
  (e_i, q^{H})_{\wt{\DrJ}} = 0 \,,\quad (q^{H}, f_i)_{\wt{\DrJ}} = 0 \,,\quad
  (e_i, f_j)_{\wt{\DrJ}} = \frac{\delta_{ij}}{q - q^{-1}} \,,\quad 
  (q^{H}, q^{H'})_{\wt{\DrJ}} = q^{(H, H')} \,.
\end{equation*}
\end{Rem}

We now introduce a specific Drinfeld twist of $\uqVe$ using the element $\CF = \fR_{s}$, the semisimple part of the universal 
$R$-matrix, see Subsection~\ref{ssec:osp-universal-R}. This yields the exact same formulas for the twisted comultiplication 
$\Delta^{\CF}$ and antipode $S^{\CF}$ on the generators $e_i, f_i, q^{H}$ as in the $\gl(V)$ case from 
Subsection~\ref{ssec:drinfeld-twist}:
\begin{align*}
  & \Delta^{\CF}(e_i) = 1 \otimes e_i + e_i \otimes q^{-h_i} \,,\quad
    \Delta^{\CF}(f_i) = q^{h_i} \otimes f_i + f_i \otimes 1 \,,\quad
    \Delta^{\CF}(q^{H}) = q^{H} \otimes q^{H} \,, \\
  & S^{\CF}(e_i) = -e_i q^{h_i} \,,\quad
    S^{\CF}(f_i) = -q^{-h_i} f_i \,,\quad
    S^{\CF}(q^{H}) = q^{-H} \,.
\end{align*}
The following structural properties are completely parallel to Proposition~\ref{prop:drinfeld-twist-iso}:

\begin{Prop}\label{prop:osp-drinfeld-twist-iso}
(a) The assignment $\phi_{\CF}(e_i) = e_i q^{-h_i}$, $\phi_{\CF}(f_i) = q^{h_i} f_i$, $\phi_{\CF}(q^{H}) = q^{H}$ 
uniquely extends to a $Q$-graded Hopf superalgebra isomorphism $\phi_{\CF} \colon \uqVe^{\CF} \iso \uqVe$.

\smallskip
\noindent
(b) The isomorphism $\phi_{\CF}$ preserves the Borel subalgebras $U^{\geq}_q(\fgosp(V))$ and $U^{\leq}_q(\fgosp(V))$.

\smallskip
\noindent
(c) The pullback skew-pairing $(x,y)^{\CF}_{\DrJ} \coloneqq (\phi_{\CF}(x), \phi_{\CF}(y))_{\DrJ}$ evaluates identically to 
the skew-pairing $(-,-)_{\DrJ}$ on the generators. Consequently, $\uqVe^{\CF}$ can also be realized as the Drinfeld double 
of $U^{\leq}_q(\fgosp(V))^{\CF}$ and $U^{\geq}_q(\fgosp(V))^{\CF}$ with respect to $(-,-)^{\CF}_{\DrJ}$ (and similarly for 
$(-,-)^{\CF}_{\wt{\DrJ}}$ with the opposite order, see Remark~\ref{rem:osp-DrJ pairing order}).
\end{Prop}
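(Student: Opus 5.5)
We follow the proof of Proposition~\ref{prop:drinfeld-twist-iso} verbatim, since all three parts only involve the Chevalley-type relations, the Cartan conjugation identities and the comultiplication formulas, which have the same shape for $\uqVe$ as for $\uqgl$.

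\textbf{Part (a).} Because $\uqVe^{\CF}$ and $\uqVe$ have the same underlying superalgebra, we only need to show that $e_i\mapsto e_iq^{-h_i}$, $f_i\mapsto q^{h_i}f_i$, $q^H\mapsto q^H$ defines a superalgebra endomorphism of $\uqVe$.

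\begin{itemize}
\item The relations~\eqref{eq:q-gamma-chevalley-rel-HH}, \eqref{eq:q-gamma-chevalley-rel-He} hold trivially, since conjugation by $q^H$ commutes with right multiplication by the central-in-$U^0$ factor $q^{-h_i}$.
\item For~\eqref{eq:q-gamma-chevalley-rel-ef} we repeat the displayed computation. Moving Cartan factors through gives $[e_iq^{-h_i},q^{h_j}f_j]=q^{(\alpha_i-\alpha_j,\alpha_j)}[e_i,f_j]q^{-h_i+h_j}$, and this reduces to $\delta_{ij}(q^{h_i}-q^{-h_i})/(q-q^{-1})$.
\item For the Serre relations, including the type-dependent ones~\eqref{eq:q-osp-serre-B}--\eqref{eq:q-osp-serre-D}, we use that $\phi_{\CF}(e_i)\phi_{\CF}(e_j)=q^{-(\alpha_i,\alpha_j)}e_ie_jq^{-h_i-h_j}$. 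By induction on the nesting depth, any nested $q$-supercommutator $[\![\cdots]\!]$ of the images $\phi_{\CF}(e_{i_k})$ equals a nonzero scalar times the same nested bracket of the $e_{i_k}$, right-multiplied by $q^{-h_\lambda}$, where $\lambda$ is the total $Q$-degree.
\end{itemize}

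The inductive step works because the $q$-supercommutator exponent depends only on $Q$-degrees. Both orderings $ab$ and $ba$ acquire the same Cartan tail $q^{-h_{\deg a}-h_{\deg b}}$, and the scalar picked up when moving it past the other factor is symmetric in $(a,b)$ up to exactly the factor compensated in $[\![a,b]\!]$. It is also worth noting that the $D$-type relation of the form ``bracket $=$ bracket'' is handled by the same mechanism, because both sides have equal degree and the scalars agree: each is $q^{-\sum_{k<l}(\alpha_{i_k},\alpha_{i_l})}$, which is symmetric in the letters. This scalar bookkeeping is the only real check, and I expect it to be the main (mild) obstacle, particularly for the length-$7$ $C$-type relation and the relations involving $[\![[\![e_{s-2},e_{s-1}]\!],\ldots]\!]$. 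We therefore state the inductive lemma uniformly, with scalar $q^{-\sum_{k<l}(\deg x_k,\deg x_l)}$, and avoid checking each relation separately.

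The $f$-side is handled symmetrically with left Cartan factors. The inverse assignment $e_i\mapsto e_iq^{h_i}$, $f_i\mapsto q^{-h_i}f_i$ is treated identically, and the two maps are mutually inverse on generators. Compatibility $(\phi_{\CF}\otimes\phi_{\CF})\Delta^{\CF}=\Delta\phi_{\CF}$ is checked on generators: for example, $\Delta(e_iq^{-h_i})=(q^{h_i}\otimes e_i+e_i\otimes1)(q^{-h_i}\otimes q^{-h_i})=1\otimes e_iq^{-h_i}+e_iq^{-h_i}\otimes q^{-h_i}$. Compatibility with the antipode then follows automatically, and the map is $Q$-graded.

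\textbf{Part (b)} is immediate from the formulas.

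\textbf{Part (c).} The pullback is a skew-pairing by (a). On generators, degree considerations leave only $(f_i,e_i)$ to compute. Using the factorization identity analogous to~\eqref{eq:DrJ-skew-pairing-identity}, which follows in the same way from the triangular decomposition~\eqref{eq:osp-triangular-morphisms} and the comultiplication formulas, we get
\[(q^{h_i}f_i,e_iq^{-h_i})_{\DrJ}=q^{-(\alpha_i,\alpha_i)}(f_i,e_i)_{\DrJ}(q^{h_i},q^{-h_i})_{\DrJ}=(f_i,e_i)_{\DrJ}.\]
Non-degeneracy transfers because $\phi_{\CF}$ is bijective on each Borel subalgebra. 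The double realization then follows as in Remark~\ref{rem:twisted-drinfeld-double}, and for the opposite order we apply the same argument to $(-,-)_{\wt{\DrJ}}$ of Remark~\ref{rem:osp-DrJ pairing order}.
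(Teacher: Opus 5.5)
Your proposal is correct and follows the same route as the paper, which gives no separate argument here and defers to the proof of Proposition~\ref{prop:drinfeld-twist-iso}: Chevalley relations by moving Cartan factors, Serre relations via the inductive ``scalar times Cartan tail'' observation, the coproduct check on generators, and the single pairing computation for $(f_i,e_i)$. Your observation that the accumulated scalar $q^{-\sum_{k<l}(\deg x_k,\deg x_l)}$ does not depend on the bracketing is exactly what covers the $D$-type relation $[\![[\![e_{s-2},e_{s-1}]\!],e_s]\!]=[\![[\![e_{s-2},e_s]\!],e_{s-1}]\!]$, which is not of the form ``bracket $=0$'' handled by the $\gl(V)$ wording. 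One small correction: both orderings $ab$ and $ba$ pick up literally the same factor $q^{-(\deg a,\deg b)}$, so no compensation by the $q$-supercommutator is involved.
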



\smallskip

\section{\texorpdfstring{$\RLL$}{RLL}-realization of extended orthosymplectic quantum supergroups}\label{sec:RTT_osp}


\subsection{The universal \texorpdfstring{$R$}{R}-matrix}\label{ssec:osp-universal-R}
\

Following the combinatorial approach developed in~\cite{ht1} (based on~\cite{chw}), one defines the corresponding 
\emph{quantum root vectors} $\{e_\gamma,f_\gamma\}_{\gamma \in \bar{\Phi}^{+}}$ recursively as in~\cite[(5.20)]{ht1}, 
using iterated $q$-supercommutators~\eqref{eq:q-gl-superbracket} and combinatorics of dominant Lyndon words. As in the $\gl(V)$ 
case, let $\gamma_{ij} \coloneqq \varepsilon_{i} - \varepsilon_{j}$. Since $\gamma_{ij} = \gamma_{j'i'}$, the reduced positive 
root system $\bar{\Phi}^{+}$ of~\eqref{eq:osp-reduced-root-sys} is uniquely parameterized by a subset of 
$\{(i,j) \mid i < j \leq i'\}$ via 
\begin{equation}\label{eq:gosp-reduced-roots}
  \bar{\Phi}^{+} = \begin{cases}
    \big\{ \gamma_{ij} \,\big|\, i < j < i' \big\} & \text{ \ ($B$-type)}\,, \\
    \big\{ \gamma_{ij} \,\big|\, i < j < i' \big\} \cup 
    \big\{ \gamma_{ii'} \,\big|\, \ol{i} = \bbar{1} \big\} & \text{ \ ($CD$-types)}\,.
  \end{cases}
\end{equation}
The set $\sL^+$ of dominant Lyndon words depends on the order of simple roots, and we choose it as in~\cite{ht1}: 
\begin{equation}\label{eq:alphabet-order}
  \alpha_1 \prec \alpha_2 \prec \cdots \prec \alpha_s \,.
\end{equation}
Via the aforementioned bijection $\rl \colon \sL^{+} \iso \bar{\Phi}^{+}$, the lexicographic order on $\sL^+$ endows 
$\bar{\Phi}^{+}$ with a convex order~$\prec$, which for~\eqref{eq:alphabet-order} is simply induced by the lexicographic order on 
the pairs $\{(i,j) \mid i < j \leq i'\}$, cf.~\eqref{eq:gl-convex-order}. For simple roots $\alpha_i$, we set $e_{\alpha_i} = e_i$  
and $f_{\alpha_i} = f_i$. For non-simple roots $\gamma \in \bar{\Phi}^{+}$, the root vectors are defined inductively via
(cf.~\eqref{eq:quantum root vectors}):
\begin{equation*}
  e_{\gamma} = [\![ e_{\alpha}, e_{\beta} ]\!] \,, \qquad
  f_{\gamma} = - (-1)^{|f_\alpha| |f_\beta|} q^{-(\alpha,\beta)} [\![ f_{\alpha}, f_{\beta} ]\!] \,,
\end{equation*}
where the pair of roots $(\alpha, \beta)$ corresponds to the so-called costandard factorization $\ell = \ell_1 \ell_2$ 
of the dominant Lyndon word $\ell = \rl^{-1}(\gamma)$ into $\ell_1 = \rl^{-1}(\alpha)$ and $\ell_2 = \rl^{-1}(\beta)$. 
The costandard factorization for the order~\eqref{eq:alphabet-order} is generically given by the uniform rule 
$\gamma_{ij} \mapsto (\gamma_{i, j-1}, \gamma_{j-1, j})$ for $i < j \leq i'$, which applies exhaustively for $B$-type, 
while the type-dependent exceptions for $CD$-types are as follows: 
\begin{itemize}[leftmargin=0.7cm]

\item \textbf{$C$-type:} 
$\gamma_{ii'} \mapsto (\gamma_{is}, \gamma_{is'})$ for $i < s$;

\item \textbf{$D$-type:} 
$\gamma_{is'} \mapsto (\gamma_{i,s-1}, \gamma_{s-1,s'})$ for $i < s-1$, 
and $\gamma_{ii'} \mapsto (\gamma_{is}, \gamma_{is'})$ for $i < s$.

\end{itemize}

In complete parallel with the $\gl(V)$ case (cf.\ Theorem~\ref{thm:gl-PBW-general}), these quantum root vectors provide 
PBW-type bases for the strictly positive and negative subalgebras $U^>_q(\fgosp(V))$ and $U^<_q(\fgosp(V))$, which are 
orthogonal with respect to the skew-pairing $(-,-)_{\DrJ}$ from~\eqref{eq:osp-skew-pairing}, see~\cite[Theorem 5.16]{ht1}.

\begin{Thm}\label{thm:osp-PBW-general}
(a) The sets of ordered monomials
\begin{equation*}
  \left\{ \prod_{\gamma \in \bar{\Phi}^{+}}^{\leftarrow} e_{\gamma}^{m_{\gamma}} \;\Bigg|\; 
  \substack{m_{\gamma} \in \BZ_{\ge 0} \\ m_\gamma \leq 1 \text{ if } \gamma \in \bar{\Phi}_{\bbar{1}} \text{ isotropic}} \right\}
  \qquad \text{and} \qquad
  \left\{ \prod_{\gamma \in \bar{\Phi}^{+}}^{\leftarrow} f_{\gamma}^{m_{\gamma}} \;\Bigg|\; 
  \substack{m_{\gamma} \in \BZ_{\ge 0} \\ m_\gamma \leq 1 \text{ if } \gamma \in \bar{\Phi}_{\bbar{1}} \text{ isotropic}} \right\}
\end{equation*}
form bases for $U^>_q(\fgosp(V))$ and $U^<_q(\fgosp(V))$, respectively. Here, the arrow $\leftarrow$ indicates that the factors 
are ordered such that the roots $\gamma$ increase from right to left with respect to the convex order $\prec$ on $\bar{\Phi}^+$.

\smallskip
\noindent
(b) These dual bases are orthogonal with respect to the skew-pairing $(-,-)_{\DrJ}$.
\end{Thm}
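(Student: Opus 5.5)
The plan is to reduce everything to the combinatorial machinery of~\cite{chw} as adapted in~\cite{ht1}, following the template of Lemma~\ref{lem:gl-pairing-basis} for orthogonality. The argument has three steps.

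\textbf{Step 1 (spanning).} I would first show that the ordered monomials span $U^>_q(\fgosp(V))$, with the analogous statement for $U^<_q(\fgosp(V))$ following by applying $\omega_{\DrJ}$ of Remark~\ref{rem:osp-DrJ involution}.
\begin{itemize}
\item The Lyndon-word theory gives this directly. The quantum shuffle (Rosso) embedding of $U^>_q(\fgosp(V))$ identifies its graded dual with a shuffle subalgebra. The good words are exactly the concatenations of dominant Lyndon words in $\sL^+$, taken in non-increasing lexicographic order.
\item Lalonde--Ram type triangularity then expresses each monomial $e_{i_1}\cdots e_{i_k}$ as a combination of products $e_{\gamma_1}\cdots e_{\gamma_r}$ with $\gamma_1\succeq\cdots\succeq\gamma_r$, modulo lexicographically smaller terms.
\item I will use the convexity of $\prec$, induced by~\eqref{eq:alphabet-order} and the lexicographic order on pairs $(i,j)$, to reorder products. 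Concretely, I need $[\![e_\alpha,e_\beta]\!]$ for $\alpha\prec\beta$ to lie in the span of ordered monomials in roots strictly between $\alpha$ and $\beta$ (a Levendorskii--Soibelman type property).
\item Two relations handle the exponent restrictions. For isotropic odd $\gamma$ I need $e_\gamma^2=0$. For a non-isotropic odd $\gamma$, the vector $e_{2\gamma}$ must be proportional to $e_\gamma^2$, which is why $2\gamma$ is discarded from the reduced system $\bar\Phi^+$.
\end{itemize}

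\textbf{Step 2 (orthogonality).} I would mimic the induction in Lemma~\ref{lem:gl-pairing-basis}.
\begin{itemize}
\item The key input is an analogue of~\eqref{eq:RTT-Delta-ef} for the DJ coproduct:
\[
\Delta(e_\gamma)\in q^{h_\gamma}\otimes e_\gamma+e_\gamma\otimes 1+\sum_{\gamma'\prec\gamma\prec\gamma''}U^{\geq}_q(\fgosp(V))_{\gamma''}\otimes U^{\geq}_q(\fgosp(V))_{\gamma'} ,
\]
and similarly for $f_\gamma$. This follows from the recursive definition via costandard factorizations together with convexity.
\item Given this, pick the minimal root $\alpha$ appearing in either monomial and peel it off. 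Degree-zero-ness of $(-,-)_{\DrJ}$ forces the pairing to split, producing a factor $(f_\alpha^{m},e_\alpha^{m})_{\DrJ}$ times a pairing of shorter monomials.
\item The last thing to check is that each $(f_\gamma^m,e_\gamma^m)_{\DrJ}$ is nonzero. It equals a $q$-factorial in $(-1)^{|e_\gamma|}q^{(\gamma,\gamma)}$ times $(f_\gamma,e_\gamma)^m_{\DrJ}$. This vanishes only when $(\gamma,\gamma)=0$, $\gamma$ is odd, and $m\ge2$, which are exactly the excluded exponents. For non-isotropic odd $\gamma$ the base $-q^{(\gamma,\gamma)}$ is not a root of unity, so the factor is nonzero. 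The scalar $(f_\gamma,e_\gamma)_{\DrJ}$ itself is nonzero by the recursive computation in~\cite{ht1}.
\end{itemize}

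\textbf{Step 3 (independence and conclusion).} The spanning monomials are pairwise orthogonal with nonzero self-pairings, so they are linearly independent. This gives part~(a) for both signs together with part~(b). I would add a consistency check: the graded count equals the super PBW character $\prod_{\gamma\ \text{even}}(1-e^\gamma)^{-1}\prod_{\gamma\ \text{odd}}(1+e^\gamma)$ of $U(\mathfrak n^+)$, once even roots $2\gamma$ are accounted for through the powers $e_\gamma^m$ of non-isotropic odd $\gamma$.

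The main obstacle is Step~1. Specifically, I need the Levendorskii--Soibelman property and the identification of good words with dominant Lyndon words in the super $BCD$ setting, where the exceptional costandard factorizations in $C$- and $D$-types and the higher-order Serre relations~\eqref{eq:q-osp-serre-B}--\eqref{eq:q-osp-serre-D} intervene. I would not redo this by hand. Instead I would invoke~\cite[Theorem 5.16]{ht1}, which relies on~\cite{chw}, and only verify that our order~\eqref{eq:alphabet-order} and normalizations match theirs.
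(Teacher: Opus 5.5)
The paper gives no argument of its own for this theorem. It imports it verbatim from \cite[Theorem~5.16]{ht1}, which rests on the Lyndon-word machinery of \cite{chw}. Your final move, invoking that result after matching the order~\eqref{eq:alphabet-order} and normalizations, is therefore exactly the paper's route, and your Steps~1 and~3 sketch what lies behind it reasonably. The cited theorem already contains part~(b), so Step~2 is only needed if you want an independent proof of orthogonality. As written, that step contains an error.

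\textbf{The legs in your coproduct formula are swapped.} With $\Delta(e_i)=q^{h_i}\otimes e_i+e_i\otimes 1$, already in type $A$ one gets
\[
  \Delta(e_{\gamma_{13}}) = q^{h_1}q^{h_2}\otimes e_{\gamma_{13}} + e_{\gamma_{13}}\otimes 1 + \big(1-q^{2(\alpha_1,\alpha_2)}\big)\, e_1 q^{h_2}\otimes e_2 \,,
\]
because the $e_2\otimes e_1$ contributions cancel. So the cross terms carry the \emph{smaller} root $\gamma_{12}\prec\gamma_{13}$ in the first leg and the larger root in the second leg. This is the same orientation as in~\eqref{eq:RTT-Delta-ef}; only the placement of the Cartan factors differs.

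The peeling step depends on this orientation. The minimal root $\alpha$ sits at the right end of $\prod^{\leftarrow}$, so $(F'f_\alpha,E)_{\DrJ}=(F'\otimes f_\alpha,\Delta(E))_{\DrJ}$ probes the second leg. You therefore need every second-leg degree arising from $\Delta(e_\gamma)$, $\gamma\succeq\alpha$, to be $0$ or a sum of roots $\succeq\gamma$. With your formula the second leg can carry degrees $\gamma'\prec\gamma$, possibly $\preceq\alpha$. Then the claim that a single $q^{h_\alpha}\otimes e_\alpha$ is the only contribution fails. Swapping the legs repairs this, but two further points remain.

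\begin{itemize}
\item \textbf{The triangularity is a genuine input.} It is not a formal consequence of the costandard recursion plus convexity. The unwanted cross terms cancel only because of $q$-commutation relations among root vectors, e.g.\ $[\![e_{\gamma_{ik}},e_j]\!]=0$ for $k<j$ in type $A$, plus the higher Serre relations in types $C$ and $D$. Moreover, the cross terms need not lie in $U_{\gamma'}\otimes U_{\gamma''}$ with $\gamma',\gamma''$ roots: for $\mathfrak{sp}_4$, $\Delta(e_{2\alpha_1+\alpha_2})$ contains a nonzero multiple of $e_1^2q^{h_2}\otimes e_2$. The correct statement is phrased via the subalgebras generated by root vectors $\prec\gamma$ and $\succ\gamma$. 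This is a Levendorskii--Soibelman-type fact that you must take from \cite{ht1,chw} or prove.

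\item \textbf{You must use the Lyndon order.} The convexity you invoke holds for the Lyndon order. Given the costandard factorization $\gamma_{ii'}\mapsto(\gamma_{is},\gamma_{is'})$, that order places $\gamma_{is}\prec\gamma_{ii'}\prec\gamma_{is'}$. The literal lexicographic order on pairs $(i,j)$ is not convex once such $\gamma_{ii'}$ occur: for $\mathfrak{sp}_4$ it gives $\alpha_1\prec\alpha_1+\alpha_2\prec 2\alpha_1+\alpha_2\prec\alpha_2$. Your induction must be run with the Lyndon order.
\end{itemize}

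With these corrections, your $q$-factorial evaluation of $(f_\gamma^m,e_\gamma^m)_{\DrJ}$ and the linear-independence argument go through as you describe.
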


Using these dual PBW bases, the universal $R$-matrix $\fR$ in the completion of the tensor square of $\uqVe$ 
(cf.\ Remark~\ref{rem:h-adic-completion-avoidance}) factors exactly as it does in the $\gl(V)$ case: 
\begin{equation*}
  \fR = \fR_u \fR_s \,,\quad 
  \fR_s = q^{- C \otimes C} q^{-\sum_{i=1}^{s} (-1)^{\ol{i}} H_i \otimes H_i} \,, \quad
  \fR_u = \prod_{\gamma \in \bar{\Phi}^{+}}^{\leftarrow} 
  \left( \sum^{k\geq 0}_{k \leq 1 \text{ if } \gamma \in \bar{\Phi}_{\bbar{1}} \text{ isotropic}}
  \frac{e_\gamma^k \otimes f_\gamma^k}{(f_\gamma^k, e_\gamma^k)_{\DrJ}} \right) \,,
\end{equation*}
cf.~\cite[Theorem 5.18]{ht1}. 
We now evaluate $\fR$ on the \emph{first fundamental representation} $\varrho \colon \uqVe \to \End(V_{q})$ 
(see~\cite[Proposition 3.1]{ht1}), where $V_{q} \coloneqq \BC(q) \otimes_{\BC} V$ as before. The representation $\varrho$ 
is defined on the generators by $\varrho(e_i) = \sse_i$ and $\varrho(f_i) = \kappa_i\ssf_i$ for $1 \leq i \leq s$, where 
\begin{equation}\label{eq:kappa-const}
  \kappa_{i} = 
  \begin{cases}
    \frac{q+q^{-1}}{2} & \text{for $C$-type and } i=s \,, \\
    1 & \text{otherwise} \,,
  \end{cases} 
\end{equation}
while on the Cartan generators it is given by $\varrho(q^{\pm \Tilde{H}_i}) = q^{\pm (\ssH_i + \ID)}$ for $1 \leq i \leq s+1$,
where $\ID$ denotes the identity matrix and the exponentials of diagonal matrices are evaluated as in~\eqref{eq:q-D}.

Again, the evaluation of the semisimple part $\varrho^{\otimes 2}(\fR_{s})$ follows from a direct $\hbar$-adic 
computation (cf.~\eqref{eq:osp-epsilon-def} and~\eqref{eq:osp-epsilon-pairing}), while that of the unipotent part 
$\varrho^{\otimes 2}(\fR_{u})$ is derived in~\cite[Propositions 5.23, 5.28]{ht1}: 
\begin{align*}
  R_{s} &\coloneqq \varrho^{\otimes 2}(\fR_s) = 
    q^{-(\Tilde{\varepsilon}_{C},\Tilde{\varepsilon}_{C})} \sum_{1\leq i,j\leq N} 
    q^{-(\varepsilon_{i},\varepsilon_{j})} E_{ii} \otimes E_{jj}
    = \sum_{1\leq i,j\leq N} q^{-(\Tilde{\varepsilon}_{i},\Tilde{\varepsilon}_{j})} E_{ii} \otimes E_{jj}\,,\\
  R_{u} &\coloneqq \varrho^{\otimes 2}(\fR_u) \\ 
    &= \ID - 
    (q - q^{-1}) \sum_{i < j} (-1)^{\ol{j}} E_{ij} \otimes \Big( q^{(\varepsilon_{i},\varepsilon_{j})} E_{ji} - 
    (-1)^{\ol{j}(\ol{i}+\ol{j})} \vartheta_{i}\vartheta_{j} q^{(\rho, \varepsilon_{i} - \varepsilon_{j})} 
    q^{-(\varepsilon_{i},\varepsilon_{i})/2}q^{-(\varepsilon_{j},\varepsilon_{j})/2} E_{i'j'} \Big) \,,
\end{align*}
where 
  $\rho = \frac{1}{2} \sum_{\alpha \in \Phi_{\bbar{0}}^{+}} \alpha - \frac{1}{2} \sum_{\alpha \in \Phi_{\bbar{1}}^{+}} \alpha$ 
is the \emph{Weyl vector} of $\Phi$. We note that $(\rho, \alpha_i) = \sfrac{1}{2}(\alpha_i,\alpha_i)$ for all $1\leq i\leq s$ 
by~\cite[Proposition 1.33]{cw}. Combining these we get explicit formulas for $R \coloneqq \varrho^{\otimes 2}(\fR)$ and its 
inverse $R^{-1}$ in $\End(V_{q})^{\otimes 2}$, cf.~\cite[Remark~4.7]{ht1}: 
\begin{equation}\label{eq:osp-evaluated-R}
\begin{split}
  R &= q^{-(\Tilde{\varepsilon}_{C},\Tilde{\varepsilon}_{C})} \sum_{1\leq i,j\leq N} 
    q^{-(\varepsilon_{i},\varepsilon_{j})} E_{ii} \otimes E_{jj} \\
    &\quad - (q - q^{-1}) q^{-(\Tilde{\varepsilon}_{C},\Tilde{\varepsilon}_{C})} 
    \sum_{i < j} (-1)^{\ol{j}} E_{ij} \otimes \Big( E_{ji} - 
    (-1)^{\ol{j}(\ol{i}+\ol{j})} \vartheta_{i}\vartheta_{j} q^{(\rho, \varepsilon_{i} - \varepsilon_{j})} 
    q^{-(\varepsilon_{i},\varepsilon_{i})/2} q^{(\varepsilon_{j},\varepsilon_{j})/2} E_{i'j'} \Big) \,,\\
  R^{-1} 
    &= q^{(\Tilde{\varepsilon}_{C},\Tilde{\varepsilon}_{C})} \sum_{1\leq i,j\leq N} 
    q^{(\varepsilon_{i},\varepsilon_{j})} E_{ii} \otimes E_{jj} \\
    &\quad + (q - q^{-1}) q^{(\Tilde{\varepsilon}_{C},\Tilde{\varepsilon}_{C})} \sum_{i < j} (-1)^{\ol{j}} E_{ij} \otimes \Big( E_{ji} - 
    (-1)^{\ol{j}(\ol{i}+\ol{j})} \vartheta_{i}\vartheta_{j} q^{-(\rho, \varepsilon_{i} - \varepsilon_{j})} 
    q^{-(\varepsilon_{i},\varepsilon_{i})/2} q^{(\varepsilon_{j},\varepsilon_{j})/2} E_{i'j'} \Big) \,.
\end{split}
\end{equation}

\begin{Rem}\label{rem:osp-YBE}
We note that $\fR$ (as well as its evaluation $R$) satisfies the exact same Yang--Baxter equation, comultiplication properties, 
and braid relations as those detailed in Remark~\ref{rem:YBE} for the $\gl(V)$ case.
\end{Rem}


\subsection{The superalgebra \texorpdfstring{$U(R)$}{U(R)}}
\

Following the general linear case, we define the $\RLL$-realization $U(R)$ of the extended orthosymplectic quantum supergroup 
using the corresponding $R$-matrix~\eqref{eq:osp-evaluated-R}. The overarching algebraic structure is defined in exactly the 
same way as in the $\gl(V)$ setting, with the only difference being the specific choice of $R$.

\begin{Def}
The $\BC(q)$-superalgebra $U(R)$ is generated by the elements $\{l^{+}_{ij}, l^{-}_{ji}\}_{1 \leq i \leq j \leq N}$. 
Adopting the convention $l^{+}_{ij} = 0 = l^{-}_{ji}$ for $i>j$, we organize these generators into formal matrices 
$L^{\pm} \coloneqq \sum_{i,j=1}^{N} l^{\pm}_{ij} \otimes E_{ij}$. The superalgebra $U(R)$ is defined as the quotient of the 
free superalgebra $\CT \coloneqq \BC(q)\langle l^{+}_{ij}, l^{-}_{ji} \,|\, 1 \leq i \leq j \leq N \rangle$ by the diagonal 
relations~\eqref{eq:gl-rtt-diag-rel}, together with all coefficients in $\CT$ of the matrix equations~\eqref{eq:gl-rtt-rel-1} 
and~\eqref{eq:gl-rtt-rel-2} evaluated in $\CT \otimes \End(V_{q})^{\otimes 2}$. The $\BZ_{2}$-grading is 
compatible via~\eqref{eq:gosp-grading-compatible} with the $Q$-grading determined by
\begin{equation}\label{eq:osp-uR-Q-grading}
  \deg(l^{\pm}_{ij}) = \varepsilon_{i} - \varepsilon_{j} \,.
\end{equation}
\end{Def}

As the defining relations are homogeneous, we note that $U(R)$ is indeed a $Q$-graded superalgebra. Furthermore, the exact 
same formulas~\eqref{eq:gl-rtt-coalg} and~\eqref{eq:gl-rtt-antipode} endow $U(R)$ with the structure of a Hopf superalgebra.

\begin{Rem}\label{rem:osp-U(R) involution}
As follows from the explicit formula~\eqref{eq:osp-evaluated-R}, we have $(R_{12})^{\st} = R_{21}$, 
similarly to~\eqref{eq:gl-st-vs-op}. Thus, arguing exactly as in Remark~\ref{rem:U(R) involution}, we note that 
applying the full supertranspose to the defining relations of $U(R)$ yields an isomorphism of Hopf superalgebras 
\begin{equation}\label{eq:osp-transpose-iso}
  \omega_{R}\colon U(R)\iso U(R)^{\copp} \qquad \mathrm{with} \qquad L^{\pm} \mapsto (L^{\mp})^{\st} \,.
\end{equation}
This $\omega_{R}$ induces a superalgebra isomorphism between $U^{+}(R)$ and $U^{-}(R)$ introduced below.
\end{Rem}

Let $U^{+}(R)$ and $U^{-}(R)$ be the superalgebras generated respectively by $\{l^{+}_{ij}\}_{1 \leq i \leq j \leq N}$ and 
$\{l^{-}_{ji}\}_{1 \leq i \leq j \leq N}$, equipped with the $Q$-grading~\eqref{eq:osp-uR-Q-grading} and the compatible 
$\BZ_2$-grading, subject to the respective single-sign relations~\eqref{eq:gl-rtt-rel-1}. The same 
formulas~\eqref{eq:gl-rtt-coalg} endow both $U^{\pm}(R)$ with superbialgebra structures. As in the general linear setting, 
the superalgebra $U(R)$ can be realized as a quotient of the generalized double of the superbialgebra pair $(U^{+}(R), U^{-}(R))$.

\begin{Prop}\label{prop:osp-RTT-pairing_finite} 
(a) There exists a unique skew-pairing 
\begin{equation}\label{eq:osp-RTT-skew-pairing}
  (\cdot,\cdot)_{R} \colon U^{+}(R) \times U^{-}(R) \to \BC(q)
\end{equation}
satisfying the following property on the generators:
\begin{equation}\label{eq:osp-RTT-skew-pairing-gen}
  \sum_{1\leq i,j,k,l\leq N} (-1)^{(\ol{i}+\ol{j})(\ol{k}+\ol{l})} (l^{+}_{ij}, l^{-}_{kl})_{R} \, E_{ij} \otimes E_{kl} 
  = R \,.
\end{equation}

\noindent
(b) The superalgebra $U(R)$ is isomorphic to the quotient of the generalized double 
$\CD_{R} \coloneqq \CD(U^{+}(R),U^{-}(R))$ modulo the relations $l^{\pm}_{ii}l^{\mp}_{ii} = 1$ for all $1 \leq i \leq N$. 
\end{Prop}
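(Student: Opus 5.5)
The proof will follow that of Proposition~\ref{prop:RTT-pairing_finite} almost verbatim. The construction there used only three facts about $R$: its coefficients lie in the base field, it has $Q$-degree zero, and it satisfies the Yang--Baxter equation~\eqref{eq:YBE}. The degree-zero property is visible from~\eqref{eq:osp-evaluated-R}, since $\deg E_{ij}+\deg E_{i'j'}$ vanishes in $\fh_{\fgosp}^*$ for the $E_{ij}\otimes E_{i'j'}$ terms because $\varepsilon_{i'}=-\varepsilon_i$. The Yang--Baxter equation holds by Remark~\ref{rem:osp-YBE}. So nothing specific to the orthosymplectic $R$-matrix beyond these facts should be needed.

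For part~(a), I introduce the supercoalgebras $W^{\pm}$ spanned by the $l^{+}_{ij}$ and the $l^{-}_{ji}$, with coproduct~\eqref{eq:gl-rtt-coalg}. I define $\sigma\colon W^{+}\times W^{-}\to\BC(q)$ by the matrix identity $\sigma(L^{+}_{(1),1}L^{-}_{(2),2})=R$. One point needs checking: this must be consistent with the convention $l^{+}_{ij}=0=l^{-}_{ji}$ for $i>j$. That is, the $E_{ij}\otimes E_{kl}$-coefficient of $R$ must vanish unless $i\le j$ and $k\ge l$. This holds for~\eqref{eq:osp-evaluated-R}: the terms are $E_{ij}\otimes E_{ji}$ and $E_{ij}\otimes E_{i'j'}$ with $i<j$, and $i<j$ forces $j'<i'$.

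Next, the universal property of $T(W^{+})$ gives a superalgebra map $T(W^{+})\to (W^{-})^{*}$. Dualizing and using the universal property of $T(W^{-})$ then yields a skew-pairing $T(W^{+})\times T(W^{-})\to\BC(q)$, just as in~\eqref{eq:RTT pairing extension 2}. This argument is formal and independent of $R$.

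The substantive step is descending this pairing to $U^{+}(R)\times U^{-}(R)$. Using~\eqref{eq:skew pairing structural property matrix}, I compute
\[
\sigma(R_{12}L^{+}_{(1),1}L^{+}_{(1),2}L^{-}_{(2),3})=R_{12}R_{13}R_{23}
\quad\text{and}\quad
\sigma(L^{+}_{(1),2}L^{+}_{(1),1}R_{12}L^{-}_{(2),3})=R_{23}R_{13}R_{12}.
\]
These agree by Yang--Baxter, and the analogous computation with two copies of $L^{-}$ gives the same result. By the computation~\eqref{eq:rtt rel bialgebra ideal}, the ideals generated by the single-sign relations are biideals. Hence the relators pair trivially with all of $T(W^{\mp})$, so they lie in the kernels. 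Uniqueness is immediate, since the generators determine the pairing.

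For part~(b), I describe $\CD_R$ as the quotient of $U^{+}(R)*U^{-}(R)$ by the cross-relations~\eqref{eq:gl-DJ-cross-rel}, following Proposition~\ref{prop:gen double}. Using the $\pm Q^{+}$-gradings together with Proposition~\ref{prop:double gen rel} and Remark~\ref{rem:reduced_generators}, the ideal is generated by the elements $u_{l^{+}_{ij},l^{-}_{kl}}$. The same sign manipulation $(\star)$, which relies only on degree-zero of the pairing (Remark~\ref{rem:pairing-degree-zero}), packages these elements into the single matrix $R_{12}L^{+}_1L^{-}_2-L^{-}_2L^{+}_1R_{12}$. Quotienting further by $l^{\pm}_{ii}l^{\mp}_{ii}-1$ then gives $U(R)$.

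I expect the main obstacle to be confirming that the gradings and sign bookkeeping carry over. In $CD$-types the weight $\Tilde{\varepsilon}_C$ enters, and the parity map is~\eqref{eq:gosp-grading-compatible}. Degree-zero of the pairing must be verified with respect to this $Q$-grading: $\deg l^{\pm}_{ij}=\varepsilon_i-\varepsilon_j$, where $\varepsilon_i$ here means the restricted functional~\eqref{eq:osp-epsilon-def}. I would also check that the positivity used in the induction of Proposition~\ref{prop:double gen rel} still holds, i.e.\ that $\deg l^{+}_{ij}\in Q^{+}$ for $i\le j$, which follows from the description of $\Phi^{+}$.
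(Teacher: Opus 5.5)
Your proposal is correct and takes the paper's route. The paper also calls the proof identical to that of Proposition~\ref{prop:RTT-pairing_finite}, driven by the Yang--Baxter equation. Its one new observation is exactly your key check: the pairing \eqref{eq:osp-RTT-skew-pairing-gen} has $Q$-degree zero, hence $\BZ_2$-degree zero, which is what the sign manipulation $(\star)$ needs.

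One of your side verifications is false, though this does not damage the proof. In $D$-type one has $s'=s+1$, so
$\deg l^{+}_{s,s+1}=\varepsilon_s-\varepsilon_{s'}=2\varepsilon_s=\alpha_s-\alpha_{s-1}\notin Q^{+}$.
This weight is not a root, since $\ol{s}=\bbar{0}$, so it does not follow from the description of $\Phi^{+}$. Moreover, $l^{+}_{s,s+1}\neq 0$ in $U^{+}(R)$, because the defining relations of $U^{+}(R)$ are homogeneous quadratic in the generators. It vanishes only in $U(R)$, via \eqref{eq:vanishing-D}, once the diagonal generators become invertible. So $U^{\pm}(R)$ is not $\pm Q^{+}$-graded in $D$-type. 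The paper's ``completely analogous'' glosses over the same point.

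The conclusion survives without any positivity. Proposition~\ref{prop:double gen rel} reduces the ideal $\CI$ to the elements $u_{l^{+}_{ij},l^{-}_{kl}}$ using only that the generators span subcoalgebras $W^{\pm}$. Argue by induction on word length in two stages:
\begin{enumerate}
\item First fix a generator in the $U^{+}(R)$-slot and induct on the word length of the $U^{-}(R)$-argument, using the second formula of Proposition~\ref{prop:double gen rel}.
\item Then induct on the word length of the $U^{+}(R)$-argument, using the first formula.
\end{enumerate}
Replace your appeal to the $\pm Q^{+}$-gradings with this induction.
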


We omit the proof of Proposition~\ref{prop:osp-RTT-pairing_finite}, as it relies entirely on the structural compatibilities 
governed by the Yang--Baxter equation (cf. Remark~\ref{rem:osp-YBE}) and is thus completely analogous to that of 
Proposition~\ref{prop:RTT-pairing_finite}. We note that 
\begin{equation*}
  (l^{+}_{ij}, l^{-}_{kl})_{R} \ne 0 \iff 
  (i,k)=(j,l) \ \ \mathrm{or} \ \  (i,k)=(l,j) \ \ \mathrm{or} \ \ k=i' \,\&\, l=j', 
\end{equation*}  
which in view of~\eqref{eq:osp-uR-Q-grading} implies that the skew-pairing~\eqref{eq:osp-RTT-skew-pairing} is of $Q$-degree zero, 
and consequently of $\BZ_2$-degree zero. The latter is crucial for $(\star)$ used in the proof of 
Proposition~\ref{prop:RTT-pairing_finite}.

\begin{Rem}
As in Remark~\ref{rem:gl-rtt-convention}, the evaluation~\eqref{eq:osp-RTT-skew-pairing-gen} is compatible with 
the convention $l^{+}_{ij} = 0 = l^{-}_{ji}$ for $i>j$, as evident from the explicit formula~\eqref{eq:osp-evaluated-R}.
\end{Rem}

\begin{Rem}\label{rem:osp-RTT double full Cartan}
Following Remark~\ref{rem:RTT double full Cartan}, let $U^{\geq}(R)$ and $U^{\leq}(R)$ denote the Hopf subalgebras of $U(R)$ 
generated by $\{l^{+}_{ij}\}_{1 \leq i \leq j \leq N} \cup \{l^{-}_{kk}\}_{1 \leq k \leq N}$ and 
$\{l^{-}_{ji}\}_{1 \leq i \leq j \leq N} \cup \{l^{+}_{kk}\}_{1 \leq k \leq N}$, respectively. 
The skew-pairing~\eqref{eq:osp-RTT-skew-pairing} extends uniquely to a skew-pairing $U^{\geq}(R) \times U^{\leq}(R) \to \BC(q)$. 
Thus, the generalized double construction of $U(R)$ can equivalently be carried out using these Hopf subalgebras. Furthermore, 
the supertranspose isomorphism $\omega_{R}$ from~\eqref{eq:osp-transpose-iso} restricts to a superalgebra isomorphism between 
$U^{\geq}(R)$ and $U^{\leq}(R)$.
\end{Rem}

\begin{Rem}\label{rem:osp-RTT pairing order}
As in Remark~\ref{rem:RTT pairing order}, the generalized double construction for $U(R)$ can equivalently be carried out by taking 
the subbialgebras in the opposite order. This utilizes the transposed inverse of $(-,-)_{R}$:
\begin{equation*}
  \wt{\sigma}=(-,-)_{\wt{R}} \colon U^{-}(R) \times U^{+}(R) \to \BC(q) \,.
\end{equation*}
By direct computation, its evaluation on the generators is given by the matrix 
formula~\eqref{eq:gl-RTT-skew-pairing-opposite}. As in Remark~\ref{rem:osp-RTT double full Cartan}, this pairing uniquely extends 
to a skew-pairing $(-,-)_{\wt{R}} \colon U^{\leq}(R) \times U^{\geq}(R) \to \BC(q)$.
\end{Rem}

Finally, the twisting procedure used to bypass the emergence of Koszul signs during formal matrix multiplication carries 
over verbatim, yielding the analogue of Proposition~\ref{prop:gl-twisted-rtt}:

\begin{Prop}\label{prop:osp-twisted-rtt}
Let $\zeta \colon \BZ_{2} \times \BZ_{2} \to \BC(q)^{\times}$ be the normalized $2$-cocycle given by 
$\zeta(\bbar{i},\bbar{j}) \coloneqq (-1)^{\bbar{i}\,\bbar{j}}$. Then, the assignment $l^{\pm}_{ij} \mapsto l^{\pm}_{ij}$ 
for all $1 \leq i \leq j \leq N$ gives rise to a superalgebra isomorphism $\URtw \iso U(R)^{\zeta}$.
\end{Prop}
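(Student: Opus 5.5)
The plan is to repeat the argument of Proposition~\ref{prop:gl-twisted-rtt} verbatim; only the matrix $R$ has changed, and that proof never used the explicit form of $R$. First I will check that $\zeta(\bbar{i},\bbar{j}) = (-1)^{\bbar{i}\,\bbar{j}}$ is a normalized $2$-cocycle on $\BZ_2$. This is immediate because $\zeta$ is a bicharacter, so the four factors in~\eqref{eq:2-cocycle} cancel by bilinearity, and $\zeta(g,0)=1=\zeta(0,g)$. Hence $U(R)^{\zeta}$ is a well-defined superalgebra with the same underlying $\BZ_2$-graded space and the same generators. Let $*$ and $*'$ denote the products in $\CT^{\otimes r}\otimes^{\beta}\End(V_q)^{\otimes 2}$ and $\CT^{\otimes r}\otimes^{\beta'}\End(V_q)^{\otimes 2}$.

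The key observation concerns the osp $R$-matrix~\eqref{eq:osp-evaluated-R}. Its entries are scalars in $\BC(q)$, and every term is a sum of elementary tensors $E_{ij}\otimes E_{kl}$ with total parity $\ol{i}+\ol{j}+\ol{k}+\ol{l}=\bbar{0}$. For the $E_{ij}\otimes E_{i'j'}$ terms this uses the standing assumption $\ol{i}=\ol{i'}$. Consequently $R_{12}$ has $G$-degree $0$ in $G=\BZ_2\times\BZ_2$, because its $\CT$-component is trivial and its matrix component is even. So multiplication by $R_{12}$ is the same under $*$ and $*'$, exactly as in the $\gl$ case.

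Next I write each relation~\eqref{eq:gl-rtt-rel-1}, \eqref{eq:gl-rtt-rel-2} of $U(R)$ as $R_{12}\,\mu_{\CT}(L^{\nu}_{(1),1}*L^{\eta}_{(2),2}) = \mu_{\CT}(L^{\eta}_{(2),2}*L^{\nu}_{(1),1})R_{12}$ and substitute $\mu_{\CT}=\mu^{\zeta}_{\CT}\circ (J^{\zeta}_{\CT,\CT})^{-1}$. The component computation from the proof of Proposition~\ref{prop:gl-twisted-rtt} applies unchanged. The Koszul sign $(-1)^{(\ol{i}+\ol{j})(\ol{k}+\ol{l})}$ produced by $\beta$ is exactly cancelled by $\zeta(|l^{\nu}_{ij}|,|l^{\eta}_{kl}|)^{-1}$, since $|l^{\nu}_{ij}|=\ol{i}+\ol{j}$ by~\eqref{eq:osp-uR-Q-grading} and~\eqref{eq:gosp-grading-compatible}. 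This gives $(J^{\zeta})^{-1}(L^{\nu}_{(1),1}*L^{\eta}_{(2),2}) = L^{\nu}_{(1),1}*'L^{\eta}_{(2),2}$, and likewise for the reversed product. Therefore the defining relations of $U(R)^{\zeta}$, written through $\mu^{\zeta}_{\CT}$, are literally the relations of $\URtw$.

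The diagonal relations~\eqref{eq:gl-rtt-diag-rel} also match, because $l^{\pm}_{ii}$ are even and $\zeta(\bbar0,\bbar0)=1$. The identity assignment on generators therefore induces mutually inverse superalgebra morphisms. I expect no real obstacle here. The only point that genuinely needs checking, rather than copying, is the parity-zero property of $R$: it requires the extra $E_{i'j'}$ terms to be even, which holds because $\ol{i}=\ol{i'}$.
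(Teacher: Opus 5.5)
Your proposal is correct and matches the paper, which simply states that the proof of Proposition~\ref{prop:gl-twisted-rtt} carries over verbatim. Your explicit check that every term of the orthosymplectic $R$ is even, including the extra $E_{ij}\otimes E_{i'j'}$ terms via $\ol{i}=\ol{i'}$, is exactly the one osp-specific point needing verification, since it guarantees that multiplication by $R_{12}$ is the same under $\beta$ and $\beta'$; there is one cosmetic slip, copied from the paper, in that the right-hand side should read $\mu_{\CT}(L^{\eta}_{(1),2}*L^{\nu}_{(2),1})$ rather than $\mu_{\CT}(L^{\eta}_{(2),2}*L^{\nu}_{(1),1})$ (the latter returns $L^{\nu}_{1}L^{\eta}_{2}$), and the same sign cancellation under $(J^{\zeta}_{\CT,\CT})^{-1}$ holds for the corrected expression.
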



\subsection{Gauss decomposition and twisted identities: orthosymplectic case}\label{ssec:osp-Gauss}
\

As in Subsection~\ref{ssec:gl-Gauss}, we shall assume that all algebraic manipulations occur within the twisted superalgebra 
$\URtw$ or the tensor product $\URtw^{\otimes r} \otimes^{\beta'} \End(V_{q})^{\otimes s}$ (for $r,s \geq 1$), rather than 
in their untwisted counterparts. Again, the elements in $\URtw^{\otimes r} \otimes^{\beta'} \End(V_{q})^{\otimes s}$ will be 
considered as formal matrices~\eqref{eq:gl-formal-matrix}, following the standard matrix multiplication~\eqref{eq:gl-matrix-mult}. 
Consequently, $L^{+}$ and $L^{-}$ are genuinely upper and lower triangular matrices, respectively, and 
using~\eqref{eq:gl-rtt-diag-rel} we obtain the exact same \emph{Gauss decompositions}~\eqref{eq:gl-Gauss-L}.

Similarly to the general linear setup, we define the elements $\{\bfe_{ij}, \bff_{ji}\}_{1 \leq i < j \leq N}$ in $\URtw$ via  
the exact same formulas \eqref{eq:RTT ef gen}. We also introduce $q^{\pm \Tilde{\bfH}_k} \coloneqq l_{kk}^{\pm}$ in $\URtw$ for 
$1 \leq k \leq N$ and define $q^{\pm \Tilde{\bfH}_{ij}} \coloneqq q^{\pm \Tilde{\bfH}_i} (q^{\pm \Tilde{\bfH}_j})^{-1}$ for all 
$1 \leq i < j \leq N$.

By evaluating the specific matrix components of the defining relations~\eqref{eq:gl-rtt-rel-1} and~\eqref{eq:gl-rtt-rel-2}, 
we deduce a set of identities governing the elements $\bfe_{ij}$, $\bff_{ji}$, and $q^{\pm \Tilde{\bfH}_{k}}$ within $\URtw$. 
As before, $[-,-]^{\zeta}$ and $[\![-,-]\!]^{\zeta}$ denote the supercommutator and the $q$-supercommutator 
(cf.~\eqref{eq:super commutator} and~\eqref{eq:q-gl-superbracket}) of elements in $\URtw$.

We start with type-independent identities, the first three of which are derived exactly as in Subsection~\ref{ssec:gl-Gauss}:

\begin{table}[H]
\centering
\begin{tabularx}{\textwidth}{|l@{\hskip 4pt}c@{\hskip 10pt}Xr|}
    \hline
    Matrix Component    & Conditions   & Identity  &   \\
    \hline
    $E_{ii} \otimes E_{jj}$
        & $1 \leq i,j \leq N$
        & $[q^{\Tilde{\bfH}_{i}},q^{\Tilde{\bfH}_{j}}]^{\zeta} = 0$
        & \AddLabel{eq:indep-twisted-1}  \\
    \quad
        & $1 \leq i \leq N$
        & $q^{\pm \Tilde{\bfH}_{i}}q^{\mp \Tilde{\bfH}_{i}} = 1$
        & \AddLabel{eq:indep-twisted-2}  \\
    $E_{aa} \otimes E_{ij}$
        & $1 \leq i < j \leq N$, $1 \leq a \leq N$
        & $q^{\Tilde{\bfH}_{a}} \bfe_{ij} q^{-\Tilde{\bfH}_{a}} = 
           q^{(\varepsilon_{a}, \varepsilon_{i}-\varepsilon_{j})} \bfe_{ij}$
        & \AddLabel{eq:indep-twisted-3}  \\
    $E_{ij} \otimes E_{ij}$
        & $1 \leq i < j \leq N$,\, $\varepsilon_{i} - \varepsilon_{j}$ odd isotropic
        & $[\![ \bfe_{ij}, \bfe_{ij} ]\!]^{\zeta} = 0$
        & \AddLabel{eq:indep-twisted-4}  \\
    \hline
\end{tabularx}

\caption{Type-independent identities in $\URtw$}
\label{table:osp-indep}
\end{table}

We now proceed to list the type-dependent identities:

\begin{xltabular}{\textwidth}{|l@{\hskip 4pt}c@{\hskip 10pt}Xr|}
    \hline
    Matrix Component    & Conditions   & Identity  &   \\
    \hline
    \endfirsthead
    
    \hline
    Matrix Component    & Conditions   & Identity  &   \\
    \hline
    \endhead
    
    \hline
    \endfoot
    
    \hline
    \caption{$B$-type, identities in $\URtw$ derived from $RL_{1}^{+}L_{2}^{+} = L_{2}^{+}L_{1}^{+}R$}
    \label{table:B1} \\
    \endlastfoot

    $E_{ij} \otimes E_{i'j'}$
        & $1 \leq i < j \leq s+1$
        & $q^{\Tilde{\bfH}_{i}}q^{\Tilde{\bfH}_{i'}} = q^{\Tilde{\bfH}_{j}}q^{\Tilde{\bfH}_{j'}}$
        & \AddLabel{eq:B-twisted-1}  \\
    $E_{i,i+1} \otimes E_{i'i'}$
        & $1 \leq i \leq s$
        & $\bfe_{i,i+1} = -(-1)^{\ol{i}(\ol{i}+\ol{i+1})} \vartheta_{i}\vartheta_{i+1} \bfe_{(i+1)'i'}$
        & \AddLabel{eq:B-twisted-2} \\
    $E_{ij} \otimes E_{j,j+1}$
        & $i < j < (i+1)'$
        & $\bfe_{i,j+1} = (-1)^{(\ol{i}+\ol{j})(\ol{j}+\ol{j+1})} [\![ \bfe_{ij}, \bfe_{j,j+1} ]\!]^{\zeta}$
        & \AddLabel{eq:B-twisted-3} \\
    $E_{i,(i+1)'} \otimes E_{(i+1)'i'}$
        & $1 \leq i \leq s$
        & \begin{tabular}{@{}l@{}}
            $\bfe_{ii'} = (-1)^{\ol{i}+\ol{i+1}}\bfe_{i,(i+1)'}\bfe_{(i+1)'i'}$\\
            \qquad\quad$-q^{-(\varepsilon_{i},\varepsilon_{i})}q^{-(\varepsilon_{i+1},\varepsilon_{i+1})} 
                \bfe_{(i+1)'i'}\bfe_{i,(i+1)'}$
        \end{tabular}
        & \AddLabel{eq:B-twisted-4}    \\
    $E_{(j+1)'j'} \otimes E_{j'i'}$
        & $i < j < (i+1)'$
        & $\bfe_{(j+1)'i'} = (-1)^{(\ol{i}+\ol{j})(\ol{j}+\ol{j+1})} [\![ \bfe_{(j+1)'j'}, \bfe_{j'i'} ]\!]^{\zeta}$
        & \AddLabel{eq:B-twisted-5} \\
    $E_{i,i+1} \otimes E_{i+1,i'}$
        & $1 \leq i \leq s$
        & \begin{tabular}{@{}l@{}}
            $\bfe_{ii'} = (-1)^{\ol{i}+\ol{i+1}}\bfe_{i,i+1}\bfe_{i+1,i'}$\\
            \qquad\quad$-q^{-(\varepsilon_{i},\varepsilon_{i})}q^{-(\varepsilon_{i+1},\varepsilon_{i+1})} \bfe_{i+1,i'}\bfe_{i,i+1}$
        \end{tabular}
        & \AddLabel{eq:B-twisted-6}    \\
    $E_{i,i+1} \otimes E_{j,j+1}$
        & $1 \leq i < j \leq s$, $j-i > 1$
        & $[\![ \bfe_{i,i+1}, \bfe_{j,j+1} ]\!]^{\zeta} = 0$
        & \AddLabel{eq:B-twisted-7} \\
    $E_{i,i+1} \otimes E_{i,i+1}$
        & $1 \leq i < s$, $|\bfe_{i,i+1}| = \bbar{1}$
        & $[\![ \bfe_{i,i+1}, \bfe_{i,i+1} ]\!]^{\zeta} = 0$
        & \AddLabel{eq:B-twisted-8} \\
    $E_{i,i+1} \otimes E_{i,i+2}$
        & $1 \leq i < s$
        & $[\![ \bfe_{i,i+1}, [\![ \bfe_{i,i+1}, \bfe_{i+1,i+2} ]\!]^{\zeta}]\!]^{\zeta} = 0$
        & \AddLabel{eq:B-twisted-9} \\
    $E_{i,i+1} \otimes E_{i-1,i+1}$
        & $1 < i < s$
        & $[\![[\![ \bfe_{i-1,i}, \bfe_{i,i+1} ]\!]^{\zeta}, \bfe_{i,i+1} ]\!]^{\zeta} = 0$
        & \AddLabel{eq:B-twisted-10} \\
    $E_{s,s+1} \otimes E_{s-1,s'}$
        &
        & $[\![[\![[\![ \bfe_{s-1,s}, \bfe_{s,s+1} ]\!]^{\zeta}, \bfe_{s,s+1} ]\!]^{\zeta}, \bfe_{s,s+1} ]\!]^{\zeta} = 0$
        & \AddLabel{eq:B-twisted-11} \\
    $E_{i,i+1} \otimes E_{i-1,i+2}$
        & $1 < i < s$
        & $[\![[\![[\![ \bfe_{i-1,i}, \bfe_{i,i+1} ]\!]^{\zeta}, \bfe_{i+1,i+2} ]\!]^{\zeta}, \bfe_{i,i+1} ]\!]^{\zeta} = 0$
        & \AddLabel{eq:B-twisted-12} \\
\end{xltabular}

\begin{xltabular}{\textwidth}{|l@{\hskip 4pt}c@{\hskip 10pt}Xr|}
    \hline
    Matrix Component    & Conditions   & Identity  &   \\
    \hline
    \endfirsthead
    
    \hline
    Matrix Component    & Conditions   & Identity  &   \\
    \hline
    \endhead
    
    \hline
    \endfoot
    
    \hline
    \caption{$C$-type, identities in $\URtw$ derived from $RL_{1}^{+}L_{2}^{+} = L_{2}^{+}L_{1}^{+}R$}
    \label{table:C1} \\
    \endlastfoot

    $E_{ij} \otimes E_{i'j'}$
        & $1 \leq i < j \leq s$
        & $q^{\Tilde{\bfH}_{i}}q^{\Tilde{\bfH}_{i'}} = q^{\Tilde{\bfH}_{j}}q^{\Tilde{\bfH}_{j'}}$
        & \AddLabel{eq:C-twisted-1}  \\
    $E_{i,i+1} \otimes E_{i'i'}$
        & $1 \leq i \leq s-1$
        & $\bfe_{i,i+1} = -(-1)^{\ol{i}(\ol{i}+\ol{i+1})} \vartheta_{i}\vartheta_{i+1} \bfe_{(i+1)'i'}$
        & \AddLabel{eq:C-twisted-2} \\
    $E_{ij} \otimes E_{j,j+1}$
        & $i < j < (i+1)'$, $j \neq s$
        & $\bfe_{i,j+1} = (-1)^{(\ol{i}+\ol{j})(\ol{j}+\ol{j+1})} [\![ \bfe_{ij}, \bfe_{j,j+1} ]\!]^{\zeta}$
        & \AddLabel{eq:C-twisted-3} \\
    $E_{ss'} \otimes E_{is}$
        & $1 \leq i \leq s-1$
        & $(1+q^{2})\bfe_{is'} = [\![ \bfe_{is}, \bfe_{ss'} ]\!]^{\zeta}$
        & \AddLabel{eq:C-twisted-4} \\
    $E_{i,(i+1)'} \otimes E_{(i+1)'i'}$
        & $1 \leq i \leq s-1$
        & \begin{tabular}{@{}l@{}}
            $\bfe_{ii'} = (-1)^{\ol{i}+\ol{i+1}}\bfe_{i,(i+1)'}\bfe_{(i+1)'i'}$\\
            \qquad\quad$-q^{-(\varepsilon_{i},\varepsilon_{i})}q^{-(\varepsilon_{i+1},\varepsilon_{i+1})} 
              \bfe_{(i+1)'i'}\bfe_{i,(i+1)'}$
        \end{tabular}
        & \AddLabel{eq:C-twisted-5}    \\
    $E_{(j+1)'j'} \otimes E_{j'i'}$
        & $i < j < (i+1)'$, $j \neq s$
        & $\bfe_{(j+1)'i'} = (-1)^{(\ol{i}+\ol{j})(\ol{j}+\ol{j+1})} [\![ \bfe_{(j+1)'j'}, \bfe_{j'i'} ]\!]^{\zeta}$
        & \AddLabel{eq:C-twisted-6} \\
    $E_{ss'} \otimes E_{s'i'}$
        & $1 \leq i \leq s-1$
        & $(1+q^{2})\bfe_{si'} = [\![ \bfe_{ss'}, \bfe_{s'i'} ]\!]^{\zeta}$
        & \AddLabel{eq:C-twisted-7} \\
    $E_{i,i+1} \otimes E_{i+1,i'}$
        & $1 \leq i \leq s-1$
        & \begin{tabular}{@{}l@{}}
            $\bfe_{ii'} = (-1)^{\ol{i}+\ol{i+1}}\bfe_{i,i+1}\bfe_{i+1,i'}$\\
            \qquad\quad$-q^{-(\varepsilon_{i},\varepsilon_{i})}q^{-(\varepsilon_{i+1},\varepsilon_{i+1})} \bfe_{i+1,i'}\bfe_{i,i+1}$
        \end{tabular}
        & \AddLabel{eq:C-twisted-8}    \\
    $E_{i,i+1} \otimes E_{j,j+1}$
        & $1 \leq i < j \leq s$, $j-i > 1$
        & $[\![ \bfe_{i,i+1}, \bfe_{j,j+1} ]\!]^{\zeta} = 0$
        & \AddLabel{eq:C-twisted-9} \\
    $E_{i,i+1} \otimes E_{i,i+1}$
        & $1 \leq i < s$, $|\bfe_{i,i+1}| = \bbar{1}$
        & $[\![ \bfe_{i,i+1}, \bfe_{i,i+1} ]\!]^{\zeta} = 0$
        & \AddLabel{eq:C-twisted-10} \\
    $E_{i,i+1} \otimes E_{i,i+2}$
        & $1 \leq i \leq s-2$
        & $[\![ \bfe_{i,i+1}, [\![ \bfe_{i,i+1}, \bfe_{i+1,i+2} ]\!]^{\zeta}]\!]^{\zeta} = 0$
        & \AddLabel{eq:C-twisted-11} \\
    $E_{i,i+1} \otimes E_{i-1,i+1}$
        & $1 < i \leq s$
        & $[\![[\![ \bfe_{i-1,i}, \bfe_{i,i+1} ]\!]^{\zeta}, \bfe_{i,i+1} ]\!]^{\zeta} = 0$
        & \AddLabel{eq:C-twisted-12}  \\
    $E_{i,i+1} \otimes E_{i-1,i+2}$
        & $1 < i < s-1$
        & $[\![[\![[\![ \bfe_{i-1,i}, \bfe_{i,i+1} ]\!]^{\zeta}, \bfe_{i+1,i+2} ]\!]^{\zeta}, \bfe_{i,i+1} ]\!]^{\zeta} = 0$
        & \AddLabel{eq:C-twisted-13} \\
    $E_{s-1,s} \otimes E_{s-3,(s-2)'}$
        &
        & \begin{tabular}{@{}l@{}}
            $[\![[\![[\![[\![[\![[\![ \bfe_{s-3,s-2}, \bfe_{s-2,s-1} ]\!]^{\zeta}, \bfe_{s-1,s} ]\!]^{\zeta},$\\
            \hspace{40pt}$\bfe_{ss'} ]\!]^{\zeta}, \bfe_{s-1,s} ]\!]^{\zeta}, \bfe_{s-2,s-1} ]\!]^{\zeta}, \bfe_{s-1,s} ]\!]^{\zeta} = 0$
        \end{tabular}
        & \AddLabel{eq:C-twisted-14} \\
\end{xltabular}

\begin{xltabular}{\textwidth}{|l@{\hskip 0pt}c@{\hskip 8pt}Xr|}
    \hline
    Matrix Component    & Conditions   & Identity  &   \\
    \hline
    \endfirsthead
    
    \hline
    Matrix Component    & Conditions   & Identity  &   \\
    \hline
    \endhead
    
    \hline
    \endfoot
    
    \hline
    \caption{$D$-type, identities in $\URtw$ derived from $RL_{1}^{+}L_{2}^{+} = L_{2}^{+}L_{1}^{+}R$}
    \label{table:D1} \\
    \endlastfoot

    $E_{ij} \otimes E_{i'j'}$
        & $1 \leq i < j \leq s$
        & $q^{\Tilde{\bfH}_{i}}q^{\Tilde{\bfH}_{i'}} = q^{\Tilde{\bfH}_{j}}q^{\Tilde{\bfH}_{j'}}$
        & \AddLabel{eq:D-twisted-1}  \\
    $E_{i,i+1} \otimes E_{i'i'}$
        & $1 \leq i \leq s$
        & $\bfe_{i,i+1} = -(-1)^{\ol{i}(\ol{i}+\ol{i+1})} \vartheta_{i}\vartheta_{i+1} \bfe_{(i+1)'i'}$
        & \AddLabel{eq:D-twisted-2} \\
    $E_{s-1,s'} \otimes E_{(s-1)'(s-1)'}$
        &
        & $\bfe_{s-1,s'} = -(-1)^{\ol{s-1}(\ol{s-1}+\ol{s})} \vartheta_{s-1}\vartheta_{s'} \bfe_{s,(s-1)'}$
        & \AddLabel{eq:D-twisted-3} \\
    $E_{ij} \otimes E_{j,j+1}$
        & $i < j < (i+1)'$, $j \neq s$
        & $\bfe_{i,j+1} = (-1)^{(\ol{i}+\ol{j})(\ol{j}+\ol{j+1})} [\![ \bfe_{ij}, \bfe_{j,j+1} ]\!]^{\zeta}$
        & \AddLabel{eq:D-twisted-4} \\
    $E_{i,s-1} \otimes E_{s-1,s'}$
        & $1 \leq i \leq s-2$
        & $\bfe_{is'} = (-1)^{(\ol{i}+\ol{s-1})(\ol{s-1}+\ol{s})} [\![ \bfe_{i,s-1}, \bfe_{s-1,s'} ]\!]^{\zeta}$
        & \AddLabel{eq:D-twisted-5} \\
    $E_{is} \otimes E_{s,(s-1)'}$
        & $1 \leq i \leq s-2$
        & $\bfe_{i,(s-1)'} = (-1)^{(\ol{i}+\ol{s})(\ol{s-1}+\ol{s})} [\![ \bfe_{is}, \bfe_{s,(s-1)'} ]\!]^{\zeta}$
        & \AddLabel{eq:D-twisted-6} \\
    $E_{i,(i+1)'} \otimes E_{(i+1)'i'}$
        & $1 \leq i \leq s-1$
        & \begin{tabular}{@{}l@{}}
            $\bfe_{ii'} = (-1)^{\ol{i}+\ol{i+1}}\bfe_{i,(i+1)'}\bfe_{(i+1)'i'}$\\
            \qquad\quad$-q^{-(\varepsilon_{i},\varepsilon_{i})}q^{-(\varepsilon_{i+1},\varepsilon_{i+1})} \bfe_{(i+1)'i'}\bfe_{i,(i+1)'}$
        \end{tabular}
        & \AddLabel{eq:D-twisted-7}    \\
    $E_{(j+1)'j'} \otimes E_{j'i'}$
        & $i < j < (i+1)'$, $j \neq s$
        & $\bfe_{(j+1)'i'} = (-1)^{(\ol{i}+\ol{j})(\ol{j}+\ol{j+1})} [\![ \bfe_{(j+1)'j'}, \bfe_{j'i'} ]\!]^{\zeta}$
        & \AddLabel{eq:D-twisted-8} \\
    $E_{s,(s-1)'} \otimes E_{(s-1)'i'}$
        & $1 \leq i \leq s-2$
        & $\bfe_{si'} = (-1)^{(\ol{i}+\ol{s-1})(\ol{s-1}+\ol{s})} [\![ \bfe_{s,(s-1)'}, \bfe_{(s-1)'i'} ]\!]^{\zeta}$
        & \AddLabel{eq:D-twisted-9} \\
    $E_{s-1,s'} \otimes E_{s'i'}$
        & $1 \leq i \leq s-2$
        & $\bfe_{s-1,i'} = (-1)^{(\ol{i}+\ol{s})(\ol{s-1}+\ol{s})} [\![ \bfe_{s-1,s'}, \bfe_{s'i'} ]\!]^{\zeta}$
        & \AddLabel{eq:D-twisted-10} \\
    $E_{i,i+1} \otimes E_{i+1,i'}$
        & $1 \leq i \leq s-1$
        & \begin{tabular}{@{}l@{}}
            $\bfe_{ii'} = (-1)^{\ol{i}+\ol{i+1}}\bfe_{i,i+1}\bfe_{i+1,i'}$\\
            \qquad\quad$-q^{-(\varepsilon_{i},\varepsilon_{i})}q^{-(\varepsilon_{i+1},\varepsilon_{i+1})} \bfe_{i+1,i'}\bfe_{i,i+1}$
        \end{tabular}
        & \AddLabel{eq:D-twisted-11}    \\
    $E_{i,i+1} \otimes E_{j,j+1}$
        & $1 \leq i < j < s$, $j-i > 1$
        & $[\![ \bfe_{i,i+1}, \bfe_{j,j+1} ]\!]^{\zeta} = 0$
        & \AddLabel{eq:D-twisted-12} \\
    $E_{i,i+1} \otimes E_{s-1,s'}$
        & $1 \leq i \leq s-3$
        & $[\![ \bfe_{i,i+1}, \bfe_{s-1,s'} ]\!]^{\zeta} = 0$
        & \AddLabel{eq:D-twisted-13} \\
    $E_{i,i+1} \otimes E_{i,i+1}$
        & $1 \leq i < s$, $|\bfe_{i,i+1}| = \bbar{1}$
        & $[\![ \bfe_{i,i+1}, \bfe_{i,i+1} ]\!]^{\zeta} = 0$
        & \AddLabel{eq:D-twisted-14} \\
    $E_{s-1,s'} \otimes E_{s-1,s'}$
        & $|\bfe_{s-1,s'}| = \bbar{1}$
        & $[\![ \bfe_{s-1,s'}, \bfe_{s-1,s'} ]\!]^{\zeta} = 0$
        & \AddLabel{eq:D-twisted-15} \\
    $E_{i,i+1} \otimes E_{i,i+2}$
        & $1 \leq i \leq s-2$
        & $[\![ \bfe_{i,i+1}, [\![ \bfe_{i,i+1}, \bfe_{i+1,i+2} ]\!]^{\zeta}]\!]^{\zeta} = 0$
        & \AddLabel{eq:D-twisted-16} \\
    $E_{s-2,s-1} \otimes E_{s-2,s'}$
        &
        & $[\![ \bfe_{s-2,s-1}, [\![ \bfe_{s-2,s-1}, \bfe_{s-1,s'} ]\!]^{\zeta}]\!]^{\zeta} = 0$
        & \AddLabel{eq:D-twisted-17} \\
    $E_{i,i+1} \otimes E_{i-1,i+1}$
        & $1 < i < s$
        & $[\![[\![ \bfe_{i-1,i}, \bfe_{i,i+1} ]\!]^{\zeta}, \bfe_{i,i+1} ]\!]^{\zeta} = 0$
        & \AddLabel{eq:D-twisted-18} \\
    $E_{s-1,s'} \otimes E_{s-2,s'}$
        &
        & $[\![[\![ \bfe_{s-2,s-1}, \bfe_{s-1,s'} ]\!]^{\zeta}, \bfe_{s-1,s'} ]\!]^{\zeta} = 0$
        & \AddLabel{eq:D-twisted-19} \\
    $E_{i,i+1} \otimes E_{i-1,i+2}$
        & $1 < i < s-1$
        & $[\![[\![[\![ \bfe_{i-1,i}, \bfe_{i,i+1} ]\!]^{\zeta}, \bfe_{i+1,i+2} ]\!]^{\zeta}, \bfe_{i,i+1} ]\!]^{\zeta} = 0$
        & \AddLabel{eq:D-twisted-20} \\
    $E_{s-2,s-1} \otimes E_{s-3,s'}$
        &
        & $[\![[\![[\![ \bfe_{s-3,s-2}, \bfe_{s-2,s-1} ]\!]^{\zeta}, \bfe_{s-1,s'} ]\!]^{\zeta}, \bfe_{s-2,s-1} ]\!]^{\zeta} = 0$
        & \AddLabel{eq:D-twisted-21} \\
\end{xltabular}

\begin{Rem}\label{rem:BCD-rel-nilpotent}
As in Remark~\ref{rem:A-rel-nilpotent}, while~(\ref{eq:B-twisted-8},~\ref{eq:C-twisted-10},~\ref{eq:D-twisted-14}--\ref{eq:D-twisted-15}) 
are special cases of~\eqref{eq:indep-twisted-4}, they are recorded separately as they correspond to the Serre relations. 
Also~\eqref{eq:indep-twisted-4} implies $\bfe_{ij}^{2} = 0$ for odd isotropic $\varepsilon_{i} - \varepsilon_{j}$.
\end{Rem}

\begin{Rem}
For each type, both the type-independent identities (Table~\ref{table:osp-indep}) and the type-dependent identities listed earlier 
for that same type are applied iteratively to establish the subsequent relations, cf.\ Remark~\ref{rem:table-higher-order-rel}.
\end{Rem}

Finally, we record the following exceptional identities for $C$- and $D$-types:

\begin{table}[H]
\centering
\begin{tabularx}{\textwidth}{|c|c@{\hskip 4pt}Xr|}
    \hline
    Type & Conditions & Identity & \\
    \hline
    \multirow{2}{*}{$C$-type}
        & 
        & $[\![ \bfe_{s-1,s}, [\![ \bfe_{s-1,s}, [\![ \bfe_{s-1,s}, \bfe_{ss'} ]\!]^{\zeta} ]\!]^{\zeta} ]\!]^{\zeta} = 0$
        & \AddLabel{eq:C-twisted-15} \\
        & $|\bfe_{s-2,s-1}|\,, |\bfe_{s-1,s}| = \bbar{1}$
        & $[\![ [\![ [\![ [\![ \bfe_{s-2,s-1}, \bfe_{s-1,s} ]\!]^{\zeta}, \bfe_{ss'} ]\!]^{\zeta}, 
           [\![ \bfe_{s-2,s-1}, \bfe_{s-1,s} ]\!]^{\zeta} ]\!]^{\zeta}, \bfe_{s-1,s} ]\!]^{\zeta} = 0$
        & \AddLabel{eq:C-twisted-16} \\
    \hline
    \multirow{2}{*}{$D$-type}
        & $|\bfe_{s-1,s}| = \bbar{0}$
        & $[\![ \bfe_{s-1,s}, \bfe_{s-1,s'} ]\!]^{\zeta} = 0$
        & \AddLabel{eq:D-twisted-22} \\
        & 
        & $[\![ [\![ \bfe_{s-2,s-1}, \bfe_{s-1,s} ]\!]^{\zeta}, \bfe_{s-1,s'} ]\!]^{\zeta} = 
           [\![ [\![ \bfe_{s-2,s-1}, \bfe_{s-1,s'} ]\!]^{\zeta}, \bfe_{s-1,s} ]\!]^{\zeta}$
        & \AddLabel{eq:D-twisted-23} \\
    \hline
\end{tabularx}
\caption{Exceptional identities in $\URtw$}
\label{table:CD_exceptional}
\end{table}

\noindent
The proofs of the exceptional identities are as follows:
\begin{itemize}[leftmargin=0.7cm]

\item 
Since relation~\eqref{eq:C-twisted-15} obviously holds when $|\bfe_{s-1,s}| = \bbar{1}$, due to $\bfe_{s-1,s}^2 = 0$ 
by~\eqref{eq:C-twisted-10}, we shall assume now that $|\bfe_{s-1,s}| = \bbar{0}$. Then identity~\eqref{eq:C-twisted-15} 
reduces via~\eqref{eq:C-twisted-4} to
  $[\![ \bfe_{s-1,s}, [\![ \bfe_{s-1,s}, \bfe_{s-1,s'} ]\!]^{\zeta}]\!]^{\zeta} = 0$, 
expanding to
\begin{equation}\label{eq:C-twisted-15-1}
  \bfe_{s-1,s}^{2}\bfe_{s-1,s'} - (1+q^{-2})\bfe_{s-1,s}\bfe_{s-1,s'}\bfe_{s-1,s} + q^{-2}\bfe_{s-1,s'}\bfe_{s-1,s}^{2} = 0 \,.
\end{equation}
Evaluating the $E_{s',(s-1)'} \otimes E_{s-1,(s-1)'}$-component of $RL_{1}^{+}L_{2}^{+} = L_{2}^{+}L_{1}^{+}R$ 
(together with~\eqref{eq:indep-twisted-3} and~\eqref{eq:C-twisted-2} for $i=s-1$) yields 
\begin{equation}\label{eq:C-twisted-15-2}
  [\bfe_{s-1,s}, \bfe_{s-1,(s-1)'}]^{\zeta} = 0\,,
\end{equation}
while~\eqref{eq:C-twisted-5} for $i = s-1$ reads 
  $\bfe_{s-1,(s-1)'} = \bfe_{s-1,s'}\bfe_{s',(s-1)'} - q^{2}\bfe_{s',(s-1)'}\bfe_{s-1,s'}$.
Substituting the latter into~\eqref{eq:C-twisted-15-2} and applying~\eqref{eq:C-twisted-2} for $i=s-1$ yields 
a scalar multiple of~\eqref{eq:C-twisted-15-1}, and identity~\eqref{eq:C-twisted-15} follows.

\item 
Using the iterative formulas~\eqref{eq:C-twisted-3} and~\eqref{eq:C-twisted-4}, identity~\eqref{eq:C-twisted-16} reduces to
  $[\![[\![ \bfe_{s-2,s'}, \bfe_{s-2,s} ]\!]^{\zeta}, \bfe_{s-1,s} ]\!]^{\zeta} = 0$,
which by~\eqref{eq:C-twisted-2} is equivalent to
  $[\![[\![ \bfe_{s-2,s'}, \bfe_{s-2,s} ]\!]^{\zeta}, \bfe_{s',(s-1)'} ]\!]^{\zeta} = 0$. 
This is further equivalent to
  $[\![[\![ \bfe_{s-2,s}, \bfe_{s-2,s'} ]\!]^{\zeta}, \bfe_{s',(s-1)'} ]\!]^{\zeta} = 0$
as the condition $|\bfe_{s-2,s-1}| = |\bfe_{s-1,s}| = \bbar{1}$ implies $|\bfe_{s-2,s}| = |\bfe_{s-2,s'}| = \bbar{0}$.
We note the following algebraic equality:
\begin{equation}\label{eq:C-twisted-16-1}
  [\![[\![ \bfe_{s-2,s}, \bfe_{s-2,s'} ]\!]^{\zeta}, \bfe_{s',(s-1)'} ]\!]^{\zeta}
  = [\![ \bfe_{s-2,s}, [\![ \bfe_{s-2,s'}, \bfe_{s',(s-1)'} ]\!]^{\zeta}]\!]^{\zeta}
  - [\![ \bfe_{s-2,s'}, [\![ \bfe_{s-2,s}, \bfe_{s',(s-1)'} ]\!]^{\zeta}]\!]^{\zeta}\,.
\end{equation}
The second term in~\eqref{eq:C-twisted-16-1} vanishes because $[\![ \bfe_{s-2,s}, \bfe_{s-1,s} ]\!]^{\zeta} = 0$ 
by~\eqref{eq:C-twisted-12} together with~\eqref{eq:C-twisted-2} and~\eqref{eq:C-twisted-3}. The first term reduces 
via~\eqref{eq:C-twisted-3} to a scalar multiple of $[\![ \bfe_{s-2,s}, \bfe_{s-2,(s-1)'} ]\!]^{\zeta}$, which can be shown 
to vanish by evaluating the $E_{s-2,s} \otimes E_{s-2,(s-1)'}$-component of $RL_{1}^{+}L_{2}^{+} = L_{2}^{+}L_{1}^{+}R$ 
(together with~\eqref{eq:indep-twisted-3}). This completes the verification of~\eqref{eq:C-twisted-16}.

\item 
Relation~\eqref{eq:D-twisted-22} follows by comparing the iterative formulas~\eqref{eq:D-twisted-7} 
and~\eqref{eq:D-twisted-11} at $i = s-1$, alongside with formulas~\eqref{eq:D-twisted-2} and~\eqref{eq:D-twisted-3} 
for $i=s-1$.

\item 
Applying formulas~\eqref{eq:D-twisted-3} and~\eqref{eq:D-twisted-2} at $i = s-1$ reduces relation~\eqref{eq:D-twisted-23} to
\begin{equation*}
  [\![[\![\bfe_{s-2,s-1},\bfe_{s-1,s}]\!]^{\zeta},\bfe_{s,(s-1)'}]\!]^{\zeta} = 
  [\![[\![\bfe_{s-2,s-1},\bfe_{s-1,s'}]\!]^{\zeta},\bfe_{s',(s-1)'}]\!]^{\zeta}\,.
\end{equation*}
Formulas~\eqref{eq:D-twisted-4}--\eqref{eq:D-twisted-6} imply that both sides equal 
$(-1)^{\ol{s-1}+\ol{s}}\bfe_{s-2,(s-1)'}$, thus completing the proof.

\end{itemize}


\subsection{Homomorphism theorem: orthosymplectic case}\label{ssec:osp-homomorphism thm}
\

The key result of this section is the identification between the Hopf superalgebras $\uqVe^{\CF}$ and~$U(R)$:

\begin{Thm}\label{thm:osp DrJ to RTT}
(a) The assignment 
\begin{equation}\label{eq:osp_xi_efh}
\begin{split}
  q^{\pm \Tilde{H}_{i}} &\mapsto q^{\pm \Tilde{\bfH}_{i}} \qquad \text{for \ } 1 \leq i \leq s+1 \,,\\
  e_{i} &\mapsto \begin{cases}
    \bfe_{i,i+1} & \text{if \ } 1 \leq i < s \text{ \ (in all $BCD$-types)}\,, \\
    \bfe_{s,s+1} & \text{if \ } i = s \text{ \ ($B$-type)}\,, \\
    \bfe_{ss'}/(1+q^{2}) & \text{if \ } i = s \text{ \ ($C$-type)}\,, \\
    \bfe_{s-1,s'} & \text{if \ } i = s \text{ \ ($D$-type)}\,,
  \end{cases}\\
  f_{i} &\mapsto \begin{cases}
    \bff_{i+1,i} & \text{if \ } 1 \leq i < s \text{ \ (in all $BCD$-types)}\,, \\
    \bff_{s+1,s} & \text{if \ } i = s \text{ \ ($B$-type)}\,, \\
    q\bff_{s's} & \text{if \ } i = s \text{ \ ($C$-type)}\,, \\
    \bff_{s',s-1} & \text{if \ } i = s \text{ \ ($D$-type)}\,.
  \end{cases}
\end{split}
\end{equation}
gives rise to a $Q$-graded Hopf superalgebra isomorphism $\xi \colon \uqVe^{\CF} \iso U(R)$.

\smallskip
\noindent
(b) Under the isomorphism $\xi$, the images of the quantum root vectors from Subsection~\ref{ssec:osp-universal-R} are given by: 
\begin{itemize}[leftmargin=0.7cm]

\item \textbf{$B$-type}

If $\gamma = \varepsilon_{i} - \varepsilon_{j}$ with $1 \leq i < j \leq s+1$, then
\begin{equation*}
  \xi(e_{\gamma}) = \bfe_{ij} \,, \qquad 
  \xi(f_{\gamma}) = \bff_{ji} \,.
\end{equation*}

If $\gamma = \varepsilon_{i} + \varepsilon_{j}$ with $1 \leq i < j \leq s$, then
\begin{align*}
  \xi(e_{\gamma}) = 
  \vartheta_{j}\vartheta_{s+1} \cdot \prod_{k=j}^{s} \Big(-(-1)^{\ol{k}(\ol{k}+\ol{k+1})}\Big) \cdot \bfe_{ij'} \,,\qquad
  \xi(f_{\gamma}) = 
  \vartheta_{j}\vartheta_{s+1} \cdot \prod_{k=j}^{s} \Big(-(-1)^{\ol{k}(\ol{k}+\ol{k+1})}\Big) \cdot \bff_{j'i} \,.
\end{align*}

\item \textbf{$C$-type}

If $\gamma = \varepsilon_{i} - \varepsilon_{j}$ with $1 \leq i < j \leq s$, then
\begin{equation*}
  \xi(e_{\gamma}) = \bfe_{ij} \,, \qquad
  \xi(f_{\gamma}) = \bff_{ji} \,.
\end{equation*}

If $\gamma = \varepsilon_{i} + \varepsilon_{j}$ with $1 \leq i < j \leq s$, then
\begin{align*}
  \xi(e_{\gamma}) = 
  -\vartheta_{j}\vartheta_{s} \cdot \prod_{k=j}^{s} \Big(-(-1)^{\ol{k}(\ol{k}+\ol{k+1})}\Big) \cdot \bfe_{ij'} \,,\qquad
  \xi(f_{\gamma}) = 
  -(q+q^{-1})\vartheta_{j}\vartheta_{s} \cdot \prod_{k=j}^{s} \Big(-(-1)^{\ol{k}(\ol{k}+\ol{k+1})}\Big) \cdot \bff_{j'i} \,.
\end{align*}

\item \textbf{$D$-type}

If $\gamma = \varepsilon_{i} - \varepsilon_{j}$ with $1 \leq i < j \leq s$, then
\begin{equation*}
  \xi(e_{\gamma}) = \bfe_{ij} \,, \qquad
  \xi(f_{\gamma}) = \bff_{ji} \,.
\end{equation*}

If $\gamma = \varepsilon_{i} + \varepsilon_{j}$ with $1 \leq i < j \leq s$, then
\begin{align*}
  \xi(e_{\gamma}) = 
  -\vartheta_{j}\vartheta_{s} \cdot \prod_{k=j}^{s} \Big(-(-1)^{\ol{k}(\ol{k}+\ol{k+1})}\Big) \cdot \bfe_{ij'} \,,\qquad
  \xi(f_{\gamma}) = 
  -\vartheta_{j}\vartheta_{s} \cdot \prod_{k=j}^{s} \Big(-(-1)^{\ol{k}(\ol{k}+\ol{k+1})}\Big) \cdot \bff_{j'i} \,.
\end{align*}

\end{itemize}

\noindent
(c) The isomorphism $\xi$ is compatible with the pairings introduced in Proposition~\ref{prop:osp-drinfeld-twist-iso}(c) 
and Remark~\ref{rem:osp-RTT double full Cartan}:
\begin{equation*}
  (\xi(x),\xi(y))_{R} = (x,y)^{\CF}_{\wt{\DrJ}} \qquad \text{for all} \quad 
  x \in U^{\geq}_q(\fgosp(V))^{\CF} ,\, y \in U^{\leq}_q(\fgosp(V))^{\CF} \,.
\end{equation*}
\end{Thm}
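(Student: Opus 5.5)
We follow the template of the proof of Theorem~\ref{thm:finite DrJ to RTT}, using the generalized double structures to avoid any faithfulness argument. First we show that \eqref{eq:osp_xi_efh} defines a superalgebra morphism $U^{\geq}_q(\fgosp(V))^{\CF}\to U^{\geq}(R)$. We untwist the identities of Tables~\ref{table:osp-indep}--\ref{table:CD_exceptional} via~\eqref{eq:untwisting}. Identities \eqref{eq:indep-twisted-1}--\eqref{eq:indep-twisted-3}, together with the relations $q^{\Tilde{\bfH}_i}q^{\Tilde{\bfH}_{i'}}=q^{\Tilde{\bfH}_j}q^{\Tilde{\bfH}_{j'}}$ (\eqref{eq:B-twisted-1}, \eqref{eq:C-twisted-1}, \eqref{eq:D-twisted-1}), give the Chevalley-type relations. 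The latter also ensure that only $q^{\pm\Tilde{\bfH}_k}$ with $k\le s+1$ are independent, matching $\Gamma_{\fgosp}$ and the centrality of $q^{C}$ (resp.\ $q^{2C}$). For the weights we use $\deg(\bfe_{ij})=\varepsilon_i-\varepsilon_j$ and $\varepsilon_{i'}=-\varepsilon_i$.

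The Serre relations not involving the index $s$ follow from the $A$-type rows of the tables. The type-dependent ones are obtained as follows: \eqref{eq:q-osp-serre-B} from \eqref{eq:B-twisted-9}--\eqref{eq:B-twisted-12}; \eqref{eq:q-osp-serre-C} from \eqref{eq:C-twisted-12}, \eqref{eq:C-twisted-14}, \eqref{eq:C-twisted-15}, \eqref{eq:C-twisted-16}, after rescaling $\bfe_{ss'}$ by $(1+q^2)^{-1}$; and \eqref{eq:q-osp-serre-D} from \eqref{eq:D-twisted-13}--\eqref{eq:D-twisted-23}. The coproduct compatibility is checked on generators exactly as in type $A$, using \eqref{eq:gl-rtt-coalg-component}: $\Delta(\bfe_{i,i+1})=1\otimes\bfe_{i,i+1}+\bfe_{i,i+1}\otimes q^{-\bfH}$ with the appropriate Cartan factor. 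The nonstandard generators $\bfe_{ss'}$ (type $C$) and $\bfe_{s-1,s'}$ (type $D$) need separate treatment. For these, extra terms $l^+_{s,k}\otimes l^+_{k,s'}$ with $s<k<s'$ appear in $\Delta(l^+_{s,s'})$; in type $C$ there are none, since $s'=s+1$. In type $D$, the terms of $\Delta(l^+_{s-1,s'})$ through $k=s$ vanish because $l^+_{s,s'}=0$, which follows from $\gamma_{ss'}=0\notin\Phi$ via the $E_{ss'}$-type components of the RLL relation. We expect this step to be routine but sign-heavy.

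Next comes part~(b). We induct along the costandard factorizations listed in Subsection~\ref{ssec:osp-universal-R}, using the untwisted iterative formulas. For $\gamma_{ij}$ with $j\le s+1$ (resp.\ $j\le s$) this uses \eqref{eq:B-twisted-3}, \eqref{eq:C-twisted-3} and \eqref{eq:D-twisted-4}. For $\varepsilon_i+\varepsilon_j=\gamma_{ij'}$ we pass through the reflected vectors using \eqref{eq:B-twisted-5}, \eqref{eq:C-twisted-4}, \eqref{eq:C-twisted-6}, \eqref{eq:C-twisted-7} and \eqref{eq:D-twisted-5}--\eqref{eq:D-twisted-10}, together with the symmetry $\bfe_{i,i+1}\propto\bfe_{(i+1)'i'}$ from \eqref{eq:B-twisted-2}. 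Each step past $k$ contributes the sign $-(-1)^{\ol{k}(\ol{k}+\ol{k+1})}\vartheta_k\vartheta_{k+1}$, and the product telescopes to the stated $\vartheta_j\vartheta_{s+1}$ or $\vartheta_j\vartheta_s$ prefactors. The $C$-type long roots $\gamma_{ii'}$ are handled by \eqref{eq:C-twisted-5}/\eqref{eq:C-twisted-8}. In particular $U^{\geq}(R)$ is generated by the images, so the map is surjective, because every $l^+_{ij}$ is recovered from some $\bfe_{ij}$ or $\bfe_{ij'}$, or is zero. The negative half is obtained by transport: we check on generators that $\omega_R^{-1}\circ\xi\circ\omega_{\DrJ}$ agrees with \eqref{eq:osp_xi_efh}, using Remarks~\ref{rem:osp-DrJ involution} and~\ref{rem:osp-U(R) involution}. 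The unusual constants $q(1+q^2)$ and $1/(q(1+q^2))$ in type $C$ must match the normalizations $\bfe_{ss'}/(1+q^2)$ and $q\bff_{s's}$. This transport also yields the $f$-part of (b), and it explains the extra factor $(q+q^{-1})$ that appears there.

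For part~(c), both pairings have $Q$-degree zero, so it suffices to compare them on generators. The Cartan pairings agree because $R_s$ encodes $q^{-(\Tilde{\varepsilon}_i,\Tilde{\varepsilon}_j)}$. For the pairs $(\bfe,\bff)$ we repeat the type-$A$ computation with $R$ from \eqref{eq:osp-evaluated-R}. The only new inputs are the $E_{i'j'}$-coefficient, which contributes for $(l^+_{ss'},l^-_{s's})$ in type $C$, and the $D$-type pair $(l^+_{s-1,s'},l^-_{s',s-1})$. The factors $\kappa_s$ and $(1+q^2)$ must cancel to give $(q-q^{-1})^{-1}$.

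Finally, $\xi$ is a surjective morphism of doubles compatible with the pairings. Non-degeneracy of $(-,-)^{\CF}_{\wt{\DrJ}}$ (Proposition~\ref{prop:osp-drinfeld-twist-iso}(c)) then forces injectivity on each Borel, so $\xi$ is an isomorphism of doubles. Identifying Cartan parts via Propositions~\ref{prop:osp-DJ-pairing_finite}(b) and~\ref{prop:osp-RTT-pairing_finite}(b) gives~(a). We expect the main obstacle to be bookkeeping: the type $C$ and $D$ signs and normalizations in the generator pairings and in the $\omega$-compatibility.
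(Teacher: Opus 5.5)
Your proposal follows the paper's own proof essentially step by step:
\begin{itemize}
\item untwist the tables to get a surjective Hopf morphism of positive Borels;
\item transport to the negative Borels via $\omega_{\DrJ}$ and $\omega_R$;
\item compare the pairings on generators;
\item obtain injectivity from non-degeneracy of $(-,-)^{\CF}_{\wt{\DrJ}}$.
\end{itemize}
The argument is sound, but a few details need correcting.

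\emph{The vanishing of $l^+_{ss'}$ in type $D$.} You write $\gamma_{ss'}=0$. In fact $\gamma_{ss'}=2\varepsilon_s\neq 0$; it is merely not a root because $\ol{s}=\bbar{0}$. Moreover $2\varepsilon_s=\alpha_s-\alpha_{s-1}\in Q$, so no degree argument is available. The vanishing $l^+_{ss'}=0$ comes from \eqref{eq:D-twisted-2} at $i=s$. Since $s'=s+1$ and $\vartheta_s\vartheta_{s'}=(-1)^{\ol{s}}=1$, that identity reads $\bfe_{ss'}=-\bfe_{ss'}$.

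\emph{Surjectivity.} Your justification (``every $l^+_{ij}$ is recovered from some $\bfe_{ij}$ or $\bfe_{ij'}$, or is zero'') misses the generators $l^+_{ii'}$ in types $B$ and $D$. The elements $\bfe_{ii'}$ there are not among the root vectors of part~(b), and no formula you cite produces them. They also do not vanish in general: in type $B$, \eqref{eq:B-twisted-6} and \eqref{eq:B-twisted-2} at $i=s$ make $\bfe_{ss'}$ a nonzero scalar multiple of $\bfe_{s,s+1}^{2}$. You need \eqref{eq:B-twisted-4} or \eqref{eq:B-twisted-6} in type $B$, and \eqref{eq:D-twisted-7} or \eqref{eq:D-twisted-11} in type $D$. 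The paper includes exactly these among its iterative formulas.

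\emph{A minor point on part~(c).} $\kappa_s$ plays no explicit role there; it only enters through the already-given $R$. The factor $q+q^{-1}$ comes from the two contributions to the $E_{ss'}\otimes E_{s's}$-coefficient of $R$. It is cancelled by the normalizations $\xi(e_s)=\bfe_{ss'}/(1+q^{2})$ and $\xi(f_s)=q\bff_{s's}$, which together contribute $q/(1+q^2)=(q+q^{-1})^{-1}$.
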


\begin{proof}
The proof is completely analogous to that of Theorem~\ref{thm:finite DrJ to RTT}. We first establish that the 
assignment~\eqref{eq:osp_xi_efh} defines a surjective Hopf superalgebra morphism 
$\xi\colon U_{q}^{\geq}(\fgosp(V))^{\CF} \to U^{\geq}(R)$. By untwisting the relations in 
Tables~\ref{table:osp-indep}--\ref{table:CD_exceptional} via~\eqref{eq:untwisting}, we deduce the following:
\begin{itemize}[leftmargin=0.7cm]

\item 
The Chevalley-type relations for $U_{q}^{\geq}(\fgosp(V))^{\CF}$ are recovered directly from 
identities~\eqref{eq:indep-twisted-1}--\eqref{eq:indep-twisted-3};
    
\item 
The standard Serre relations~\eqref{eq:q-gl-serre-rel-standard-1}--\eqref{eq:q-gl-serre-rel-standard-3} 
(for the appropriate indices) and the higher-order Serre relations~\eqref{eq:q-osp-serre-B}--\eqref{eq:q-osp-serre-D} for each 
respective type follow from identities~\eqref{eq:B-twisted-7}--\eqref{eq:B-twisted-12} for $B$-type; 
\eqref{eq:C-twisted-9}--\eqref{eq:C-twisted-14} along with~(\ref{eq:C-twisted-15},~\ref{eq:C-twisted-16}) for $C$-type; 
and \eqref{eq:D-twisted-12}--\eqref{eq:D-twisted-21} along with~(\ref{eq:D-twisted-22},~\ref{eq:D-twisted-23}) for $D$-type;
    
\item 
The surjectivity of $\xi$ restricted to the Cartan subalgebra follows from identities~\eqref{eq:B-twisted-1}, 
\eqref{eq:C-twisted-1}, and~\eqref{eq:D-twisted-1} for the respective types. The fact that all remaining generators 
$\bfe_{ij}$ ($i<j$) lie in the image is then guaranteed by the iterative formulas: 
\eqref{eq:B-twisted-2}--\eqref{eq:B-twisted-6} for $B$-type, \eqref{eq:C-twisted-2}--\eqref{eq:C-twisted-8} for $C$-type,  
and \eqref{eq:D-twisted-2}--\eqref{eq:D-twisted-11} for $D$-type. We also emphasize that for $D$-type, setting $i=s$ 
in~\eqref{eq:D-twisted-2}, one actually gets 
\begin{equation}\label{eq:vanishing-D}
  \bfe_{ss'} = 0 \,;
\end{equation} 

\item 
In conjunction with~\eqref{eq:quantum root vectors}, these iterative formulas simultaneously establish part~(b);

\item 
The fact that $\xi$ intertwines the respective comultiplications follows by the exact same computation as in the $\gl(V)$ 
setting. The only case deserving a separate treatment is the generator $\bfe_{s-1,s'}$ for $D$-type, in which case one uses 
$l_{ss'}^{+} = 0$, due to~\eqref{eq:vanishing-D}, to derive 
  $\Delta(l_{s-1,s'}^{+}) = l_{s-1,s-1}^{+} \otimes l_{s-1,s'}^{+} + l_{s-1,s'}^{+} \otimes l_{s's'}^{+}$.

\end{itemize}
This confirms that $\xi$ is a well-defined, surjective Hopf superalgebra morphism of the positive Borel subalgebras. 
The corresponding morphism for the negative Borel subalgebras $U_{q}^{\leq}(\fgosp(V))^{\CF} \to U^{\leq}(R)$ follows 
by utilizing the superalgebra isomorphisms $\omega_{\DrJ}$ and $\omega_{R}$ from Remarks~\ref{rem:osp-DrJ involution} 
and~\ref{rem:osp-U(R) involution}, precisely as executed in the $\gl(V)$ case.

The compatibility of the pairings stated in part (c) is verified by a direct evaluation on the generators. 
Since both pairings are of $Q$-degree zero (cf.~\eqref{eq:skew-pairing-degree-zero}), we have:
\begin{equation*}
  (\bfe_{ij},q^{\pm \Tilde{\bfH}_{k}})_{R} = 0
  = (q^{\pm \Tilde{\bfH}_{k}},\bff_{ji})_{R}\,,
\end{equation*}
for all $1 \leq i < j \leq N$, and 
\begin{equation*}
  (q^{\Tilde{\bfH}_{i}},q^{-\Tilde{\bfH}_{j}})_{R} = (l^{+}_{ii},l^{-}_{jj})_{R} \overset{\eqref{eq:osp-evaluated-R}}{=}
  q^{-(\Tilde{\varepsilon}_{C},\Tilde{\varepsilon}_{C})}q^{-(\varepsilon_{i},\varepsilon_{j})} = 
  q^{-(\Tilde{\varepsilon}_{i},\Tilde{\varepsilon}_{j})} = 
  (q^{\Tilde{H}_{i}},q^{-\Tilde{H}_{j}})^{\CF}_{\wt{\DrJ}} \,.
\end{equation*}

Furthermore, for any $1 \leq i < j \leq N$ and $1 \leq k < l \leq N$, expanding the pairing yields:
\begin{equation}\label{eq:pairing-eval}
\begin{split}
  ( l_{ii}^{-} l_{ij}^{+} \,,\, l_{lk}^{-} l_{kk}^{+} )_{R}
  &= ( l_{ii}^{-} \otimes l_{ij}^{+} \,,\, \Delta(l_{lk}^{-}) \Delta(l_{kk}^{+}) )_{R} 
   \overset{\eqref{eq:skew-pairing-degree-zero}}{=} 
   ( l_{ii}^{-} \otimes l_{ij}^{+} \,,\, l_{ll}^{-} l_{kk}^{+} \otimes l_{lk}^{-} l_{kk}^{+} )_{R}\\
  &= ( l_{ii}^{-} \,,\, l_{ll}^{-} l_{kk}^{+} )_{R} \, ( l_{ij}^{+} \,,\, l_{lk}^{-} l_{kk}^{+} )_{R}
   = q^{-(\Tilde{\varepsilon}_{i},\Tilde{\varepsilon}_{k}-\Tilde{\varepsilon}_{l})} \, 
     ( l_{ij}^{+} \,,\, l_{lk}^{-} l_{kk}^{+} )_{R}\\
  &\overset{\eqref{eq:skew-pairing-degree-zero}}{=} 
   q^{-(\Tilde{\varepsilon}_{i},\Tilde{\varepsilon}_{k}-\Tilde{\varepsilon}_{l})} \, 
   ( l_{ij}^{+} \otimes l_{ii}^{+} \,,\, l_{lk}^{-} \otimes l_{kk}^{+} )_{R}
   = q^{(\Tilde{\varepsilon}_{i},\Tilde{\varepsilon}_{l})} \, ( l_{ij}^{+} \,,\, l_{lk}^{-} )_{R}\,,
\end{split}
\end{equation}
which vanishes unless $\deg(l_{ij}^{+}) = -\deg(l_{lk}^{-})$. Therefore $(\xi(e_{i}),\xi(f_{j}))_{R} = 0$ for $i \neq j$ 
due to degree reasons, and it remains to evaluate the pairing for $i=j$. Thus, the proof of part~(c) is concluded by  
the following explicit type-dependent evaluations for all $1 \leq i \leq s$ using~\eqref{eq:pairing-eval}, 
cf.~\eqref{eq:osp-epsilon-pairing}:
\begin{itemize}[leftmargin=0.7cm]

\item \emph{$1 \leq i < s$ in $BCD$-types}
\begin{equation*}
  q^{(\Tilde{\varepsilon}_{i},\Tilde{\varepsilon}_{i+1})} ( l_{i,i+1}^{+} \,,\, l_{i+1,i}^{-} )_{R} 
  = -(-1)^{\ol{i+1}}(q-q^{-1})q^{-(\Tilde{\varepsilon}_{C},\Tilde{\varepsilon}_{C})} \cdot (-1)^{\ol{i}+\ol{i+1}} 
    q^{(\Tilde{\varepsilon}_{i},\Tilde{\varepsilon}_{i+1})}
  = -(-1)^{\ol{i}}(q-q^{-1}) \,,
\end{equation*}
and therefore
\begin{align*}
  (\xi(e_{i}),\xi(f_{i}))_{R} = (\bfe_{i,i+1}, \bff_{i+1,i})_{R}
  = -(-1)^{\ol{i}} (q-q^{-1})^{-2} ( l_{ii}^{-} l_{i,i+1}^{+} \,,\, l_{i+1,i}^{-} l_{ii}^{+} )_{R} 
  = (q-q^{-1})^{-1} \,.
\end{align*}

\item \emph{$i = s$ in $B$-type} 
\begin{equation*}
  q^{(\Tilde{\varepsilon}_{s},\Tilde{\varepsilon}_{s+1})} ( l_{s,s+1}^{+} \,,\, l_{s+1,s}^{-} )_{R} 
  = -(-1)^{\ol{s+1}}(q-q^{-1})q^{-(\Tilde{\varepsilon}_{C},\Tilde{\varepsilon}_{C})} \cdot (-1)^{\ol{s}+\ol{s+1}} 
    q^{(\Tilde{\varepsilon}_{s},\Tilde{\varepsilon}_{s+1})}
  = -(-1)^{\ol{s}}(q-q^{-1})\,,
\end{equation*}
and thus
\begin{align*}
  (\xi(e_{s}),\xi(f_{s}))_{R} = (\bfe_{s,s+1}, \bff_{s+1,s})_{R}
  = -(-1)^{\ol{s}} (q-q^{-1})^{-2} ( l_{ss}^{-} l_{s,s+1}^{+} \,,\, l_{s+1,s}^{-} l_{ss}^{+} )_{R} = (q-q^{-1})^{-1}\,.
\end{align*}

\item \emph{$i = s$ in $C$-type}
\begin{align*}
  q^{(\Tilde{\varepsilon}_{s},\Tilde{\varepsilon}_{s'})} ( l_{ss'}^{+} \,,\, l_{s's}^{-} )_{R} 
  &= -(q-q^{-1})q^{-(\Tilde{\varepsilon}_{C},\Tilde{\varepsilon}_{C})}
    \Big( (-1)^{\ol{s}} - q^{(\rho,2\varepsilon_{s})} \Big) \cdot q^{(\Tilde{\varepsilon}_{s},\Tilde{\varepsilon}_{s'})} \\
  &= -(q-q^{-1})q^{-(\varepsilon_{s},\varepsilon_{s})} \Big( (-1)^{\ol{s}} - q^{2(\varepsilon_{s},\varepsilon_{s})} \Big)
   = (q-q^{-1})(q+q^{-1})\,,
\end{align*}
and hence 
\begin{align*}
  (\xi(e_{s}),\xi(f_{s}))_{R} = (q+q^{-1})^{-1}(\bfe_{ss'}, \bff_{s's})_{R}
  = -(-1)^{\ol{s}} (q+q^{-1})^{-1}(q-q^{-1})^{-2} ( l_{ss}^{-} l_{ss'}^{+} \,,\, l_{s's}^{-} l_{ss}^{+} )_{R} = (q-q^{-1})^{-1}\,.
\end{align*}

\item \emph{$i = s$ in $D$-type}
\begin{equation*}
  q^{(\Tilde{\varepsilon}_{s-1},\Tilde{\varepsilon}_{s'})} ( l_{s-1,s'}^{+} \,,\, l_{s',s-1}^{-} )_{R} 
  = -(-1)^{\ol{s}}(q-q^{-1})q^{-(\Tilde{\varepsilon}_{C},\Tilde{\varepsilon}_{C})} \cdot (-1)^{\ol{s-1}+\ol{s}} 
    q^{(\Tilde{\varepsilon}_{s-1},\Tilde{\varepsilon}_{s'})}
  = -(-1)^{\ol{s-1}}(q-q^{-1})\,,
\end{equation*}
and therefore
\begin{align*}
  (\xi(e_{s}),\xi(f_{s}))_{R} &= (\bfe_{s-1,s+1}, \bff_{s+1,s-1})_{R} \\ 
  &= -(-1)^{\ol{s-1}+\ol{s}} (q-q^{-1})^{-2} ( l_{s-1,s-1}^{-} l_{s-1,s'}^{+} \,,\, l_{s',s-1}^{-} l_{s-1,s-1}^{+} )_{R} 
  = (q-q^{-1})^{-1}\,.
\end{align*}
   
\end{itemize}
We conclude that in all cases, the right-hand side evaluates precisely to $(q-q^{-1})^{-1}=(e_i, f_i)^{\CF}_{\wt{\DrJ}}$.

As in $\gl(V)$ case, $\xi$ naturally extends to a surjective Hopf superalgebra morphism between the doubles
\begin{equation*}
  \xi \colon \CD^{\CF}_{\DrJ} = U^\geq_{q}(\fgosp(V))^{\CF} \otimes U^\leq_{q}(\fgosp(V))^{\CF}
  \longrightarrow U^{\geq}(R) \otimes U^{\leq}(R) = \CD_{R} \,.
\end{equation*}
The non-degeneracy of the pairing $(-,-)^{\CF}_{\wt{\DrJ}}$ guarantees that $\xi \colon \CD^{\CF}_{\DrJ} \to \CD_{R}$ 
is injective on each Borel subalgebra, thereby establishing an isomorphism of doubles. Finally, the identification of 
the Cartan subalgebras yields the desired isomorphism $\xi \colon \uqVe^{\CF} \iso U(R)$, completing the proof of~(a).
\end{proof}


\subsection{Inverse morphism: orthosymplectic case}\label{ssec:inverse morphism osp}
\

Following the approach in the general linear setting (see Subsection~\ref{ssec:inverse morphism}), we shall now construct 
a similar morphism for the orthosymplectic quantum supergroups. The proof of the following proposition is completely analogous 
to that of Proposition~\ref{prop:gl-inverse-morphism}:

\begin{Prop}
The assignment
\begin{equation}\label{eq:DF morphism osp}
  L^{+} \mapsto (\Id \otimes \varrho)(\fR_{(21)}) \,,\qquad
  L^{-} \mapsto (\Id \otimes \varrho)(\fR^{-1}) \,,
\end{equation}
gives rise to a $Q$-graded Hopf superalgebra morphism $\phi^{\DF} \colon U(R)^{\copp} \to \uqVe$.
\end{Prop}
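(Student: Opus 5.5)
We follow the proof of Proposition~\ref{prop:gl-inverse-morphism} verbatim, replacing $\uqgl$ by $\uqVe$ and using the orthosymplectic universal $R$-matrix $\fR=\fR_u\fR_s$ of Subsection~\ref{ssec:osp-universal-R}. The required properties of $\fR$ are the Yang--Baxter equation and the comultiplication properties, which hold by Remark~\ref{rem:osp-YBE}; everything else is formal manipulation.

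\emph{Step 1: triangularity and the diagonal relations.} Since each $e_\gamma$ ($\gamma\in\bar\Phi^+$) has positive $Q$-degree, $\varrho(e_\gamma)$ is strictly upper triangular and $\varrho(f_\gamma)$ strictly lower triangular. Hence $(\Id\otimes\varrho)((\fR_u)_{(21)})$ is upper unitriangular and $(\Id\otimes\varrho)(\fR_u^{-1})$ is lower unitriangular. A direct $\hbar$-adic computation, using $\varrho(q^{\Tilde H_i})=q^{\ssH_i+\ID}$ together with \eqref{eq:osp-epsilon-def} and \eqref{eq:osp-epsilon-pairing}, shows that $(\Id\otimes\varrho)(\fR_s)$ is diagonal, with $i$-th entry the Cartan element $q^{-C}q^{-H_i}$ (understood through the $\Tilde H$'s). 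Its inverse is therefore the diagonal of the image of $L^-$. Because $\fR_s$ is symmetric under the flip, the diagonals of the images of $L^+$ and $L^-$ are mutually inverse, which gives \eqref{eq:gl-rtt-diag-rel}.

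\emph{Step 2: the RLL relations.} Applying $\tau_{(12)}\tau_{(23)}$ and inversions to \eqref{eq:YBE} produces the identities \eqref{eq:YBE variants}, together with $\fR_{(23)}\fR_{(21)}\fR^{-1}_{(13)}=\fR^{-1}_{(13)}\fR_{(21)}\fR_{(23)}$. We then apply $\Id\otimes\varrho^{\otimes 2}$, which sends $\fR_{(23)}$ to $R_{12}$ with $R$ as in \eqref{eq:osp-evaluated-R}. This yields \eqref{eq:gl-rtt-rel-1} and \eqref{eq:gl-rtt-rel-2} for the images.

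\emph{Step 3: coproducts.} From \eqref{eq:R comult} we get $(\Delta\otimes\Id)\fR_{(21)}=\fR_{(32)}\fR_{(31)}$ and $(\Delta\otimes\Id)\fR^{-1}=\fR^{-1}_{(23)}\fR^{-1}_{(13)}$. Applying $\Id^{\otimes2}\otimes\varrho$ shows that $\Delta$ of each image equals $\phi^{\DF}(L^\pm)_{(2)}\phi^{\DF}(L^\pm)_{(1)}$, i.e.\ the coproduct of $U(R)^{\copp}$. Counit compatibility follows from $(\epsilon\otimes\Id)\fR=1$, and the antipode is then automatic for a bialgebra map between Hopf superalgebras. $Q$-gradedness holds because $\fR$ has total degree zero and $\varrho$ preserves degrees.

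\emph{Main obstacle.} The only genuinely delicate point is the super sign bookkeeping in Steps 2 and 3. When leg-swapping operators such as $\tau_{(12)}$ act on $\fR\in\uqVe^{\hat\otimes 2}$, one must check that the Koszul signs match those in $\CT\otimes\End(V_q)^{\otimes2}$. We would resolve this by noting that $\fR$ and $L^\pm$ are even elements of total degree, so the same identities \eqref{eq:matrix notation leg swap}--\eqref{eq:matrix notation mult order swap} apply exactly as in the $\gl(V)$ case. Otherwise we rely on the $\hbar$-adic framework of Remark~\ref{rem:h-adic-completion-avoidance} to make sense of $\fR_s$.
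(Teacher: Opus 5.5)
Your proposal is correct and is essentially the paper's proof, which transports the argument of Proposition~\ref{prop:gl-inverse-morphism} to $\uqVe$: triangularity and the diagonal relations come from $\fR=\fR_u\fR_s$ with $(\Id\otimes\varrho)(\fR_s)=\sum_i q^{-\Tilde{H}_i}\otimes E_{ii}$, the $\RLL$ relations from the Yang--Baxter variants~\eqref{eq:YBE variants}, and the coproduct from~\eqref{eq:R comult}. One small precision: your total-degree-zero argument shows that $\phi^{\DF}(l^{\pm}_{ij})$ has degree $\varepsilon_j-\varepsilon_i$, so ``$Q$-graded'' here must be read up to the flip $\lambda\mapsto-\lambda$ of $Q$; this matches Lemma~\ref{lem:DF osp image}, where $l^{+}_{ij}$ is sent to a multiple of $f_{\gamma_{ij}}$.
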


We now determine the explicit images of the generators under the morphism $\phi^{\DF}$, cf.\ Lemma~\ref{lem:DF gl image}:

\begin{Lem}\label{lem:DF osp image}
The images of the diagonal generators $\{l^{\pm}_{ii}\}_{i=1}^{N}$ under $\phi^{\DF}$ are given by 
$\phi^{\DF}(l^{\pm}_{ii}) = q^{\mp \Tilde{H}_{i}}$. For the generators $l^{+}_{ij}, l^{-}_{ji}$ associated with 
a simple root $\gamma_{ij} = \varepsilon_{i} - \varepsilon_{j}$, their images under $\phi^{\DF}$ are as follows:
\begin{align*}
  l^{+}_{ij} &\mapsto -(q-q^{-1}) f_{\gamma_{ij}}q^{-\Tilde{H}_{j}} \,,\\
  l^{-}_{ji} &\mapsto 
  \begin{cases}
    -(q^{2}-q^{-2}) q^{\Tilde{H}_{s'}} e_s & \text{if \ } \gamma_{ij} = \alpha_{s} \text{ for $C$-type} \,, \\
    (-1)^{\ol{j}} (q-q^{-1}) q^{\Tilde{H}_{j}} e_{\gamma_{ij}} & \text{otherwise} \,. 
  \end{cases}
\end{align*}
\end{Lem}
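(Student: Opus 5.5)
The plan is to follow the proof of Lemma~\ref{lem:DF gl image}, working with the factorization $\fR=\fR_u\fR_s$ from Subsection~\ref{ssec:osp-universal-R} and with the explicit first fundamental representation $\varrho$ (including the constants $\kappa_i$ of~\eqref{eq:kappa-const}).

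\textbf{Diagonal generators.} First I would compute the semisimple part by a direct $\hbar$-adic computation. Since $\varrho(q^{\Tilde H_i})=q^{\ssH_i+\ID}$ and $\fR_s=q^{-C\otimes C}q^{-\sum_i(-1)^{\ol{i}}H_i\otimes H_i}$, we expect
\[
  (\Id\otimes\varrho)(\fR_s)=\sum_{1\leq i\leq N} q^{-\Tilde H_i}\otimes E_{ii} \,.
\]
Check this coefficient by coefficient: on $E_{ii}$ the second leg evaluates $H_k\mapsto(\varepsilon_k,\varepsilon_i)$-type scalars and $C\mapsto 1$. This produces $-\sum_k(-1)^{\ol k}(\varepsilon_i,\varepsilon_k)H_k - C = -(H_i+C)=-\Tilde H_i$, using $H_{i'}=-H_i$ for $i>s$.

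The diagonal parts of $(\fR_u)_{(21)}$ and $\fR_u^{-1}$ evaluate to the identity, because every nontrivial term has nonzero $Q$-degree. Hence $\phi^{\DF}(l^\pm_{ii})=q^{\mp\Tilde H_i}$.

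\textbf{Simple-root generators.} For $\gamma_{ij}$ a simple root $\alpha$, I take the $E_{ij}$-component of $(\Id\otimes\varrho)((\fR_u)_{(21)})$. Because $\varrho$ preserves $Q$-degree and $\alpha$ is simple, only the linear term of the factor for $\gamma=\alpha$ contributes. That term is
\[
  (-1)^{|e_\alpha|}\, f_\alpha\otimes\varrho(e_\alpha)\,/\,(f_\alpha,e_\alpha)_{\DrJ} \,,
\]
with $(f_\alpha,e_\alpha)_{\DrJ}=(-1)^{|e_\alpha|}/(q^{-1}-q)$. Next I read off the $E_{ij}$-coefficient of $\varrho(e_\alpha)=\sse_\alpha$:
\begin{itemize}
  \item for $\gamma_{i,i+1}$ (and $B$-type $\alpha_s$), $\sse_\alpha=X_{i,i+1}$ has coefficient $1$ at $E_{i,i+1}$;
  \item in $C$-type, $\sse_s=E_{ss'}$;
  \item in $D$-type, $\sse_s=X_{s-1,s'}$ has coefficient $1$ at $E_{s-1,s'}$.
\end{itemize}
So this component is uniformly $-(q-q^{-1})f_\alpha$. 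Multiplying on the right by the diagonal $q^{-\Tilde H_j}$ from $\fR_s$ then gives the formula for $l^+_{ij}$.

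For $l^-_{ji}$ I expand $\fR_u^{-1}$ as in Lemma~\ref{lem:DF gl image}. The only contribution is $-e_\alpha\otimes\varrho(f_\alpha)/(f_\alpha,e_\alpha)_{\DrJ}$, multiplied on the left by $\fR_s^{-1}$, which contributes $q^{\Tilde H_j}$. It remains to extract the $E_{ji}$-coefficient of $\varrho(f_\alpha)=\kappa_\alpha\ssf_\alpha$:
\begin{itemize}
  \item generically, $(-1)^{\ol i}X_{i+1,i}$ contributes $(-1)^{\ol i}$, which gives $(-1)^{\ol j}(q-q^{-1})$ as in $A$-type;
  \item in $D$-type, $(-1)^{\ol{s-1}}X_{s',s-1}$ at $E_{s',s-1}$ gives $(-1)^{\ol{s-1}+\ol{s-1}+\ol{s'}}=(-1)^{\ol{s'}}$, as required;
  \item in $C$-type, $\kappa_s\cdot(-2)E_{s's}=-(q+q^{-1})E_{s's}$. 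Since $\ol s=\bbar 1$, $e_s$ is even, and this produces $-(q-q^{-1})(q+q^{-1})=-(q^2-q^{-2})$ together with $q^{\Tilde H_{s'}}$.
\end{itemize}

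\textbf{Main obstacle.} I expect the hard part to be the sign bookkeeping. This covers the parity factors $(-1)^{|e_\alpha|}$ arising from the super flip in $(\fR_u)_{(21)}$, and confirming that $\Tilde H$ in the semisimple evaluation is indexed correctly for $j>s$. Both are checked exactly as in the $\gl(V)$ case.
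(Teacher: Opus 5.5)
Your proposal is correct and follows the paper's proof step by step. The paper likewise evaluates $(\Id\otimes\varrho)(\fR_s)=\sum_i q^{-\Tilde{H}_i}\otimes E_{ii}$, then extracts the $E_{ij}$- and $E_{ji}$-components of $(\Id\otimes\varrho)((\fR_u)_{(21)})$ and $(\Id\otimes\varrho)(\fR_u^{-1})$ from the single linear term of the simple-root factor, reading off the coefficients of $\varrho(e_\alpha)$ and $\kappa_\alpha\ssf_\alpha$ case by case. Your explicit check of the semisimple exponent and your bookkeeping of the diagonal factors (right factor $q^{-\Tilde{H}_j}$, left factor $q^{\Tilde{H}_j}$) simply spell out what the paper calls a direct $\hbar$-adic computation.
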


\begin{proof}
As in the general linear case, we first evaluate the semisimple component. An explicit $\hbar$-adic computation yields:
\begin{equation*}
  (\Id \otimes \varrho)(\fR_{s})
  = \sum_{1\leq i\leq N} q^{-\Tilde{H}_{i}} \otimes E_{ii} \,,
\end{equation*}
which establishes $\phi^{\DF}(l^{\pm}_{ii}) = q^{\mp \Tilde{H}_{i}}$. Proceeding to the unipotent part, for the simple root 
$\gamma_{ij} = \varepsilon_{i} - \varepsilon_{j}$, we need to determine the $E_{ij}$-component of
\begin{equation*}
  (\Id \otimes \varrho)((\fR_{u})_{(21)}) 
  = \prod_{\gamma \in \bar{\Phi}^{+}}^{\leftarrow} 
    \left( \sum^{k\geq 0}_{k \leq 1 \text{ if } \gamma \in \bar{\Phi}_{\bbar{1}} \text{ isotropic}} 
    (-1)^{|e_{\gamma}^{k}|} \frac{f_\gamma^k \otimes \varrho(e_\gamma^k)}{(f_\gamma^k, e_\gamma^k)_{\DrJ}} \right) \,.
\end{equation*}
Because the representation $\varrho$ preserves the $Q$-degree, the only contribution to this component arises from 
\begin{equation*}
  (-1)^{|e_{\gamma_{ij}}|} \frac{f_{\gamma_{ij}} \otimes \varrho(e_{\gamma_{ij}})}{(f_{\gamma_{ij}}, e_{\gamma_{ij}})_{\DrJ}}
  = -(q-q^{-1}) f_{\gamma_{ij}} \otimes \varrho(e_{\gamma_{ij}}) \,.
\end{equation*}
As the coefficient of $E_{ij}$ in each $\varrho(e_{\gamma_{ij}})$ is $1$ 
by~\eqref{eq:Lie-action-case-B}--\eqref{eq:Lie-action-case-D}, this implies that 
$\phi^{\DF}(l^{+}_{ij}) = -(q-q^{-1}) f_{\gamma_{ij}}q^{-\Tilde{H}_{j}}$.

Let us now determine the $E_{ji}$-component of
\begin{equation*}
  (\Id \otimes \varrho)(\fR_{u}^{-1})
  = \prod_{\gamma \in \bar{\Phi}^{+}}^{\rightarrow} \left( 
  \sum^{k\geq 0}_{k \leq 1 \text{ if } \gamma \in \bar{\Phi}_{\bbar{1}} \text{ isotropic}}
    \frac{e_\gamma^k \otimes \varrho(f_\gamma^k)}{(f_\gamma^k, e_\gamma^k)_{\DrJ}} \right)^{-1}
  =\, \prod_{\gamma \in \bar{\Phi}^{+}}^{\rightarrow} 
    \sum_{r \geq 0} \left( -\sum^{k\geq 1}_{k \leq 1 \text{ if } \gamma \in \bar{\Phi}_{\bbar{1}} \text{ isotropic}}
    \frac{e_\gamma^k \otimes \varrho(f_\gamma^k)}{(f_\gamma^k, e_\gamma^k)_{\DrJ}} \right)^{r} 
\end{equation*}
where we utilize the nilpotency of 
  $\sum^{k\geq 1}_{k \leq 1 \text{ if } \gamma \in \bar{\Phi}_{\bbar{1}} \text{ isotropic}} 
   \frac{e_\gamma^k \otimes \varrho(f_\gamma^k)}{(f_\gamma^k, e_\gamma^k)_{\DrJ}}$. 
The sole contributing term is:
\begin{equation*}
  - \frac{e_{\gamma_{ij}} \otimes \varrho(f_{\gamma_{ij}})}{(f_{\gamma_{ij}}, e_{\gamma_{ij}})_{\DrJ}}
  = (-1)^{\ol{i}+\ol{j}} (q-q^{-1}) e_{\gamma_{ij}} \otimes \varrho(f_{\gamma_{ij}}) \,.
\end{equation*}
Within $\varrho(f_{\gamma_{ij}})$, the coefficient of the matrix unit $E_{ji}$ evaluates to $-(q+q^{-1})$ when 
$\gamma_{ij} = \alpha_s$ in $C$-type, and to $(-1)^{\ol{i}}$ in all other cases, see~\eqref{eq:kappa-const} combined 
with~\eqref{eq:Lie-action-case-B}--\eqref{eq:Lie-action-case-D}. Therefore:
\begin{equation*}
  \phi^{\DF}(l^{-}_{ji}) =
  \begin{cases}
    -(q^{2}-q^{-2}) q^{\Tilde{H}_{s'}} e_s & \text{if \ } \gamma_{ij} = \alpha_{s} \text{ for $C$-type}\,, \\
    (-1)^{\ol{j}} (q-q^{-1}) q^{\Tilde{H}_{j}} e_{\gamma_{ij}} & \text{otherwise}\,.
  \end{cases}
\end{equation*} 
This completes the proof. 
\end{proof}

We conclude with the following analogue of Proposition~\ref{prop:almost-inverse-gl}:

\begin{Prop}\label{prop:almost-inverse-osp}
The composition
\begin{equation*}
  \uqVe \xrightarrow{\phi_{\CF}^{-1}} \uqVe^{\CF} 
  \xrightarrow[\eqref{eq:osp_xi_efh}]{\xi} U(R) 
  \xrightarrow[\eqref{eq:osp-transpose-iso}]{\omega_{R}^{-1}} U(R)^{\copp} 
  \xrightarrow[\eqref{eq:DF morphism osp}]{\phi^{\DF}} \uqVe 
\end{equation*}
(see Proposition~\ref{prop:osp-drinfeld-twist-iso} for $\phi_{\CF}$) defines an automorphism of $\uqVe$, mapping 
\begin{equation*}
\begin{split}
  q^{\pm \Tilde{H}_{i}} \mapsto q^{\pm \Tilde{H}_{i}} & 
    \qquad \mathrm{for} \quad 1 \leq i \leq N \,, \\
  e_{\gamma_{ij}} \mapsto q^{(\varepsilon_{j},\gamma_{ij})} e_{\gamma_{ij}} \,,\quad
  f_{\gamma_{ij}} \mapsto q^{-(\varepsilon_{j},\gamma_{ij})} f_{\gamma_{ij}} & 
    \qquad \mathrm{for\ simple} \quad \gamma_{ij} = \varepsilon_{i} - \varepsilon_{j} \,. 
\end{split}
\end{equation*}
\end{Prop}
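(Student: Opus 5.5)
The plan mirrors the proof of Proposition~\ref{prop:almost-inverse-gl}. All four maps in the composition are superalgebra morphisms, so the composition is a superalgebra endomorphism of $\uqVe$. It therefore suffices to compute it on the generators $q^{\pm \Tilde{H}_{i}}$ and $e_{i},f_{i}$ ($1\le i\le s$), that is, on $e_{\gamma_{ij}},f_{\gamma_{ij}}$ for the simple roots $\gamma_{ij}$. Once the images are shown to be the stated rescalings, the map is visibly invertible: the inverse is the rescaling by the reciprocal scalars, which also preserves all relations since these are homogeneous. Hence it is an automorphism.

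For the Cartan part we chase $q^{\pm\Tilde{H}_i}\mapsto q^{\pm\Tilde{H}_i}\mapsto q^{\pm\Tilde{\bfH}_i}=l^{\pm}_{ii}$. Next, $\omega_R^{-1}$ sends $l^{\pm}_{ii}$ to $l^{\mp}_{ii}$, since the diagonal entries are even and untouched by the supertranspose. Finally Lemma~\ref{lem:DF osp image} gives $l^{\mp}_{ii}\mapsto q^{\pm\Tilde{H}_i}$. For $e_i$, we use $\phi_\CF^{-1}(e_i)=e_iq^{h_i}$ and then apply $\xi$ via \eqref{eq:osp_xi_efh}, obtaining a scalar multiple of $\bfe_{ij}q^{\Tilde{\bfH}_{ij}}$ for the pair $(i,j)$ with $\gamma_{ij}=\alpha_i$. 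Here the pair is $(i,i+1)$ generically, $(s,s')$ in $C$-type, and $(s-1,s')$ in $D$-type, noting that $q^{h_s}$ matches $q^{\Tilde{\bfH}_{s-1}}q^{-\Tilde{\bfH}_{s'}}$ via \eqref{eq:D-twisted-1}. We then need the analogue of Remark~\ref{rem:omega_R_action}. Writing $\bfe_{ij}$ via \eqref{eq:RTT ef gen} as $(l^+_{ii})^{-1}l^+_{ij}$ up to scalar and applying the supertranspose map $L^\pm\mapsto (L^\mp)^{\st}$ gives $\omega_R(\bfe_{ij})$ as an explicit scalar times $\bff_{ji}$, by the same one-line computation as in type $A$. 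Applying $\phi^{\DF}$ then uses Lemma~\ref{lem:DF osp image} for $l^-_{ji}$ and $l^{\pm}_{ii}$. Assembling the powers of $q$ coming from $q^{\Tilde{H}}$-conjugations via the Chevalley relations should produce the factor $q^{(\varepsilon_j,\gamma_{ij})}$.

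The computation for $f_i$ is parallel. We have $\phi_\CF^{-1}(f_i)=q^{-h_i}f_i$, which is sent by $\xi$ to $q^{-\Tilde{\bfH}_{ij}}\bff_{ji}$ up to scalar, then by $\omega_R^{-1}$ to a multiple of $q^{\Tilde{\bfH}_{ij}}\bfe_{ij}$, and by $\phi^{\DF}$ to a multiple of $f_{\gamma_{ij}}$ via the formula $l^+_{ij}\mapsto -(q-q^{-1})f_{\gamma_{ij}}q^{-\Tilde{H}_j}$.

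The main obstacle is bookkeeping of scalars in the exceptional cases, chiefly $i=s$ in $C$-type. There the normalizations $1/(1+q^2)$ and $q$ in \eqref{eq:osp_xi_efh} must combine with $-(q^2-q^{-2})$ from Lemma~\ref{lem:DF osp image} and with the signs $\vartheta_s\vartheta_{s'}=(-1)^{\ol{s}}=-1$ arising in $\omega_R$ on $l^+_{ss'}$, and all of this must cancel to exactly $q^{(\varepsilon_{s'},2\varepsilon_s)}$. In $D$-type, $\omega_R$ on $l^+_{s-1,s'}$ introduces $\vartheta$-signs that must be checked against $(-1)^{\ol{s'}}$. I would do these cases by explicit evaluation, using that $(\varepsilon_{i'},\cdot)=-(\varepsilon_i,\cdot)$ and the supertranspose sign $(-1)^{\ol{j}(\ol{i}+\ol{j})}$. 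The final cancellation should follow exactly as in the $\gl(V)$ computation, where $\omega_R^{-1}$ and $\phi^{\DF}$ contribute mutually inverse constants up to the residual $q^{(\varepsilon_j,\gamma_{ij})}$ coming from $q^{\pm\Tilde{H}_j}$.
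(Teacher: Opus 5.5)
Your plan is the paper's proof: a generator-by-generator chase through $\phi_{\CF}^{-1}$, $\xi$, $\omega_R^{-1}$, $\phi^{\DF}$, split into the cases $1\le i<s$ and $i=s$ in $B$-, $C$- and $D$-type. It uses the orthosymplectic analogue of Remark~\ref{rem:omega_R_action} (same formula, with $(\varepsilon_i,\varepsilon_j)$ now possibly nonzero) together with Lemma~\ref{lem:DF osp image}.

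One correction to your bookkeeping forecast: $\omega_R\colon L^\pm\mapsto (L^\mp)^{\st}$ introduces no $\vartheta$-signs, only the supertranspose signs. For example, $\omega_R(\bfe_{ss'})=\bff_{s's}$ in $C$-type. The signs that cancel therefore come only from the normalizations in \eqref{eq:RTT ef gen} and from Lemma~\ref{lem:DF osp image}. Inserting a factor $\vartheta_s\vartheta_{s'}=-1$ would give the wrong overall sign.
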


\begin{proof}
This is verified by direct computation. For the Cartan generators, we have:
\begin{equation*}
  q^{\pm \Tilde{H}_{i}} \xmapsto{\phi_{\CF}^{-1}} q^{\pm \Tilde{H}_{i}} \xmapsto{\xi} q^{\pm \Tilde{\bfH}_{i}} 
  \xmapsto{\omega_{R}^{-1}} q^{\mp \Tilde{\bfH}_{i}} \xmapsto{\phi^{\DF}} q^{\pm \Tilde{H}_{i}} \,,
\end{equation*}
while the rest is based on the following type-dependent computations:
\begin{itemize}[leftmargin=0.7cm]

\item \emph{$1 \leq i < s$ in $BCD$-types}
\begin{align*}
  e_{i} &\xmapsto{\phi_{\CF}^{-1}} e_{i}q^{h_{i}} \xmapsto{\xi} \bfe_{i,i+1} q^{\Tilde{\bfH}_{i,i+1}} \xmapsto{\omega_{R}^{-1}} 
  -(-1)^{\ol{i}\,\ol{i+1}} (-1)^{\ol{i}+\ol{i+1}} q^{(\varepsilon_{i},\varepsilon_{i})} \bff_{i+1,i} q^{-\Tilde{\bfH}_{i,i+1}}
  \xmapsto{\phi^{\DF}} q^{(\varepsilon_{i+1},\alpha_{i})} e_{i} \,,\\
  f_{i} &\xmapsto{\phi_{\CF}^{-1}} q^{-h_{i}}f_{i} \xmapsto{\xi} q^{-\Tilde{\bfH}_{i,i+1}} \bff_{i+1,i} \xmapsto{\omega_{R}^{-1}} 
  -(-1)^{\ol{i}\,\ol{i+1}} q^{-(\varepsilon_{i},\varepsilon_{i})} q^{\Tilde{\bfH}_{i,i+1}} \bfe_{i,i+1}
  \xmapsto{\phi^{\DF}} q^{-(\varepsilon_{i+1},\alpha_{i})} f_{i} \,.
\end{align*}

\item \emph{$i = s$ in $B$-type} 
\begin{align*}
  e_{s} &\xmapsto{\phi_{\CF}^{-1}} e_{s}q^{h_{s}} \xmapsto{\xi} \bfe_{s,s+1} q^{\Tilde{\bfH}_{s,s+1}} \xmapsto{\omega_{R}^{-1}} 
  -(-1)^{\ol{s}\,\ol{s+1}} (-1)^{\ol{s}+\ol{s+1}} q^{(\varepsilon_{s},\varepsilon_{s})} \bff_{s+1,s} q^{-\Tilde{\bfH}_{s,s+1}}
  \xmapsto{\phi^{\DF}} q^{(\varepsilon_{s+1},\alpha_{s})} e_{s} \,,\\
  f_{s} &\xmapsto{\phi_{\CF}^{-1}} q^{-h_{s}}f_{s} \xmapsto{\xi} q^{-\Tilde{\bfH}_{s,s+1}} \bff_{s+1,s} \xmapsto{\omega_{R}^{-1}} 
  -(-1)^{\ol{s}\,\ol{s+1}} q^{-(\varepsilon_{s},\varepsilon_{s})} q^{\Tilde{\bfH}_{s,s+1}} \bfe_{s,s+1}
  \xmapsto{\phi^{\DF}} q^{-(\varepsilon_{s+1},\alpha_{s})} f_{s} \,.
\end{align*}

\item \emph{$i = s$ in $C$-type}
\begin{align*}
  e_{s} &\xmapsto{\phi_{\CF}^{-1}} e_{s}q^{h_{s}} \xmapsto{\xi} 
  \frac{1}{1+q^{2}}\bfe_{ss'} q^{\Tilde{\bfH}_{s,s'}} \xmapsto{\omega_{R}^{-1}} 
  \frac{1}{1+q^{2}} \bff_{s's} q^{-\Tilde{\bfH}_{s,s'}}
  \xmapsto{\phi^{\DF}} q^{(\varepsilon_{s'},\alpha_{s})} e_{s} \,,\\
  f_{s} &\xmapsto{\phi_{\CF}^{-1}} q^{-h_{s}}f_{s} \xmapsto{\xi} q \cdot q^{-\Tilde{\bfH}_{s,s'}} \bff_{s's} \xmapsto{\omega_{R}^{-1}} 
  q \cdot q^{\Tilde{\bfH}_{s,s'}} \bfe_{ss'}
  \xmapsto{\phi^{\DF}} q^{-(\varepsilon_{s'},\alpha_{s})} f_{s} \,.
\end{align*}

\item \emph{$i = s$ in $D$-type}
\begin{align*}
  e_{s} &\xmapsto{\phi_{\CF}^{-1}} e_{s}q^{h_{s}} \xmapsto{\xi} \bfe_{s-1,s'} q^{\Tilde{\bfH}_{s-1,s'}} \xmapsto{\omega_{R}^{-1}} 
  -(-1)^{\ol{s-1}\,\ol{s}} (-1)^{\ol{s-1}+\ol{s}} q^{(\varepsilon_{s-1},\varepsilon_{s-1})} \bff_{s',s-1} q^{-\Tilde{\bfH}_{s-1,s'}}
  \xmapsto{\phi^{\DF}} q^{(\varepsilon_{s'},\alpha_{s})} e_{s} \,,\\
  f_{s} &\xmapsto{\phi_{\CF}^{-1}} q^{-h_{s}}f_{s} \xmapsto{\xi} q^{-\Tilde{\bfH}_{s-1,s'}} \bff_{s',s-1} \xmapsto{\omega_{R}^{-1}} 
  -(-1)^{\ol{s-1}\,\ol{s'}} q^{-(\varepsilon_{s-1},\varepsilon_{s-1})} q^{\Tilde{\bfH}_{s-1,s'}} \bfe_{s-1,s'}
  \xmapsto{\phi^{\DF}} q^{-(\varepsilon_{s'},\alpha_{s})} f_{s} \,. \qedhere
\end{align*}
   
\end{itemize}
\end{proof}


\subsection{Factorization of the reduced canonical element: orthosymplectic case}\label{ssec:factorization gosp}
\

We now derive the canonical element $\fR^{R}$ of $U(R)$ in the orthosymplectic setting and establish the factorization 
of the associated reduced $R$-matrix. 
The structural framework utilized in Subsection~\ref{ssec:factorization gl} adapts seamlessly to $U(R)$ for $\fgosp$.
Since the underlying logic for the skew-pairing evaluations and the derivation of the PBW-type bases remains identical,
we record the main structural results and the resulting factorization below, omitting the repetitive proofs.

As before, we employ the transposed inverse skew-pairing $(-,-)_{\wt{R}}$ to ensure compatibility with~\eqref{eq:osp-skew-pairing}.
Let $U^{>}(R)$, $U^{<}(R)$, and $U^{0}(R)$ be the subalgebras of $U(R)$ generated by $\{\bfe_{ij}\}_{1 \leq i < j \leq N}$, 
$\{\bff_{ji}\}_{1 \leq i < j \leq N}$, and $\{q^{\pm \Tilde{\bfH}_{k}}\}_{k=1}^{N}$, respectively. As in 
Subsection~\ref{ssec:factorization gl}, for each $\gamma = \gamma_{ij} = \varepsilon_{i} - \varepsilon_{j} \in \bar{\Phi}^{+}$ 
with the choice of $(i,j)$ as in~\eqref{eq:gosp-reduced-roots}, we set $\bfe_{\gamma} \coloneqq \bfe_{ij}$, 
$\bff_{\gamma} \coloneqq \bff_{ji}$, and $q^{\pm\Tilde{\bfH}_{\gamma}} \coloneqq q^{\pm\Tilde{\bfH}_{ij}}$. 
We note that these listed elements indeed generate the subalgebras $U^{>}(R)$, $U^{<}(R)$, and $U^{0}(R)$ respectively, as follows 
from Theorem~\ref{thm:osp DrJ to RTT}. Furthermore, the multiplication yields the triangular decomposition isomorphisms 
$U^0(R) \otimes U^>(R) \iso U^{\geq}(R)$, $U^0(R) \otimes U^<(R) \iso U^{\leq}(R)$ of superspaces.

The following counterpart of Lemma~\ref{lem:gl-pairing-basis} establishes orthogonality of ordered products of root vectors:

\begin{Lem}\label{lem:gosp-pairing-basis}
For any sequences of non-negative integers $\mathbf{m} = (m_\gamma)_{\gamma \in \bar{\Phi}^+}$ and 
$\mathbf{n} = (n_\gamma)_{\gamma \in \bar{\Phi}^+}$ (restricted to $m_\gamma, n_\gamma \leq 1$ whenever 
$\gamma \in \Phi_{\bbar{1}}$ is isotropic), we have
\begin{equation*}
  \Bigg( \prod_{\gamma \in \bar{\Phi}^{+}}^{\leftarrow} \bff_{\gamma}^{m_{\gamma}}, 
         \prod_{\gamma \in \bar{\Phi}^{+}}^{\leftarrow} \bfe_{\gamma}^{n_{\gamma}} \Bigg)_{\wt{R}} 
  = (-1)^{\chi(\mathbf{m})} \prod_{\gamma \in \bar{\Phi}^{+}} \delta_{m_{\gamma}, n_{\gamma}} 
  \sfC_{\gamma, m_{\gamma}} (\bff_{\gamma}, \bfe_{\gamma})_{\wt{R}}^{m_{\gamma}}\,,
\end{equation*}
where $\sfC_{\gamma,p}$ and $\chi(\mathbf{m})$ are defined exactly as in Lemma~\ref{lem:gl-pairing-basis}.
\end{Lem}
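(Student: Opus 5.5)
I will follow the proof of Lemma~\ref{lem:gl-pairing-basis} and argue by induction on $\sum_{\gamma}(m_\gamma+n_\gamma)$. The base cases, where this sum is $0$ or $1$, hold for degree reasons, since $(-,-)_{\wt{R}}$ is of $Q$-degree zero.

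The first ingredient is the orthosymplectic analogue of~\eqref{eq:RTT-Delta-ef}. For every $\gamma\in\bar\Phi^+$ I want
\[
  \Delta(\bfe_\gamma)\in \bfe_\gamma\otimes q^{-\Tilde{\bfH}_\gamma}+1\otimes\bfe_\gamma+\sum_{\gamma'\prec\gamma\prec\gamma''}U^{\geq}(R)_{\gamma'}\otimes U^{\geq}(R)_{\gamma''}\,,
\]
and the mirror statement for $\Delta(\bff_\gamma)$. This is read off from~\eqref{eq:gl-rtt-coalg-component}: write $\bfe_{ij}=c\,l^-_{ii}l^+_{ij}$, and then $\Delta(l^+_{ij})$ is a sum of terms $l^+_{ik}\otimes l^+_{kj}$ with $i\le k\le j$.

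\begin{itemize}
\item For $j<i'$, each middle term has degree $(\gamma_{ik},\gamma_{kj})$. Here $\gamma_{ik}\prec\gamma_{ij}$ in the lexicographic order, and $\gamma_{kj}\succ\gamma_{ij}$ because $k>i$.
\item For $j=i'$ (the long roots $2\varepsilon_i$ in $C$-type, and $\gamma_{ii'}$ in general), the middle degrees are $\gamma_{ik}$ and $\gamma_{ki'}$. I must re-express $\gamma_{ki'}=\gamma_{i k'}$ in terms of the chosen parametrization~\eqref{eq:gosp-reduced-roots}.
\item In $D$-type, the vanishing $l^+_{ss'}=0$ from~\eqref{eq:vanishing-D} removes the middle term that does not fit.
\end{itemize}

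In every case I still need to check that one factor precedes $\gamma$ and the other follows it in the convex order induced by the Lyndon ordering~\eqref{eq:alphabet-order}. This is the convexity statement of~\cite{ht1}, transported through $\xi$ via Theorem~\ref{thm:osp DrJ to RTT}(b).

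Next comes the inductive step. Let $\alpha$ be the $\prec$-minimal root with $m_\alpha\ne0$ or $n_\alpha\ne0$, and suppose $m_\alpha>0$. Write $\bfF=\bfF'\bff_\alpha$ and expand $(\bfF'\otimes\bff_\alpha,\Delta(\bfE))_{\wt{R}}$. By the coproduct inclusions and convexity, the second leg can reach degree exactly $\alpha$ only by taking $1\otimes\bfe_\alpha$ from exactly one factor $\bfe_\alpha$ and the leading term from every other factor. This requires that $\alpha$ not be a sum of two or more roots $\succeq\alpha$, which again follows from convexity of $\prec$ on $\bar\Phi^+$.

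A new feature appears for non-isotropic odd roots $\gamma=\varepsilon_i-\varepsilon_j$ with $\bar i\ne\bar j$ in $B$-type (e.g.\ $\varepsilon_i$ with $i$ odd). There $\bfe_\gamma^2\ne0$ and $m_\gamma$ is unrestricted. The $q$-geometric sum uses $\bfe_\alpha\,q^{-\Tilde{\bfH}_\alpha}=(-1)^{|\bfe_\alpha|}q^{(\alpha,\alpha)}q^{-\Tilde{\bfH}_\alpha}\bfe_\alpha$ in the twisted tensor product, so it produces $\sfC_{\alpha,n_\alpha}$ exactly as before, with the sign $(-1)^{|\bfe_\alpha|}$ included. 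For isotropic $\alpha$ the bound $n_\alpha\le1$ makes the sum trivial, consistent with Remark~\ref{rem:BCD-rel-nilpotent}.

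Then $(\bff_\alpha,\bfK\bfe_\alpha)_{\wt{R}}=(\bff_\alpha,\bfe_\alpha)_{\wt{R}}$ for $\bfK\in U^0(R)$, by the analogue of~\eqref{eq:rtt-skew-pairing-identity}. The induction hypothesis applied to $(\bfF',\bfE')$ then gives the claimed factorization. The Koszul signs $(-1)^{|\bff_\alpha||\bfF'|}$ accumulate to $\chi(\mathbf m)$ exactly as in type $A$. The case $m_\alpha=0<n_\alpha$ is symmetric, reversing the roles of $\bfe$ and $\bff$.

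The main obstacle is the first step: verifying the coproduct inclusions for the roots $\gamma_{ij}$ with $j\ge s+1$, where $\bfe_{ij}$ is identified with a root vector only up to the relations~\eqref{eq:B-twisted-2}, \eqref{eq:C-twisted-2}, \eqref{eq:D-twisted-2}--\eqref{eq:D-twisted-3}. For these I would first rewrite each middle term $l^+_{ik}\otimes l^+_{kj}$ with $k>s$ via $\gamma_{kj}=\gamma_{j'k'}$, and then compare positions in the lexicographic order on pairs $(a,b)$ with $a<b\le a'$. If a direct check fails, I would instead pull the inclusion back from $U^{\geq}_q(\fgosp(V))^{\CF}$ through $\xi$, using the PBW convexity proven in~\cite{ht1}.
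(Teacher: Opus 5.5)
Your route is the one the paper intends: it omits the proof and refers back to Lemma~\ref{lem:gl-pairing-basis}. However, the step you single out as the main obstacle genuinely fails, and neither of your fallbacks rescues it.

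\textbf{Where the coproduct inclusion fails.} The problem is not the roots $\gamma_{ij}$ with $s<j<i'$; there the lexicographic comparison you describe works. The problem is the roots $\gamma_{ii'}=2\varepsilon_i$ with $\ol{i}=\bbar{1}$ and $i<s$, in types $C$ and $D$. Since $\bfe_{ii'}$ is a nonzero multiple of $l^-_{ii}l^+_{ii'}$, \eqref{eq:gl-rtt-coalg-component} gives $\Delta(\bfe_{ii'})$ a middle term $\pm\,l^-_{ii}l^+_{ik}\otimes l^-_{ii}l^+_{ki'}$ for every $i<k<i'$. The legs have degrees $(\varepsilon_i-\varepsilon_k,\varepsilon_i+\varepsilon_k)$ for $k\le s$, and $(\varepsilon_i+\varepsilon_{k'},\varepsilon_i-\varepsilon_{k'})$ for $k>s$. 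So the same two roots occur in both orders, and no order on $\bar\Phi^+$ can make all first legs precede and all second legs follow $\gamma_{ii'}$.

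Incidentally, the Lyndon order is not the lexicographic order on pairs here. The factorization $\rl^{-1}(\gamma_{ii'})=\rl^{-1}(\gamma_{is})\,\rl^{-1}(\gamma_{is'})$ forces $\gamma_{is}\prec\gamma_{ii'}\prec\gamma_{is'}$. The lexicographic order on pairs is not even convex: for $C_2$ it reads $\alpha_1\prec\alpha_1+\alpha_2\prec 2\alpha_1+\alpha_2\prec\alpha_2$.

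\textbf{Why pulling back through $\xi$ does not help.} Theorem~\ref{thm:osp DrJ to RTT}(b) says nothing about $2\varepsilon_i$, and in fact $\bfe_{ii'}$ is not a multiple of $\xi(e_{2\varepsilon_i})$. Take $\mathfrak{sp}_4$, i.e.\ $\gamma_V=(\bbar{1},\bbar{1},\bbar{1},\bbar{1})$ with $s=2$, and put $x=\bfe_{12}$, $y=\bfe_{23}$. Then \eqref{eq:C-twisted-2}, \eqref{eq:C-twisted-4} and \eqref{eq:C-twisted-5} give $\bfe_{14}\propto x^2y-(q^2+q^{-2})xyx+yx^2$. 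On the other hand, $\xi(e_{2\varepsilon_1})=[\![\xi(e_{\alpha_1}),\xi(e_{\alpha_1+\alpha_2})]\!]\propto x^2y-(1+q^2)xyx+q^2yx^2$.

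\textbf{The stated identity is false for these roots.} So the gap cannot be closed. In the same $\mathfrak{sp}_4$ example, take $m_{\gamma_{12}}=m_{\gamma_{13}}=1$, $n_{\gamma_{14}}=1$, and all other exponents $0$; the right-hand side then vanishes. Since $\gamma_{12}\prec\gamma_{13}$, the left-hand side is $(\bff_{31}\bff_{21},\bfe_{14})_{\wt{R}}=(\bff_{31}\otimes\bff_{21},\Delta(\bfe_{14}))_{\wt{R}}$, and by degree only the $k=3$ middle term survives. Write $l^-_{11}l^+_{34}=(l^-_{11}l^+_{33})(l^-_{33}l^+_{34})$ and use $\bfe_{34}=\pm\bfe_{12}$ from \eqref{eq:C-twisted-2} together with \eqref{eq:rtt-skew-pairing-identity}. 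The pairing becomes a nonzero constant times $(\bff_{31},\bfe_{13})_{\wt{R}}\,(\bff_{21},\bfe_{12})_{\wt{R}}$. Both factors are nonzero: up to nonzero scalars they are the $E_{13}\otimes E_{31}$ and $E_{12}\otimes E_{21}$ entries of $R^{-1}$, cf.\ \eqref{eq:gl-RTT-skew-pairing-opposite}. The reversed monomial $\bff_{21}\bff_{31}$ fails in the same way through the $k=2$ term. The same computation works in type $D$, e.g.\ for $\gamma_V=(\bbar{1},\bbar{0},\bbar{0},\bbar{1})$.

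\textbf{What can be salvaged.} Your argument can only be completed away from the roots $\gamma_{ii'}$, in particular in type $B$. A correct statement must replace $\bfe_{\gamma_{ii'}},\bff_{\gamma_{ii'}}$ by $\xi(e_{2\varepsilon_i}),\xi(f_{2\varepsilon_i})$ (a normalized $[\![\bfe_{is},\bfe_{is'}]\!]$ and its $\bff$-counterpart) and use the Lyndon order. Orthogonality is then most easily obtained by transporting Theorem~\ref{thm:osp-PBW-general}(b) through $\xi$ via Theorem~\ref{thm:osp DrJ to RTT}(c), with the diagonal values computed as in type $A$. The same correction is needed in Proposition~\ref{prop:gosp-unipotent-factorization}.
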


This orthogonality, together with the non-degeneracy of~\eqref{eq:osp-skew-pairing} and a dimensional comparison via the 
isomorphism $\xi$ of~\eqref{eq:osp_xi_efh}, immediately yields the corresponding PBW-type bases for $U^{>}(R)$ and $U^{<}(R)$:

\begin{Cor}
The sets of ordered monomials 
\begin{equation}\label{eq:gosp-PBW-RTT}
  \left\{ \prod_{\gamma \in \bar{\Phi}^{+}}^{\leftarrow} \bfe_{\gamma}^{m_{\gamma}} \;\Bigg|\; 
  \substack{m_{\gamma} \in \BZ_{\ge 0} \\ m_\gamma \leq 1 \text{ if } \gamma \in \bar{\Phi}_{\bbar{1}} \text{ isotropic}} \right\}
  \qquad \text{and} \qquad
  \left\{ \prod_{\gamma \in \bar{\Phi}^{+}}^{\leftarrow} \bff_{\gamma}^{m_{\gamma}} \;\Bigg|\; 
  \substack{m_{\gamma} \in \BZ_{\ge 0} \\ m_\gamma \leq 1 \text{ if } \gamma \in \bar{\Phi}_{\bbar{1}} \text{ isotropic}} \right\}
\end{equation}
form bases for $U^>(R)$ and $U^<(R)$, respectively.
\end{Cor}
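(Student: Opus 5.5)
The plan follows the proof of the general linear Corollary (the one following Lemma~\ref{lem:gl-pairing-basis}): first prove linear independence via the pairing, then spanning by a dimension count. I will work with the transposed inverse pairing $(-,-)_{\wt{R}}\colon U^{\leq}(R)\times U^{\geq}(R)\to\BC(q)$ of Remark~\ref{rem:osp-RTT pairing order}. Its non-degeneracy comes from three facts. First, Theorem~\ref{thm:osp DrJ to RTT}(c) identifies $(-,-)_{R}$ with the pullback $(-,-)^{\CF}_{\wt{\DrJ}}$ along $\xi$. Second, Proposition~\ref{prop:osp-DJ-pairing_finite}(a) and Proposition~\ref{prop:osp-drinfeld-twist-iso}(c) make the latter non-degenerate. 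Third, passing to the transposed inverse preserves non-degeneracy, via \eqref{eq:inverse-pairing} and \eqref{eq:sigma-tilde}.

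I then need non-degeneracy on the restriction $U^{<}(R)\times U^{>}(R)$, not just on the Borels. For this I use the factorization $(\bfk'\bff,\bfk\bfe)_{\wt{R}}=(\bfk',\bfk)_{\wt{R}}(\bff,\bfe)_{\wt{R}}$, derived exactly as in \eqref{eq:rtt-skew-pairing-identity}. That derivation needs the coproduct inclusions \eqref{eq:RTT-Delta-ef} and \eqref{eq:rtt-borel-comult} for the $\fgosp$ root vectors $\bfe_\gamma,\bff_\gamma$. These follow from \eqref{eq:gl-rtt-coalg-component} together with the triangularity of $L^{\pm}$, and the $D$-type vanishing \eqref{eq:vanishing-D} handles the $\bfe_{s-1,s'}$ case. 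Combining the factorization with the triangular decompositions $U^0(R)\otimes U^>(R)\iso U^{\geq}(R)$ and $U^0(R)\otimes U^<(R)\iso U^{\leq}(R)$ stated just before Lemma~\ref{lem:gosp-pairing-basis} gives the required non-degeneracy.

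For linear independence, Lemma~\ref{lem:gosp-pairing-basis} shows the two families of ordered monomials are mutually orthogonal. It remains to check that each diagonal pairing value $\sfC_{\gamma,m_\gamma}(\bff_\gamma,\bfe_\gamma)_{\wt{R}}^{m_\gamma}$ is nonzero; this is the main obstacle. The factor $(\bff_\gamma,\bfe_\gamma)_{\wt{R}}$ is nonzero: by Theorem~\ref{thm:osp DrJ to RTT}(b), $\bfe_\gamma$ and $\bff_\gamma$ are nonzero scalar multiples of $\xi(e_\gamma)$ and $\xi(f_\gamma)$, and $(f_\gamma,e_\gamma)_{\DrJ}\neq0$ by Theorem~\ref{thm:osp-PBW-general}(b) together with non-degeneracy. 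The factor $\sfC_{\gamma,p}$ can vanish only when $(-1)^{|\bfe_\gamma|}q^{(\gamma,\gamma)}$ is a nontrivial root of unity in $\BC(q)$. That happens exactly when $(\gamma,\gamma)=0$ and $\gamma$ is odd, i.e.\ $\gamma$ is odd isotropic, where the exponent is already restricted to $m_\gamma\le1$ and $\sfC_{\gamma,1}=1$. Non-isotropic odd roots such as $\varepsilon_i-\varepsilon_j$ with $\ol i\neq\ol j$ do not arise in this way, since $(\gamma,\gamma)=0$ is required. Odd roots with $(\gamma,\gamma)\neq0$ give a base $\pm q^{c}$ with $c\neq0$, hence a nonzero $\sfC_{\gamma,p}$ for all $p$. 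Once every diagonal value is nonzero, a linear relation among monomials of fixed $Q$-degree paired against the dual family forces every coefficient to vanish.

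For spanning, by Theorem~\ref{thm:osp DrJ to RTT}(a) the isomorphism $\xi$ maps $U^{>}_q(\fgosp(V))$ onto $U^{>}(R)$ preserving $Q$-degrees, so each graded piece $U^{>}(R)_\lambda$ is finite-dimensional of the same dimension as $U^{>}_q(\fgosp(V))_\lambda$. By Theorem~\ref{thm:osp-PBW-general}(a), that dimension equals the number of admissible PBW monomials of degree $\lambda$. Since $\deg\bfe_\gamma=\gamma=\deg e_\gamma$, this is also the number of monomials in \eqref{eq:gosp-PBW-RTT} of degree $\lambda$. Independence then yields a basis. The negative side is identical, or follows by applying $\omega_R$ of \eqref{eq:osp-transpose-iso}.
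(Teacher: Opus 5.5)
Your route is the paper's. Orthogonality from Lemma~\ref{lem:gosp-pairing-basis} gives linear independence, and the degree-wise dimension count through $\xi$ and Theorem~\ref{thm:osp-PBW-general}(a) gives spanning. The paper just asserts independence from ``orthogonality plus non-degeneracy''. You correctly isolate what is actually needed: every diagonal value $\sfC_{\gamma,m_\gamma}(\bff_\gamma,\bfe_\gamma)_{\wt{R}}^{m_\gamma}$ must be nonzero. Consequently, your paragraph on non-degeneracy of the restriction to $U^<(R)\times U^>(R)$ is not needed for the argument you actually run. Your analysis of $\sfC_{\gamma,p}$ is right, but the aside is misstated. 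The root $\varepsilon_i-\varepsilon_j$ with $\ol{i}\neq\ol{j}$ is isotropic. The only odd non-isotropic roots are the $B$-type short roots $\varepsilon_i=\gamma_{i,s+1}$ with $\ol{i}=\bbar{1}$, where $(\gamma,\gamma)=-1$.

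The step that does not go through as cited is $(\bff_\gamma,\bfe_\gamma)_{\wt{R}}\neq 0$ via Theorem~\ref{thm:osp DrJ to RTT}(b). In $C$- and $D$-types, $\bar{\Phi}^+$ contains $\gamma_{ii'}=2\varepsilon_i$ for each odd index $i<s$. Part (b) only treats $\varepsilon_i\pm\varepsilon_j$ with $i<j$, and the $C$-type simple root $2\varepsilon_s$ is covered by part (a). So nothing in the paper relates $\xi(e_{2\varepsilon_i})=[\![\xi(e_{\gamma_{is}}),\xi(e_{\gamma_{is'}})]\!]$ to $\bfe_{ii'}$. Even where (b) applies, two further steps are needed. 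First, take transposed inverses in part (c) to compare $(-,-)_{\wt{R}}$ with $(-,-)^{\CF}_{\DrJ}$. Second, absorb the Cartan factors that $\phi_{\CF}$ attaches to $e_\gamma,f_\gamma$.

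A uniform fix bypasses the Drinfeld--Jimbo side entirely. Exactly as in~\eqref{eq:pairing-eval}, $(\bff_{ji},\bfe_{ij})_{\wt{R}}$ is a nonzero multiple of $(l^-_{ji},l^+_{ij})_{\wt{R}}$. By~\eqref{eq:gl-RTT-skew-pairing-opposite}, this is, up to sign, the $E_{ij}\otimes E_{ji}$-entry of $R^{-1}$. By~\eqref{eq:osp-evaluated-R}, that entry is a unit times $q-q^{-1}$ for $j\neq i'$. For $j=i'$ with $\ol{i}=\bbar{1}$, both summands contribute. Using $\vartheta_i\vartheta_{i'}=-1$, they give a unit times $(q-q^{-1})\bigl(1+q^{-2(\rho,\varepsilon_i)}\bigr)\neq 0$. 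With this in place your proof is complete.
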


Similarly to the general linear case, we thus obtain the factorization of the canonical element 
\begin{equation*}
  \fR^{R} = \fR^{R}_{s} \fR^{R}_{u} \,,
\end{equation*}  
whereas $\fR^{R}_{u}$ can be evaluated using bases~\eqref{eq:gosp-PBW-RTT} whose pairing is computed 
in Lemma~\ref{lem:gosp-pairing-basis}:

\begin{Prop}\label{prop:gosp-unipotent-factorization}
$\fR^{R}_{u}$ factorizes as 
\begin{equation*}
  \fR^{R}_{u} = \prod_{\gamma \in \bar{\Phi}^{+}}^{\leftarrow} 
  \left( \sum^{k\geq 0}_{k \leq 1 \text{ if } \gamma \in \bar{\Phi}_{\bbar{1}} \text{ isotropic}}
         \frac{\bfe_{\gamma}^{k} \otimes \bff_{\gamma}^{k}}{(\bff_{\gamma}^{k}, \bfe_{\gamma}^{k})_{\wt{R}}} \right) \,. 
  \qedhere
\end{equation*}
\end{Prop}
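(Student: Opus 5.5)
The plan is to mirror the proof of Proposition~\ref{prop:R-factor-A-rtt} verbatim, with $\Phi^{+}$ replaced by the reduced system $\bar{\Phi}^{+}$. First I would write $\fR^{R}_{u}$ as the canonical element of the restriction of $(-,-)_{\wt{R}}$ to $U^{<}(R)\times U^{>}(R)$, expanded in the PBW-type bases~\eqref{eq:gosp-PBW-RTT}. By Lemma~\ref{lem:gosp-pairing-basis} these bases are orthogonal, so the dual of $\prod^{\leftarrow}\bfe_\gamma^{m_\gamma}$ is $\prod^{\leftarrow}\bff_\gamma^{m_\gamma}$ divided by its pairing. This gives
\begin{equation*}
  \fR^{R}_{u} = \sum_{\mathbf{m}} \Big(\prod^{\leftarrow}_{\gamma} \bfe_\gamma^{m_\gamma} \otimes \prod^{\leftarrow}_{\gamma}\bff_\gamma^{m_\gamma}\Big)\Big/\Big(\prod^{\leftarrow}_{\gamma}\bff_\gamma^{m_\gamma},\prod^{\leftarrow}_{\gamma}\bfe_\gamma^{m_\gamma}\Big)_{\wt{R}}\,,
\end{equation*}
where $\mathbf{m}$ ranges over the admissible exponent sequences of~\eqref{eq:gosp-PBW-RTT}.

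Second, I would specialize Lemma~\ref{lem:gosp-pairing-basis} to a single root. This gives $(\bff_\gamma^{m},\bfe_\gamma^{m})_{\wt{R}}=(-1)^{\binom{m}{2}|\bfe_\gamma|}\sfC_{\gamma,m}(\bff_\gamma,\bfe_\gamma)^{m}_{\wt{R}}$, so the full pairing becomes $(-1)^{\sum_{\gamma\prec\gamma'}m_\gamma m_{\gamma'}|\bfe_\gamma||\bfe_{\gamma'}|}\prod_\gamma(\bff_\gamma^{m_\gamma},\bfe_\gamma^{m_\gamma})_{\wt{R}}$. Next I would rearrange the tensor $\prod^{\leftarrow}\bfe^{m}\otimes\prod^{\leftarrow}\bff^{m}$ into $\prod^{\leftarrow}(\bfe_\gamma^{m_\gamma}\otimes\bff_\gamma^{m_\gamma})$ using the super tensor multiplication rule~\eqref{eq:superalgebras tensoring}. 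This produces exactly the same Koszul sign $(-1)^{\sum_{\gamma\prec\gamma'}m_\gamma m_{\gamma'}|\bfe_\gamma||\bfe_{\gamma'}|}$, since $|\bff_\gamma|=|\bfe_\gamma|$, and the two signs cancel. The sum over $\mathbf{m}$ of the resulting product then factors as the ordered product over $\gamma$ of $\sum_k \bfe_\gamma^k\otimes\bff_\gamma^k/(\bff_\gamma^k,\bfe_\gamma^k)_{\wt{R}}$.

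The only point needing care, and the expected obstacle, is the range of $k$ in each factor. The PBW basis restricts $m_\gamma\le 1$ only for odd isotropic $\gamma$, whereas an ordered product of full geometric-type series would a priori produce powers $k\ge2$ for those roots as well. I would handle this as in type $A$ by using Remark~\ref{rem:BCD-rel-nilpotent}: $\bfe_\gamma^2=0=\bff_\gamma^2$ for odd isotropic $\gamma$, the latter via $\omega_R$. The factorized sum is then the same whether or not such $k$ are restricted. For non-isotropic odd roots, such as $\gamma_{ij}$ with $i<j<i'$ but $\ol{i}=\ol{j}'$ type situations in $B$-type where $(\gamma,\gamma)\neq0$, $\sfC_{\gamma,k}$ is nonzero for all $k$, since $(-1)q^{(\gamma,\gamma)}$ is not a root of unity. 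Hence the denominators are well defined and match the unrestricted sum, consistent with the PBW index set. With this checked, the factorization follows.
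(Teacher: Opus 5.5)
Your proposal is correct and coincides with the paper's argument, which omits this proof as a verbatim repetition of Proposition~\ref{prop:R-factor-A-rtt}: expand the canonical element in the PBW-type bases, split the pairing via Lemma~\ref{lem:gosp-pairing-basis}, cancel the Koszul sign from the tensor rearrangement against the one in the pairing, and use $\bfe_\gamma^2=0=\bff_\gamma^2$ for odd isotropic $\gamma$. Two minor asides: the odd non-isotropic roots of $\bar{\Phi}^{+}$ are precisely $\varepsilon_i=\gamma_{i,s+1}$ with $\ol{i}=\bbar{1}$ in $B$-type (your condition ``$\ol{i}=\ol{j}'$'' actually describes even roots), and for odd isotropic $\gamma$ a $k=2$ term would be $0/0$ since $\sfC_{\gamma,2}=0$, so the restriction on $k$ is genuinely needed rather than merely harmless.
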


\begin{Rem}
We note that one can easily derive the multi-parameter version of the above factorization of the $R$-matrix within 
the RLL-realization, thus generalizing the super $A$-type results of~\cite{hz}, by utilizing the bicharacter twists 
exactly as Theorem B.7 is derived from Theorem B.6 in~\cite{mt}.
\end{Rem}


\smallskip

\appendix


\section{Generalized double construction in super setting}\label{sec:double-construction}
\

This Appendix is devoted to the generalization of the classical constructions from~\cite{maj} to the setting of superbialgebras.  
Specifically, we systematically incorporate the $\BZ_{2}$-grading and the corresponding super-sign conventions into the standard 
theory of matched pairs. Throughout this section, we assume for convenience that all elements of superbialgebras we consider 
are $\BZ_{2}$-homogeneous.


\subsection{Matched pairs of superbialgebras}\label{ssec:matched-pairs}
\

Let $A$ and $B$ be superbialgebras. Our goal is to endow the superspace $A \otimes B$ with a superbialgebra structure such 
that $A$ and $B$ can be identified as subbialgebras via the natural embeddings:
\begin{align*}
  A \hookrightarrow A \otimes B\,,\qquad & a \mapsto a \otimes 1 \,,\\
  B \hookrightarrow A \otimes B\,,\qquad & b \mapsto 1 \otimes b \,.
\end{align*}
Requiring these embeddings to be superbialgebra morphisms (and assuming $A \otimes B$ is endowed with such a superalgebra 
structure) forces the following multiplication rules:
\begin{equation}\label{eq:matched-pair-mult}
  (a \otimes 1)(a' \otimes 1) = (aa' \otimes 1) \,,\qquad
  (1 \otimes b)(1 \otimes b') = (1 \otimes bb') \,,\qquad  
  (a \otimes 1)(1 \otimes b) = (a \otimes b) \,.
\end{equation}
Analogously, for the comultiplication:\footnote{Throughout this section, unless stated otherwise, we omit parentheses in the 
subscripts and write the Sweedler notation as $\Delta(a) = \sum_{(a)} a_{1} \otimes a_{2}$ instead of $a_{(1)} \otimes a_{(2)}$. 
Additionally, to maintain brevity during iterated comultiplications, we suppress the summation symbols for subsequent components 
such as $(a_i)$ or $(b_i)$.}
\begin{multline*}
  \Delta(a \otimes b) = \Delta(a \otimes 1)\Delta(1 \otimes b)
  = \sum_{(a)(b)} (-1)^{|a_{2}||b_{1}|} (a_{1} \otimes 1)(1 \otimes b_{1}) \otimes (a_{2} \otimes 1)(1 \otimes b_{2})\\
  = \sum_{(a)(b)} (-1)^{|a_{2}||b_{1}|} (a_{1} \otimes b_{1}) \otimes (a_{2} \otimes b_{2})\,,
\end{multline*}
and for the counit:
\begin{equation*}
  \epsilon(a \otimes b) = \epsilon(a \otimes 1)\epsilon(1 \otimes b) = \epsilon(a)\epsilon(b) \,.
\end{equation*}

The only remaining nontrivial structure is the cross-multiplication $(1 \otimes b)(a \otimes 1)$. To define this, 
we introduce morphisms (hence degree-preserving):
\begin{align*}
  \rhd \colon B \otimes A \to A \,,\qquad \lhd \colon B \otimes A \to B \,,
\end{align*}
and define the multiplication as follows:
\begin{equation}\label{eq:matched-pair-cross-mult}
  (1 \otimes b)(a \otimes 1) = \sum_{(a)(b)} (-1)^{|b_{2}||a_{1}|} (b_{1} \rhd a_{1}) \otimes (b_{2} \lhd a_{2}) \,.
\end{equation}

\begin{Prop}\label{prop:matched-pair-super}
Suppose that $\rhd$ defines a left $B$-module supercoalgebra structure on $A$, and $\lhd$ defines a right $A$-module 
supercoalgebra structure on $B$. If these maps satisfy the following compatibility conditions:
\begin{equation*}
\begin{split}
  & b \rhd aa' = \sum_{(b)(a)} (-1)^{|b_{2}||a_{1}|} (b_{1} \rhd a_{1}) \cdot ((b_{2} \lhd a_{2}) \rhd a') \,,\qquad
    b \rhd 1 = \epsilon(b)\,,\\
  & bb' \lhd a = \sum_{(b')(a)} (-1)^{|b'_{2}||a_{1}|} (b \lhd (b'_{1} \rhd a_{1})) \cdot (b'_{2} \lhd a_{2}) \,,\qquad
    1 \lhd a = \epsilon(a)\,,\\ 
  & \sum_{(a)(b)} (-1)^{|b_{2}||a_{1}|} (b_{1} \lhd a_{1}) \otimes (b_{2} \rhd a_{2}) \\
  & \hspace{50pt}=
    \sum_{(a)(b)} (-1)^{|b_{2}||a_{1}|} (-1)^{(|a_{1}|+|b_{1}|)(|a_{2}|+|b_{2}|)} (b_{2} \lhd a_{2}) \otimes (b_{1} \rhd a_{1})\,,    
\end{split}
\end{equation*}
then the pair $(A,B)$ is called a \textbf{matched pair} of superbialgebras. In this case, the preceding 
formulas~\eqref{eq:matched-pair-mult}--\eqref{eq:matched-pair-cross-mult} endow $A \otimes B$ with a well-defined superbialgebra 
structure, called the \textbf{double cross product} of $A$ and $B$ and denoted by $A \bowtie B$. Moreover, if $A$ and $B$ are 
Hopf superalgebras with respective antipodes $S_{A}$ and $S_{B}$, then $A \bowtie B$ is also a Hopf superalgebra with antipode 
$S_{A} \otimes S_{B}$.
\end{Prop}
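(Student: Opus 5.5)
The plan is to follow the classical argument of Majid, keeping track of Koszul signs by working throughout in the symmetric monoidal category $\sVect$. The point is that every identity to be checked can be written as an equality of morphisms built from $\mu$, $\Delta$, $\rhd$, $\lhd$ and braidings $\tau$. By Remark on $\tau^{\sigma}$, signs then depend only on the underlying permutations, so each non-super diagrammatic computation transfers verbatim. Concretely, I will define the product on $A\otimes B$ as the composite
\[
(A\otimes B)\otimes(A\otimes B)\xrightarrow{\Id\otimes\Psi\otimes\Id}A\otimes A\otimes B\otimes B\xrightarrow{\mu_A\otimes\mu_B}A\otimes B ,
\]
where $\Psi\colon B\otimes A\to A\otimes B$ is the morphism $(\rhd\otimes\lhd)\circ(\Id\otimes\tau_{BA}\otimes\Id)\circ(\Delta_B\otimes\Delta_A)$. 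This reproduces \eqref{eq:matched-pair-cross-mult} together with \eqref{eq:matched-pair-mult}.

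First I show the structure is a superalgebra. Unitality of the product follows from $b\rhd1=\epsilon(b)$, $1\lhd a=\epsilon(a)$, and the (co)unitality of the module-coalgebra actions. For associativity, by the standard factorization-algebra argument, it suffices to show that $\Psi$ is compatible with the multiplications, in the sense that
\[
\Psi\circ(\Id_B\otimes\mu_A)=(\mu_A\otimes\Id)(\Id\otimes\Psi)(\Psi\otimes\Id)\qquad\text{and}\qquad \Psi\circ(\mu_B\otimes\Id_A)=(\Id\otimes\mu_B)(\Psi\otimes\Id)(\Id\otimes\Psi).
\]
Expanding the left side of the first identity using that $\Delta_A$ is multiplicative, the $A$-component reduces to the first compatibility condition. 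The $B$-component reduces to $\lhd$ being a right action together with $\lhd$ being a coalgebra map, which produces the iterated Sweedler terms in the correct order. The second identity is symmetric, using the second compatibility condition.

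Second, I check that $\Delta_{A\otimes B}=(\Id\otimes\tau_{AB}\otimes\Id)\circ(\Delta_A\otimes\Delta_B)$ and $\epsilon_A\otimes\epsilon_B$ are superalgebra morphisms; coassociativity and counitality are automatic, since this is just the tensor product supercoalgebra. Multiplicativity on products of the form $(a\otimes1)(a'\otimes1)$ and $(1\otimes b)(1\otimes b')$ is inherited from $A$ and $B$. By associativity, it then reduces to the cross products $(1\otimes b)(a\otimes1)$, i.e.\ to the statement that $\Psi$ is a supercoalgebra morphism, $\Delta_{A\otimes B}\circ\Psi=(\Psi\otimes\Psi)\circ\Delta_{B\otimes A}$. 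Expanding both sides with coassociativity and the fact that $\rhd,\lhd$ are coalgebra maps leaves exactly one mismatch in the middle legs. The left side contains $(b\lhd a)\otimes(b\rhd a)$-type terms in the order that the third compatibility condition equates with the braided order. The counit part uses $\epsilon(b\rhd a)=\epsilon(b)\epsilon(a)$ and similarly for $\lhd$.

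The main obstacle is this sign bookkeeping in $\Delta\circ\Psi$, where four Sweedler indices per factor appear. I would avoid coordinates by drawing it as a string diagram in $\sVect$ and invoking the coherence of the symmetric braiding. For the antipode, I will verify that $S(a\otimes b)=\Psi(S_B b\otimes S_A a)=(1\otimes S_Bb)(S_Aa\otimes1)$ is a convolution inverse of $\Id$. This is immediate from the multiplicativity just proven, since $\Id$ factors as $a\otimes b=(a\otimes1)(1\otimes b)$ and the convolution inverse of a product of algebra maps is the reversed product of inverses. This identifies the antipode with $S_A\otimes S_B$ in the sense of the statement, i.e.\ composed through the cross multiplication.
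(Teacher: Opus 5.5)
Your plan is essentially the paper's proof. Your two $\Psi$-identities are exactly its ``Associativity 1/2'' checks, unitality and the counit are handled the same way, and $\Psi$ being a supercoalgebra map is precisely its ``Comultiplication and multiplication'' computation via the third compatibility condition; you just replace the explicit Sweedler sign bookkeeping by the twisted-tensor-product reduction and coherence in $\sVect$.

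The paper's proof never actually verifies the antipode claim. Your convolution argument ($\Id=f\star g$ with $f(a\otimes b)=\epsilon(b)\,a\otimes 1$ and $g(a\otimes b)=\epsilon(a)\,1\otimes b$, hence $S=g^{-1}\star f^{-1}$) is correct. You also rightly read ``$S_A\otimes S_B$'' as $S(a\otimes b)=S(1\otimes b)S(a\otimes 1)$, since the literal tensor-product map is not an antipode in general. However, your formula is missing the Koszul sign: it should be $S(a\otimes b)=(-1)^{|a||b|}(1\otimes S_Bb)(S_Aa\otimes 1)$, i.e.\ $S=\Psi\circ(S_B\otimes S_A)\circ\tau_{AB}$.
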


\begin{proof}
By the definition of module supercoalgebra structures and module actions, the following identities hold:
\begin{align*}
  \Delta(b \rhd a) &= \sum_{(a)(b)} (-1)^{|b_{2}||a_{1}|} (b_{1} \rhd a_{1}) \otimes (b_{2} \rhd a_{2}) \,,\qquad
  \epsilon(b \rhd a) = \epsilon(b)\epsilon(a) \,,\\
  \Delta(b \lhd a) &= \sum_{(a)(b)} (-1)^{|b_{2}||a_{1}|} (b_{1} \lhd a_{1}) \otimes (b_{2} \lhd a_{2}) \,,\qquad
  \epsilon(b \lhd a) = \epsilon(b)\epsilon(a) \,,
\end{align*}
as well as:
\begin{align*}
  b \rhd (b' \rhd a) &= bb' \rhd a \,,\qquad 1 \rhd a = a \,,\\
  (b \lhd a) \lhd a' &= b \lhd aa' \,,\qquad b \rhd 1 = b \,.
\end{align*}
We now verify the superbialgebra axioms. Since many of the required conditions hold trivially by construction, we exhibit 
only the nontrivial cases.

\smallskip
\noindent 
$\bullet$ \textbf{Associativity 1.}
\begin{align*}
  & (1 \otimes b)\{(1 \otimes b')(a \otimes 1)\}
    = (1 \otimes b) \sum_{(b')(a)} (-1)^{|b'_{2}||a_{1}|} (b'_{1} \rhd a_{1}) \otimes (b'_{2} \lhd a_{2}) \\
  &= 
    \sum_{(b)(b')(a)} (-1)^{|b'_{2}||a_{1}|} (-1)^{|b'_{1,2}||a_{1,1}|} (-1)^{|b_{2}|(|b'_{1,1}|+|a_{1,1}|)}
    \{b_{1} \rhd (b'_{1,1} \rhd a_{1,1})\} \otimes \{b_{2} \lhd (b'_{1,2} \rhd a_{1,2})\}(b'_{2} \lhd a_{2})\\
  &= 
    \sum_{(b)(b')(a)} (-1)^{|b'_{3}|(|a_{1}|+|a_{2}|)} (-1)^{|b'_{2}||a_{1}|} (-1)^{|b_{2}|(|b'_{1}|+|a_{1}|)}
    \{b_{1} \rhd (b'_{1} \rhd a_{1})\} \otimes \{b_{2} \lhd (b'_{2} \rhd a_{2})\}(b'_{3} \lhd a_{3})\\
  &= 
    \sum_{(b)(b')(a)} (-1)^{|b'_{2}||a_{1}|} (-1)^{|b_{2}|(|b'_{1}|+|a_{1}|)}
    \{b_{1}b'_{1} \rhd a_{1}\} \otimes \{b_{2}b'_{2} \lhd a_{2}\}\,,
\end{align*}
which coincides with
\begin{align*}
  \{(1 \otimes b)(1 \otimes b')\} (a \otimes 1) = (1 \otimes bb')(a \otimes 1) = 
  \sum_{(b)(b')(a)} (-1)^{|b_{2}||b'_{1}|} (-1)^{(|b_{2}|+|b'_{2}|)|a_{1}|} 
  (b_{1}b'_{1} \rhd a_{1}) \otimes (b_{2}b'_{2} \lhd a_{2}) \,.
\end{align*}

\noindent 
$\bullet$ \textbf{Associativity 2.}
\begin{align*}
  & \{(1 \otimes b)(a \otimes 1)\}(a' \otimes 1) = 
    \sum_{(b)(a)} (-1)^{|b_{2}||a_{1}|} \{(b_{1} \rhd a_{1}) \otimes (b_{2} \lhd a_{2})\}(a' \otimes 1)\\
  &= \sum_{(b)(a)(a')} (-1)^{|b_{2}||a_{1}|} (-1)^{|b_{2,2}||a_{2,1}|} (-1)^{(|b_{2,2}|+|a_{2,2}|)|a'_{1}|}
     \{(b_{1} \rhd a_{1})((b_{2,1} \lhd a_{2,1}) \rhd a'_{1})\} \otimes \{(b_{2,2} \lhd a_{2,2}) \lhd a'_{2}\}\\
  &= \sum_{(b)(a)(a')} (-1)^{(|b_{2}|+|b_{3}|)|a_{1}|} (-1)^{|b_{3}||a_{2}|} (-1)^{(|b_{3}|+|a_{3}|)|a'_{1}|}
     \{(b_{1} \rhd a_{1})((b_{2} \lhd a_{2}) \rhd a'_{1})\} \otimes \{(b_{3} \lhd a_{3}) \lhd a'_{2}\}\\
  &= \sum_{(b)(a)(a')} (-1)^{|b_{2}||a_{1}|} (-1)^{(|b_{2}|+|a_{2}|)|a'_{1}|}
     \{b_{1} \rhd a_{1}a'_{1}\} \otimes \{b_{2} \lhd a_{2}a'_{2}\}\,,
\end{align*}
which coincides with
\begin{align*}
  (1 \otimes b)\{(a \otimes 1)(a' \otimes 1)\} = (1 \otimes b)(aa' \otimes 1) = 
  \sum_{(b)(a)(a')} (-1)^{|a_{2}||a'_{1}|} (-1)^{|b_{2}|(|a_{1}|+|a'_{1}|)} 
  (b_{1} \rhd a_{1}a'_{1}) \otimes (b_{2} \lhd a_{2}a'_{2}) \,.
\end{align*}

\noindent 
$\bullet$ \textbf{Unit 1.}
\begin{align*}
  (1 \otimes b)(1 \otimes 1) = \sum_{(b)} (b_{1} \rhd 1) \otimes (b_{2} \lhd 1) = 
  \sum_{(b)} \epsilon(b_{1}) \otimes b_{2} = 1 \otimes b \,.
\end{align*}

\noindent 
$\bullet$ \textbf{Unit 2.}
\begin{align*}
  (1 \otimes 1)(a \otimes 1) = \sum_{(a)} (1 \rhd a_{1}) \otimes (1 \lhd a_{2}) = 
  \sum_{(a)} a_{1} \otimes \epsilon(a_{2}) = a \otimes 1 \,.
\end{align*}

\noindent 
$\bullet$ \textbf{Comultiplication and multiplication.}
\begin{align*}
  & \Delta\{(1 \otimes b)(a \otimes 1)\}
    = \sum_{(a)(b)} (-1)^{|b_{2}||a_{1}|} \Delta\{(b_{1} \rhd a_{1}) \otimes (b_{2} \lhd a_{2})\} \\
  & = \sum_{(a)(b)} (-1)^{(|b_{3}|+|b_{4}|)(|a_{1}|+|a_{2}|)} (-1)^{|b_{2}||a_{1}|} (-1)^{|b_{4}||a_{3}|}
    (-1)^{(|b_{2}|+|a_{2}|)(|b_{3}|+|a_{3}|)}\\
  &\hspace{30pt}\cdot
   \{(b_{1} \rhd a_{1}) \otimes (b_{3} \lhd a_{3})\} \otimes \{(b_{2} \rhd a_{2}) \otimes (b_{4} \lhd a_{4})\} \\
  & = \sum_{(a)(b)} (-1)^{|b_{4}||a_{1}|} 
    (-1)^{(|b_{2}|+|b_{3}|)|a_{1}|} (-1)^{|b_{4}|(|a_{2}|+|a_{3}|)} (-1)^{|b_{3}||a_{2}|} 
    (-1)^{(|b_{2}|+|a_{2}|)(|b_{3}|+|a_{3}|)}\\
  &\hspace{30pt}\cdot
   \{(b_{1} \rhd a_{1}) \otimes (b_{3} \lhd a_{3})\} \otimes \{(b_{2} \rhd a_{2}) \otimes (b_{4} \lhd a_{4})\} \\
  & = \sum_{(a)(b)} (-1)^{|b_{4}||a_{1}|}
    (-1)^{(|b_{2}|+|b_{3}|)|a_{1}|} (-1)^{|b_{4}|(|a_{2}|+|a_{3}|)} (-1)^{|b_{3}||a_{2}|}\\
  &\hspace{30pt}\cdot
   \{(b_{1} \rhd a_{1}) \otimes (b_{2} \lhd a_{2})\} \otimes \{(b_{3} \rhd a_{3}) \otimes (b_{4} \lhd a_{4})\} \,,
\end{align*}
which coincides with
\begin{align*}
  &\Delta(1 \otimes b)\Delta(a \otimes 1) = 
   \sum_{(a)(b)} (-1)^{|b_{2}||a_{1}|} (1 \otimes b_{1})(a_{1} \otimes 1) \otimes (1 \otimes b_{2})(a_{2} \otimes 1) \\
  &= \sum_{(a)(b)} (-1)^{(|b_{3}|+|b_{4}|)(|a_{1}|+|a_{2}|)}
   (-1)^{|b_{2}||a_{1}|} (-1)^{|b_{4}||a_{3}|} \{(b_{1} \rhd a_{1}) \otimes (b_{2} \lhd a_{2})\}
   \otimes \{(b_{3} \rhd a_{3}) \otimes (b_{4} \lhd a_{4})\} \,.
\end{align*}

\noindent 
$\bullet$ \textbf{Counit and multiplication.} 
Since $\epsilon$ vanishes on odd-degree elements:
\begin{align*}
  \epsilon\{(1 \otimes b)(a \otimes 1)\}
  &= \sum_{(a)(b)} (-1)^{|b_{2}||a_{1}|} \epsilon\{(b_{1} \rhd a_{1}) \otimes (b_{2} \lhd a_{2})\}
   = \sum_{(a)(b)} (-1)^{|b_{2}||a_{1}|} \epsilon(b_{1} \rhd a_{1})\epsilon(b_{2} \lhd a_{2}) \\
  &= \sum_{(a)(b)} (-1)^{|b_{2}||a_{1}|} \epsilon(b_{1})\epsilon(a_{1})\epsilon(b_{2})\epsilon(a_{2})
   = \sum_{(a)(b)} \epsilon(b_{1})\epsilon(a_{1})\epsilon(b_{2})\epsilon(a_{2}) 
   = \epsilon(b)\epsilon(a) \,,
\end{align*}
which coincides with $\epsilon(1 \otimes b)\epsilon(a \otimes 1) = \epsilon(b)\epsilon(a)$.
\end{proof}


\subsection{Convolution product}\label{ssec:convolution product}
\

Let us briefly recall the \emph{convolution product} that will be frequently used below. Let $(A, \mu_{A}, \eta_{A})$ 
be a superalgebra, $(C, \Delta_{C}, \epsilon_{C})$ be a supercoalgebra, and consider the space of morphisms $\Hom_{\sVect}(C,A)$. 
For any two morphisms $\phi, \psi \colon C \to A$, their convolution $\phi \star \psi \colon C \to A$ is defined as 
the composition $\mu_{A} \circ (\phi \otimes \psi) \circ \Delta_{C}$. This equips $\Hom_{\sVect}(C,A)$ with a unital 
associative (non-super) algebra structure, with the identity element $\eta_{A} \circ \epsilon_{C}$.
A morphism is called \emph{convolution invertible} if it admits an inverse with respect to this product.

\begin{Rem}\label{rem:convolution}
Here, we restrict the convolution product to the space of morphisms $\Hom_{\sVect}(C,A)$, i.e.\ degree-preserving linear maps. 
Alternatively, one could use the internal $\Hom_{\BK}(C,A)$ (cf.\ Subsection~\ref{ssec:super A}) consisting of all linear maps, 
in which case the convolution algebra naturally becomes a superalgebra. Unless stated otherwise, we shall work in the space 
of morphisms and thus frequently omit the subscript $\sVect$.
\end{Rem}

The following functoriality properties of the convolution product are straightforward:

\begin{Lem}\label{lem:conv prod id}
Let $A$ and $A'$ be superalgebras, and let $C$ and $C'$ be supercoalgebras.

\smallskip
\noindent
(a) For any superalgebra morphism $\phi \colon A \to A'$, the postcomposition $\phi_{*} \colon \Hom(C,A) \to \Hom(C,A')$
is an algebra morphism (i.e.\ it preserves the convolution product).

\smallskip
\noindent
(b) For any supercoalgebra morphism $\phi \colon C \to C'$, the precomposition $\phi^{*} \colon \Hom(C',A) \to \Hom(C,A)$
is an algebra morphism (i.e.\ it preserves the convolution product).
\end{Lem}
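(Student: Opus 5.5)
The plan is to unwind the definition of the convolution product and use only the defining properties of superalgebra and supercoalgebra morphisms. Both statements are one-line diagram chases once convolution is written as $\phi\star\psi = \mu_A\circ(\phi\otimes\psi)\circ\Delta_C$. Since all maps involved are morphisms in $\sVect$ (degree-preserving), the tensor products $\phi\otimes\psi$ are again morphisms. No Koszul signs are introduced by composing them, so the argument is literally the non-super one.

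For (a), I will fix $\phi\colon A\to A'$ a superalgebra morphism and $\alpha,\beta\in\Hom(C,A)$. I will compute
\[
\phi_*(\alpha\star\beta)=\phi\circ\mu_A\circ(\alpha\otimes\beta)\circ\Delta_C
=\mu_{A'}\circ(\phi\otimes\phi)\circ(\alpha\otimes\beta)\circ\Delta_C,
\]
using $\phi\circ\mu_A=\mu_{A'}\circ(\phi\otimes\phi)$. Then I will rewrite $(\phi\otimes\phi)\circ(\alpha\otimes\beta)=(\phi\circ\alpha)\otimes(\phi\circ\beta)$ by functoriality of $\otimes$ in $\sVect$. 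This holds without signs because $\phi$ is even. The result is $\phi_*\alpha\star\phi_*\beta$. For the unit, $\phi\circ\eta_A\circ\epsilon_C=\eta_{A'}\circ\epsilon_C$ since $\phi$ is unital. Linearity of $\phi_*$ is clear.

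For (b), I will dualize. With $\phi\colon C\to C'$ a supercoalgebra morphism, I will write
\[
\phi^*(\alpha\star\beta)=\mu_A\circ(\alpha\otimes\beta)\circ\Delta_{C'}\circ\phi
=\mu_A\circ(\alpha\otimes\beta)\circ(\phi\otimes\phi)\circ\Delta_C,
\]
using $\Delta_{C'}\circ\phi=(\phi\otimes\phi)\circ\Delta_C$. This again regroups as $(\alpha\circ\phi)\otimes(\beta\circ\phi)$. The unit is handled by $\epsilon_{C'}\circ\phi=\epsilon_C$.

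The only point needing care, which I expect to be the sole "obstacle", is the interchange law $(f\otimes g)\circ(f'\otimes g')=(f\circ f')\otimes(g\circ g')$ in the super setting. In general this acquires a sign $(-1)^{|g||f'|}$. That sign vanishes here precisely because we work in $\Hom_{\sVect}$ (Remark~\ref{rem:convolution}), where all maps are even. I will note this explicitly and conclude.
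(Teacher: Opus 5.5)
Your argument is correct. The paper gives no proof, only calling these properties straightforward, and your check is exactly that routine verification. It uses $\phi\circ\mu_A=\mu_{A'}\circ(\phi\otimes\phi)$ in (a) and $\Delta_{C'}\circ\phi=(\phi\otimes\phi)\circ\Delta_C$ in (b), together with the unit checks. You also correctly point out that the interchange law picks up no Koszul sign, because the convolution product is restricted to $\Hom_{\sVect}$, where all maps are even.
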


We also note the following general result:

\begin{Lem}\label{lem:convolution inverse anti-morphism}
Let $A$ be a superalgebra, $C$ be a supercoalgebra, and consider the space $\Hom(C,A)$ endowed with the convolution product.

\smallskip
\noindent
(a) If $C$ has a superbialgebra structure and $\phi \colon C \to A$ is a superalgebra morphism, then its convolution inverse 
$\phi'$ is a superalgebra morphism from $C$ to $A^{\opp}$ (or equivalently, from $C^{\opp}$ to $A$).

\smallskip
\noindent
(b) If $A$ has a superbialgebra structure and $\phi \colon C \to A$ is a supercoalgebra morphism, then its convolution inverse 
$\phi'$ is a supercoalgebra morphism from $C^{\copp}$ to $A$ (or equivalently, from $C$ to $A^{\copp}$).
\end{Lem}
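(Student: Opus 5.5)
The natural route is to reduce both parts to the classical (non-super) argument, keeping track of Koszul signs, and to take as the definition of the convolution inverse $\phi'$ the two-sided identity $\phi\star\phi' = \eta_A\epsilon_C = \phi'\star\phi$. In each part we exhibit two elements of a suitable convolution algebra that are respectively a left and a right inverse of the same invertible element. Associativity of convolution then forces them to coincide, which gives the claimed (anti)multiplicativity or (anti)comultiplicativity.

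For (a), consider the convolution algebra $\Hom(C\otimes C, A)$, where $C\otimes C$ carries the tensor-product supercoalgebra structure. Here we need the super tensor-product coalgebra, whose coproduct involves the braiding $\tau$. Define three morphisms $C\otimes C\to A$:
\[
F(x\otimes y)=\phi(xy),\qquad G(x\otimes y)=\phi'(xy),\qquad H(x\otimes y)=(-1)^{|x||y|}\phi'(y)\phi'(x).
\]
Since multiplication $\mu_C\colon C\otimes C\to C$ is a supercoalgebra morphism (this is the bialgebra axiom), Lemma~\ref{lem:conv prod id}(b) shows that $G=\mu_C^{*}\phi'$ is a convolution inverse of $F=\mu_C^{*}\phi$. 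The remaining step is to check by a Sweedler computation that $H\star F=\eta\epsilon$. Using that $\phi$ is multiplicative, we have $F(x_1\otimes y_1)=\phi(x_1)\phi(y_1)$. The innermost pair $\phi'(y_1)\phi(y_2)$ then collapses to $\epsilon(y)$, and afterwards $\phi'(x_1)\phi(x_2)$ collapses to $\epsilon(x)$. The signs to track are the factor $(-1)^{|x_2||y_1|}$ coming from the tensor coproduct and the factor in $H$; these cancel once one uses that $\epsilon$ vanishes on odd elements. Hence $H=G$, i.e.\ $\phi'(xy)=(-1)^{|x||y|}\phi'(y)\phi'(x)$. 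Together with $\phi'(1)=1$, which follows because $1$ is grouplike, this says exactly that $\phi'$ is a superalgebra morphism $C\to A^{\opp}$, equivalently $C^{\opp}\to A$.

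For (b) we argue dually in $\Hom(C, A\otimes A)$, where $A\otimes A$ carries the super tensor-product algebra structure. Take
\[
F=\Delta_A\circ\phi,\qquad G=\Delta_A\circ\phi',\qquad H=\tau\circ(\phi'\otimes\phi')\circ\Delta_C.
\]
Since $\Delta_A$ is a superalgebra morphism, Lemma~\ref{lem:conv prod id}(a) shows that $G$ is a convolution inverse of $F$. We then verify that $F\star H=\eta\epsilon$. This uses $\Delta_A\phi=(\phi\otimes\phi)\Delta_C$ together with the sign-twisted multiplication in $A\otimes A$. After reordering, the inner factors pair as $\phi(c_2)\phi'(c_3)$ and collapse to $\epsilon$, and the outer factors then collapse in the same way; the signs again cancel. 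Hence $\Delta_A\phi'=\tau(\phi'\otimes\phi')\Delta_C$. Counitality, $\epsilon_A\phi'=\epsilon_C$, follows by applying Lemma~\ref{lem:conv prod id}(a) to the algebra map $\epsilon_A$. Together these show that $\phi'\colon C^{\copp}\to A$ is a supercoalgebra morphism.

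The main obstacle is bookkeeping the Koszul signs in the Sweedler computations for $H\star F$ and $F\star H$: one must confirm that the sign from the braided tensor structure exactly cancels the sign in $H$, and this is where the super setting genuinely differs from the classical proof. Everything else is formal.
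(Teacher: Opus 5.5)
Your proposal is correct and is essentially the paper's proof. In both, $\phi'\circ\mu_C$ is identified as the convolution inverse of $\phi\circ\mu_C$ via Lemma~\ref{lem:conv prod id}(b), the anti-multiplicative candidate is shown by a Sweedler computation to be a one-sided inverse (the paper checks $f\star g$ rather than your $H\star F$), uniqueness of inverses gives the claim, and (b) is handled dually. One cosmetic slip: in $H\star F=\sum \pm\,\phi'(y_1)\phi'(x_1)\phi(x_2)\phi(y_2)$ the innermost pair is the $x$-pair, not the $y$-pair, but this does not affect the conclusion; your explicit checks of $\phi'(1)=1$ and $\epsilon_A\phi'=\epsilon_C$ fill in details the paper leaves implicit.
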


\begin{proof}
(a) As the multiplication $\mu_{C} \colon C \otimes C \to C$ is a supercoalgebra morphism, Lemma~\ref{lem:conv prod id}(b) 
implies that $\phi' \circ \mu_{C} \colon C \otimes C \to A$ is a convolution inverse of $\phi \circ \mu_{C}$. Furthermore, 
one can check that $g \coloneqq \mu_{A} \circ \tau_{AA} \circ (\phi' \otimes \phi')$ is also a convolution inverse of 
$f \coloneqq \phi \circ \mu_{C} = \mu_{A} \circ (\phi \otimes \phi)$ via the following computation:
\begin{multline*}
  (f \star g)(c \otimes c') = \sum_{(c)(c')} (-1)^{|c_{2}||c'_{1}|} f(c_{1} \otimes c'_{1}) g(c_{2} \otimes c'_{2}) \\
  = \sum_{(c)(c')} (-1)^{|c_{2}||c'|} \phi(c_{1})\phi(c'_{1}) \phi'(c'_{2})\phi'(c_{2})
  = \sum_{(c)} \epsilon_{C}(c') \phi(c_{1})\phi'(c_{2}) 
  = \epsilon_{C}(c)\epsilon_{C}(c') 
  = \epsilon_{C \otimes C}(c \otimes c')\,.
\end{multline*}
By the uniqueness of the convolution inverse, we get $\phi' \circ \mu_{C} = \mu_{A} \circ \tau_{AA} \circ (\phi' \otimes \phi')$, 
completing the proof.

\smallskip
\noindent
(b) The proof is completely analogous to that of part (a).
\end{proof}


\subsection{Coadjoint actions}\label{ssec:coadjoint actions}
\

Let $A,B$ be superbialgebras over a field $\BK$, and consider a \emph{dual pairing} between $A$ and $B$, i.e.\ 
a bilinear map
\begin{equation*}
  (\cdot,\cdot) \colon A \otimes B \to \BK \,,
\end{equation*}
satisfying:
\begin{equation*}
\begin{aligned}
  (1,b) &= \epsilon(b) \,, & \qquad (a,1) &= \epsilon(a) \,,\\
  (aa', b) &= (a \otimes a', \Delta(b)) \,, & \qquad (a, bb') &= (\Delta(a), b \otimes b') \,,
\end{aligned}
\end{equation*}
subject to the usual sign convention:
\begin{equation*}
  (a \otimes a', b \otimes b') \coloneqq (-1)^{|a'||b|} (a,b) (a',b') \,.
\end{equation*}

\begin{Rem}\label{rem:dual pairing equivalent morphisms}
Recall that the dual space of a supercoalgebra admits a canonical superalgebra structure. Therefore, a bilinear map 
$(\cdot,\cdot) \colon A \otimes B \to \BK$ is a dual pairing iff the following are superalgebra morphisms:
\begin{equation}\label{eq:pairing_induced_maps}
\begin{split}
  A \to B^{*} \,,\qquad & a \mapsto \big( b \mapsto (a,b) \big) \,,\\
  B \to A^{*} \,,\qquad & b \mapsto \big( a \mapsto (-1)^{|a||b|}(a,b) \big) \,. 
\end{split}
\end{equation}
Alternatively, recall the \emph{restricted dual} $H^{\circ}$ of a superbialgebra $H$, consisting of all linear functionals 
$f \in H^{*}$ such that their pullback $f \circ \mu_{H}$ along the multiplication map $\mu_{H} \colon H \otimes H \to H$ 
lies in the subspace $H^{*} \otimes H^{*} \subseteq (H \otimes H)^{*}$ (cf.~\cite[Section~1.2.8]{ks}). By construction, 
the restricted dual of a superbialgebra is again a superbialgebra. Thus, a bilinear map is a dual pairing if and only if 
the induced linear map $A \to B^*$ takes values in $B^{\circ}$ and defines a superbialgebra morphism $A \to B^{\circ}$ 
(or, analogously, if the induced map $B \to A^*$ defines a superbialgebra morphism $B \to A^{\circ}$).
Since the morphism $A \to B^{\circ}$ canonically identifies with the morphisms
\begin{equation*}
  A^{\opp} \to (B^{\circ})^{\opp} = (B^{\copp})^{\circ} \,,\qquad
  A^{\copp} \to (B^{\circ})^{\copp} = (B^{\opp})^{\circ} \,,\qquad
  A^{\opp,\copp} \to (B^{\circ})^{\opp,\copp} = (B^{\opp,\copp})^{\circ} \,,
\end{equation*}
the initial pairing naturally induces dual pairings between the following superbialgebras:
\begin{equation*}
  A^{\opp} \otimes B^{\copp} \to \BK \,,\qquad
  A^{\copp} \otimes B^{\opp} \to \BK \,,\qquad
  A^{\opp,\copp} \otimes B^{\opp,\copp} \to \BK \,.
\end{equation*}
The underlying morphism of superspaces (hence the explicit formula for the pairing) is identical in all cases.
\end{Rem}

\begin{Rem}\label{rem:pairing-degree-zero}
Since $(\cdot,\cdot)$ is a morphism between superspaces, it is degree-preserving by definition and thus $(a,b) = 0$ if 
$|a| + |b| \neq \bbar{0}$.
\end{Rem}

Since $A \otimes B$ admits a canonical supercoalgebra structure, the space of superspace morphisms from $A \otimes B$ 
to $\BK$ can be endowed with the convolution product: the convolution $\phi \star \psi$ of morphisms $\phi$ and $\psi$ 
is given by
\begin{equation*}
  (\phi \star \psi)(a \otimes b)
  \coloneqq \big( \mu_{\BK} \circ (\phi \otimes \psi) \circ \Delta \big) (a \otimes b) 
  = \sum_{(a)(b)} (-1)^{|a_{2}||b_{1}|} \phi(a_{1} \otimes b_{1}) \psi(a_{2} \otimes b_{2}) \,.
\end{equation*}
Moreover, the morphism $a \otimes b \mapsto \epsilon(a)\epsilon(b)$ is the identity element for the convolution product.
A dual pairing is called \emph{convolution invertible} if it admits a convolution inverse. Notably, any dual 
pairing $(-,-) \colon A \otimes B \to \BK$ between Hopf superalgebras is convolution invertible, with its inverse given by:
\begin{equation}\label{eq:inverse-pairing}
  a \otimes b \mapsto (S(a),b) = (a,S(b)) \,.
\end{equation}

Let $(-,-) \colon A \otimes B \to \BK$ be a convolution invertible dual pairing between superbialgebras, 
and let $(-,-)'$ denote its convolution inverse. By definition, the following identity holds:
\begin{equation}\label{eq:inverse pairing identity}
  \sum_{(a)(b)} (-1)^{|a_{2}||b_{1}|} (a_{1},b_{1}) (a_{2},b_{2})'
  = \epsilon(a)\epsilon(b)
  = \sum_{(a)(b)} (-1)^{|a_{2}||b_{1}|} (a_{1},b_{1})' (a_{2},b_{2}) \,.
\end{equation}
We also note the following:

\begin{Lem}\label{lem:inverse_dual_pairing}
Let $\sigma \colon A \otimes B \to \BK$ be a convolution invertible dual pairing. Its convolution inverse $\sigma'$ 
is a dual pairing between $A^{\opp}$ and $B^{\opp}$ (equivalently, a dual pairing between $A^{\copp}$ and $B^{\copp}$ 
by Remark~\ref{rem:dual pairing equivalent morphisms}).
\end{Lem}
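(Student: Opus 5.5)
We verify the four axioms of a dual pairing $A^{\opp}\otimes B^{\opp}\to\BK$ for $\sigma'$: the unit conditions $\sigma'(1,b)=\epsilon(b)$ and $\sigma'(a,1)=\epsilon(a)$, and the two multiplicativity conditions, now with respect to the opposite products. The plan mirrors the proof of Lemma~\ref{lem:convolution inverse anti-morphism}(a): we reinterpret $\sigma'$ as the convolution inverse of a superalgebra morphism and read off anti-multiplicativity from the uniqueness of convolution inverses.

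By Remark~\ref{rem:dual pairing equivalent morphisms}, $\sigma$ is equivalent to the superalgebra morphism $\phi\colon A\to B^{*}$, $a\mapsto \sigma(a,-)$, with $B^{*}$ the dual superalgebra of the supercoalgebra $B$. We check that the convolution product on $\Hom(A\otimes B,\BK)$ corresponds, under the identification $\Hom(A\otimes B,\BK)\simeq \Hom(A,B^{*})$, to the convolution product on $\Hom(A,B^{*})$. Here $A$ is a supercoalgebra and the multiplication on $B^{*}$ is dual to $\Delta_B$. This is a sign bookkeeping exercise: the sign $(-1)^{|a_2||b_1|}$ in the convolution formula should match the sign in the product of $B^{*}$, namely $(f g)(b)=\sum (-1)^{|g||b_1|}f(b_1)g(b_2)$ with $|g|=|a_2|$. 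Under this correspondence, $\sigma'$ becomes the convolution inverse $\phi'$ of $\phi$, and the unit condition $\epsilon$ corresponds to $\eta\circ\epsilon$.

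We now apply Lemma~\ref{lem:convolution inverse anti-morphism}(a) with $C=A$ (a superbialgebra) and target superalgebra $B^{*}$. It gives that $\phi'$ is a superalgebra morphism $A\to (B^{*})^{\opp}$. We then observe that $(B^{*})^{\opp}=(B^{\copp})^{*}$ as superalgebras. Hence $\sigma'$ satisfies $\sigma'(aa',b)=\sigma'(a'\otimes a,\Delta(b))$ up to the appropriate Koszul sign, which is exactly the dual pairing condition for $A^{\opp}$ against $B^{\copp}$ in the first slot, and $\sigma'(1,b)=\epsilon(b)$.

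Symmetrically, we use the second morphism $B\to A^{*}$, $b\mapsto (a\mapsto (-1)^{|a||b|}\sigma(a,b))$, and again invoke Lemma~\ref{lem:convolution inverse anti-morphism}(a). This gives the condition in the second slot: $\sigma'(a,bb')=\sigma'(\Delta^{\opp}(a),b\otimes b')$ with signs, together with $\sigma'(a,1)=\epsilon(a)$. Combining the two slots, $\sigma'$ is a pairing of $A^{\opp}$ with $B^{\opp}$. Read with the coproduct twisted instead, it is a pairing of $A^{\copp}$ with $B^{\copp}$, and Remark~\ref{rem:dual pairing equivalent morphisms} gives the equivalence of these two formulations.

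The main obstacle is the sign verification: first, that the convolution on $A\otimes B$ (with the factor $(-1)^{|a_2||b_1|}$) really matches the convolution on $\Hom(A,B^{*})$; and second, that the extra sign $(-1)^{|a||b|}$ in the second identification is compatible with the first. If this direct matching gets messy, the fallback is a direct computation. For the first slot, we show that both $(a\otimes a',b)\mapsto\sigma'(aa',b)$ and the twisted product $(a\otimes a',b)\mapsto \sum\pm\sigma'(a',b_1)\sigma'(a,b_2)$ are convolution inverses of $(a\otimes a',b)\mapsto\sigma(aa',b)$ on the coalgebra $(A\otimes A)\otimes B$. Uniqueness of inverses then gives equality, exactly as in the proof of Lemma~\ref{lem:convolution inverse anti-morphism}.
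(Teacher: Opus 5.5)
Your argument is the paper's: identify $\sigma$ with $\lambda(\sigma)\colon A\to B^{*}$, check that this identification intertwines the convolution products, apply Lemma~\ref{lem:convolution inverse anti-morphism}(a), and then repeat the argument with $B\to A^{*}$ for the second slot. There is one labeling slip: $\phi'\colon A\to (B^{*})^{\opp}=(B^{\copp})^{*}$ is the first-slot condition for the pair $(A,B^{\copp})$, or equivalently for $(A^{\opp},B^{\opp})$ since $(B^{\opp})^{*}=B^{*}$ as superalgebras; it is not the condition for $(A^{\opp},B^{\copp})$, which would merely restate the condition for $\sigma$ itself, although your displayed formula and final conclusion are correct.
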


\begin{proof}
Consider the superalgebra structure on the dual space $B^{*}$, and let $\lambda \colon \Hom(A \otimes B, \BK) \iso \Hom(A, B^{*})$ 
be the canonical isomorphism of vector spaces mapping $\phi$ to $\lambda(\phi) \colon a \mapsto (b \mapsto \phi(a \otimes b))$. 
We claim that $\lambda$ intertwines the convolution products. Indeed, for any $\phi, \psi \in \Hom(A \otimes B, \BK)$, we have:
\begin{align*}
  (\lambda(\phi) \star \lambda(\psi))(a)(b)
  &= \sum_{(a)} \big( \lambda(\phi)(a_{1}) \cdot \lambda(\psi)(a_{2}) \big)(b) 
  = \sum_{(a)(b)} (-1)^{|a_{2}||b_{1}|} \lambda(\phi)(a_{1})(b_{1}) \cdot \lambda(\psi)(a_{2})(b_{2}) \\
  &= \sum_{(a)(b)} (-1)^{|a_{2}||b_{1}|} \phi(a_{1} \otimes b_{1}) \cdot \psi(a_{2} \otimes b_{2}) 
   = (\phi \star \psi)(a \otimes b) = \lambda(\phi \star \psi)(a)(b) \,.
\end{align*}
Since $\sigma$ is a dual pairing, $\lambda(\sigma) \colon A \to B^{*}$ is a superalgebra morphism 
(see Remark~\ref{rem:dual pairing equivalent morphisms}). By Lemma~\ref{lem:convolution inverse anti-morphism}(a), its 
convolution inverse $\lambda(\sigma') = \lambda(\sigma)'$ is a superalgebra morphism from $A^{\opp}$ to $B^{*}$, which 
establishes the first condition of~\eqref{eq:pairing_induced_maps} for $A^{\opp}$ and $B^{\opp}$. 

A completely similar argument utilizing $A^{*}$ and the canonical isomorphism $\Hom(A \otimes B, \BK) \iso \Hom(B, A^{*})$ 
completes the proof.
\end{proof}

A convolution invertible dual pairing canonically induces a double cross product algebra structure via the following 
coadjoint actions:

\begin{Def}
The \emph{right coadjoint action} $\lhd_{\Coad} \colon B \otimes A \to B$ of $A$ on $B$ is defined by:
\begin{equation*}
  b \lhd_{\Coad} a = \sum_{(a)(b)} (-1)^{|a|} 
  (-1)^{|b_{1}||b_{2}| + |b_{1}||b_{3}|} (a_{1}, b_{1}) (a_{2}, b_{3})' b_{2} \,.
\end{equation*}
Analogously, the \emph{right coadjoint action} $\lhd_{\Coad} \colon A \otimes B \to A$ of $B$ on $A$ is defined by:
\begin{equation*}
  a \lhd_{\Coad} b = \sum_{(a)(b)}
  (-1)^{|a_{1}||a_{2}| + |a_{1}||a_{3}|} (a_{1}, b_{1}) (a_{3}, b_{2})' a_{2} \,.
\end{equation*}
\end{Def}

These actions are structurally motivated by duality with the \emph{left adjoint actions} of Hopf algebras $A$ and $B$ on 
themselves, given by:
\begin{align*}
  & \rhd_{\Ad} \colon A \otimes A \to A \,,\qquad
    a \rhd_{\Ad} a' = \sum_{(a)} (-1)^{|a_{2}||a'|} a_{1} a' S(a_{2}) \,,\\
  & \rhd_{\Ad} \colon B \otimes B \to B \,,\qquad
    b \rhd_{\Ad} b' = \sum_{(b)} (-1)^{|b_{2}||b'|} b_{1} b' S(b_{2}) \,.
\end{align*}

\begin{Prop}
Let $A$ and $B$ be Hopf superalgebras. For any $a,a' \in A$ and $b,b' \in B$, the following holds:
\begin{align*}
  (a',b \lhd_{\Coad} a) \;=\; (-1)^{|a|(|a'|+|b|)} (a \rhd_{\Ad} a', b) \qquad \mathrm{and} \qquad 
  (a \lhd_{\Coad} b,b') \;=\; (a, b \rhd_{\Ad} b') \,.
\end{align*}
\end{Prop}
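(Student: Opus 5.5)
We verify both identities by expanding definitions and applying the dual pairing axioms. The key inputs are the relation between the convolution inverse $(\cdot,\cdot)'$ and the antipode, and the super-sign bookkeeping.

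First, since $A$ and $B$ are Hopf superalgebras, \eqref{eq:inverse-pairing} gives $(a,b)'=(S(a),b)=(a,S(b))$. We substitute this into the definition of $\lhd_{\Coad}$, so that the second factor becomes an ordinary pairing. For the first identity, we pair $b\lhd_{\Coad}a$ with $a'$:
\begin{equation*}
  (a',b\lhd_{\Coad}a)=\sum (-1)^{|a|}(-1)^{|b_1||b_2|+|b_1||b_3|}(a_1,b_1)\,(S(a_2),b_3)\,(a',b_2).
\end{equation*}
Next we collect the three scalar factors into one pairing of the form $(x\otimes y\otimes z,\,b_1\otimes b_2\otimes b_3)$. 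This uses the convention $(a\otimes a',b\otimes b')=(-1)^{|a'||b|}(a,b)(a',b')$, iterated via coassociativity. We then apply the multiplicativity axiom $(xyz,b)=(x\otimes y\otimes z,\Delta^{(2)}(b))$. With $x=a_1$, $y=a'$, $z=S(a_2)$, this yields $(a_1a'S(a_2),b)$, multiplied by a sign.

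It then remains to check that this sign equals $(-1)^{|a|(|a'|+|b|)}\cdot(-1)^{|a_2||a'|}$, which is what the definition of $\rhd_{\Ad}$ requires. In carrying this out, we use Remark~\ref{rem:pairing-degree-zero} to replace $|b_i|$ by the degree of the element paired with it ($|b_1|=|a_1|$, $|b_2|=|a'|$, $|b_3|=|a_2|$) and $|b|$ by $|a_1|+|a'|+|a_2|$. We also use $|a_1|+|a_2|=|a|$ and the fact that $S$ is even.

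The second identity is handled in the same way. We expand $a\lhd_{\Coad}b$, replace $(a_3,b_2)'$ by $(a_3,S(b_2))$, pair the result with $b'$, and reassemble the three factors via the coproduct axiom $(a,xyz)=(\Delta^{(2)}(a),x\otimes y\otimes z)$ with $x\otimes y\otimes z=b_1\otimes b'\otimes S(b_2)$. This produces $(a,b_1b'S(b_2))$. The remaining sign should reduce exactly to the $(-1)^{|b_2||b'|}$ appearing in $b\rhd_{\Ad}b'$, with no global prefactor.

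The main obstacle is the sign bookkeeping. Reordering the Sweedler factors (moving $a'$ between $a_1$ and $S(a_2)$) introduces Koszul signs that must cancel precisely against the explicit signs $(-1)^{|b_1||b_2|+|b_1||b_3|}$ and $(-1)^{|a|}$ in the definition of $\lhd_{\Coad}$. We will carry out this cancellation carefully, working modulo $2$ and using degree matching at every step. We also check that the choice between $(S(a_2),b_3)$ and $(a_2,S(b_3))$ does not matter, which follows from \eqref{eq:inverse-pairing}.
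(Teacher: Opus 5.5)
Your proposal is correct and is essentially the paper's argument run in the opposite direction: the paper starts from $(a \rhd_{\Ad} a', b)$, expands it via the pairing axioms into $(a_{1} \otimes a' \otimes S(a_{2}),\, b_{1} \otimes b_{2} \otimes b_{3})$, and simplifies signs using Remark~\ref{rem:pairing-degree-zero} and \eqref{eq:inverse-pairing}, which are exactly the ingredients you use. Your target signs are the right ones: $(-1)^{|a|(|a'|+|b|)}(-1)^{|a_{2}||a'|}$ for the first identity, and only $(-1)^{|b_{2}||b'|}$ with no global prefactor for the second.
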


\begin{proof}
These identities follow from direct computation:
\begin{align*}
  (-1)^{|a|(|a'|+|b|)} (a \rhd_{\Ad} a', b)
  &= \sum_{(a)} (-1)^{|a|(|a'|+|b|)} (-1)^{|a_{2}||a'|} (a_{1} a' S(a_{2}), b) \\
  &= \sum_{(a)(b)} (-1)^{|a|(|a'|+|b|)} (-1)^{|a_{2}||a'|}
     (a_{1} \otimes a' \otimes S(a_{2}), b_{1} \otimes b_{2} \otimes b_{3})\\
  &= \sum_{(a)(b)} (-1)^{|a|(|a'|+|b|)} (-1)^{|a_{2}||a'|}
     (-1)^{|b_{1}||b_{2}| + |b_{1}||b_{3}| + |b_{2}||b_{3}|} (a_{1}, b_{1}) (a', b_{2}) (S(a_{2}), b_{3}) \\
  &= \sum_{(a)(b)} (-1)^{|a|} (-1)^{|b_{1}||b_{2}| + |b_{1}||b_{3}|} (a_{1}, b_{1}) (a', b_{2}) (S(a_{2}), b_{3}) \,,
\end{align*}
and
\begin{align*}
  (a, b \rhd_{\Ad} b') 
  &= \sum_{(b)} (-1)^{|b_{2}||b'|} (a, b_{1} b' S(b_{2}))
   = \sum_{(a)(b)} (-1)^{|b_{2}||b'|} (a_{1} \otimes a_{2} \otimes a_{3}, b_{1} \otimes b' \otimes S(b_{2})) \\
  &= \sum_{(a)(b)} (-1)^{|b_{2}||b'|}
     (-1)^{|a_{1}||a_{2}| + |a_{1}||a_{3}| + |a_{2}||a_{3}|} (a_{1}, b_{1}) (a_{2}, b') (a_{3}, S(b_{2})) \\
  &= \sum_{(a)(b)} (-1)^{|a_{1}||a_{2}| + |a_{1}||a_{3}|} (a_{1}, b_{1}) (a_{2}, b') (a_{3}, S(b_{2})) \,.\qedhere
\end{align*}
\end{proof}

\begin{Rem}
The right coadjoint action $\lhd_{\Coad} \colon A \otimes B \to A$ of $B$ on $A$ induces a \emph{left coadjoint action} 
$\rhd_{\Coad} \colon B^{\opp} \otimes A \to A$ of $B^{\opp}$ on $A$ via:
\begin{align*}
  b \rhd_{\Coad} a = (-1)^{|a||b|} a \lhd_{\Coad} b =
  \sum_{(a)(b)} (-1)^{|a||b|} (-1)^{|a_{1}||a_{2}| + |a_{1}||a_{3}|} (a_{1}, b_{1}) (a_{3}, b_{2})' a_{2} \,.
\end{align*}
\end{Rem}

Since the supercoalgebra structures of $B$ and $B^{\opp}$ are the same, we note that $\lhd_{\Coad} \colon B \otimes A \to B$ can be 
thought of as a right coadjoint action of $A$ on $B^{\opp}$ as well. The following proposition ensures that the coadjoint actions 
are compatible in the sense of a double cross product.

\begin{Prop}
The morphisms $\lhd_{\Coad} \colon B^{\opp} \otimes A \to B^{\opp}$ and $\rhd_{\Coad} \colon B^{\opp} \otimes A \to A$ 
satisfy the conditions in Proposition~\ref{prop:matched-pair-super}.
\end{Prop}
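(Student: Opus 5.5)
The plan is to check the conditions of Proposition~\ref{prop:matched-pair-super} one at a time, with every verification reduced to identities between pairings. The tools are the dual pairing axioms, the convolution inverse identity~\eqref{eq:inverse pairing identity}, and Lemma~\ref{lem:inverse_dual_pairing}, which says that $(-,-)'$ is a dual pairing between $A^{\opp}$ and $B^{\opp}$. Throughout, I will write the actions in the uniform "sandwich" form
\begin{equation*}
  b \lhd_{\Coad} a = \sum \pm\, (a_{1},b_{1})\,(a_{2},b_{3})'\, b_{2} \,,\qquad
  b \rhd_{\Coad} a = \sum \pm\, (a_{1},b_{1})\,(a_{3},b_{2})'\, a_{2} \,.
\end{equation*}
Each condition then becomes an equality of two expressions in which some iterated coproduct of $a$ and of $b$ is paired against $(-,-)$ and $(-,-)'$ in a prescribed pattern. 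Coassociativity lets me relabel Sweedler indices freely, so the bookkeeping is mostly the Koszul signs, which I will track by the braiding rule $\tau$ rather than by hand.

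\textbf{Module structures.} I first show that $\lhd_{\Coad}$ is a right $A$-action on $B^{\opp}$. The unit axiom $b \lhd 1 = b$ follows from $(1,b)=\epsilon(b)$. For $(b\lhd a)\lhd a' = b\lhd aa'$, I expand $(aa',b_1)$ using the dual pairing property and $(aa',b_3)'$ using the $\opp$-dual-pairing property of $(-,-)'$ from Lemma~\ref{lem:inverse_dual_pairing}. The resulting reversed factor order is exactly what makes $b_2$ sit "inside" the nested sandwich. The left action $\rhd_{\Coad}$ of $B^{\opp}$ on $A$ is handled identically. Its opposite multiplication comes from the fact that $(-,-)'$ reverses multiplication on $B$.

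\textbf{Supercoalgebra compatibility.} Next I check
\begin{equation*}
  \Delta(b\lhd a)=\sum \pm\,(b_1\lhd a_1)\otimes(b_2\lhd a_2)\,,\qquad \epsilon(b\lhd a)=\epsilon(b)\epsilon(a)\,.
\end{equation*}
For the coproduct, I write $\Delta(b_2)$ inside the sandwich and insert $\epsilon(a_{\bullet})\epsilon(b_{\bullet})$ in the middle in the form of~\eqref{eq:inverse pairing identity}, namely $(a_{i},b_{j})'(a_{i+1},b_{j+1})$, which splits one sandwich into two. For the counit, I contract $b_1$ against $b_3$ using~\eqref{eq:inverse pairing identity} directly. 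The same argument applies to $\rhd_{\Coad}$.

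\textbf{Compatibility conditions.} The identities for $b\rhd aa'$ and $bb'\lhd a$ are proved by expanding both sides in terms of pairings and cancelling adjacent pairs $(-,-)(-,-)'$ via~\eqref{eq:inverse pairing identity}, followed by coassociativity. The units $b\rhd 1=\epsilon(b)$ and $1\lhd a=\epsilon(a)$ are immediate.

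\textbf{Main obstacle.} I expect the hardest step to be the last, "braided commutativity" condition
\begin{equation*}
  \sum \pm\,(b_1\lhd a_1)\otimes(b_2\rhd a_2) = \sum \pm\,(b_2\lhd a_2)\otimes(b_1\rhd a_1)\,.
\end{equation*}
Both sides expand into four pairings against $a_{1..4}$ and $b_{1..4}$ (after coassociative relabelling), differing only in which coproduct slot survives. I will first try to rewrite the left side by inserting an $\epsilon$ via~\eqref{eq:inverse pairing identity} between the two factors. This should telescope $(a_2,b_3)'(a_3,b_4)$-type pairs and move the surviving elements into the opposite slots. The real difficulty is verifying that the accumulated Koszul signs match $(-1)^{(|a_1|+|b_1|)(|a_2|+|b_2|)}$. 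I plan to control this by invoking Remark~\ref{rem:pairing-degree-zero}, which says each nonzero pairing forces $|a_i|=|b_j|$. That lets me replace $a$-degrees by the corresponding $b$-degrees and collapse the sign to the required one.
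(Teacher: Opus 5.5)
Your proposal is correct and follows essentially the same route as the paper: a direct Sweedler-notation check of each condition of Proposition~\ref{prop:matched-pair-super}, contracting adjacent $(-,-)'(-,-)$ pairs via \eqref{eq:inverse pairing identity}, recombining the remaining pairings through the dual-pairing property and its opposite version from Lemma~\ref{lem:inverse_dual_pairing}, and simplifying signs with Remark~\ref{rem:pairing-degree-zero}. The paper writes out only the module and module-coalgebra axioms for $\rhd_{\Coad}$ and the identity for $b \rhd_{\Coad} aa'$, calling the rest analogous; the braided-commutativity condition you single out does go through as you plan, since each side contains exactly one contractible pair and both reduce to the same expression $\sum \pm (a_1,b_1)(a_3,b_3)'\, b_2 \otimes a_2$ with identical signs.
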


\begin{proof}
We first verify that $\rhd_{\Coad}$ defines a left $B^{\opp}$-module structure on $A$:
\begin{align*}
& b \rhd_{\Coad} (b' \rhd_{\Coad} a)
= b \rhd_{\Coad} \sum_{(a)(b')} (-1)^{|a||b'|} (-1)^{|a_{1}|(|a_{2}|+|a_{3}|)} (a_{1},b'_{1}) (a_{3},b'_{2})' a_{2}\\
&= \sum_{(a)(b)(b')} (-1)^{|a||b'|} (-1)^{|a_{1}|(|a_{2}|+|a_{3}|+|a_{4}|+|a_{5}|)} (-1)^{(|a_{2}|+|a_{3}|+|a_{4}|)|b|} (-1)^{|a_{2}|(|a_{3}|+|a_{4}|)}\\
&\hspace{50pt}\cdot (a_{1},b'_{1}) (a_{5},b'_{2})' (a_{2},b_{1}) (a_{4},b_{2})' a_{3} \\
&= \sum_{(a)(b)(b')} (-1)^{|a||b'_{2}|} (-1)^{|b'_{1}|+|b|} (-1)^{|b_{2}|(|a_{3}|+|b_{1}|)} \cdot (-1)^{|b_{1}||b'_{1}|+|b_{2}||b'_{2}|} (a_{1} \otimes a_{2},b'_{1} \otimes b_{1}) (a_{4} \otimes a_{5},b_{2} \otimes b'_{2})' a_{3} \\
&= \sum_{(a)(b)(b')} (-1)^{|a||b'_{2}|} (-1)^{|b'_{1}|+|b|} (-1)^{|b_{2}|(|a_{2}|+|b_{1}|)} \cdot (-1)^{|b_{1}||b'|} (a_{1},(b'b)_{1}) (a_{3},(b'b)_{2})' a_{2} \\
&= (-1)^{|b||b'|} \cdot \sum_{(a)(b)(b')} (-1)^{|a|(|b|+|b'|)} (-1)^{|a_{1}|(|a_{2}|+|a_{3}|)} (a_{1},(b'b)_{1}) (a_{3},(b'b)_{2})' a_{2}
= (-1)^{|b||b'|} (b'b) \rhd_{\Coad} a\,,
\end{align*}
as well as 
\begin{align*}
1 \rhd_{\Coad} a = \sum_{(a)} (-1)^{|a_{1}|(|a_{2}|+|a_{3}|)} (a_{1},1)(a_{3},1)' a_{2}
= \sum_{(a)} \epsilon(a_{1})\epsilon(a_{3}) a_{2} = a\,.
\end{align*}

Furthermore, this action is compatible with the supercoalgebra structure of $A$:
\begin{align*}
& \sum_{(a)(b)} (-1)^{|b_{2}||a_{1}|} (b_{1} \rhd_{\Coad} a_{1}) \otimes (b_{2} \rhd_{\Coad} a_{2})\\
&= \sum_{(a)(b)} (-1)^{|b_{2}||a_{1}|} 
(-1)^{|a_{1}||b_{1}|} (-1)^{|a_{1,1}|(|a_{1,2}|+|a_{1,3}|)} (-1)^{|a_{2}||b_{2}|} (-1)^{|a_{2,1}|(|a_{2,2}|+|a_{2,3}|)}\\
&\hspace{40pt}\cdot (a_{1,1},b_{1,1})(a_{1,3},b_{1,2})'(a_{2,1},b_{2,1})(a_{2,3},b_{2,2})' a_{1,2} \otimes a_{2,2}\\
&= \sum_{(a)(b)} (-1)^{|a||b|} 
(-1)^{|a_{2,1}||a_{1,3}|} (-1)^{|a_{1,1}|(|a_{1,2}|+|a_{1,3}|+|a_{2,1}|)} (-1)^{(|a_{1,1}|+|a_{1,3}|+|a_{2,1}|)(|a_{2,2}|+|a_{2,3}|)}\\
&\hspace{40pt}\cdot (a_{1,1},b_{1,1})(a_{1,3},b_{1,2})'(a_{2,1},b_{2,1})(a_{2,3},b_{2,2})' a_{1,2} \otimes a_{2,2}\\
&\overset{\eqref{eq:inverse pairing identity}}{=} \sum_{(a)(b)} (-1)^{|a||b|} 
(-1)^{|a_{1}||a_{2}|} (-1)^{|a_{1}|(|a_{4}|+|a_{5}|)}
\cdot \epsilon(a_{3})\epsilon(b_{2}) (a_{1},b_{1})(a_{5},b_{3})' a_{2} \otimes a_{4}\\
&= \sum_{(a)(b)} (-1)^{|a||b|} 
(-1)^{|a_{1}|(|a_{2}|+|a_{3}|+|a_{4}|)}
\cdot (a_{1},b_{1})(a_{4},b_{2})' a_{2} \otimes a_{3}
= \Delta(b \rhd_{\Coad} a)\,,
\end{align*}
as well as 
\begin{align*}
\epsilon(b \rhd_{\Coad} a)
&= \sum_{(a)(b)} (-1)^{|a||b|} (-1)^{|a_{1}|(|a_{2}|+|a_{3}|)} (a_{1},b_{1}) (a_{3},b_{2})' \epsilon(a_{2})\\
&= \sum_{(a)(b)} (-1)^{|a||b|} (-1)^{|a_{1}||a_{2}|} (a_{1},b_{1}) (a_{2},b_{2})' \overset{\eqref{eq:inverse pairing identity}}{=} \epsilon(a)\epsilon(b)\,.
\end{align*}

Analogous computations confirm that $\lhd_{\Coad}$ defines a right $A$-module supercoalgebra structure on $B^{\opp}$. 
Since the verification of the remaining compatibility conditions from Proposition~\ref{prop:matched-pair-super} is 
purely computational, we explicitly exhibit only the relation governing the left action on a product:
\begin{align*}
&\sum_{(a)(b)} (-1)^{|a_{1}||b_{2}|} (b_{1} \rhd_{\Coad} a_{1}) \cdot ((b_{2} \lhd_{\Coad} a_{2}) \rhd_{\Coad} a')\\
&= \sum_{(a)(b)} (-1)^{|a_{1}||b_{2}|} (-1)^{|a_{1}||b_{1}|} (-1)^{|a_{1,1}|(|a_{1,2}|+|a_{1,3}|)} (-1)^{|a_{2}|} (-1)^{|b_{2,1}|(|b_{2,2}|+|b_{2,3}|)}\\
&\hspace{50pt}\cdot (a_{1,1},b_{1,1})(a_{1,3},b_{1,2})'(a_{2,1},b_{2,1})(a_{2,2},b_{2,3})' a_{1,2} \cdot (b_{2,2} \rhd_{\Coad} a')\\
&= \sum_{(a)(a')(b)} (-1)^{(|a|-|a_{2,2}|)|b|} (-1)^{|a_{1,1}|(|a_{1,2}|+|a_{1,3}|+|a_{2,1}|)} (-1)^{|a_{2,2}|} (-1)^{|a_{1,3}||a_{2,1}|}
(-1)^{|a'|(|b_{2,2}|+|b_{2,3}|)+|a'_{1}|(|a'_{2}|+|a'_{3}|)}\\
&\hspace{50pt}\cdot (a_{1,1},b_{1,1})(a_{1,3},b_{1,2})'(a_{2,1},b_{2,1})(a_{2,2},b_{2,4})'(a'_{1},b_{2,2})(a'_{3},b_{2,3})' a_{1,2}a'_{2} \\
&\overset{\eqref{eq:inverse pairing identity}}{=} \sum_{(a)(a')(b)} (-1)^{(|a|-|a_{4}|)|b|} (-1)^{|a_{1}|(|a_{2}|+|a_{3}|)} (-1)^{|a_{4}|}
(-1)^{|a'|(|b_{3}|+|b_{4}|)+|a'_{1}|(|a'_{2}|+|a'_{3}|)}\\
&\hspace{50pt}\cdot \epsilon(a_{3})\epsilon(b_{2}) (a_{1},b_{1})(a_{4},b_{5})'(a'_{1},b_{3})(a'_{3},b_{4})' a_{2}a'_{2}\\
&= \sum_{(a)(a')(b)} (-1)^{(|a|+|a'|)|b|} (-1)^{|a_{3}||a'_{3}|} (-1)^{(|a_{1}|+|a'_{1}|)(|a_{2}|+|a'_{2}|+|a_{3}|+|a'_{3}|)} (-1)^{|a_{2}||a'_{1}|+|a_{3}||a'_{1}|+|a_{3}||a'_{2}|}\\
&\hspace{50pt}\cdot (a_{1} \otimes a'_{1},b_{1} \otimes b_{2})(a'_{3} \otimes a_{3},b_{3} \otimes b_{4})' a_{2}a'_{2} \\
&= \sum_{(a)(a')(b)} (-1)^{(|a|+|a'|)|b|} (-1)^{(|a_{1}|+|a'_{1}|)(|a_{2}|+|a'_{2}|+|a_{3}|+|a'_{3}|)}
(-1)^{|a_{2}||a'_{1}|+|a_{3}||a'_{1}|+|a_{3}||a'_{2}|}
\cdot (a_{1}a'_{1},b_{1})(a_{3}a'_{3},b_{2})' a_{2}a'_{2}\\
&= b \rhd_{\Coad} (aa')\,.  \qedhere
\end{align*}
\end{proof}

Combining the above result with Proposition~\ref{prop:matched-pair-super}, one obtains the double cross product $A \bowtie B^{\opp}$.

\begin{Prop}
The double cross product $A \bowtie B^{\opp}$ corresponding to the convolution invertible dual pairing has 
the cross-multiplication formula:
\begin{equation}\label{eq:double cross formula 1}
  (1 \otimes b)(a \otimes 1) = \sum_{(a)(b)} 
  (-1)^{(|a|-|a_{3}|)(|b|-|b_{1}|)} (-1)^{|b_{1}|} (-1)^{|a_{3}|} (a_{1}, b_{1}) (a_{3}, b_{3})' a_{2} \otimes b_{2} \,.
\end{equation}
\end{Prop}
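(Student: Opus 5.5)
The cross-multiplication formula \eqref{eq:double cross formula 1} will be obtained by substituting the two coadjoint actions into the general formula \eqref{eq:matched-pair-cross-mult} for $A \bowtie B^{\opp}$. Because $B$ and $B^{\opp}$ share the same supercoalgebra structure, the Sweedler components of $b$ are the same on both sides, and only multiplications inside $B^{\opp}$ would pick up signs. No such products occur here, since each action returns a single component. We therefore start from
\begin{equation*}
  (1 \otimes b)(a \otimes 1) = \sum_{(a)(b)} (-1)^{|b_{2}||a_{1}|} (b_{1} \rhd_{\Coad} a_{1}) \otimes (b_{2} \lhd_{\Coad} a_{2}) \,.
\end{equation*}

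Next we insert the definitions. The left action $b_{1} \rhd_{\Coad} a_{1}$ expands via the three-fold coproduct of $a_{1}$ and the two-fold coproduct of $b_{1}$; it contributes the pairings $(a_{1,1},b_{1,1})(a_{1,3},b_{1,2})'$ and the element $a_{1,2}$. The right action $b_{2} \lhd_{\Coad} a_{2}$ expands via the two-fold coproduct of $a_{2}$ and the three-fold coproduct of $b_{2}$; it contributes $(a_{2,1},b_{2,1})(a_{2,2},b_{2,3})'$ and the element $b_{2,2}$. By coassociativity we relabel everything as $a_{1},\dots,a_{5}$ and $b_{1},\dots,b_{5}$. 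The middle factor $(a_{3},b_{2})'(a_{4},b_{3})$ has adjacent indices, so \eqref{eq:inverse pairing identity}, in the form with the inverse pairing placed first, collapses it to $\epsilon(a_{3})\epsilon(b_{3})$. This leaves $(a_{1},b_{1})(a_{5},b_{5})' a_{2} \otimes b_{4}$ summed over the coproducts, which after relabeling is $(a_{1},b_{1})(a_{3},b_{3})' a_{2}\otimes b_{2}$. This mirrors the collapse already carried out in the proof that $\rhd_{\Coad}$ is a module supercoalgebra action.

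The main obstacle is the sign bookkeeping. The signs to collect are:
\begin{itemize}
\item the cross sign $(-1)^{|b_{2}||a_{1}|}$ from \eqref{eq:matched-pair-cross-mult};
\item the factors $(-1)^{|a||b|}$ and $(-1)^{|a_{1}|(|a_{2}|+|a_{3}|)}$ from $\rhd_{\Coad}$;
\item the factors $(-1)^{|a|}$ and $(-1)^{|b_{1}|(|b_{2}|+|b_{3}|)}$ from $\lhd_{\Coad}$, evaluated on the relevant Sweedler components;
\item the sign needed to bring the middle pair of pairings into the form of \eqref{eq:inverse pairing identity}, namely $(-1)^{|a_{4}||b_{2}|}$ in the relabeled indices.
\end{itemize}
Each pairing is degree zero by Remark~\ref{rem:pairing-degree-zero}, so within a nonvanishing term we may replace $|a_{k}|$ by $|b_{k}|$ for the paired indices. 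We also use $|a_{3}|=|b_{3}|=\bbar{0}$ after the counit collapse. With these substitutions the exponents reduce modulo $2$. The target exponent $(|a|-|a_{3}|)(|b|-|b_{1}|)+|b_{1}|+|a_{3}|$ uses $|a|-|a_{3}| = |a_{1}|+|a_{2}|$ and $|b|-|b_{1}| = |b_{2}|+|b_{3}|$ in the final three-fold labeling. The plan is to express every sign in terms of the final labels $a_{1},a_{2},a_{3}$ and $b_{1},b_{2},b_{3}$, using $|a_{1}|=|b_{1}|$ and $|a_{3}|=|b_{3}|$ and summing parities of the collapsed components into their neighbours, and then to compare both sides term by term.
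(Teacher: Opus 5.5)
Your proposal is correct and is exactly the paper's argument: you substitute the two coadjoint actions into \eqref{eq:matched-pair-cross-mult}, reindex to five-fold coproducts, split off $(-1)^{|a_4||b_2|}$ so that \eqref{eq:inverse pairing identity} collapses $(a_3,b_2)'(a_4,b_3)$, and relabel. The paper's sign identity $(\sharp)$ carries out the bookkeeping you leave as a plan, showing that the remaining sign is $(-1)^{(|a|-|a_5|)(|b|-|b_1|)}(-1)^{|b_1|}(-1)^{|a_5|}$; this involves only the outer components, so your plan goes through.

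One minor slip: the collapse produces $\epsilon$ of the merged components built from $a_3,a_4$ and from $b_2,b_3$ (not $\epsilon(b_3)$). It is these merged components that are forced to be even, not the final $a_3,b_3$, which still carry the factor $(-1)^{|a_3|}$.
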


\begin{proof}
Direct computation yields the following:
\begin{align*}
  (1 \otimes b)(a \otimes 1)
  &= \sum_{(a)(b)} (-1)^{|b_{2}||a_{1}|} (b_{1} \rhd a_{1}) \otimes (b_{2} \lhd a_{2}) \\
  &= \sum_{(a)(b)} (-1)^{|b_{2}||a_{1}|}
     \left\{ (-1)^{|a_{1}||b_{1}|} (-1)^{|a_{1,1}||a_{1,2}| + |a_{1,1}||a_{1,3}|}
     (a_{1,1}, b_{1,1}) (a_{1,3}, b_{1,2})' a_{1,2} \right\} \\
  &\hspace{50pt}\otimes
     \left\{ (-1)^{|a_{2}|} (-1)^{|b_{2,1}||b_{2,2}| + |b_{2,1}||b_{2,3}|} 
      (a_{2,1}, b_{2,1}) (a_{2,2}, b_{2,3})' b_{2,2} \right\} \,.
\end{align*}
Expanding and re-indexing the terms, we have:
\begin{align*}
  (1 \otimes b)(a \otimes 1)
  &= \sum_{(a)(b)} (-1)^{(|b_{3}|+|b_{4}|+|b_{5}|)(|a_{1}|+|a_{2}|+|a_{3}|)}
     (-1)^{(|a_{1}|+|a_{2}|+|a_{3}|)(|b_{1}|+|b_{2}|)} (-1)^{|a_{1}||a_{2}| + |a_{1}||a_{3}|} \\
  &\hspace{30pt} \cdot (-1)^{|a_{4}|+|a_{5}|} (-1)^{|b_{3}||b_{4}| + |b_{3}||b_{5}|}
     (a_{1}, b_{1}) (a_{3}, b_{2})' (a_{4}, b_{3}) (a_{5}, b_{5})' a_{2} \otimes b_{4} \\
  &\overset{(\sharp)}{=} \sum_{(a)(b)}
     (-1)^{(|a|-|a_{5}|)(|b|-|b_{1}|)} (-1)^{|b_{1}|} (-1)^{|a_{5}|}
     (-1)^{|b_{2}||a_{4}|} (a_{1}, b_{1}) (a_{3}, b_{2})' (a_{4}, b_{3}) (a_{5}, b_{5})' a_{2} \otimes b_{4} \\
  &\overset{\eqref{eq:inverse pairing identity}}{=} \sum_{(a)(b)}
     (-1)^{(|a|-|a_{4}|)(|b|-|b_{1}|)} (-1)^{|b_{1}|} (-1)^{|a_{4}|} 
    (a_{1}, b_{1}) \epsilon(a_{3}) \epsilon(b_{2}) (a_{4}, b_{4})' a_{2} \otimes b_{3} \\
  &= \sum_{(a)(b)}
    (-1)^{(|a|-|a_{3}|)(|b|-|b_{1}|)} (-1)^{|b_{1}|} (-1)^{|a_{3}|}
    (a_{1}, b_{1}) (a_{3}, b_{3})' a_{2} \otimes b_{2} \,,
\end{align*}
where the sign identity $(\sharp)$ follows from:
\begin{align*}
&(-1)^{(|a_{1}|+|a_{2}|+|a_{3}|)(|b_{1}|+|b_{2}|+|b_{3}|+|b_{4}|+|b_{5}|)} (-1)^{|a_{1}|(|a_{2}| + |a_{3}|)}
(-1)^{|a_{4}|+|a_{5}|} (-1)^{|b_{3}|(|b_{4}| + |b_{5}|)}
(-1)^{|b_{2}||a_{4}|}\\
&= (-1)^{(|a_{1}|+|a_{2}|+|a_{3}|)|b|} (-1)^{|a_{1}|(|a_{2}| + |a_{3}|)}
(-1)^{|a_{4}|+|a_{5}|} (-1)^{|b_{3}|(|b_{4}| + |b_{5}|)}
(-1)^{|b_{2}||a_{4}|}\\
&= (-1)^{(|a_{1}|+|a_{2}|+|a_{3}|)(|b|-|b_{1}|)}
(-1)^{|a_{1}||b_{1}|}
(-1)^{|a_{4}|+|a_{5}|} (-1)^{|b_{3}|(|b_{4}| + |b_{5}|)}
(-1)^{|b_{2}||a_{4}|}\\
&= (-1)^{(|a_{1}|+|a_{2}|+|a_{3}|)(|b|-|b_{1}|)}
(-1)^{|b_{1}|}
(-1)^{|a_{5}|} (-1)^{|b_{3}|(|b|-|b_{1}|)}
= (-1)^{(|a|-|a_{5}|)(|b|-|b_{1}|)}
(-1)^{|b_{1}|}
(-1)^{|a_{5}|}\,.\qedhere
\end{align*}
\end{proof}

The next result provides an equivalent definition of $A \bowtie B^{\opp}$ without using the convolution inverse.

\begin{Prop}\label{prop:double cross formula 2}
The cross-multiplication admits the alternative form:
\begin{equation}\label{eq:double cross formula 2}
  \sum_{(a)(b)} (-1)^{|b_{2}||a|} (1 \otimes b_{1})(a_{1} \otimes 1) (a_{2},b_{2}) =
  \sum_{(a)(b)} (-1)^{|b_{2}||a|} (-1)^{|a_{1}||b_{1}|} (a_{1},b_{1}) a_{2} \otimes b_{2} \,.
\end{equation}
\end{Prop}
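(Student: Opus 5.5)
The plan is to start from the explicit cross-multiplication formula~\eqref{eq:double cross formula 1} and substitute it into the left-hand side of~\eqref{eq:double cross formula 2}. Writing the iterated coproducts with coassociativity, the left-hand side becomes a sum over $(a_1,\dots,a_4)$ and $(b_1,\dots,b_4)$. The term $(1\otimes b_1)(a_1\otimes 1)$ expands into $(a_1,b_1)(a_3,b_3)'\,a_2\otimes b_2$, and the extra factor is $(a_4,b_4)$. Since all scalars are in $\BK$ (even), the factors can be freely regrouped up to the explicit Koszul signs. The key step is then to recognize
\begin{equation*}
  \sum (-1)^{|a_4||b_3|}(a_3,b_3)'(a_4,b_4) = \epsilon(a_3)\epsilon(b_3)\,,
\end{equation*}
which is the second equality in~\eqref{eq:inverse pairing identity} applied to $a_3$ and $b_3$ after recombining $a_3\otimes a_4$ and $b_3\otimes b_4$ via coassociativity. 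Collapsing this with the counit should leave exactly $\sum (\pm)(a_1,b_1)\,a_2\otimes b_2$, i.e.\ the right-hand side.

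The main obstacle is the sign bookkeeping. The prefactor in~\eqref{eq:double cross formula 1}, $(-1)^{(|a|-|a_3|)(|b|-|b_1|)}(-1)^{|b_1|}(-1)^{|a_3|}$, has to be rewritten in terms of the finer Sweedler indices. Together with $(-1)^{|b_2||a|}$ from~\eqref{eq:double cross formula 2}, this prefactor must combine into the sign $(-1)^{|a_4||b_3|}$ required by~\eqref{eq:inverse pairing identity}, times $(-1)^{|b_{2}||a|}(-1)^{|a_1||b_1|}$ on the surviving terms. I would simplify using two facts: a pairing vanishes unless the degrees match (Remark~\ref{rem:pairing-degree-zero}), so $|a_1|=|b_1|$, $|a_3|=|b_3|$ and $|a_4|=|b_4|$ on nonzero terms; and the total degrees satisfy $|a|=\sum|a_i|$ and $|b|=\sum|b_i|$. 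With these substitutions the parity exponent reduces to a quadratic form in $|a_2|,|b_2|$ and the matched pairs, and I expect it to match after cancelling squares mod $2$. The terms $(-1)^{|b_1|}(-1)^{|a_3|}$ should be absorbed using $|x|^2=|x|$.

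As a sanity check, and as an alternative route if the signs resist, I would reverse the argument. Starting from the right-hand side of~\eqref{eq:double cross formula 2}, I would insert $\epsilon=\sum(a',b')(a'',b'')'$ via the first equality of~\eqref{eq:inverse pairing identity}. This shows that~\eqref{eq:double cross formula 2} determines $(1\otimes b)(a\otimes 1)$ uniquely and reproduces~\eqref{eq:double cross formula 1}. This also justifies calling~\eqref{eq:double cross formula 2} an equivalent definition: it is a convolution with $(-,-)$, which is invertible, so it can be solved for the cross product.
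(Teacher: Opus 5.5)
Your proposal is correct and follows the paper's own proof. You substitute \eqref{eq:double cross formula 1} into the left-hand side, use Remark~\ref{rem:pairing-degree-zero} to regroup the Koszul signs, collapse the pair $(a_3,b_3)'(a_4,b_4)$ via \eqref{eq:inverse pairing identity}, and recover \eqref{eq:double cross formula 1} from \eqref{eq:double cross formula 2} by convolving with the first equality of \eqref{eq:inverse pairing identity}. One detail to make explicit when doing the regrouping: the leftover sign outside the collapsing pair must depend only on the combined degree $|a_3|+|a_4|$, as in the paper's factor $(-1)^{(|a_3|+|a_4|)|a|}$, so that applying the counit is legitimate.
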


\begin{proof}
Substituting \eqref{eq:double cross formula 1} into the left-hand side, we obtain:
\begin{align*}
  &\sum_{(a)(b)} (-1)^{|b_{2}||a|} (1 \otimes b_{1})(a_{1} \otimes 1) (a_{2},b_{2}) \\
  &= \sum_{(a)(b)} (-1)^{|b_{4}||a|}
    (-1)^{(|b_{2}|+|b_{3}|)(|a_{1}|+|a_{2}|)} (-1)^{|b_{1}|+|a_{3}|}
    (a_{1},b_{1})(a_{3},b_{3})'a_{2} \otimes b_{2} (a_{4},b_{4}) \\
  &= \sum_{(a)(b)} (-1)^{(|a_{3}|+|a_{4}|)|a|}
    (-1)^{|b_{2}|(|a_{1}|+|a_{2}|)} (-1)^{|b_{1}|} (-1)^{|a_{3}||a_{4}|}
    (a_{1},b_{1})(a_{3},b_{3})'(a_{4},b_{4}) a_{2} \otimes b_{2} \\
  &= \sum_{(a)(b)} (-1)^{|a_{3}||a|}
   (-1)^{|b_{2}|(|a_{1}|+|a_{2}|)} (-1)^{|b_{1}|}
   (a_{1},b_{1}) \epsilon(a_{3}) \epsilon(b_{3}) a_{2} \otimes b_{2} \\
  &= \sum_{(a)(b)} (-1)^{|b_{2}||a|} (-1)^{|b_{1}|} (a_{1},b_{1}) a_{2} \otimes b_{2} 
   = \sum_{(a)(b)} (-1)^{|b_{2}||a|} (-1)^{|a_{1}||b_{1}|} (a_{1},b_{1}) a_{2} \otimes b_{2} \,.
\end{align*}
Conversely, \eqref{eq:double cross formula 1} is recovered from \eqref{eq:double cross formula 2} as follows:
\begin{align*}
  & \sum_{(a)(b)} (-1)^{(|a|-|a_{3}|)(|b|-|b_{1}|)} (-1)^{|b_{1}|} (-1)^{|a_{3}|} 
    (a_{1}, b_{1}) (a_{3}, b_{3})' a_{2} \otimes b_{2} \\
  &= \sum_{(a)(b)} (-1)^{(|a_{1}|+|a_{2}|)|b_{2}|} (-1)^{|a_{1}||b_{1}|} (-1)^{|a||b_{3}|} 
    (a_{1}, b_{1}) (a_{3}, b_{3})' a_{2} \otimes b_{2} \\
  &\overset{\eqref{eq:double cross formula 2}}{=} \sum_{(a)(b)} (-1)^{(|a_{1}|+|a_{2}|)|b_{2}|} 
    (-1)^{|a||b_{3}|} (a_{2}, b_{2}) (a_{3}, b_{3})' (1 \otimes b_{1})(a_{1} \otimes 1) \\
  &= \sum_{(a)(b)} (-1)^{|a|(|b_{2}|+|b_{3}|)} (-1)^{|b_{2}||a_{3}|} 
    (a_{2}, b_{2}) (a_{3}, b_{3})' (1 \otimes b_{1})(a_{1} \otimes 1) \\
  &= \sum_{(a)(b)} (-1)^{|a||b_{2}|} \epsilon(a_{2}) \epsilon(b_{2}) (1 \otimes b_{1})(a_{1} \otimes 1)
   = (1 \otimes b)(a \otimes 1) \,.\qedhere
\end{align*}
\end{proof}


\subsection{Generalized doubles}\label{ssec:generalized-doubles}
\

In contrast to the dual pairing used in the previous subsection, we now consider a \emph{skew-pairing} between $A$ and $B$, 
defined as a bilinear map $(\cdot,\cdot) \colon A \otimes B \to \BK$ satisfying:
\begin{equation}\label{eq:skew pairing structural property}
\begin{aligned}
  (1,b) &= \epsilon(b) \,, & \qquad (a,1) &= \epsilon(a) \,,\\
  (aa', b) &= (a \otimes a', \Delta(b)) \,, & \qquad (a, bb') &= (\Delta^{\opp}(a), b \otimes b') \,,
\end{aligned}
\end{equation}
subject to the standard super-sign convention $(a \otimes a', b \otimes b') \coloneqq (-1)^{|a'||b|} (a,b) (a',b')$.

\begin{Rem}\label{rem:skew pairing dual pairing of opp}
The skew-pairing property implies the following:
\begin{align*}
  (a, bb') = (\Delta^{\opp}(a), b \otimes b') = 
  \sum_{(a)} (-1)^{|a_{1}||a_{2}|} (a_{2} \otimes a_{1}, b \otimes b') = 
  \sum_{(a)} (a_{2},b)(a_{1},b') = (-1)^{|b'||b|} (\Delta(a), b' \otimes b) \,,
\end{align*}
which yields $(a, b' *_{\opp} b) = (\Delta(a), b' \otimes b)$, where $*_{\opp}$ denotes the opposite multiplication of $B$.
\end{Rem}

The remark above shows that a skew-pairing is equivalent to a dual pairing between $A$ and $B^{\opp}$. Accordingly, we call 
a skew-pairing convolution invertible if the corresponding dual pairing $A \otimes B^{\opp} \to \BK$ is convolution invertible. 
Consequently, for any convolution invertible skew-pairing, the construction in Subsection~\ref{ssec:coadjoint actions} yields 
a double cross product $A \bowtie B$, which we denote by $\CD(A,B)$ and call the \textbf{generalized double} of $A$ and $B$. 
Moreover, if the skew-pairing is non-degenerate, $\CD(A,B)$ is referred to as the \textbf{Drinfeld double}. Since the underlying 
coalgebra structures of $B$ and $B^{\opp}$ are identical, the cross-multiplication formulas~\eqref{eq:double cross formula 1} and
\eqref{eq:double cross formula 2} remain unchanged.

\begin{Rem}
Since a skew-pairing between $A$ and $B$ and the corresponding dual pairing between $A$ and $B^{\opp}$ coincide as morphisms 
of the underlying superspaces, we use the exact same symbol, e.g.\ $(\cdot,\cdot)$ or $\sigma(\cdot,\cdot)$, to denote both 
a skew-pairing and its corresponding dual pairing.
\end{Rem}

The following is clear from the construction:

\begin{Prop}\label{prop:gen double}
The generalized double $\CD(A,B)$ is isomorphic to the quotient of the free product $A * B$ of the algebras $A$ and $B$ by 
the relations \eqref{eq:double cross formula 1} (or equivalently \eqref{eq:double cross formula 2}). Specifically, for each 
pair of homogeneous elements $a \in A$ and $b \in B$, define
\begin{equation*}
  u_{a,b} \coloneqq \sum_{(a)(b)} (-1)^{|b_{2}||a|} \cdot 
  \left\{ (-1)^{|a_{1}||b_{1}|} (a_{1},b_{1}) \cdot a_{2}b_{2} - (a_{2},b_{2}) \cdot b_{1}a_{1} \right\}
\end{equation*}
and let $\CI$ be the two-sided ideal of $A * B$ generated by these elements. Then $\CD(A,B) \simeq (A * B)/\CI$.
\end{Prop}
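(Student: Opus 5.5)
The plan is to compare two presentations of the same algebra: the double cross product $A \bowtie B$ from Subsection~\ref{ssec:coadjoint actions}, built on the superspace $A \otimes B$, and the quotient $(A*B)/\CI$. First I build a surjective superalgebra morphism $\Phi\colon (A*B)/\CI \to \CD(A,B)$, and then I show $\Phi$ is injective by proving that $A\otimes B \to (A*B)/\CI$, $a\otimes b\mapsto ab$, spans.

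\textbf{Constructing $\Phi$.} By the universal property of the free product, the embeddings $a\mapsto a\otimes 1$ and $b\mapsto 1\otimes b$ (superalgebra morphisms by~\eqref{eq:matched-pair-mult}) induce a morphism $A*B\to \CD(A,B)$. It is surjective, since $(a\otimes 1)(1\otimes b)=a\otimes b$.

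\textbf{$\Phi$ kills $\CI$.} Under these embeddings, $u_{a,b}$ becomes
\[
\sum_{(a)(b)}(-1)^{|b_2||a|}(-1)^{|a_1||b_1|}(a_1,b_1)\, a_2\otimes b_2 \;-\; \sum_{(a)(b)}(-1)^{|b_2||a|}(1\otimes b_1)(a_1\otimes 1)(a_2,b_2).
\]
This vanishes by Proposition~\ref{prop:double cross formula 2}. The identity~\eqref{eq:double cross formula 2} is unchanged on passing from $B^{\opp}$ to $B$, because the coalgebra structures coincide and only the cross-multiplication is involved. Hence $\Phi$ descends to the quotient.

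\textbf{Spanning.} I need every word in $A*B$ to reduce modulo $\CI$ to a combination of $ab$ with $a\in A$, $b\in B$. It suffices to rewrite $ba$ in this form, since repeated rewriting then moves all $B$-letters to the right. This step is the main obstacle, because the relation $u_{a,b}$ expresses only the convolution-twisted combination $\sum \pm (a_2,b_2)\, b_1a_1$ in terms of $a_2b_2$, not $ba$ itself.

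To resolve it I use the convolution inverse $(-,-)'$, following the converse direction in the proof of Proposition~\ref{prop:double cross formula 2}. Apply $u_{a_1,b_1}$, multiply by $(a_2,b_2)'$, and sum. The inverse identity~\eqref{eq:inverse pairing identity} then collapses the left side to $ba$, and the right side is visibly in $A\cdot B$. Concretely, modulo $\CI$ one gets the same formula~\eqref{eq:double cross formula 1} holding in $(A*B)/\CI$. This computation uses only coassociativity, so it works in the free product.

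\textbf{Concluding.} Since products in $(A*B)/\CI$ of the form $ab$ span, the map $\Psi\colon A\otimes B\to (A*B)/\CI$, $a\otimes b\mapsto ab$, is surjective. We have $\Phi\circ\Psi=\Id_{A\otimes B}$ on the underlying superspace, so $\Psi$ is injective. Being bijective, $\Psi$ is an isomorphism, and therefore so is $\Phi$.

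The equivalence with relations~\eqref{eq:double cross formula 1} is the same argument, given the two-way derivation in Proposition~\ref{prop:double cross formula 2}. Throughout, sign bookkeeping is routine. It uses Remark~\ref{rem:pairing-degree-zero}, which says the pairing is even, to simplify the Koszul factors.
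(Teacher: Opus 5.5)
Your proof is correct and is essentially the argument the paper leaves as ``clear from the construction''. The induced map $A*B\to\CD(A,B)$ kills each $u_{a,b}$ by Proposition~\ref{prop:double cross formula 2}, and running the converse computation from that proof inside $(A*B)/\CI$, using the convolution inverse, shows that products $ab$ span the quotient; the section $a\otimes b\mapsto ab$ then forces $\Phi$ to be bijective, with termination of the rewriting and the sign bookkeeping handled as you describe.
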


The multiplicative properties of these elements are described in the next result.

\begin{Prop}\label{prop:double gen rel}
For any $a,a' \in A$ and $b,b' \in B$, the following equalities hold:
\begin{align*}
  u_{aa',b}
  &= \sum_{(a)(b)} (-1)^{|b_{2}||a|} 
     (-1)^{|a_{1}|} (a_{1}, b_{1}) \cdot a_{2} u_{a',b_{2}}
   + \sum_{(a')(b)} (-1)^{|b_{2}|(|a|+|a'|)} (a'_{2}, b_{2}) \cdot u_{a,b_{1}}a'_{1} \,,\\
  u_{a,bb'}
  &= \sum_{(a)(b')} (-1)^{|b'_{2}||a|}
     (-1)^{|b'_{1}|} (a_{1},b'_{1}) \cdot u_{a_{2},b}b'_{2}
   + \sum_{(a)(b)} (-1)^{|b_{2}|(|a|+|b'|)} (a_{2},b_{2}) \cdot b_{1}u_{a_{1},b'} \,.
\end{align*}
\end{Prop}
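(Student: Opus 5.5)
The plan is to expand both sides directly from the definition of $u_{a,b}$ in Proposition~\ref{prop:gen double}, and to produce the right-hand sides by adding and subtracting suitable terms. I treat the first identity in detail; the second follows by the mirror argument, where $B$ plays the role of $A$ and $\Delta^{\opp}$ enters through the skew-pairing property $(a,bb')=(\Delta^{\opp}(a),b\otimes b')$ of~\eqref{eq:skew pairing structural property}.

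For $u_{aa',b}$, use $\Delta(aa')=\sum (-1)^{|a_2||a'_1|} a_1a'_1\otimes a_2a'_2$ together with $(a_1a'_1,b_1)=(a_1\otimes a'_1,\Delta(b_1))$. Then the first part of $u_{aa',b}$ becomes a sum of terms of the form
\begin{equation*}
  (\pm)\,(a_1,b_1)(a'_1,b_2)\, a_2a'_2b_3 \,,
\end{equation*}
and the second part becomes a sum of terms of the form
\begin{equation*}
  (\pm)\,(a_2,b_2)(a'_2,b_3)\, b_1a_1a'_1 \,.
\end{equation*}

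The key step is to insert and cancel the mixed term
\begin{equation*}
  M \;=\; \sum (\pm)\,(a_1,b_1)(a'_2,b_3)\, a_2 b_2 a'_1 \,.
\end{equation*}
\begin{itemize}
  \item Grouping the $a_2a'_2b_3$ term with $-M$ gives $a_2$ times $u_{a',b_2}$, weighted by $(a_1,b_1)$. This matches the first sum of the claim.
  \item Grouping $M$ with the $b_1a_1a'_1$ term gives $u_{a,b_1}$ times $a'_1$, weighted by $(a'_2,b_2)$, after reindexing with coassociativity of $b$. This matches the second sum of the claim.
\end{itemize}

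The main obstacle is the Koszul signs. For each of the four groups of terms I will record the sign coming from:
\begin{itemize}
  \item the convention $(x\otimes x',y\otimes y')=(-1)^{|x'||y|}(x,y)(x',y')$;
  \item the factor $(-1)^{|b_2||a|}$ in the definition of $u$;
  \item the tensor-product multiplication in $\Delta(aa')$.
\end{itemize}
I will then simplify using Remark~\ref{rem:pairing-degree-zero}: a nonzero pairing $(x,y)$ forces $|x|=|y|$. This turns factors such as $(-1)^{|a_1||b_1|}$ into $(-1)^{|a_1|}$ on the support of the pairing, which explains the factor $(-1)^{|a_1|}$ in the claim. I would verify the sign of the cancelling term $M$ first, since that is where an error would most likely sit.

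For the second identity, the same procedure applies. Expand $(a_1,b_1b'_1)$ via $\Delta^{\opp}$; this produces $(a_2,b_1)(a_1,b'_1)$-type factors. Then insert the mixed term $(a_1,b'_1)(a_3,b_2)\,b_1a_2b'_2$ and regroup, checking the signs in the same way.
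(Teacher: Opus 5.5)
Your proposal matches the paper's proof: it also expands both parts of $u_{aa',b}$ (resp.\ $u_{a,bb'}$) using the skew-pairing axioms, including the $\Delta^{\opp}$ rule for the second identity, together with coassociativity, and it adds and subtracts exactly your mixed term $(a_1,b_1)(a'_2,b_3)\,a_2b_2a'_1$ (resp.\ $(a_1,b'_1)(a_3,b_2)\,b_1a_2b'_2$) before regrouping. Signs are simplified via Remark~\ref{rem:pairing-degree-zero} in both; the bookkeeping you deferred does work out, so the plan is correct.
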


\begin{proof}
For any $a,a' \in A$ and $b \in B$, direct expansion of the first term of $u_{aa',b}$ yields:
\begin{align*}
  & \sum_{(a)(a')(b)} (-1)^{|b_{2}|(|a|+|a'|)}
    (-1)^{(|a_{1}|+|a'_{1}|)|b_{1}|} (-1)^{|a_{2}||a'_{1}|} (a_{1}a'_{1},b_{1}) \cdot a_{2}a'_{2}b_{2} \\
  &=\sum_{(a)(a')(b)} (-1)^{|b_{3}|(|a|+|a'|)}
    (-1)^{|a_{1}|+|a'_{1}|} (-1)^{|a||a'_{1}|} (a_{1}, b_{1}) (a'_{1}, b_{2}) \cdot a_{2}a'_{2}b_{3} \\
  &=\sum_{(a)(a')(b)} (-1)^{(|b_{2}|+|b_{3}|)|a|}
    (-1)^{|a_{1}|} (a_{1}, b_{1})a_{2} \cdot \left\{(-1)^{|b_{3}||a'|} (-1)^{|a'_{1}||b_{2}|} (a'_{1}, b_{2}) a'_{2}b_{3}\right\} \\
  &=\sum_{(a)(b)} (-1)^{|b_{2}||a|}
    (-1)^{|a_{1}|} (a_{1}, b_{1}) \cdot a_{2} u_{a',b_{2}} \\
  &\qquad+ \sum_{(a)(a')(b)} (-1)^{(|b_{2}|+|b_{3}|)|a|} 
    (-1)^{|a_{1}|} (a_{1}, b_{1})a_{2} \cdot \left\{(-1)^{|b_{3}||a'|} (a'_{2}, b_{3}) b_{2}a'_{1}\right\} \,.
\end{align*}
Similarly, for the second term, we have:
\begin{align*}
  & \sum_{(a)(a')(b)} (-1)^{|b_{2}|(|a|+|a'|)}
    (-1)^{|a_{2}||a'_{1}|} (a_{2}a'_{2},b_{2}) \cdot b_{1}a_{1}a'_{1} \\
  &=\sum_{(a)(a')(b)} (-1)^{(|b_{2}|+|b_{3}|)(|a|+|a'|)}
    (-1)^{|a_{2}||a'|} (a_{2}, b_{2}) (a'_{2}, b_{3}) \cdot b_{1}a_{1}a'_{1} \\
  &=\sum_{(a)(a')(b)} (-1)^{|b_{3}|(|a|+|a'|)} \left\{(-1)^{|b_{2}||a|} (a_{2}, b_{2}) b_{1}a_{1}\right\} \cdot 
    (a'_{2}, b_{3})a'_{1} \\
  &=-\sum_{(a')(b)} (-1)^{|b_{2}|(|a|+|a'|)} (a'_{2}, b_{2}) \cdot u_{a,b_{1}}a'_{1} \\
  &\qquad+ \sum_{(a)(a')(b)} (-1)^{|b_{3}|(|a|+|a'|)} \left\{(-1)^{|b_{2}||a|} 
    (-1)^{|a_{1}||b_{1}|} (a_{1}, b_{1}) a_{2}b_{2}\right\} \cdot (a'_{2}, b_{3})a'_{1} \,.
\end{align*}
Subtracting these expressions recovers the identity for $u_{aa',b}$. The other identity follows from an analogous computation:
\begin{align*}
  & \sum_{(a)(b)(b')} (-1)^{(|b_{2}|+|b'_{2}|)|a|}
   (-1)^{|a_{1}|(|b_{1}|+|b'_{1}|)} (-1)^{|b_{2}||b'_{1}|} (a_{1},b_{1}b'_{1}) \cdot a_{2}b_{2}b'_{2} \\
  &=\sum_{(a)(b)(b')} (-1)^{(|b_{2}|+|b'_{2}|)|a|}
   (-1)^{|b_{1}|+|b'_{1}|} (-1)^{|b_{2}||b'_{1}|} (a_{1},b'_{1}) (a_{2},b_{1}) \cdot a_{3}b_{2}b'_{2} \\
  &=\sum_{(a)(b)(b')} (-1)^{|b'_{2}||a|}
   (-1)^{|b'_{1}|} \left\{(-1)^{|b_{2}|(|a_{2}|+|a_{3}|)} (-1)^{|a_{2}||b_{1}|} (a_{2}, b_{1}) a_{3}b_{2}\right\} \cdot 
   (a_{1},b'_{1})b'_{2} \\
  &=\sum_{(a)(b')} (-1)^{|b'_{2}||a|} 
   (-1)^{|b'_{1}|} (a_{1},b'_{1}) \cdot u_{a_{2},b}b'_{2} \\
  &\qquad+ \sum_{(a)(b)(b')} (-1)^{|b'_{2}||a|} 
   (-1)^{|b'_{1}|} \left\{(-1)^{|b_{2}|(|a_{2}|+|a_{3}|)} (a_{3}, b_{2}) b_{1}a_{2}\right\} \cdot (a_{1},b'_{1})b'_{2} \,,
\end{align*}
and
\begin{align*}
  & \sum_{(a)(b)(b')} (-1)^{(|b_{2}|+|b'_{2}|)|a|}
    (-1)^{|b_{2}||b'_{1}|} (a_{2},b_{2}b'_{2}) \cdot b_{1}b'_{1}a_{1} \\
  &=\sum_{(a)(b)(b')} (-1)^{(|b_{2}|+|b'_{2}|)|a|} 
    (-1)^{|b_{2}||b'_{1}|} (a_{2},b'_{2}) (a_{3},b_{2}) \cdot b_{1}b'_{1}a_{1} \\
  &=\sum_{(a)(b)(b')} (-1)^{|b_{2}|(|a|+|b'|)} 
    (a_{3},b_{2})b_{1} \cdot \left\{(-1)^{|b'_{2}|(|a_{1}|+|a_{2}|)} (a_{2}, b'_{2}) b'_{1}a_{1}\right\} \\
  &= -\sum_{(a)(b)} (-1)^{|b_{2}|(|a|+|b'|)} (a_{2},b_{2}) \cdot b_{1}u_{a_{1},b'} \\
  &\qquad+ \sum_{(a)(b)(b')} (-1)^{|b_{2}|(|a|+|b'|)} (a_{3},b_{2})b_{1} \cdot 
   \left\{(-1)^{|b'_{2}|(|a_{1}|+|a_{2}|)} (-1)^{|a_{1}||b'_{1}|} (a_{1}, b'_{1}) a_{2}b'_{2}\right\} \,.
   \qedhere
\end{align*}
\end{proof}

\begin{Rem}\label{rem:reduced_generators}
The result above implies that if $A$ and $B$ are generated by subsets $X$ and $Y$ having well-behaved comultiplications, 
then the ideal $\CI$ is generated by $u_{x,y}$ restricted to the generators $x \in X$ and $y \in Y$, rather than all $u_{a,b}$. 
In all cases of interest to us, specifically $U_{q}(\fg)$ and $U(R)$, this reduction holds, thus allowing us to define their 
doubles by imposing the cross-multiplication~\eqref{eq:double cross formula 1} solely on the algebra generators.
\end{Rem}


\subsection{The opposite double construction}\label{ssec:opposite double}
\

Consider the generalized double $\CD(A,B)$ of superbialgebras $A$ and $B$ associated with a convolution invertible skew-pairing. 
It is often useful to formulate the double in the opposite order, namely as a generalized double $\CD(B,A)$ with respect to an
appropriately induced skew-pairing. We shall now detail this canonical construction.

Let $\sigma \colon A \otimes B \to \BK$ be a convolution invertible skew-pairing, viewed as a dual pairing between $A$ and $B^{\opp}$. 
Lemma~\ref{lem:inverse_dual_pairing} implies that its convolution inverse $\sigma'$ is a dual pairing between $A^{\opp}$ and $B$, 
hence, the composition
\begin{equation}\label{eq:sigma-tilde}
  \wt{\sigma} \coloneqq \sigma' \circ \tau \colon B \otimes A^{\opp} \to \BK
\end{equation}
defines a dual pairing between $B$ and $A^{\opp}$ and thus a skew-pairing between $B$ and $A$. We shall refer to $\wt{\sigma}$ 
as the \emph{transposed inverse} of $\sigma$. This induced pairing allows us to construct the double $\CD(B,A)$. We also note that
$\wt{\sigma}' = \sigma \circ \tau$ by Lemma~\ref{lem:conv prod id}(b). To prevent any ambiguity, we shall explicitly specify the 
chosen pairing in our notation for the double, denoting the resulting algebras by $\CD(A,B,\sigma)$ and $\CD(B,A,\wt{\sigma})$.

\begin{Prop}\label{prop:opposite double}
There exists an isomorphism of superbialgebras $\CD(B,A,\wt{\sigma}) \iso \CD(A,B,\sigma)$ which is identity on both 
subbialgebras $A$ and $B$. 
\end{Prop}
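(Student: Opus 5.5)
The natural candidate for the isomorphism is the multiplication-induced map $\Phi\colon \CD(B,A,\wt{\sigma}) \to \CD(A,B,\sigma)$, $b \otimes a \mapsto (1 \otimes b)(a \otimes 1)$. By Proposition~\ref{prop:gen double}, $\CD(B,A,\wt{\sigma}) \simeq (B * A)/\wt{\CI}$ and $\CD(A,B,\sigma) \simeq (A * B)/\CI$, where the ideals are generated by the elements $\wt{u}_{b,a}$ and $u_{a,b}$ respectively. Since $A*B = B*A$ as free products, the identity on generators gives a superalgebra map $B*A \to (A*B)/\CI$, and it suffices to show that it kills every $\wt{u}_{b,a}$, i.e.\ that
\begin{equation*}
  \wt{\CI} \subseteq \CI \qquad \text{and, by symmetry,} \qquad \CI \subseteq \wt{\CI}.
\end{equation*}
Then the two quotients of the same free product coincide, giving a superalgebra isomorphism that is the identity on $A$ and on $B$. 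Compatibility with coproducts and counits is automatic, since both doubles carry the tensor product coalgebra structure and the map is the identity on the generating subbialgebras.

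To prove $\wt{u}_{b,a} \in \CI$, I would write out the relation~\eqref{eq:double cross formula 2} for $\CD(B,A,\wt{\sigma})$, which expresses $\sum (a_1 \otimes 1)(1\otimes b_1)\,\wt{\sigma}(b_2,a_2)$ with appropriate signs, and rewrite $\wt{\sigma}(b,a)=\sigma'(a,b)$ using~\eqref{eq:sigma-tilde}. The aim is to show that this relation is equivalent to~\eqref{eq:double cross formula 1} for $\CD(A,B,\sigma)$, which gives $ba$ in terms of $\sigma$ and $\sigma'$, read in the other direction. Concretely, start from~\eqref{eq:double cross formula 2} for $(A,B,\sigma)$, multiply by $\sigma'$ on the appropriate legs, and apply the convolution identity~\eqref{eq:inverse pairing identity} to isolate $a b$ as a combination of terms $\sigma'(a_i,b_j)\, b_k a_l$. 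This is exactly the $\wt{\sigma}$-version of~\eqref{eq:double cross formula 2}, by the same manipulation used in the proof of Proposition~\ref{prop:double cross formula 2} with the roles of $A$ and $B$ swapped. For the reverse inclusion, the cleanest route is to use $\wt{\sigma}' = \sigma\circ\tau$ as noted above: the transposed inverse of $\wt{\sigma}$ is $\sigma$ again, so the same argument applies verbatim.

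The main obstacle is the sign bookkeeping. We must keep track of the opposite multiplication on $A$ versus $B$, since $\wt{\sigma}$ is a dual pairing of $B$ with $A^{\opp}$, and of the Koszul signs when transposing via $\tau$. These have to match the sign conventions $(-1)^{|b_2||a|}(-1)^{|a_1||b_1|}$ in $u_{a,b}$. I would handle this by working with degree-zero pairings (Remark~\ref{rem:pairing-degree-zero}), which lets us replace $(-1)^{|a_i||b_j|}$ by $(-1)^{|a_i|}$ whenever the pairing is nonzero, as in step $(\sharp)$ earlier. After that substitution, it remains to check that the resulting signs agree term by term.
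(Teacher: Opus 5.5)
Your proposal is correct and follows the paper's own proof. Both doubles are presented as quotients of the same free product, each family of cross-relations is shown to lie in the other's ideal by a direct convolution computation with $\sigma$ and $\sigma'$, and the coproducts agree because they are multiplicative and coincide on $A$ and $B$.

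One slip to fix: convolving \eqref{eq:double cross formula 2} for $\sigma$ with $\sigma'$ isolates $ab$ as $\sum \pm\,\sigma'(a_1,b_1)\,\sigma(a_3,b_3)\,b_2a_2$, not as a combination involving only $\sigma'$. This is the $\wt{\sigma}$-version of \eqref{eq:double cross formula 1} (since $\wt{\sigma}'=\sigma\circ\tau$), which is exactly the identity the paper verifies. Your observation that $\wt{\wt{\sigma}}=\sigma$ is a clean way to handle the reverse inclusion, which the paper only describes as an ``analogous calculation''.
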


\begin{proof}
Since both doubles are quotients of the isomorphic free products $A * B \simeq B * A$, it suffices to show that their 
respective cross-multiplication relations~\eqref{eq:double cross formula 1} generate the exact same two-sided ideal. 
For simplicity, we identify the superbialgebras $A$ and $B$ with their respective images inside the doubles, thereby 
omitting the tensor product symbols.

For any $a,b \in \CD(A,B,\sigma)$, a direct computation yields (cf. Remark~\ref{rem:pairing-degree-zero}):
\begin{align*}
  &\sum_{(a)(b)} (-1)^{(|b|-|b_{3}|)(|a|-|a_{1}|)} (-1)^{|a_{1}|}(-1)^{|b_{3}|} \cdot 
    \wt{\sigma}(b_{1},a_{1})\wt{\sigma}'(b_{3},a_{3})b_{2}a_{2} \\
  &= \sum_{(a)(b)} (-1)^{(|b|-|b_{5}|)(|a|-|a_{1}|)} (-1)^{|a_{1}|}(-1)^{|b_{5}|}
     (-1)^{(|a_{2}|+|a_{3}|)(|b_{3}|+|b_{4}|)} (-1)^{|b_{2}|}(-1)^{|a_{4}|} \\
  &\hspace{50pt}
     \cdot \wt{\sigma}(b_{1},a_{1}) \wt{\sigma}'(b_{5},a_{5}) \sigma(a_{2},b_{2}) \sigma'(a_{4},b_{4})a_{3}b_{3} \\
  &= \sum_{(a)(b)} (-1)^{(|b|-|b_{5}|)(|a|-|a_{1}|)}
      (-1)^{(|a_{2}|+|a_{3}|)(|b_{3}|+|b_{4}|)} (-1)^{|b_{2}|+|a_{4}|}
      \cdot \sigma'(a_{1},b_{1}) \sigma(a_{5},b_{5}) \sigma(a_{2},b_{2}) \sigma'(a_{4},b_{4})a_{3}b_{3} \\
  &\overset{(\dagger)}{=}
    \sum_{(a)(b)} (-1)^{(|b|-|b_{4}|-|b_{5}|)(|a|-|a_{1}|-|a_{2}|)}
     (-1)^{|a_{2}||b_{1}|+|a_{3}||b_{3}|+|b_{4}||a_{5}|}
     \cdot \sigma'(a_{1},b_{1}) \sigma(a_{2},b_{2}) \sigma'(a_{4},b_{4}) \sigma(a_{5},b_{5}) a_{3}b_{3} \\
  &= \sum_{(a)(b)} (-1)^{(|b|-|b_{3}|)(|a|-|a_{1}|)} (-1)^{|a_{2}||b_{2}|}
       \cdot \epsilon(a_{1})\epsilon(b_{1}) \epsilon(a_{3})\epsilon(b_{3}) a_{2}b_{2} = ab \,,
\end{align*}
where the sign identity $(\dagger)$ follows from Remark~\ref{rem:pairing-degree-zero}:
\begin{align*}
  &  (-1)^{(|b|-|b_{5}|)(|a|-|a_{1}|)}
     (-1)^{(|a_{2}|+|a_{3}|)(|b_{3}|+|b_{4}|)} (-1)^{|b_{2}|}(-1)^{|a_{4}|} \cdot \sigma(a_{2},b_{2}) \sigma'(a_{4},b_{4})\\
  &= (-1)^{(|b|-|b_{5}|)(|a|-|a_{1}|-|a_{2}|)}
     (-1)^{|a_{2}||b_{1}|} (-1)^{|a_{3}|(|b_{3}|+|b_{4}|)} (-1)^{|a_{4}|} \cdot \sigma(a_{2},b_{2}) \sigma'(a_{4},b_{4})\\
  &= (-1)^{(|b|-|b_{4}|-|b_{5}|)(|a|-|a_{1}|-|a_{2}|)} 
     (-1)^{|a_{2}||b_{1}|} (-1)^{|a_{3}||b_{3}|} (-1)^{|b_{4}||a_{5}|} \cdot \sigma(a_{2},b_{2}) \sigma'(a_{4},b_{4})\,.
\end{align*}

This computation demonstrates that the cross-multiplication relation defining $\CD(B,A,\wt{\sigma})$ is satisfied within 
$\CD(A,B,\sigma)$. By a completely analogous calculation, the cross-multiplication relation defining $\CD(A,B,\sigma)$ 
is satisfied within $\CD(B,A,\wt{\sigma})$, establishing the algebra isomorphism. Finally, because the coproducts on 
both doubles are determined by the exact same formulas on the elements of $A$ and $B$, their supercoalgebra structures 
automatically coincide.
\end{proof}


\subsection{Canonical element}\label{ssec:canonical element}
\

Suppose the skew-pairing $(\cdot,\cdot) \colon A \otimes B \to \BK$ is non-degenerate and that both $A$ and $B$ are 
finite-dimensional. The non-degeneracy implies that $A$ and $B$ must have the same dimension, and we may choose bases 
$\{a_{1}, \ldots, a_{N}\}$ and $\{b_{1}, \ldots, b_{N}\}$ of $A$ and $B$, respectively, dual to each other with respect 
to the skew-pairing: $(a_i,b_j)=\delta_{ij}$.

\begin{Def}
The \emph{canonical element} of $\CD(A,B)$ is defined as 
\begin{equation}\label{eq:canonical element}
  \fR = \sum_{1\leq i\leq N} b_{i} \otimes a_{i} \in \CD(A,B) \otimes \CD(A,B) \,.
\end{equation}
This construction is independent of the choice of dual bases and clearly satisfies $|\fR| = \bbar{0}$.
\end{Def}

The basic properties of this $\fR$ are summarized in the following result.

\begin{Prop}\label{prop:R properties}
The canonical element $\fR$ satisfies the following properties:
\begin{align*}
  (\Delta \otimes \Id)\fR = \fR_{(13)}\fR_{(23)} \,,\qquad
  & (\Id \otimes \Delta)\fR = \fR_{(13)}\fR_{(12)} \,,\\
  (\epsilon \otimes \Id)\fR = 1 = (\Id \otimes \epsilon)\fR \,,\qquad
  & \fR_{(12)}\fR_{(13)}\fR_{(23)} = \fR_{(23)}\fR_{(13)}\fR_{(12)} \,,\\
  \Delta^{\opp}(u) \fR = \fR \Delta(u) \quad &\textrm{for all} \quad u \in \CD(A,B) \,,
\end{align*}
where the notation of Subsection~\ref{ssec:leg-numbering} is used.
\end{Prop}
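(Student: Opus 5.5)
We reduce everything to the defining properties of the skew-pairing together with duality of bases. Fix dual bases $\{a_i\}$, $\{b_i\}$ with $(a_i,b_j)=\delta_{ij}$. Since the pairing is non-degenerate, an element $X\in B\otimes B\otimes A$ (or $B\otimes A\otimes A$) is determined by the values obtained by pairing its $A$-legs against arbitrary elements of $B$ and its $B$-legs against arbitrary elements of $A$. For the coproduct identities we therefore test both sides against arbitrary $a\otimes a'$ (respectively $b\otimes b'$).

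\textbf{Coproduct identities.} Expand $\Delta(b_i)=\sum_{j,k} c^{i}_{jk}\, b_j\otimes b_k$. The structure constants are read off by pairing: $c^i_{jk}=(a_j\otimes a_k,\Delta(b_i))$, up to the Koszul sign fixed by the convention $(a\otimes a',b\otimes b')=(-1)^{|a'||b|}(a,b)(a',b')$. Since $(a_ja_k,b_i)=(a_j\otimes a_k,\Delta(b_i))$, this gives
\[
(\Delta\otimes\Id)\fR=\sum_{j,k} b_j\otimes b_k\otimes a_ja_k ,
\]
because $\sum_i (a_ja_k,b_i)\,a_i=a_ja_k$. On the other side, $\fR_{(13)}\fR_{(23)}=\sum_{j,k}(b_j\otimes1\otimes a_j)(1\otimes b_k\otimes a_k)$. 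Multiplying out in $\CD(A,B)^{\otimes3}$ produces a sign $(-1)^{|a_j||b_k|}$. This sign must be matched against the one hidden in $c^i_{jk}$, and both are nonzero only when $|a_j|=|b_j|$, by Remark~\ref{rem:pairing-degree-zero}; checking that they agree is the main bookkeeping step. The identity $(\Id\otimes\Delta)\fR=\fR_{(13)}\fR_{(12)}$ is proved in the same way using $(a,bb')=(\Delta^{\opp}(a),b\otimes b')$, and the order reversal visible in $\fR_{(13)}\fR_{(12)}$ comes exactly from $\Delta^{\opp}$. Counitality follows from $\sum_i \epsilon(b_i)a_i=\sum_i(a_i,1)\ldots$; more precisely $(1,b)=\epsilon(b)$ gives $\sum_i\epsilon(b_i)a_i=1$, and symmetrically $\sum_i\epsilon(a_i)b_i=1$.

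\textbf{Quasi-cocommutativity.} It is enough to check $\Delta^{\opp}(u)\fR=\fR\Delta(u)$ for $u\in A$ and for $u\in B$, because both sides are multiplicative in $u$. Take $u=a\in A$. Write $\sum_i \Delta^{\opp}(a)(b_i\otimes a_i)$ and $\sum_i (b_i\otimes a_i)\Delta(a)$. Rewrite the mixed products $a_{(2)}b_i$ appearing in the first expression using the cross relation~\eqref{eq:double cross formula 2}. This turns the identity into one involving $\sum_i(a_{(1)},b_i)\,a_i$-type sums, which collapse via duality ($\sum_i (x,b_i)a_i=x$) together with the coproduct identities already proved. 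The case $u\in B$ is symmetric. I expect this step to be the main obstacle: the cross-relation~\eqref{eq:double cross formula 2} carries several Sweedler-indexed signs, and they must be reconciled with the graded tensor multiplication. I would first verify the identity on generators in the purely even case, then insert the parities, invoking Remark~\ref{rem:pairing-degree-zero} repeatedly to replace $|a_{(k)}|$ by $|b_{(k)}|$ wherever the two are paired.

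\textbf{Yang--Baxter.} This follows formally. Apply $\Delta^{\opp}(u)\fR=\fR\Delta(u)$ to the first leg, i.e.\ to $u=b_i$ in $\fR=\sum_i b_i\otimes a_i$, and then use $(\Delta\otimes\Id)\fR=\fR_{(13)}\fR_{(23)}$ and its flip $(\Delta^{\opp}\otimes\Id)\fR=\fR_{(23)}\fR_{(13)}$. This gives
\[
\fR_{(12)}\fR_{(13)}\fR_{(23)}=\fR_{(12)}(\Delta\otimes\Id)\fR=(\Delta^{\opp}\otimes\Id)(\fR)\,\fR_{(12)}=\fR_{(23)}\fR_{(13)}\fR_{(12)}.
\]
Here the flip of legs is computed with the symmetric braiding $\tau$, and $|\fR|=\bbar{0}$ ensures that no extra sign appears.
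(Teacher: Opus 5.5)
Your proposal is correct and follows essentially the paper's proof. The coproduct and counit identities come from the skew-pairing axioms plus the dual-basis expansion; your structure-constant bookkeeping is equivalent to the paper's testing against $(a_k,-)$, and the deferred sign does match because $|a_j|=|b_j|$. Quasi-cocommutativity is checked on generators by reduction to the cross relation~\eqref{eq:double cross formula 2}, and Yang--Baxter is derived formally exactly as in the paper. The only cosmetic difference is that you work out $(\Delta\otimes\Id)\fR$ and $u\in A$ explicitly, where the paper works out $(\Id\otimes\Delta)\fR$ and $u=b_k$.
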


\begin{proof}
We first note that for any $a \in A$ and $b \in B$, the following identities hold:
\begin{equation}\label{eq:dual-basis-expansion}
  a = \sum_{1\leq i\leq N} (a,b_{i}) a_{i} \,,\qquad
  b = \sum_{1\leq i\leq N} (a_{i},b) b_{i} \,.
\end{equation}
We begin by proving $\Delta_{(2)} \fR = (\Id \otimes \Delta)\fR = \fR_{(13)}\fR_{(12)}$. Since the skew-pairing 
is non-degenerate, it suffices to show that the equality holds after applying $(a_{k},-) \otimes \Id \otimes \Id$ 
to both sides for all $1 \leq k \leq N$:
\begin{equation}\label{eq:R-property-1}
  \sum_{1\leq i\leq N} (a_{k},b_{i}) \cdot \Delta(a_{i}) 
  = \sum_{1\leq i,j\leq N} (a_{k},b_{i}b_{j}) \cdot a_{j} \otimes a_{i} \,.
\end{equation}
The left-hand side of~\eqref{eq:R-property-1} is simply $\Delta(a_{k})$, while the right-hand side becomes:
\begin{align*}
  \sum_{1\leq i,j\leq N} (a_{k},b_{i}b_{j}) \cdot a_{j} \otimes a_{i}
  = \sum_{1\leq i,j\leq N} \sum_{(a_{k})} \big((a_{k})_{1},b_{j}\big) \big((a_{k})_{2},b_{i}\big) \cdot a_{j} \otimes a_{i}
  \overset{\eqref{eq:dual-basis-expansion}}{=} \sum_{(a_{k})} (a_{k})_{1} \otimes (a_{k})_{2}
  = \Delta(a_{k}) \,,
\end{align*}
thus establishing~\eqref{eq:R-property-1}. 
Analogous computations prove $\Delta_{(1)}\fR = (\Delta \otimes \Id)\fR = \fR_{(13)}\fR_{(23)}$.

The identity $(\epsilon \otimes \Id)\fR = 1$ can be verified by the following direct computation:
\begin{equation*}
  (\epsilon \otimes \Id)\fR = \sum_{1 \leq i \leq N} \epsilon(b_{i}) a_{i}
  = \sum_{1 \leq i \leq N} (1,b_{i}) a_{i} \overset{\eqref{eq:dual-basis-expansion}}{=} 1 \,,
\end{equation*}
and analogously for $(\Id \otimes \epsilon)\fR = 1$.

We now prove $\Delta^{\opp}(u) \fR = \fR \Delta(u)$ for all $u \in \CD(A,B)$. As $\{a_{i}, b_{i}\}_{i=1}^{N}$ generate $\CD(A,B)$, 
it is sufficient to check the identity for $u = a_k$ and $u = b_k$. Setting $u = b_k$ and applying $(a_{l},-) \otimes \Id$ to both 
sides yields:
\begin{align*}
  \big( (a_{l},-) \otimes \Id \big) (\Delta^{\opp}(b_{k}) \fR)
  &= \sum_{1\leq i\leq N} \sum_{(b_{k})} (-1)^{|(b_{k})_{1}||(b_{k})_{2}|} (-1)^{|(b_{k})_{1}||b_{i}|} 
     \big(a_{l}, (b_{k})_{2} b_{i}\big) \cdot (b_{k})_{1} a_{i} \\
  &= \sum_{1\leq i\leq N} \sum_{(a_{l})(b_{k})} (-1)^{|(b_{k})_{1}||a_{l}|} 
     \big((a_{l})_{1}, b_{i}\big) \big((a_{l})_{2}, (b_{k})_{2}\big) \cdot (b_{k})_{1} a_{i} \\
  &\overset{\eqref{eq:dual-basis-expansion}}{=} 
   (-1)^{|b_{k}||a_{l}|} \sum_{(a_{l})(b_{k})} (-1)^{|(b_{k})_{2}||a_{l}|} 
   \big((a_{l})_{2}, (b_{k})_{2}\big) \cdot (b_{k})_{1} (a_{l})_{1}
\end{align*}
and
\begin{align*}
  \big( (a_{l},-) \otimes \Id \big) (\fR \Delta(b_{k}))
  &= \sum_{1\leq i\leq N} \sum_{(b_{k})} (-1)^{|(b_{k})_{1}||b_{i}|} \big(a_{l}, b_{i} (b_{k})_{1}\big) \cdot a_{i} (b_{k})_{2} \\
  &= \sum_{1\leq i\leq N} \sum_{(a_{l})(b_{k})} (-1)^{|(b_{k})_{1}||(a_{l})_{2}|} 
     \big((a_{l})_{1}, (b_{k})_{1}\big) \big((a_{l})_{2}, b_{i}\big) \cdot a_{i} (b_{k})_{2}\\
  &\overset{\eqref{eq:dual-basis-expansion}}{=} 
   (-1)^{|b_{k}||a_{l}|} \sum_{(a_{l})(b_{k})} (-1)^{|(b_{k})_{2}||a_{l}|} (-1)^{|(b_{k})_{1}||(a_{l})_{1}|} 
   \big((a_{l})_{1}, (b_{k})_{1}\big) \cdot (a_{l})_{2} (b_{k})_{2} \,,
\end{align*}
which coincide by~\eqref{eq:double cross formula 2}. The verification for $u = a_k$ is analogous, concluding the proof of 
$\Delta^{\opp}(u) \fR = \fR \Delta(u)$. Lastly, the Yang--Baxter equation 
$\fR_{(12)}\fR_{(13)}\fR_{(23)} = \fR_{(23)}\fR_{(13)}\fR_{(12)}$ follows directly from above identities:
\begin{align*}
  \fR_{(12)}\fR_{(13)}\fR_{(23)} &= \fR_{(12)} \Delta_{(1)}(\fR) = \Delta^{\opp}_{(1)}(\fR) \fR_{(12)} \\
  &= (\tau_{(12)} \Delta_{(1)})(\fR) \fR_{(12)} = \tau_{(12)} (\fR_{(13)}\fR_{(23)}) \fR_{(12)}
  = \fR_{(23)}\fR_{(13)}\fR_{(12)} \,. \qedhere
\end{align*}
\end{proof}

Furthermore, the existence of antipodes yields explicit formulas for the inverse of the canonical element:

\begin{Prop}\label{prop:R-invertible}
If $A$ is a Hopf superalgebra with an invertible antipode $S$ (thus $A^{\opp}$ and $A^{\copp}$ are also Hopf superalgebras 
with antipode $S^{-1}$), then the canonical element $\fR$ is invertible, with its inverse given by
\begin{equation*}
  \fR^{-1} = \sum_{1\leq i\leq N} b_{i} \otimes S^{-1}(a_{i}) \,.
\end{equation*}
Likewise, if $B$ is a Hopf superalgebra with antipode $S$, then $\fR$ is invertible, with its inverse given by:
\begin{equation*}
  \fR^{-1} = \sum_{1\leq i\leq N} S(b_{i}) \otimes a_{i} \,.
\end{equation*}
\end{Prop}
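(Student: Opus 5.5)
We verify that the proposed element is a two-sided inverse of $\fR=\sum_i b_i\otimes a_i$, starting with the first formula. The plan is to use the coproduct identity $(\Id\otimes\Delta)\fR=\fR_{(13)}\fR_{(12)}$ from Proposition~\ref{prop:R properties}. We then apply a convolution-type contraction in the last two legs:
\begin{equation*}
  \Id\otimes\mu_A\circ(S^{-1}\otimes\Id)\circ\tau
  \qquad\text{or}\qquad
  \Id\otimes\mu_A\circ(\Id\otimes S^{-1})\circ\tau .
\end{equation*}
This is combined with the antipode identities for $A^{\copp}$, whose antipode is $S^{-1}$:
\begin{equation*}
  \sum_{(a)}(-1)^{|a_1||a_2|}\,a_2\,S^{-1}(a_1)=\epsilon(a)
  =\sum_{(a)}(-1)^{|a_1||a_2|}\,S^{-1}(a_2)\,a_1 .
\end{equation*}

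Concretely, write $(\Id\otimes\Delta)\fR=\sum_i b_i\otimes\Delta(a_i)$. By the identity above this equals $\sum_{i,j} b_jb_i\otimes a_i\otimes a_j$, up to the Koszul sign coming from the leg numbering. Applying $\Id\otimes\mu_A\circ(\Id\otimes S^{-1})\circ\tau$ to the left side, the counit axiom gives $\sum_i b_i\otimes\epsilon(a_i)=\sum_i b_i\epsilon(a_i)\otimes 1$. Using \eqref{eq:dual-basis-expansion} with $a=1$ dualized, this is $1\otimes1$; equivalently we can invoke $(\Id\otimes\epsilon)\fR=1$. Applying the same map to the right side gives
\begin{equation*}
  \sum_{i,j} b_jb_i\otimes S^{-1}(a_j)\,a_i ,
\end{equation*}
up to sign, and this is exactly $\big(\sum_j b_j\otimes S^{-1}(a_j)\big)\cdot\fR$ in $\CD(A,B)^{\otimes2}$. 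So the candidate is a left inverse. Applying the other contraction, $(S^{-1}\otimes\Id)$ composed with the flip, produces $\fR\cdot\sum_j b_j\otimes S^{-1}(a_j)$, so it is also a right inverse.

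The second formula is handled symmetrically. We use $(\Delta\otimes\Id)\fR=\fR_{(13)}\fR_{(23)}$ together with the antipode identities $\sum b_1S(b_2)=\epsilon(b)=\sum S(b_1)b_2$ in $B$, contracting the first two legs with $\mu_B\circ(\Id\otimes S)$ and with $\mu_B\circ(S\otimes\Id)$. The counit identity $(\epsilon\otimes\Id)\fR=1$ then finishes both sides.

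The only real obstacle is bookkeeping of super signs. First, the tensor product multiplication \eqref{eq:superalgebras tensoring} introduces $(-1)^{|a_i||b_j|}$. Second, the flip $\tau$ and the $S^{-1}$-antipode identity of $A^{\copp}$ each contribute a Koszul sign. We check that these cancel by using that the pairing has degree zero, so $|a_i|=|b_i|$ for dual basis elements (Remark~\ref{rem:pairing-degree-zero}). We also use that $S^{\pm1}$ is even and is an anti-homomorphism in the super sense. With these facts the signs combine to $+1$, and the formal manipulation above goes through.
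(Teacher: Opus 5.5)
Your argument is correct, but it is organized differently from the paper's. The paper applies $(a_k,-)\otimes\Id$ to $\fR\cdot\sum_j b_j\otimes S^{-1}(a_j)$. It expands $(a_k,b_ib_j)$ through the skew-pairing axiom $(a,bb')=(\Delta^{\opp}(a),b\otimes b')$ and the dual-basis expansion \eqref{eq:dual-basis-expansion}, uses the antipode identity of $A^{\copp}$ to get $\epsilon(a_k)$, and concludes by non-degeneracy. In effect it re-derives the coproduct identity for $\fR$ inside the computation.

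You instead take $(\Id\otimes\Delta)\fR=\fR_{(13)}\fR_{(12)}$, $(\Delta\otimes\Id)\fR=\fR_{(13)}\fR_{(23)}$ and the counit identities from Proposition~\ref{prop:R properties} as given, and contract two legs. This is the standard quasitriangular argument. It shows the inverse formulas are formal consequences of those identities, with no further appeal to the pairing, so it transfers to any setting where only those identities are available (e.g.\ completed tensor products). The paper's version, by contrast, is self-contained at the level of the pairing.

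You should state explicitly that both sides of the coproduct identity lie in $B\otimes A\otimes A$ (resp.\ $B\otimes B\otimes A$). This space embeds in $\CD(A,B)^{\otimes 3}$ because $\CD(A,B)\simeq A\otimes B$ as superspaces. That embedding is what licenses applying maps built from $S^{-1}$ on $A$ (resp.\ $S$ on $B$) alone, since under the hypotheses $\CD(A,B)$ need not be a Hopf superalgebra.

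Two small corrections:
\begin{itemize}
\item The contraction $x\otimes y\mapsto(-1)^{|x||y|}\,yS^{-1}(x)$ turns $\sum_{i,j}b_jb_i\otimes a_i\otimes a_j$ into $\sum_{i,j}(-1)^{|a_i||a_j|}b_jb_i\otimes a_jS^{-1}(a_i)=\fR\cdot\sum_j b_j\otimes S^{-1}(a_j)$. So this map yields the right inverse, while $x\otimes y\mapsto(-1)^{|x||y|}\,S^{-1}(y)x$ yields the left inverse. You have the two labels swapped, which is harmless since you use both.
\item The unit expansion you need is \eqref{eq:dual-basis-expansion} with $b=1$, namely $1=\sum_i(a_i,1)b_i=\sum_i\epsilon(a_i)b_i$.
\end{itemize}
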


\begin{proof}
Assume that $A$ is a Hopf superalgebra with an invertible antipode. For any $1 \leq k \leq N$, we have 
\begin{align*}
  \Big((a_{k},-) \otimes \Id\Big) \Big( \fR \sum_{1\leq j\leq N} b_{j} \otimes S^{-1}(a_{j}) \Big)
  &= \sum_{1\leq i,j\leq N} (-1)^{|a_{i}||a_{j}|} (a_{k}, b_{i}b_{j}) \cdot a_{i} S^{-1}(a_{j})\\
  &= \sum_{1\leq i,j\leq N} \sum_{(a_{k})} (-1)^{|(a_{k})_{1}||(a_{k})_{2}|} 
     \big((a_{k})_{1}, b_{j}\big) \big((a_{k})_{2}, b_{i}\big) \cdot a_{i} S^{-1}(a_{j})\\
  &\overset{\eqref{eq:dual-basis-expansion}}{=} 
   \sum_{(a_{k})} (-1)^{|(a_{k})_{1}||(a_{k})_{2}|} \cdot (a_{k})_{2} S^{-1}((a_{k})_{1})
  = \epsilon(a_{k}) \,,
\end{align*}
which implies that $\fR \big( \sum_{j=1}^{N} b_{j} \otimes S^{-1}(a_{j}) \big) = 1$. The verification of 
$\big( \sum_{j=1}^{N} b_{j} \otimes S^{-1}(a_{j}) \big) \fR = 1$ and the analogous statements for $B$ proceed similarly, 
completing the proof.
\end{proof}

Assume the hypotheses of Proposition~\ref{prop:R-invertible} hold. For notational convenience, let us denote the skew-pairing 
by $\sigma \colon A \otimes B \to \BK$ while using $\wt{\sigma} \colon B \otimes A \to \BK$ for its transposed inverse. Evoking
$\CD(B,A,\wt{\sigma}) \simeq \CD(A,B,\sigma)$ from Proposition~\ref{prop:opposite double}, let us now relate their canonical elements.

\begin{Cor}\label{cor:opposite-canonical}
Under the assumptions of Proposition~\ref{prop:R-invertible}, the canonical element of $\CD(B,A,\wt{\sigma})$ with respect 
to $\wt{\sigma}$ is $\fR^{-1}_{(21)}$.
\end{Cor}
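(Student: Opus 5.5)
The plan is to exhibit explicit dual bases for the pair $(B,A)$ with respect to $\wt{\sigma}$, and then read off the canonical element directly from the definition~\eqref{eq:canonical element}. In $\CD(B,A,\wt{\sigma})$ the roles are swapped: the first factor is $B$, the second is $A$. So its canonical element has the form $\sum_i a'_i \otimes b'_i$, where $\{b'_i\}\subset B$ and $\{a'_i\}\subset A$ are dual with respect to $\wt{\sigma}$, that is, $\wt{\sigma}(b'_i,a'_j)=\delta_{ij}$. Here $a'_i$ plays the role of the element of the second factor, placed in the first tensor slot. The natural candidate comes from Proposition~\ref{prop:R-invertible}. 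Take $b'_i=b_i$ and $a'_i=S^{-1}(a_i)$ (resp.\ $b'_i=S(b_i)$, $a'_i=a_i$). The resulting canonical element is then $\sum_i S^{-1}(a_i)\otimes b_i$, which is exactly $\tau(\fR^{-1})=\fR^{-1}_{(21)}$ up to the Koszul sign. That sign is trivial, since each $a_i,b_i$ can be chosen homogeneous of equal parity, the pairing being even by Remark~\ref{rem:pairing-degree-zero}, and $\tau$ inserts $(-1)^{|a_i||b_i|}$. I will need to track this sign carefully.

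The key step is to verify the duality $\wt{\sigma}(b_i,S^{-1}(a_j))=\delta_{ij}$. By~\eqref{eq:sigma-tilde}, $\wt{\sigma}(b,a)=\sigma'(a,b)$ up to the swap sign, where $\sigma'$ is the convolution inverse of $\sigma$ viewed as a dual pairing between $A$ and $B^{\opp}$. By~\eqref{eq:inverse-pairing}, applied to the Hopf structure of $A$ with antipode $S$ of the dual pairing $A\otimes B^{\opp}\to\BK$, we get $\sigma'(a,b)=\sigma(S(a),b)$. There is a subtlety here: $B^{\opp}$ must be Hopf for the symmetric form $(a,S(b))$. This is why I will use only the $A$-side formula $\sigma'(a,b)=\sigma(S_A(a),b)$, which I expect to check directly from~\eqref{eq:inverse pairing identity} together with the antipode axiom in $A$. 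Then $\sigma'(S^{-1}(a_j),b_i)=\sigma(a_j,b_i)=\delta_{ij}$, as required.

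Assembling these, the canonical element of $\CD(B,A,\wt{\sigma})$ is $\sum_i S^{-1}(a_i)\otimes b_i$. Here the $b_i$ form the "first-factor" basis dual to the $S^{-1}(a_i)$, and since~\eqref{eq:canonical element} places the second-factor element first, the element comes out as written. Applying $\tau$ to $\fR^{-1}=\sum_i b_i\otimes S^{-1}(a_i)$ from Proposition~\ref{prop:R-invertible} gives the same expression. Because the isomorphism of Proposition~\ref{prop:opposite double} is the identity on $A$ and $B$, the two elements agree inside $\CD(A,B,\sigma)^{\otimes 2}$.

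I expect the main obstacle to be the sign bookkeeping. Three sources need to be reconciled: the swap in $\wt{\sigma}=\sigma'\circ\tau$, the precise form of $\sigma'$ in terms of $S$ when $B$ is used with its opposite multiplication, and the sign from $\tau$ on $\fR^{-1}$. All of these vanish on pairs with nonzero pairing value, because such pairs have equal parity and so contribute $(-1)^{|a||b|}=(-1)^{|a|}$, which is squared away. I will confirm that the combined sign is $+1$ on every nonzero term. If the $A$-side formula for $\sigma'$ turns out to carry an extra parity sign, I will instead use the $B$-side version of Proposition~\ref{prop:R-invertible}, with dual bases $\{S(b_i)\}$ and $\{a_i\}$.
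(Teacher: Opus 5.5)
Your strategy is the paper's: take $\{b_i\}$ and a rescaling of $\{S^{-1}(a_i)\}$ as $\wt{\sigma}$-dual bases, read off the canonical element of $\CD(B,A,\wt{\sigma})$, and compare with $\tau$ applied to $\fR^{-1}=\sum_i b_i\otimes S^{-1}(a_i)$ from Proposition~\ref{prop:R-invertible}. However, both sign claims you make along the way are false, and your conclusion survives only because the two errors cancel.

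First, $\{b_i\}$ and $\{S^{-1}(a_i)\}$ are \emph{not} $\wt{\sigma}$-dual. The formula $\sigma'(a,b)=\sigma(S(a),b)$ does hold exactly. It follows from $\sum_{(a)}(a_1S(a_2),b)=\epsilon(a)\epsilon(b)$ and the rule $(aa',b)=(a\otimes a',\Delta(b))$, with no extra parity sign, so you do not need the $B$-side fallback. But your step ``$\sigma'(S^{-1}(a_j),b_i)=\delta_{ij}$, as required'' evaluates $\sigma'$ rather than $\wt{\sigma}=\sigma'\circ\tau$, and the swap contributes a sign:
\[
  \wt{\sigma}(b_j,S^{-1}(a_i)) = (-1)^{|a_i||b_j|}\sigma(a_i,b_j) = (-1)^{|a_i||b_j|}\delta_{ij} .
\]
This equals $-1$ on the odd diagonal terms. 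The correct dual basis is therefore $\{(-1)^{|a_i||b_i|}S^{-1}(a_i)\}$, exactly as in the paper. The canonical element is $\sum_i (-1)^{|a_i||b_i|}S^{-1}(a_i)\otimes b_i$, not $\sum_i S^{-1}(a_i)\otimes b_i$.

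Second, equal parity of $a_i$ and $b_i$ does not make the Koszul sign of $\tau$ trivial. It gives $(-1)^{|a_i||b_i|}=(-1)^{|a_i|}$; equal parity trivializes $(-1)^{|a|+|b|}$, not $(-1)^{|a||b|}$. Hence $\fR^{-1}_{(21)}=\sum_i(-1)^{|a_i||b_i|}S^{-1}(a_i)\otimes b_i$.

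With both corrections the two expressions agree term by term. This is the ``squared away'' cancellation you mention in your last paragraph, but that paragraph contradicts your earlier assertions that each sign is individually trivial. Since this sign is the only content of the corollary beyond Proposition~\ref{prop:R-invertible}, state it as the paper does. The swap in \eqref{eq:sigma-tilde} produces one sign, which you absorb into the dual basis. Applying $\tau$ to $\fR^{-1}$ produces the same sign, so the two sides match.
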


\begin{proof}
First, consider the case when $A$ is a Hopf superalgebra with an invertible antipode $S$. The evaluation
\begin{equation*}
  \wt{\sigma}(b_{j}, S^{-1}(a_{i})) \overset{\eqref{eq:inverse-pairing}, \eqref{eq:sigma-tilde}}{=} 
  (-1)^{|a_{i}||b_{j}|} \sigma(a_{i}, b_{j}) = (-1)^{|a_{i}||b_{j}|} \delta_{ij}
\end{equation*}
implies that $\{b_{j}\}_{j=1}^{N}$ and $\{(-1)^{|a_{i}||b_{i}|}S^{-1}(a_{i}) \}_{i=1}^{N}$ form dual bases with respect 
to $\wt{\sigma}$. The result then follows from Proposition~\ref{prop:R-invertible}.

The case where $B$ is a Hopf superalgebra is analogous.
\end{proof}

\begin{Rem}
The results of this subsection naturally generalize to any infinite-dimensional setting where the 
summation~\eqref{eq:canonical element} is well-defined, such as the $\hbar$-adic quantum supergroups 
(cf.\ Remark~\ref{rem:h-adic-completion-avoidance}).
\end{Rem}


\end{document}